\newtheorem{definition}{Definition}[section]
\newtheorem{proposition}[definition]{Proposition}
\newtheorem{lemma}[definition]{Lemma}
\newtheorem{theorem}[definition]{Theorem}
\newtheorem{remark}[definition]{Remark}
\newtheorem{notation}[definition]{Notation}
\def\NN{\mathbb{N}}
\def\BB{\mathbb{B}}
\def\S{{\textup S}}
\def\K{{\textup K}}
\newcommand{\Pred}[1]{\mathbf{Pred}_{#1}}
\newcommand{\Formulas}[1]{\mathbf{Form}_{#1}}
\newcommand{\Ax}[1]{\mathbf{Ax}_{#1}}
\newcommand{\Assumption}[1]{({\bf A#1})}
\newcommand{\efq}{\text{efq}}
\newcommand{\ruleid}{\text{id}}
\newcommand{\rulecut}{\text{cut}}
\newcommand{\ruleper}{\text{per}}
\newcommand{\rulecon}{\text{con}}
\newcommand{\rulewkn}{\text{wkn}}
\newcommand{\true}{\textsc{T}}
\newcommand{\false}{\textsc{F}}
\newcommand{\IH}[1]{\textup{(}{\rm IH}$_{#1}$\textup{)}}
\newcommand{\wtype}[1]{{\rm wt}(#1)}
\newcommand{\btype}[1]{{\rm bt}(#1)}
\newcommand{\ifW}[1]{{\rm m}^{\BB}_{#1}}
\newcommand{\apW}[1]{{\rm m}^{\NN}_{#1}}
\newcommand{\ConUnit}{\textup{(}{\bf C}$_\eta$\textup{)}}
\newcommand{\ConStrength}{\textup{(}{\bf C}$_\sqcup$\textup{)}}
\newcommand{\ConApp}{\textup{(}{\bf C}$_\circ$\textup{)}}
\newcommand{\Quant}[1]{\textup{(}{\bf Q}$_{#1}$\textup{)}}
\newcommand{\WitS}{\textup{(}{\bf W}$_\S$\textup{)}}
\newcommand{\WitK}{\textup{(}{\bf W}$_\K$\textup{)}}
\newcommand{\WitApp}{\textup{(}{\bf W}$_{\rm Ap}$\textup{)}}
\newcommand{\ubq}[3]{\forall #2 \!\sqsubset_{#1}\! #3 \,}
\newcommand{\lubq}[4]{\forall #2 \!\stackrel{\bullet}{\sqsubset}{\!}_{#1}\, #3 \, #4}
\newcommand{\bubq}[4]{\forall #2 \!\stackrel{\circ}{\sqsubset}{\!}_{#1}\, #3 \, #4}
\newcommand{\bound}[3]{#1 \!\prec^{#3}\! #2}
\newcommand{\lbound}[3]{#1 \prec^{#3}_\bullet #2}
\newcommand{\bbound}[3]{#1 \prec^{#3}_\circ #2}
\newcommand{\singleton}[1]{\eta #1}
\newcommand{\join}[2]{#1 \sqcup #2}
\newcommand{\comp}[2]{#1 \circ #2}
\newcommand{\Wit}{\textup{W}}
\newcommand{\lWit}{\textup{W}^\bullet}
\newcommand{\eqleft}[1]{\begin{enumerate} \item[] $#1$ \end{enumerate}}
\newcommand{\mr}{\;{\sf mr}\;}
\newcommand{\bmr}{\;{\sf bmr}\;}
\newcommand{\dnInter}[3]{#1_{\mathrm{DN}}(#2;#3)}
\newcommand{\lTrans}[1]{#1^{\bullet}}
\newcommand{\fTrans}[1]{#1^\mathcal{F}}
\newcommand{\bTrans}[1]{#1^{\circ}}
\newcommand{\eTrans}[1]{#1^{\star}}
\newcommand{\AHAomega}{{\bf WE\textup{-}AHA}^{\omega}}
\newcommand{\NAHA}{{\bf N\textup{-}AHA}}
\newcommand{\AL}{{\bf AL}}
\newcommand{\IL}{{\bf IL}}
\newcommand{\HAomega}{{\bf WE\textup{-}HA}^\omega}
\newcommand{\nHAomega}{{\bf N\textup{-}HA}^\omega}
\newcommand{\eHAomegaS}{{\bf E\textup{-}HA}^{\omega*}}
\newcommand{\eAHAomegaS}{{\bf E\textup{-}AHA}^{\omega*}}
\newcommand{\suc}{{\rm Suc}}
\newcommand{\rec}{{\rm Rec}}
\newcommand{\ap}{{\rm Ap}}
\newcommand{\Type}{{\cal T}}
\newcommand{\FV}{{\rm FV}}
\newcommand{\Atarget}{{\mathcal A}_{\mathbf t}}
\newcommand{\Asource}{{\mathcal A}_{\mathbf s}}
\newcommand{\Itarget}{{\mathcal I}_{\mathbf t}}
\newcommand{\Isource}{{\mathcal I}_{\mathbf s}}
\newcommand{\pdefin}{:\equiv}
\newcommand{\uInter}[3]{|#1|^{#2}_{#3}}
\newcommand{\lInter}[3]{\{\!\{#1\}\!\}^{#2}_{#3}}
\newcommand{\bInter}[3]{(\!(#1)\!)^{#2}_{#3}}
\newcommand{\pvec}[1]{\boldsymbol{#1}}
\newcommand{\proves}{\vdash}
\newcommand{\cwedge}{\otimes}
\newcommand{\awedge}{\,\&\,}
\newcommand{\avee}{\,\oplus\,}
\newcommand{\bang}[1]{! #1}
\newcommand{\whynot}[1]{? #1}
\newcommand{\lto}{\multimap}
\newcommand{\pcond}[3]{(#1 = \true \to #2) \wedge (#1 = \false \to #3)}
\newcommand{\lpcond}[3]{(\bang (#1 = \true) \lto #2) \cwedge (\bang(#1 = \false) \lto #3)}
\newcommand{\st}{{\rm st}}
\newcommand*{\mforall}{\tilde\forall}
\journal{Annals of Pure and Applied Logic}
\begin{document} 

\begin{frontmatter}


\title{Parametrised Functional Interpretations}


\author[1]{Bruno Dinis
\fnref{fn1}}
\ead{bmdinis@fc.ul.pt}

\author[2]{Paulo Oliva}
\ead{p.oliva@qmul.ac.uk}

\address[1]{Departamento de Matem\'{a}tica, Faculdade de
Ci\^{e}ncias da Universidade de Lisboa,
Campo Grande, Ed. C6, 1749-016, Lisboa, Portugal}

\address[2]{School of Electronic Engineering and Computer Science, Queen Mary University of London, London E1 4NS, United Kingdom}

\fntext[fn1]{The author acknowledges the support of FCT - Funda\c{c}\~ao para a Ci\^{e}ncia e Tecnologia under the projects: UID/MAT/04561/2019 and PTDC/MAT-PUR/3971/2020 MATHLOGIC, and the research center Centro de Matem\'{a}tica, Aplica\c{c}\~{o}es Fundamentais e Investiga\c{c}\~{a}o Operacional, Universidade de Lisboa.}

\begin{abstract}
This paper presents a general framework for unifying functional interpretations. It is based on families of parameters allowing for different degrees of freedom on the design of the interpretation. In this way we are able to generalise previous work on unifying functional interpretations, by including in the unification the more recent bounded and Herbrandized functional interpretations.
\end{abstract} 

\begin{keyword}
functional interpretations \sep unification \sep intuitionism \sep proof theory 

\MSC 03F07 \sep 03F10 \sep 03F30 \sep 03F55

\end{keyword}

\end{frontmatter}

\section{Introduction}

Since G\"odel \cite{G(58)} published his functional (``Dialectica") interpretation in 1958, various other functional interpretations have been proposed\footnote{See \cite{AF(98)} for a survey on the ``Dialectica" interpretation.}. These include Kreisel's modified realizability \cite{K(59)}, the Diller-Nahm variant of the Dialectica interpretation \cite{DN(74)}, Stein's family of interpretations \cite{Stein(79)}, and more recently, the bounded functional interpretation \cite{FerreiraOliva2005}, the bounded modified realizability \cite{FerreiraNunes2006}, and ``Herbrandized" versions of modified realizability and the Dialectica \cite{BergBriseidSafarik2012}. In view of this picture, several natural questions arise: How are these different interpretations related to each other? What is the common structure behind all of them? Are there any other interpretations out there waiting to be discovered? 

These questions were addressed by the second author (and various co-authors) in a series of papers on \emph{unifying functional interpretations}. Starting with a unification of interpretations of intuitionistic logic \cite{Oliva2006}, which was followed by various analysis of functional interpretations within the finer setting of linear logic \cite{FO(11),Oliva2007,Oliva2008,Oliva2010}, a proposal on how functional interpretations could actually be combined in so-called \emph{hybrid functional interpretations} \cite{HO(08),Oliva2012}, and the inclusion of truth variants in the unification \cite{GO(10)}. 

Functional interpretations associate with each formula $A$ a new formula $\uInter{A}{\pvec x}{\pvec y}$ where $\pvec x$ and $\pvec y$ are fresh tuples of variables. Intuitively, $\pvec x$ captures the ``positive" quantifications in $A$, while $\pvec y$ captures the ``negative" quantifications. This is done in such a way that, in a suitable system, the truth of $A$ is equivalent to that of $\exists \pvec x \forall \pvec y \uInter{A}{\pvec x}{\pvec y}$. The key insight which arises from the programme of ``unifying functional interpretations" is that we have some degree of freedom when choosing the interpretation of the exponentials of linear logic $\bang A$ and $\whynot A$. For instance, we can take
\[
\begin{array}{lcl}
	\uInter{\bang A}{\pvec x}{\pvec y} \pdefin \; \bang \uInter{A}{\pvec x}{\pvec y} 
		& \hspace{5mm} & \mbox{(giving rise to the Dialectica interpretation)} \\[1mm]
	\uInter{\bang A}{\pvec x}{\pvec a} \pdefin \; \bang \forall \pvec y \in \pvec a \uInter{A}{\pvec x}{\pvec y} 
		& \hspace{5mm} & \mbox{(giving rise to the Diller-Nahm interpretation)} \\[1mm]
	\uInter{\bang A}{\pvec x}{} \pdefin \; \bang \forall \pvec y \uInter{A}{\pvec x}{\pvec y} 
		& \hspace{5mm} & \mbox{(giving rise to modified realizability)} \\[1mm]
	\uInter{\bang A}{\pvec x}{} \pdefin \; \bang \forall \pvec y \uInter{A}{\pvec x}{\pvec y} \; \otimes \; \bang A 
		& \hspace{5mm} & \mbox{(giving rise to modified realizability with truth)} \\[1mm]
	\mbox{and so on...}
		&  & 
\end{array}
\]
showing that each of these interpretations only differ in the way they treat the \emph{contraction axiom}. In particular, in the pure fragment of linear logic all these interpretations coincide!

So, it makes sense to introduce an abstract bounded quantification $\ubq{\tau}{x}{a}A$, capturing this degree of freedom on the design of a functional interpretation, and to try to isolate the properties of this parameter which ensure the soundness of the interpretation. With this one is able to define a ``unifying functional interpretation" which when instantiated gave rise to several of the existing functional interpretations, including the Dialectica interpretation, modified realizability (its q- and truth variants), Stein's family of interpretations, and the Diller-Nahm interpretation \cite{Oliva2006,Oliva2010}. This process led to the design of a ``Diller-Nahm with truth" interpretation \cite{GO(10)}, which at the time was not thought to be possible.

But the \emph{unifying functional interpretation} programme has so far been unable to capture the two more recent families of functional interpretations, namely the bounded functional interpretations \cite{DG(18),FerreiraGaspar2015,FerreiraNunes2006,FerreiraOliva2005}, and the Herbrandized functional interpretations \cite{BergBriseidSafarik2012,Ferreira2017}.

In this paper we propose a framework for a more general unification, introducing other families of parameters which allow for different interpretations of \emph{typed quantifications}. We demonstrate that, when devising a functional interpretation, we in fact have \emph{two crucial degrees of freedom}: we can choose how to interpret the contraction axiom, as discussed above, but also, we can choose how to interpret typed quantifications, which ultimately boils down to the choice of how predicate symbols are interpreted. 

\begin{table}[t] 
\[
\begin{array}{|rccc|}
\hline
& & & \\
\hspace{15mm} & 
\begin{prooftree} 
    \justifies A \proves A 
    \using (\ruleid) 
\end{prooftree}
& \hspace{5mm} &
\begin{prooftree}
    \justifies \Gamma, \bot \proves A  
    \using (\efq) 
\end{prooftree} 
\quad \quad \\[5mm]
& 
\begin{prooftree}
    \Gamma \proves A \quad \Delta, A \proves B
    \justifies \Gamma, \Delta \proves B 
    \using (\rulecut) 
\end{prooftree}
& &
\begin{prooftree} 
    \Gamma, A, B, \Delta \proves C 
    \justifies 
    \Gamma, B, A, \Delta \proves C
    \using (\ruleper) 
\end{prooftree} \\[5mm]
\hline
& & & \\
& \begin{prooftree} \Gamma \proves A \quad \Delta \proves B \justifies \Gamma,\Delta \proves A \cwedge B \using (\cwedge\textup{R}) \end{prooftree}
& &
\begin{prooftree} \Gamma, A, B \proves C \justifies \Gamma, A \cwedge B \proves C \using (\cwedge\textup{L}) \end{prooftree} \\[5mm]
& \begin{prooftree} \Gamma, A \proves B \justifies \Gamma \proves A \lto B \using (\lto\!\textup{R}) \end{prooftree}
& &
\begin{prooftree} \Gamma \proves A \quad \Delta, B \proves C \justifies \Gamma, \Delta, A \lto B \proves C \using (\lto\!\textup{L}) \end{prooftree} \\[5mm]
\hline
\hspace{15mm} & & & \\
& \begin{prooftree} 
    \Gamma \proves A 
    \justifies \Gamma \proves \forall x A 
    \using (\forall \textup{R}, x \not \in \FV(\Gamma)) 
\end{prooftree}
& &
\begin{prooftree}
    \Gamma, A[t/x] \proves B
    \justifies
    \Gamma, \forall x A \proves B
    \using (\forall \textup{L}) 
\end{prooftree} \\[5mm]
& 
\begin{prooftree}
    \Gamma \proves A[t/x]
    \justifies
    \Gamma \proves \exists x A
    \using (\exists \textup{R})
\end{prooftree}
& &
\begin{prooftree} 
    \Gamma, A \proves B
    \justifies
    \Gamma, \exists x A \proves B
    \using (\exists \textup{L}, x \not \in \FV(\Gamma, B))
\end{prooftree} \\[5mm]
\hline
& & & \\
\multicolumn{4}{|l|}{
  \quad
  \begin{prooftree} \Gamma, \bang A, \bang A \proves B \justifies \Gamma, \bang A \proves B \using (\rulecon) \end{prooftree}
  \quad \quad\quad
  \begin{prooftree} \Gamma \proves B \justifies \Gamma, A \proves B \using (\rulewkn) \end{prooftree}
  \quad \quad
  \begin{prooftree} \bang \Gamma \proves A \justifies \bang \Gamma \proves \bang A \using (\bang\textup{R}) \end{prooftree}
  \quad \quad
  \begin{prooftree} \Gamma, A \proves B \justifies  \Gamma, \bang A \proves B \using (\bang\textup{L}) \quad \end{prooftree}
} \\
& & & \\
\hline
\end{array}
\]
\caption{Sequent Calculus for Intuitionistic Affine Logic $\AL$} \label{ill-rules}
\end{table}

We will start by presenting (Section \ref{sec-al-inter}) this parametrised interpretation in the setting of affine logic ($\AL$). Then, via the two well-known Girard translations from intuitionistic logic ($\IL$) into $\AL$ \cite{Girard(87B)}, we will also obtain two parametrised interpretations of $\IL$ (Section \ref{IL-interpretations}). We conclude (Section \ref{sec-instances}) by showing how all of the functional interpretations mentioned above can be obtained by suitable choices of these parameters. In this process we have again discovered some new interpretations (see Section \ref{sec-instances}). 

\subsection{Intuitionistic affine logic and theories}
\label{sec-affine-theory}

A sequent calculus for \emph{intuitionistic affine logic} $\AL$ is shown in Table \ref{ill-rules}, with negation $A^\bot$ defined as $A \lto \bot$. An extension of $\AL$ with new predicate and function symbols, and non-logical axioms, will be called an \emph{intuitionistic affine theory}, or $\AL$-theory, for short. Given an intuitionistic affine theory $\mathcal{A}$ we will denote its set of predicate symbols by $\Pred{\mathcal{A}}$, its set of formulas by $\Formulas{\mathcal{A}}$, and its set of non-logical axioms by $\Ax{\mathcal{A}}$. Subsection \ref{Subsection_theories} defines the five $\AL$-theories that we will use in this paper.

\begin{notation} If $\mathcal{A}$ is an $\AL$-theory then we write $\Gamma \proves_{\mathcal{A}} A$ as an abbreviation for ``$\mathcal{A}$ proves the sequent $\Gamma \proves A$". We write $A \Leftrightarrow_{\mathcal{A}} B$ when we have both $A \proves_{\mathcal{A}} B$ and $B \proves_{\mathcal{A}} A$. When the theory $\mathcal{A}$ used is clear from the context, we omit the subscript. We use boldface letters $\pvec x, \pvec y, \ldots$ for tuples of variables or terms, and write $\varepsilon$ for the empty tuple. Given a formula $B(\pvec x)$ of an $\AL$-theory, we will make use of the following abbreviations $\forall {\pvec x}^B A \pdefin \forall \pvec x (\bang B(\pvec x) \lto A)$ and $\exists {\pvec x}^B A \pdefin \exists \pvec x (B(\pvec x) \otimes A)$.
\end{notation}

\subsection{Intuitionistic logic and theories}

An intuitionistic theory, or $\IL$-theory, is an extension of first-order intuitionistic logic $\IL$ with constant symbols, predicate symbols, and non-logical axioms. Given an intuitionistic theory $\mathcal{I}$ we will denote its set of predicate symbols by $\Pred{\mathcal{I}}$, its set of formulas by $\Formulas{\mathcal{I}}$, and its set of non-logical axioms by $\Ax{\mathcal{I}}$. Subsection \ref{Subsection_theories} also defines the five $\IL$-theories that we will use in this paper.

\begin{notation} \label{notation:qual-IL} Given a formula $B(\pvec x)$ of an $\IL$-theory, we will make use of the following abbreviations $\forall {\pvec x}^B A \pdefin \forall \pvec x (B(\pvec x) \to A)$ and $\exists {\pvec x}^B A \pdefin \exists \pvec x (B(\pvec x) \wedge A)$.
\end{notation}

\begin{definition}[Girard translations, \cite{Girard(87B)}] \label{g-trans} Define two translations\footnote{The $\lTrans{(\cdot)}$ translation of $A \wedge B$ in linear logic is normally $\lTrans{A} \awedge \lTrans{B}$, but, in the presence of weakening, one can also define it with multiplicative conjunction $\lTrans{A} \cwedge \lTrans{B}$. We prefer this latter version as it leads to simpler functional interpretations.} of an $\IL$-theory into a corresponding $\AL$-theory: (where $P$ ranges over predicate symbols)
\[
\begin{array}{llll}
\lTrans{(P(\pvec t))} & \pdefin P(\pvec t)
 & \bTrans{(P(\pvec t))} & \pdefin \;\bang P(\pvec t) \\[2mm]
\lTrans{(A \wedge B)} & \pdefin \lTrans{A} \cwedge \lTrans{B}
 &  \bTrans{(A \wedge B)}     &\pdefin \bTrans{A} \cwedge \bTrans{B} \\[2mm]
\lTrans{(A \to B)}     &\pdefin \;\bang \lTrans{A} \lto \lTrans{B} \hspace{15mm}
 & \bTrans{(A \to B)}     &\pdefin \;\bang (\bTrans{A} \lto \bTrans{B}) \\[2mm]
\lTrans{(\forall x A)} &\pdefin \forall x \lTrans{A}
 & \bTrans{(\forall x A)} &\pdefin \;\bang \forall x \bTrans{A} \\[2mm]
\lTrans{(\exists x A)} &\pdefin \exists x \bang \lTrans{A}
 &  \bTrans{(\exists x A)} &\pdefin \exists x \bTrans{A}
\end{array}
\]
Given an $\IL$-theory $\mathcal{I}$, let $\lTrans{\mathcal{I}}$ denote the $\AL$-theory with the same constants and predicate symbols as $\mathcal{I}$, and non-logical axioms $\Ax{\lTrans{\mathcal{I}}} = \{ \bang \lTrans{\Gamma} \proves \lTrans{A} \; \colon \; \Gamma \proves A \in \Ax{\mathcal{I}} \}$. Similarly, let $\bTrans{\mathcal{I}}$ denote the $\AL$-theory with the same constants and predicate symbols as $\mathcal{I}$, and non-logical axioms $\Ax{\bTrans{\mathcal{I}}} = \{ \bTrans{\Gamma} \proves \bTrans{A} \; \colon \; \Gamma \proves A \in \Ax{\mathcal{I}} \}$.
\end{definition}

\begin{proposition} \label{ltrans-prop} If $\Gamma \proves_{\mathcal{I}} A$ then $\bang \lTrans{\Gamma} \proves_{\lTrans{\mathcal{I}}} \lTrans{A}$ and $\bTrans{\Gamma} \proves_{\bTrans{\mathcal{I}}} \bTrans{A}$.
\end{proposition}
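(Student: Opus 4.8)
The plan is to argue by induction on the derivation of $\Gamma \proves_{\mathcal I} A$ in $\IL$, establishing the two translation statements by a common case analysis (the translations being independent, so each uses only its own inductive hypotheses). Two preliminary observations are needed. First, a routine induction on formulas shows that both translations commute with substitution, $\lTrans{(A[t/x])} = (\lTrans A)[t/x]$ and $\bTrans{(A[t/x])} = (\bTrans A)[t/x]$, which is what makes the quantifier cases go through. Second, and more importantly, I would first prove the auxiliary lemma that for every $\IL$-formula $A$ one has $\bTrans A \proves_{\bTrans{\mathcal I}} \bang \bTrans A$ (hence $\bTrans A \Leftrightarrow_{\bTrans{\mathcal I}} \bang \bTrans A$, using dereliction $(\bang\textup{L})$ for the converse). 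This goes by induction on $A$: the cases $P(\pvec t)$, $A \to B$ and $\forall x A$ are immediate since their $\circ$-translations are already $\bang$-formulas, while $A \wedge B$ and $\exists x A$ follow from the inductive hypotheses by dereliction followed by promotion $(\bang\textup{R})$. In particular each $\bTrans A$ is then contractible, $\bTrans A \proves_{\bTrans{\mathcal I}} \bTrans A \cwedge \bTrans A$, which is exactly what is required to simulate the contraction rule of $\IL$ under the $\circ$-translation.

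With these in hand the main induction checks each rule of $\IL$. The logical axiom and $(\rulecut)$ are immediate (for $\lTrans$ the identity $A \proves A$ needs one dereliction $(\bang\textup{L})$ to pass from $\lTrans A \proves \lTrans A$ to $\bang \lTrans A \proves \lTrans A$), and the non-logical axioms are handled directly by the definitions of $\Ax{\lTrans{\mathcal I}}$ and $\Ax{\bTrans{\mathcal I}}$. Weakening is absorbed by affine $(\rulewkn)$, and contraction by $(\rulecon)$ in the $\lTrans$-case, where the whole context $\bang\lTrans\Gamma$ consists of $\bang$-formulas, and by the contractibility lemma in the $\bTrans$-case. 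The propositional and quantifier rules are then mostly mechanical; the recurring move is that whenever the translation demands a $\bang$ on the right — as in $\exists\textup{R}$ for $\lTrans$, where $\lTrans{(\exists x A)} = \exists x \bang \lTrans A$, or in $\to\textup{R}$ and $\forall\textup{R}$ for $\bTrans$, whose conclusions carry an outer $\bang$ — one appeals to $(\bang\textup{R})$, which is licensed because the ambient context is $\bang\lTrans\Gamma$ in the $\lTrans$-case and can be turned into a $\bang$-context in the $\bTrans$-case via the auxiliary lemma.

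The step I expect to be the real obstacle is the implication-left rule for the $\lTrans$-translation. Here the sequent to be derived is $\bang\lTrans\Gamma, \bang\lTrans\Delta, \bang(\bang \lTrans A \lto \lTrans B) \proves \lTrans C$, whereas the inductive hypothesis from the right premise $\Delta, B \proves C$ only yields $\bang\lTrans\Delta, \bang\lTrans B \proves \lTrans C$, with $\lTrans B$ \emph{banged}; a naive application of $(\lto\!\textup{L})$ exposes $\lTrans B$ un-banged and does not match. The way around this is to first derive $\bang\lTrans\Gamma, \bang(\bang\lTrans A \lto \lTrans B) \proves \bang \lTrans B$: strip the outer $\bang$ by $(\bang\textup{L})$, apply $(\lto\!\textup{L})$ against the left-premise fact $\bang\lTrans\Gamma \proves \bang\lTrans A$ (obtained by promoting the inductive hypothesis $\bang\lTrans\Gamma \proves \lTrans A$ with $(\bang\textup{R})$) and the identity $\lTrans B \proves \lTrans B$ to reach $\lTrans B$, and then re-promote with $(\bang\textup{R})$ — legitimate precisely because the surviving context is entirely banged. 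A final $(\rulecut)$ against $\bang\lTrans\Delta, \bang\lTrans B \proves \lTrans C$ closes the case.

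This interplay between the outer $\bang$ created by banging the context and the promotion rule is the crux of the $\lTrans$-argument; once it is understood, the corresponding $\bTrans$-cases are easier, since the $\circ$-translation leaves the context un-banged and the right premise of $(\lto\!\textup{L})$ then lines up with the inductive hypothesis directly, the only subtlety there being the systematic use of the lemma $\bTrans A \proves \bang \bTrans A$ to supply contraction and the outer $\bang$ whenever they are needed.
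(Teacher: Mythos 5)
Your proposal is correct and is essentially the proof the paper is pointing to: the paper's own ``proof'' is just a one-line citation to Girard's original argument (``a simple adaptation of the similar result from Girard''), and what you have written out is precisely that adaptation --- the induction on the $\IL$-derivation, with the promotion rule $(\bang\textup{R})$ licensed by the banged context in the $\lTrans{(\cdot)}$-case (your $(\lto\!\textup{L})$ analysis is the genuine crux and is handled correctly), the contractibility lemma $\bTrans{A} \proves \bang \bTrans{A}$ supplying contraction and promotion in the $\bTrans{(\cdot)}$-case, and the translated non-logical axioms discharged by the definitions of $\Ax{\lTrans{\mathcal{I}}}$ and $\Ax{\bTrans{\mathcal{I}}}$. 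No gap; your write-up simply supplies the details the citation leaves implicit.
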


\begin{proof} A simple adaptation of the similar result from \cite{Girard(87B)}.
\end{proof}

\begin{proposition} \label{prop:lb-equivalence} For all $A \in \Formulas{\IL}$ we have that $\bTrans{A} \Leftrightarrow_{\AL} \, \bang \lTrans{A}$, and hence, for any $\IL$-theory $\mathcal{I}$, the $\AL$-theories $\lTrans{\mathcal{I}}$ and $\bTrans{\mathcal{I}}$ prove the same set of formulas.
\end{proposition}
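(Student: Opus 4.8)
The plan is to prove the formula-level equivalence $\bTrans{A} \Leftrightarrow_{\AL} \bang \lTrans{A}$ by induction on the structure of $A\in\Formulas{\IL}$, and then to bootstrap this to the theory-level statement by checking that the two axiom sets are mutually derivable. The atomic case is immediate, since by definition $\bTrans{(P(\pvec t))} = \bang P(\pvec t) = \bang\lTrans{(P(\pvec t))}$. In each inductive step I would apply the induction hypothesis together with the (routine) congruence properties of $\cwedge$, $\lto$, $\forall$, $\exists$ and of $\bang$; note in particular that $\bang$ respects $\Leftrightarrow_{\AL}$, because from $D \proves D'$ one obtains $\bang D \proves \bang D'$ by $(\bang\textup{L})$ followed by $(\bang\textup{R})$. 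After using the induction hypothesis, each case reduces to a small equivalence between bang-patterns: for $A = B \wedge C$ the residual task is $\bang\lTrans{B} \cwedge \bang\lTrans{C} \Leftrightarrow_{\AL} \bang(\lTrans{B} \cwedge \lTrans{C})$; for $A = B \to C$ it is $\bang(\bang\lTrans{B} \lto \bang\lTrans{C}) \Leftrightarrow_{\AL} \bang(\bang\lTrans{B} \lto \lTrans{C})$; for $A = \forall x\, B$ it is $\bang\forall x\, \bang\lTrans{B} \Leftrightarrow_{\AL} \bang\forall x\, \lTrans{B}$; and for $A = \exists x\, B$ it is $\exists x\, \bang\lTrans{B} \Leftrightarrow_{\AL} \bang\exists x\, \bang\lTrans{B}$.

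Each of these is an instance of a handful of facts about the exponential in $\AL$. The directions that \emph{remove} an inner bang are handled by dereliction, i.e.\ $(\bang\textup{L})$ giving $\bang D \proves D$. The directions that \emph{introduce} a bang cannot be obtained from naive congruence (that would require the invalid $D \proves \bang D$); instead they rely on the promotion rule $(\bang\textup{R})$, which applies precisely because the surrounding context is already of the form $\bang(\cdots)$ or consists of banged formulas. For instance, to add the outer bang in the existential case one uses $(\bang\textup{R})$ with the banged hypothesis $\bang\lTrans{B}$; in the universal and implication cases one peels off the outer $\bang$, introduces $\forall$ or $\lto$ on the right, and reapplies $(\bang\textup{R})$ to the now-banged context. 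The one genuinely \emph{affine} step is the conjunction case: to prove $\bang(X \cwedge Y) \proves \bang X \cwedge \bang Y$ I would duplicate the hypothesis with $(\rulecon)$ and prove $\bang(X \cwedge Y) \proves \bang X$ and $\bang(X \cwedge Y) \proves \bang Y$ separately, each reducing via $(\bang\textup{R})$ and $(\bang\textup{L})$ to $X, Y \proves X$ (resp.\ $X, Y \proves Y$), which is available only because of weakening $(\rulewkn)$.

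For the second claim, since $\lTrans{\mathcal{I}}$ and $\bTrans{\mathcal{I}}$ share the same language it suffices to show their non-logical axioms are mutually derivable and then splice the derivations into proof leaves using $(\rulecut)$. Given $\Gamma \proves A \in \Ax{\mathcal{I}}$, the $\lTrans{\mathcal{I}}$-axiom $\bang\lTrans{\Gamma} \proves \lTrans{A}$ and the $\bTrans{\mathcal{I}}$-axiom $\bTrans{\Gamma} \proves \bTrans{A}$ differ only up to the established equivalences together with one application of $(\bang\textup{R})$: from $\bang\lTrans{\Gamma} \proves \lTrans{A}$ promotion yields $\bang\lTrans{\Gamma} \proves \bang\lTrans{A}$ (the antecedent is banged), and conversely dereliction recovers $\bang\lTrans{\Gamma} \proves \lTrans{A}$ from $\bang\lTrans{\Gamma} \proves \bang\lTrans{A}$; cutting against the equivalences $\bTrans{B} \Leftrightarrow_{\AL} \bang\lTrans{B}$ then derives each theory's axioms inside the other. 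I expect the main obstacle to be exactly the bookkeeping of these bang manipulations in the inductive step, and in particular keeping straight that the ``introduce a bang'' directions must go through promotion on an already-banged context rather than congruence, with the conjunction case being the point at which affineness is indispensable and the argument would fail in plain linear logic.
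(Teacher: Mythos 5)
Your proposal is correct, but it is not the route the paper itself takes: the paper's entire proof of the formula-level equivalence $\bTrans{A} \Leftrightarrow_{\AL} \bang \lTrans{A}$ is a citation to earlier work (Gaspar--Oliva), and the theory-level claim is dismissed as an easy consequence. What you have done is supply the self-contained structural induction that the citation hides, and your treatment of the delicate points is accurate: the bang-congruence $\bang D \Leftrightarrow_{\AL} \bang D'$ via $(\bang\textup{L})$ then $(\bang\textup{R})$; the observation that the bang-introducing directions must factor through promotion over an already-banged context (e.g.\ proving $\bang(\bang X \lto Y), \bang X \proves Y$ and promoting, rather than attempting the invalid $Y \proves \bang Y$); and, most importantly, the identification of $\bang(X \cwedge Y) \proves \bang X \cwedge \bang Y$ as the one step requiring weakening. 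That last point is exactly why the equivalence holds in \emph{affine} rather than pure linear logic, and it is consistent with the paper's own footnote to Definition~\ref{g-trans} explaining that $\lTrans{(A \wedge B)}$ can be taken with $\cwedge$ only in the presence of weakening. Your second part (mutual derivability of $\Ax{\lTrans{\mathcal{I}}}$ and $\Ax{\bTrans{\mathcal{I}}}$, then replacement of axiom leaves) coincides with what the paper sketches; the only cosmetic imprecision is that the splicing of derivations into leaves needs no cut — cut is only used when deriving each theory's axioms from the other's via the established equivalences. In short: the paper buys brevity by outsourcing the induction; your version buys a complete, checkable argument, and would serve as a proof of the cited fact itself.
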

\begin{proof} The first part is shown in \cite{GO(10)}. Using this it is easy to see that all the non-logical axioms of $\lTrans{\mathcal{I}}$ are derivable in $\bTrans{\mathcal{I}}$, and vice-versa.
\end{proof}

\begin{definition}[Forgetful translation] \label{forget} Define the following translation of an $\AL$-theory into a $\IL$-theory: (where $P$ ranges over predicate symbols)
\[
\begin{array}{lcccl}
\fTrans{(P(\pvec x))} & \pdefin P(\pvec x) & \quad &
    \fTrans{(\bang A)} &\pdefin \fTrans{A} \\[2mm]
\fTrans{(A \cwedge B)}  &\pdefin \fTrans{A} \wedge \fTrans{B} & &
    \fTrans{(\forall x A)} &\pdefin \forall x \fTrans{A} \\[2mm]
\fTrans{(A \lto B)}  &\pdefin \fTrans{A} \to \fTrans{B} & &
    \fTrans{(\exists x A)} &\pdefin \exists x \fTrans{A}
\end{array}
\]
Given an $\AL$-theory $\mathcal{A}$, let $\fTrans{\mathcal{A}}$ denote the $\IL$-theory with non-logical axioms $\Ax{\fTrans{\mathcal{A}}} = \{ \fTrans{\Gamma} \proves \fTrans{A} \colon \Gamma \proves A \in \Ax{\mathcal{A}} \}$ and the same constants and predicate symbols as $\mathcal{A}$.
\end{definition}

\subsection{Some concrete $\IL$-theories and $\AL$-theories}\label{Subsection_theories}

By the Girard translations (Definition \ref{g-trans}) $\IL$-theories give rise to $\AL$-theories\footnote{By Proposition \ref{prop:lb-equivalence}, it does not matter which Girard translation we use.}, and by the forgetful translation (Definition \ref{forget}) $\AL$-theories give rise to $\IL$-theories. In this section we will define the following pairs of $(\IL, \AL)$ theories:
\[
\begin{array}{lll}
	\IL\mbox{-theory} & \AL\mbox{-theory} & \mbox{theory of} \\[1mm]
	\hline
	\IL^{\rm eq} & \AL^{\rm eq} & \mbox{equality} \\[1mm] 
	\IL^{\BB} & \AL^{\BB} & \mbox{booleans (extends theory of equality)} \\[1mm] 
	\IL^{\omega} & \AL^{\omega} & \mbox{finite types (extends theory of booleans)} \\[1mm] 
	\HAomega & \AHAomega & \mbox{arithmetic in all finite types (decidable equality)} \\[1mm]
	\eHAomegaS & \eAHAomegaS & \mbox{arithmetic in all finite types (undecidable equality)}  
\end{array}
\]

Let $\IL^{\rm eq}$ denote intuitionistic predicate logic with equality (see \cite[Section 3.1]{K(08)}). Using Girard's translations we can then obtain an $\AL$-theory of equality $\AL^{\rm eq} = \lTrans{(\IL^{\rm eq})}$. 

An $\IL$-theory of booleans, which we will call $\IL^\BB$, can be obtained by extending $\IL^{\rm eq}$ with two constant symbols $\true$ and $\false$, a new predicate symbol $\BB(z)$, for ``$z$ is a boolean", and the following non-logical axioms:
\[
    \proves \BB(\true)
\quad
\quad
    \proves \BB(\false)
\quad
\quad
    \proves \neg (\true = \false)
\quad
\quad
    A[\true/z], A[\false/z], \BB(z) \proves A
\]
We will refer to the first two axioms as $\BB$\emph{R}, and the last axiom as $\BB$\emph{L}. We also assume that in $\IL^{\BB}$ we can define terms by cases, i.e. we have a function symbol ``${\rm if}$'' such that the following are derivable in $\IL^{\BB}$
\[
\proves {\rm if}(\true, x, y) = x \quad \quad \mbox{and} \quad \quad \proves {\rm if}(\false, x, y) = y 
\]
Again, by Girard's translations, we get an $\AL$-theory of booleans $\AL^{\BB} = \lTrans{(\IL^{\BB})}$.

\begin{proposition} \label{prop-def-disjunction} In $\IL^{\BB}$ disjunction is definable as
\[
\begin{array}{rcl}
	A \vee B & \pdefin & \exists z^\BB (((z = \true) \to A) \wedge ((z = \false) \to B))
\end{array}
\]
while in $\AL^{\BB}$ the additive connectives of linear logic are definable as
\[
\begin{array}{rcl}
	A \awedge B & \pdefin & \forall z (\bang \BB(z) \lto ((\bang(z = \true) \lto A) \cwedge (\bang(z = \false) \lto B))) \\[2mm]
	A \avee B & \pdefin & \exists z (\bang \BB(z) \cwedge ((\bang(z = \true) \lto A) \cwedge (\bang(z = \false) \lto B)))
\end{array}
\]
in the sense that their corresponding rules are derivable.
\end{proposition}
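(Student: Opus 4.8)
The plan is to unfold each definition and check that the usual right and left sequent rules for $\vee$ (in $\IL^\BB$) and for $\awedge$ and $\avee$ (in $\AL^\BB$) become derivable from the logical rules of Table~\ref{ill-rules} together with four theory-specific ingredients: reflexivity of equality, the boolean axioms $\BB$\emph{R} and $\BB$\emph{L}, the contradiction axiom $\neg(\true=\false)$ used with $(\efq)$, and (on the affine side) the fact that a $\bang$-ed hypothesis such as $\bang{\BB(z)}$ may be contracted. I would organise the verification into the ``selection'' rules, which are straightforward, and the ``case-analysis'' rules, which carry the real content.

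For the selection rules one simply instantiates the bound boolean. For $\vee\textup{R}_1$ (respectively $\vee\textup{R}_2$) take the existential witness $z:=\true$ (resp. $z:=\false$): $\BB(\true)$ holds by $\BB$\emph{R}, the matching guarded implication follows from the premise and reflexivity $\proves\true=\true$, while the non-matching guard $(\true=\false)\to B$ is vacuous since $\true=\false\proves\bot$ by $\neg(\true=\false)$ and $(\efq)$. The same choices $z:=\true,\false$ give $\avee\textup{R}_1,\avee\textup{R}_2$, where I would supply $\bang{\BB(\true)}$ (from $\BB$\emph{R} and $\bang\textup{R}$) and $\bang{(\true=\true)}$ to meet the $A$-obligation, discharging the dead conjunct from the contradiction; here the $\cwedge\textup{R}$ splits send the whole of $\Gamma$ to the live conjunct and the empty context to the others, so no hypothesis is duplicated. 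Dually, $\awedge\textup{L}_1,\awedge\textup{L}_2$ are obtained by $\forall\textup{L}$-instantiating $z:=\true,\false$ in $A\awedge B$, feeding in $\bang{\BB(\true)}$ and $\bang{(\true=\true)}$ to release $A$ (resp. $B$), and weakening away the other conjunct.

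For the case-analysis rules $\vee\textup{L}$, $\awedge\textup{R}$ and $\avee\textup{L}$ I would first expose the bound boolean $z$ --- by $\exists\textup{L}$ for $\vee$ and $\avee$, and by $\forall\textup{R}$ followed by $\lto\textup{R}$ for $\awedge$ --- thereby placing a hypothesis $\BB(z)$ (resp. $\bang{\BB(z)}$) in the context, and then perform a boolean case distinction on $z$ via $\BB$\emph{L}. In the $z=\true$ branch the $\false$-guarded obligation is vacuous (again by $\neg(\true=\false)$ and $(\efq)$) so that only the $A$-side survives and is discharged by the corresponding premise; the $z=\false$ branch is symmetric.

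The main obstacle is the resource bookkeeping in the affine rules $\awedge\textup{R}$ and $\avee\textup{L}$. In $\IL^\BB$ the rule $\vee\textup{L}$ is unproblematic because intuitionistic logic has full contraction: cutting the two instances of the premise against the shared context $\Gamma$ produces two copies of $\Gamma$, which are contracted back to one. In $\AL^\BB$ there is no contraction on arbitrary formulas, and a naive $\cwedge\textup{R}$ split in $\awedge\textup{R}$ would demand $\Gamma$ on both sides at once. The point of doing the boolean case distinction \emph{before} splitting is precisely to serialise the two obligations: after branching on $z$ through the contractible hypothesis $\bang{\BB(z)}$, exactly one conjunct of $(\bang{(z=\true)}\lto A)\cwedge(\bang{(z=\false)}\lto B)$ is live in each branch while the other is discharged at no cost from $\bang{(\true=\false)}\proves\bot$, so the $\cwedge\textup{R}$ split can send the whole of $\Gamma$ to the live conjunct. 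Checking that this serialisation goes through --- i.e. that $\bang{\BB(z)}$ genuinely furnishes the case distinction for the compound formula in question and that the dead conjunct is uniformly eliminable --- is the step I expect to require the most care; everything else reduces to routine applications of the rules in Table~\ref{ill-rules}.
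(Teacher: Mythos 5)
You cannot be checked against the paper here: Proposition \ref{prop-def-disjunction} is stated without any proof, so your attempt has to stand entirely on its own. Its intuitionistic half does stand: taking $E(z) \pdefin (((z=\true) \to A) \wedge ((z=\false) \to B)) \to C$, cutting $\Gamma \proves E(\true)$ and $\Gamma \proves E(\false)$ (obtained from the two premises via reflexivity) into the axiom $E(\true), E(\false), \BB(z) \proves E(z)$ and then contracting $\Gamma$ is exactly the argument you sketch, and the selection rules are as routine as you say. (One slip: in the \emph{left} rules the dead guarded formula is a hypothesis, so it is simply weakened away; $\neg(\true=\false)$ and $(\efq)$ are what prove the dead guarded \emph{conjunct} in the right rules, not what eliminate a hypothesis.)

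The genuine gap is in the affine case-analysis rules $\awedge\textup{R}$ and $\avee\textup{L}$ --- precisely the step you defer with ``the step I expect to require the most care''. In $\AL^\BB = \lTrans{(\IL^\BB)}$ the only case-distinction principle available is the translated axiom $\bang{\lTrans{E}}[\true/z],\, \bang{\lTrans{E}}[\false/z],\, \bang{\BB(z)} \proves \lTrans{E}$, whose case hypotheses are $\bang$-ed. So ``branching on $z$'' in the sequent $\Gamma, \bang{\BB(z)} \proves (\bang{(z=\true)} \lto A) \cwedge (\bang{(z=\false)} \lto B)$ means cutting $\Gamma \proves \bang{E(\true)}$ and $\Gamma \proves \bang{E(\false)}$ into that axiom; but these sequents are unobtainable, because promotion $(\bang\textup{R})$ requires the whole context to be $\bang$-ed and $\Gamma$ is arbitrary. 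Your serialisation is therefore not licensed by Table \ref{ill-rules} together with the axioms of $\AL^\BB$. The repair is to internalise the context into the case formula, $E'(z) \pdefin \Gamma^{\cwedge} \lto ((\bang{(z=\true)} \lto A) \cwedge (\bang{(z=\false)} \lto B))$, derive $\proves E'(\true)$ and $\proves E'(\false)$ from the premises, promote from the now-empty context, cut, and finally discharge $\Gamma^{\cwedge}$. But this exposes a second problem your sketch never touches: the $\BB$\emph{L} axioms of $\AL^\BB$ exist only for Girard translations of intuitionistic formulas, and $E'(z)$ is such a translation (up to provable equivalence, using $\bang{(X \cwedge Y)} \Leftrightarrow \bang{X} \cwedge \bang{Y}$ and $\bang{(\bang{X} \lto Y)} \proves \bang{X} \lto \bang{Y}$) only when $\Gamma$, $A$, $B$ are themselves equivalent to $\bang$-ed translations --- which is the fragment the paper actually uses in Proposition \ref{prop-definable-connectives}, but not the full generality claimed. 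So you must either restrict the statement to that fragment and run the internalisation argument, or argue separately that the case-distinction schema lifts to all $\AL^\BB$-formulas; as written, the proposal does neither, and the affine half of the proposition --- its entire mathematical content --- remains unproved.
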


Let $A \avee B$ and $A \vee B$ be defined as in Proposition \ref{prop-def-disjunction}. Then the Girard translations of $\IL$ into $\AL$ extend to translations of $\IL^\BB$ into $\AL^\BB$:

\begin{proposition} \label{prop-definable-connectives} The following equivalences are provable in $\AL^{\BB}$
\begin{enumerate}
	\item[$(i)$] $\lTrans{(A \vee B)} \Leftrightarrow \; \bang \lTrans{A} \avee \bang\lTrans{B}$
	\item[$(ii)$] $\bTrans{(A \vee B)} \Leftrightarrow \bTrans{A} \avee \bTrans{B}$
\end{enumerate}
\end{proposition}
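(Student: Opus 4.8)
The plan is to unfold both sides completely and reduce each equivalence to the problem of relocating the exponential $\bang$, which is then settled by promotion and dereliction together with a case analysis on the boolean witness. Unfolding $A \vee B$ (Proposition~\ref{prop-def-disjunction}, via the abbreviation $\exists z^\BB$ of Notation~\ref{notation:qual-IL}) and applying Definition~\ref{g-trans} clause by clause gives
\[
\lTrans{(A \vee B)} = \exists z\, \bang(\BB(z) \cwedge ((\bang(z = \true) \lto \lTrans{A}) \cwedge (\bang(z = \false) \lto \lTrans{B}))),
\]
\[
\bTrans{(A \vee B)} = \exists z (\bang \BB(z) \cwedge (\bang(\bang(z = \true) \lto \bTrans{A}) \cwedge \bang(\bang(z = \false) \lto \bTrans{B}))),
\]
whereas unfolding the additive disjunction of Proposition~\ref{prop-def-disjunction} gives
\[
\bang \lTrans{A} \avee \bang \lTrans{B} = \exists z (\bang \BB(z) \cwedge ((\bang(z = \true) \lto \bang \lTrans{A}) \cwedge (\bang(z = \false) \lto \bang \lTrans{B}))),
\]
and $\bTrans{A} \avee \bTrans{B}$ is the same with $\bTrans{A}, \bTrans{B}$ in place of $\bang \lTrans{A}, \bang \lTrans{B}$. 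Thus in each of $(i)$ and $(ii)$ the two sides agree except for the placement of the $\bang$'s.

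For the forward directions I would record two elementary affine facts: (a) $\bang(E \cwedge F) \Leftrightarrow \bang E \cwedge \bang F$ (left to right by weakening and contraction of the banged formula, right to left by promotion), and (b) $\bang(\bang C \lto D) \proves \bang C \lto \bang D$ (after $\lto\!\textup{R}$ the whole context is banged, so one may promote). Using (a) to distribute the outer $\bang$, $\lTrans{(A \vee B)}$ takes the same distributed shape as $\bTrans{(A \vee B)}$; part $(i)$ then follows by applying (b) to each implication under $\exists z$, and part $(ii)$ follows even more directly, since each conjunct $\bang(\bang(z = \true) \lto \bTrans{A})$ derelicts to $\bang(z = \true) \lto \bTrans{A}$.

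The backward directions are the heart of the argument, and the main obstacle is that reconstructing these $\bang$'s requires a banged context before one can promote. I would supply such a context by case analysis, via the derived $\avee$-left rule of Proposition~\ref{prop-def-disjunction}: to prove $\bang \lTrans{A} \avee \bang \lTrans{B} \proves \lTrans{(A \vee B)}$ it suffices to treat the two branches separately. In the left branch I instantiate $z := \true$ and discharge the three conjuncts of the (distributed) body: $\bang \BB(\true)$ by promotion from the axiom $\proves \BB(\true)$; the $\true$-conjunct $\bang(\bang(\true = \true) \lto \lTrans{A})$ by promotion from the banged payload, using weakening and dereliction; and the $\false$-conjunct $\bang(\bang(\true = \false) \lto \lTrans{B})$ because $\bang(\true = \false)$ entails $\bot$ through the axiom $\neg(\true = \false)$ and $\efq$. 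The right branch is symmetric with $z := \false$. Part $(ii)$ runs identically, the only difference being that its payload $\bTrans{A}$ is not visibly banged; here I first rewrite $\bTrans{A}$ as $\bang \bTrans{A}$ using Proposition~\ref{prop:lb-equivalence} (which gives $\bTrans{A} \Leftrightarrow \bang \lTrans{A}$, hence $\bTrans{A} \Leftrightarrow \bang \bTrans{A}$), recovering the banged context needed to promote. The decisive interplay is thus between the additive disjunction, which lets us decide the boolean $z$ and so makes the inactive branch vacuously provable, and the idempotency of $\bang$ on translated formulas, which supplies the banged payload in the active branch.
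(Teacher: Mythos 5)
Your proof is correct, and its backward half takes a genuinely different --- and in fact more careful --- route than the paper's. The paper proves $(i)$ as a chain of two-way equivalences: it distributes the outer $\bang$ over $\cwedge$ and then cites that ``$\bang(\bang A \lto B)$ is equivalent to $\bang A \lto \bang B$'' in affine logic, after which part $(ii)$ is reduced to part $(i)$ via Proposition \ref{prop:lb-equivalence}. Your forward directions coincide with the paper's (you use only the left-to-right halves of those exponential laws, which are the valid ones), but for the backward directions you replace the appeal to the second law by a case analysis through the derived $\avee$-left rule of Proposition \ref{prop-def-disjunction}: in each branch the boolean $z$ is instantiated to $\true$ or $\false$, the inactive conjunct is discharged via $\neg(\true = \false)$ and $\efq$, and the active conjunct is promoted because its payload ($\bang\lTrans{A}$, resp.\ $\bTrans{A} \Leftrightarrow \bang\lTrans{A}$ by Proposition \ref{prop:lb-equivalence}) is banged. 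This is a substantive improvement rather than a stylistic variant: the right-to-left half of the law the paper cites, namely $\bang A \lto \bang B \proves \bang(\bang A \lto B)$, is \emph{not} derivable in pure affine logic --- promotion requires a fully banged context, and for distinct atoms $A, B$ a cut-free proof is easily seen to be impossible --- so the paper's chain, read literally as a sequence of equivalences, has a gap at exactly the point where your case analysis does the work; the intermediate entailment is only recoverable there by exploiting $\bang\BB(z)$ and the boolean axioms, which is precisely what your argument makes explicit. What the paper's presentation buys is brevity; what yours buys is that every step is actually derivable, at the modest cost of invoking the derived $\avee$ rules and the axioms $\BB$\emph{R}, $\neg(\true=\false)$ explicitly.
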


\begin{proof} $(i)$ We have
\[
\begin{array}{rcl}
	\lTrans{(A \vee B)} & \stackrel{\textup{P}\ref{prop-def-disjunction}}{\equiv} & \lTrans{(\exists z^\BB (((z = \true) \to A) \wedge ((z = \false) \to B)))} \\[1mm]
	& \stackrel{\textup{D}\ref{g-trans}}{\Leftrightarrow} & \exists z \bang (\BB(z) \cwedge((\bang(z = \true) \lto \lTrans{A}) \cwedge (\bang(z = \false) \lto \lTrans{B}))) \\[2mm]
	&\Leftrightarrow& \exists z (\bang \BB(z) \cwedge ((\bang(z = \true) \lto \bang \lTrans{A}) \cwedge (\bang(z = \false) \lto \bang\lTrans{B})))   \\[1mm]
	& \stackrel{\textup{P}\ref{prop-def-disjunction}}{\Leftrightarrow} & \bang \lTrans{A} \avee \bang\lTrans{B} 
\end{array}
\]
using that in affine logic $\bang (A \cwedge B)$ is equivalent to $\bang A \cwedge \bang B$; and that $\bang (\bang A \lto B)$ is equivalent to $\bang A \lto \bang B$. \\[1mm]
$(ii)$ We have $\bTrans{(A \vee B)} \Leftrightarrow \bang \lTrans{(A \vee B)} 
	\stackrel{\textup{Part}~(i)}{\Leftrightarrow} \bang (\bang \lTrans{A} \avee \bang \lTrans{B}) 
	\Leftrightarrow \bang \lTrans{A} \avee \bang \lTrans{B} 
	\Leftrightarrow \bTrans{A} \avee \bTrans{B}$ using the fact that $\bTrans{(A \vee B)}$, $\bTrans{A}$ and $\bTrans{B}$ are equivalent to formulas $\bang \lTrans{(A \vee B)}$, $\bang \lTrans{A}$ and $\bang \lTrans{B}$, by Proposition \ref{prop:lb-equivalence}.
\end{proof}

\begin{definition}[Finite types] The finite types $\Type$ are defined inductively as: $\BB, \NN \in \Type$ \emph{(base types)}, and if $\rho, \tau \in \Type$ then $\rho \to \tau \in \Type$ \emph{(function types)}.
\end{definition}

Let $\HAomega$ be the weakly extensional version of Heyting arithmetic in all finite types (see \cite{Troelstra(73)} and \cite[Section 3.3]{K(08)}). We will consider here a presentation of $\HAomega$ where terms are \emph{explicitly} typed, so that it can be considered an $\IL$-theory as described above. For that matter, we assume that $\HAomega$ contains explicit typing predicate symbols and axioms, i.e.
\begin{itemize}
	\item for each finite type $\sigma \in \Type$ we have predicate symbols $\sigma(x)$
	\item we have axioms $\proves \sigma(t)$ for each constant $c^\sigma$, i.e.
	\begin{enumerate}
		\item $\proves \NN(0)$
		\item $\proves (\NN \to \NN)(\suc)$
		\item $\proves (\sigma \to \tau \to \sigma) (\K_{\sigma, \tau})$, for each $\sigma, \tau \in \Type$
		\item $\proves (\sigma \to (\sigma \to \tau) \to (\sigma \to \tau \to \rho) \to \rho)(\S_{\sigma, \tau, \rho})$, for each $\sigma, \tau, \rho \in \Type$
		\item $\proves (\NN \to \sigma \to (\NN \to \sigma \to \sigma) \to \sigma) (\rec_\sigma)$, for each $\sigma \in \Type$
	\end{enumerate}
	\item A family of function symbols $\ap_{\sigma, \tau}(f, x)$ with axioms\footnote{As usual we will normally write the term $\ap_{\sigma, \tau}{(s)}{(t)}$ as simply $s t$.}
	\[ (\sigma \to \tau)(f), \sigma(x) \proves \tau(\ap_{\sigma, \tau}(f, x)) \]
\end{itemize}
With the above axioms we can indeed show that for each term $(t[\pvec x^{\pvec \sigma}])^\tau$ with intrinsic type $\tau$ and free-variables $\pvec x^{\pvec \sigma}$, we can derive $\pvec \sigma(\pvec x) \proves \tau(t[\pvec x])$ in the system above. Each formula $A$ (with intrinsic types) must also be mapped to a formula $\eTrans{A}$ (with explicit types) inductively -- e.g. taking $\eTrans{(\forall x^\sigma A)} := \forall x (\sigma(x) \to \eTrans{A})$. Then, each original axiom $\Gamma(\pvec x^{\pvec \sigma}) \proves A(\pvec x^{\pvec \sigma})$ can be stated with explicit types as $\pvec \sigma(\pvec x), \eTrans{(\Gamma(\pvec x^{\pvec \sigma}))} \proves \eTrans{(A(\pvec x^{\pvec \sigma}))}$. It will be important for the verification of the soundness (for the Dialectica interpretation), that in $\HAomega$ quantifier-free formulas are decidable (see \cite[Proposition 3.17]{K(08)}). For this decidability result to hold in our setting with explicit typing, we also need to assume that (in the verifying system) for each predicate symbol $\tau(x)$, where $\tau \in \Type$, we have in $\HAomega$ a term $t_\tau(x)$ such that $\tau(x) \proves \BB(t_\tau(x))$, and $t_\tau(x) = \true$ is provably equivalent to $\tau(x)$. 

We then define the $\AL$-theory $\AHAomega = \lTrans{(\HAomega)}$. Note that although the type $\BB$ (and its corresponding axioms) are not usually included explicitly in the definition of $\HAomega$, these are indeed definable by taking $\true := 0$ and $\false := 1$ and $\BB(x) := (x = 0) \vee (x = 1)$.

If we omit the arithmetical constants (zero, successor and recursors), and their corresponding axioms from $\HAomega$, we obtain a purely intuitionistic theory of finite types, which we call $\IL^\omega$. In this case the booleans are no longer definable, and hence we assume that $\IL^\omega$ is also an extension of $\IL^\BB$. Its corresponding $\AL$-theory will be denoted $\AL^\omega = \lTrans{(\IL^\omega)}$.

Finally, let us denote by $\Type^*$ the extension of the set of finite types with an extra closure condition: if $\rho \in \Type^*$ then $\rho^* \in \Type^*$ \emph{(finite sequence types)}, and let $\eHAomegaS$ be the system described in \cite[Section 2.1]{BergBriseidSafarik2012}, also presented with explicit types, as described above. 

\begin{remark}[Majorizability] In $\eHAomegaS$ we can extend Bezem's majorizability relation to include the finite sequence types:
\[
\begin{array}{lcl}
	x \leq_{\tau}^* y & \pdefin & x \leq_{\tau} y, \quad\quad \mbox{for $\tau  \in \{\NN, \BB\}$} \\[2mm]
	f \leq_{\tau \to \rho}^* g & \pdefin & \forall y^\tau, x^\tau \leq_{\tau}^* y (f x \leq_{\rho}^* g y \wedge g x \leq_{\rho}^* g y) \\[2mm]
	a \leq_{\tau^*}^* b & \pdefin & |a| \leq |b| \wedge \forall i < |a| (a_i \leq^*_\tau b_i) \wedge \forall i < |b| (b_i \leq^*_\tau b_i)
\end{array}
\]
The main property we need is that for each closed term $s$ (of type $\tau$) there exists a closed term $t$ (of the same type $\tau$) such that $s \leq^*_\tau t$ provably in $\eHAomegaS$. This is indeed the case by observing that 
\[ \tilde{L} s^{\sigma^*} f^{\tau \to \sigma \to \tau} z^\tau =_\tau L s (\lambda v^\tau \lambda a^\sigma . \mbox{$\max_\tau$} (v, f v a)) z  \]
majorizes the list recursor $L$ (see \cite[Section 2.1]{BergBriseidSafarik2012}), where $\max_\tau(\cdot, \cdot)$ is defined pointwise for function types, and for finite sequence types we take
\[ 
\mbox{$\max_{\tau^*}$}(s,t) = \langle \mbox{$\max_\tau$}(s_i, t_i) \rangle_{i < \max(|s|, |t|)}
\]
taking $s_i = 0$ when $i \geq |s|$. 
\end{remark}

Again, by the Girard translations, from the $\IL$-theory $\eHAomegaS$ we can obtain an $\AL$-theory $\eAHAomegaS = \lTrans{(\eHAomegaS)}$. 

\section{Parametrised Interpretation of $\AL$}
\label{sec-al-inter}

We present now a parametrised interpretation of a ``source" $\AL$-theory $\Asource$ into a ``target" $\AL$-theory $\Atarget$. In order to ensure that the parametrised interpretation is sound, we will need to stipulate a few assumption about $\Asource$ and $\Atarget$:

\begin{enumerate}

	\item[\Assumption{1}] The target theory $\Atarget$ is an extension of $\AL^\omega$ -- defined in Section \ref{Subsection_theories} -- so that we can work with typed $\lambda$-terms as witnesses.
	
	\item[\Assumption{2}] In the source theory $\Asource$, the predicate symbols are divided into two groups: the \emph{computational symbols}, denoted by $\Pred{\Asource}^c$, and the \emph{non-computational symbols}, denoted by $\Pred{\Asource}^{nc}$. The predicate symbols of $\Asource$ are also assumed to be predicate symbols of $\Atarget$.

	\item[\Assumption{3}] For each computational predicate symbol $P(\pvec x) \in \Pred{\Asource}^c$ of $\Asource$, of arity $n$, we have associated in $\Atarget$ a $(n+1)$-ary formula $\bound{\pvec x}{a}{P}$, and a finite type $\wtype{P}$ in which the witnesses $a$ of $P(\pvec x)$ will live in. We will call $\wtype{P}$ the \emph{witnessing type} of $P$. 
	We write $\forall \bound{\pvec x}{a}{P} \, A$ and $\exists \bound{\pvec x}{a}{P} \, A$ as abbreviations for $\forall \pvec x (\bound{\pvec x}{a}{P} \lto A)$ and $\exists \pvec x (\bound{\pvec x}{a}{P} \cwedge A)$, respectively. We assume that, over $\Atarget$, $\bound{\pvec x}{a}{P}$ is stronger than $P(\pvec x)$, i.e. 
	\begin{enumerate}
		\item[] $\bound{\pvec x}{a}{P} \proves_{\Atarget} P(\pvec x)$. 
	\end{enumerate}

	\item[\Assumption{4}] For each finite type $\tau$ we associate in $\Atarget$ a formula $\Wit_\tau(x)$, which we will use to restrict the domain of the witnesses and counter-witnesses. We also assume that $\bound{\pvec x}{a}{P}$ implies that $a$ is in $\Wit$, i.e. 
	\begin{enumerate}
		\item[] $\bound{\pvec x}{a}{P} \proves_{\Atarget} \Wit_{\wtype{P}}(a)$.
	\end{enumerate}
When $\pvec \tau$ is a tuple of finite types $\tau_1, \ldots, \tau_n$, we write $\Wit_{\pvec \tau}(x_1, \dots, x_n)$ as an abbreviation for $\Wit_{\tau_1}(x_1), \dots, \Wit_{\tau_n}(x_n)$, when this appears in the context of a sequent, or for $\Wit_{\tau_1}(x_1) \otimes \ldots \otimes \Wit_{\tau_n}(x_n)$, when this appears in the conclusion of a sequent. We assume that, provably in $\Atarget$, the combinators $\S_{\rho, \tau, \sigma}$ and $\K_{\rho, \tau}$ are in $\Wit$, and that the application of a function in $\Wit$ to an argument in $\Wit$ will also be in $\Wit$, i.e.
    \begin{enumerate}
        \item[\WitK] $\proves_{\Atarget} \Wit_{\rho \to \tau \to \rho}(\K_{\rho, \tau})$
        \item[\WitS] $\proves_{\Atarget} \Wit_{(\rho \to \tau \to \sigma) \to (\rho \to \tau) \to \rho \to \sigma}(\S_{\rho, \tau, \sigma})$
        \item[\WitApp] $\Wit_\tau(x), \Wit_{\tau \to \rho}(f) \proves_{\Atarget} \Wit_{\rho}(f x)$
    \end{enumerate}

	\item[\Assumption{5}] For each formula $A$ of $\Atarget$, tuple of variables $\pvec x = x_1, \ldots, x_n$, and finite types $\pvec \tau = \tau_1, \ldots, \tau_n$ we associate a tuple of \emph{bounding types} $\btype{\pvec \tau}$ and a formula $\ubq{\pvec \tau}{\pvec x}{\pvec a} A$, in which the variables $\pvec x$ are no longer free. We do not assume that the tuple of finite types $\btype{\pvec \tau}$ has the same length as $\pvec \tau$. The intuition is that $\pvec x$ ranges over elements of type $\pvec \tau$, whereas the bounds $\pvec a$ range over possibly different types $\btype{\pvec \tau}$. We use this parameter to interpret $\bang A$. This parameter is assumed to satisfy:
    \begin{enumerate}
        \item[\Quant{1}] If $A \proves_{\Atarget} B$ then  $\ubq{\pvec \tau}{\pvec x}{\pvec a} A \proves_{\Atarget} \ubq{\pvec \tau}{\pvec x}{\pvec a} B$
        \item[\Quant{2}] $\proves_{\Atarget} \ubq{\pvec \tau}{\pvec x}{\pvec a} \bang \Wit_{\pvec \tau}(\pvec x)$
    \end{enumerate}
    and, for each formula $A$ of $\Atarget$, tuple of variables $\pvec x$, and types $\pvec \tau$ and $\rho$ we assume that there exist terms $\pvec \singleton{(\cdot)}, \join{(\cdot)}{(\cdot)}$ and $\comp{(\cdot)}{(\cdot)}$ of $\Atarget$ such that
    \begin{enumerate}
    \item[\ConUnit] $\bang \Wit_{\pvec \tau}(\pvec z), \bang \ubq{\pvec \tau}{\pvec x}{\pvec \singleton{(\pvec z)}} A \proves_{\Atarget} A[\pvec z/\pvec x]$
        \item[]  $\proves_{\Atarget} \Wit_{\pvec \tau \to \btype{\pvec \tau}}(\pvec \eta)$
    \item[\ConStrength] $\bang \Wit_{\pvec \tau, \pvec \tau}(\pvec x_1,\pvec x_2), \bang \ubq{\pvec \tau}{\pvec x}{(\join{\pvec x_1}{\pvec x_2})} A 
            \proves_{\Atarget} \ubq{\pvec \tau}{\pvec x}{\pvec x_1} A \cwedge \ubq{\pvec \tau}{\pvec x}{\pvec x_2} A$
        \item[] $\proves_{\Atarget} \Wit_{\btype{\pvec \tau} \to \btype{\pvec \tau} \to \btype{\pvec \tau}}(\lambda \pvec x_1, \pvec x_2 . \join{\pvec x_1}{\pvec x_2})$
    \item[\ConApp]  $\bang \Wit_{\pvec \rho \to \btype{\pvec \tau}}(\pvec f), \bang \Wit_{\btype{\pvec \rho}}(\pvec z), \bang \ubq{\pvec \tau}{\pvec x}{(\comp{\pvec f}{\pvec z})} A 
            \proves_{\Atarget} \ubq{\pvec \rho}{\pvec y}{\pvec z} \bang \ubq{\pvec \tau}{\pvec x}{\pvec f \pvec y} A$
        \item[] $\proves_{\Atarget} \Wit_{(\pvec \rho \to \btype{\pvec \tau}) \to \btype{\pvec \rho} \to \btype{\pvec \tau}}(\lambda \pvec f, \pvec z . \comp{\pvec f}{\pvec z})$
	\end{enumerate}

\end{enumerate}

A term $t[\pvec x]$, with free variables $\pvec x$, is called \emph{typable} in $\Atarget$ if $\pvec \rho(\pvec x) \proves_{\Atarget} \tau(t[\pvec x])$ for some $\pvec \rho$ and $\tau$. We say that a typable term $t[\pvec x]$ \emph{is in $\Wit$} if $\Wit_{\pvec \rho}(\pvec x) \proves_{\Atarget} \Wit_{\pvec \tau}(t[\pvec x])$.

\begin{lemma} \label{lem-W-closure} Let $t$ be a term of $\Atarget$ built from variables $\pvec x = x_1, \ldots, x_n$ and the combinators $\K$ and $\S$ via application. Then $\Wit(x_1, \ldots, x_n) \proves_{\Atarget} \Wit(t)$.
\end{lemma}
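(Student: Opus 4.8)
The plan is to prove the lemma by structural induction on the applicative term $t$, feeding the three closure conditions \WitK, \WitS, and \WitApp from \Assumption{4} through the structural rules of $\AL$. Throughout, write $\Gamma$ for the antecedent $\Wit(x_1, \ldots, x_n)$, i.e. the list $\Wit_{\tau_1}(x_1), \ldots, \Wit_{\tau_n}(x_n)$, where $\tau_i$ is the (intrinsic) type of $x_i$.

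First I would dispatch the base cases. If $t$ is a variable $x_i$, the identity axiom gives $\Wit_{\tau_i}(x_i) \proves \Wit_{\tau_i}(x_i)$, and weakening in the antecedent (available in $\AL$) supplies the remaining hypotheses, yielding $\Gamma \proves_{\Atarget} \Wit_{\tau_i}(x_i)$. If $t$ is $\K$ or $\S$, then $\proves_{\Atarget} \Wit(\K)$ and $\proves_{\Atarget} \Wit(\S)$ are exactly \WitK and \WitS, and weakening again adjoins the context $\Gamma$.

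For the inductive step, write $t = s\, r$; by typability $s$ has some type $\tau \to \rho$ and $r$ has type $\tau$. The induction hypothesis provides $\Gamma \proves \Wit_{\tau \to \rho}(s)$ and $\Gamma \proves \Wit_\tau(r)$, while the instance $f := s$, $x := r$ of \WitApp gives $\Wit_\tau(r), \Wit_{\tau \to \rho}(s) \proves \Wit_\rho(s\, r)$. Cutting the two induction hypotheses into this sequent --- first on $\Wit_{\tau \to \rho}(s)$, then on $\Wit_\tau(r)$ --- produces $\Gamma, \Gamma \proves_{\Atarget} \Wit_\rho(s\, r)$, after which it remains only to collapse the two copies of $\Gamma$.

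The hard part will be precisely this last collapse. Since a variable $x_i$ can occur several times in $t$ (already in a term such as $g\, x\, (h\, x)$ with $g, h, x$ among the $x_i$), its witnessing hypothesis $\Wit_{\tau_i}(x_i)$ is consumed once per occurrence, so merging $\Gamma, \Gamma$ back to a single $\Gamma$ requires contraction. In $\AL$ contraction is licensed only for formulas of the form $\bang{A}$; hence the induction goes through only if each $\Wit_\tau$ is contractible in $\Atarget$ --- equivalently, if the hypotheses may be read as $\bang \Wit_{\tau_i}(x_i)$, exactly as they occur in the contraction clauses \ConUnit, \ConStrength, and \ConApp. Granting this duplicability of the witnessing predicates, performing one contraction per shared variable at each application node restores the single copy of $\Gamma$ and closes the induction; I expect this to be the only point requiring genuine care, the remaining steps being routine applications of cut, weakening, and \WitApp.
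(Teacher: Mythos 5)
Your proof is essentially the paper's own: the paper disposes of Lemma \ref{lem-W-closure} in one line, ``induction on the structure of $t$ using \WitK, \WitS, \WitApp'', which is exactly your base cases plus your application step (cut against \WitApp). Where you go beyond the paper is the contraction caveat, and you are right to raise it: as literally stated, with one hypothesis $\Wit_{\tau_i}(x_i)$ per variable and no exponential in front of it, the sequent $\Wit(x_1, \ldots, x_n) \proves_{\Atarget} \Wit(t)$ cannot be obtained by this induction when some $x_i$ occurs more than once in $t$, since collapsing the duplicated context $\Gamma, \Gamma$ after the cuts needs contraction, which $\AL$ grants only to formulas of the form $\bang A$. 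The paper's one-line proof silently ignores this, and it is harmless for two reasons worth recording. First, every place the lemma is actually used (point $(i)$ of Definition \ref{def-witnessable}, inside the proof of Theorem \ref{soundness1}) concerns \emph{closed} terms, built by application from $\K$, $\S$ and closed terms already known to satisfy $\proves_{\Atarget} \Wit(\cdot)$; in that situation every context arising in the induction is empty, a repeated subterm is handled by simply repeating its context-free derivation before cutting, and no contraction is ever invoked. Second, wherever the paper does place $\Wit$-hypotheses with free variables in a sequent, they occur as $\bang \Wit(\cdots)$ (as in \ConUnit, \ConStrength, \ConApp~and Definition \ref{def-witnessable}), which is precisely the reading you propose, under which the contraction rule for banged formulas closes your induction. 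So your proof is correct modulo a proviso that the paper itself tacitly relies on, and which it would have done well to state.
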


\begin{proof} Induction on the structure of $t$ using assumptions \WitS, \WitK, \WitApp.
\end{proof}

In each instantiation we will consider different choices for the parameters $\{\bound{\pvec x}{a}{P}\}_{P \in \Pred{\Asource}^c}$, $\{ \wtype{P} \}_{P \in \Pred{\Asource}^c}$, $\{ \Wit_\tau(x) \}_{\tau \in \Type}$, and $\{\ubq{\pvec \tau}{\pvec x}{\pvec a} A\}_{A \in \Formulas{\Atarget}, \pvec \tau \in \Type}$ for each choice of variables $\pvec x$. 

\begin{definition}[Adequate parameters in $\AL$] Given theories $\Asource$ and $\Atarget$, a choice of parameters will be called \emph{adequate} for $(\Asource, \Atarget)$ if assumptions \Assumption{1} -- \Assumption{5} hold. Given a class of formulas ${\mathcal C} \subseteq \Formulas{\Atarget}$, we say that the choice of parameters in $\Atarget$ is ${\mathcal C}$-\emph{adequate} for $(\Asource, \Atarget)$ if it is adequate for $(\Asource, \Atarget)$ when assumption \Assumption{5} is only required to hold for formulas in ${\mathcal C}$.
\end{definition} 

\subsection{Parametrised interpretation of $\AL$ theories}

Assume now a given choice of $\AL$-theories $\Asource$ (source theory) and $\Atarget$ (target theory) satisfying the assumptions stated above. Recall that we write $\varepsilon$ for the empty tuple of terms. Let us use the same notation, and write $\varepsilon$ for an empty tuple of types as well.

\begin{definition} We generalise the notion of \emph{witnessing type} to all formulas by defining for each formula $A$ tuples of types $\pvec \tau^+_A$ and $\pvec \tau^-_A$ inductively as
\[
\begin{array}{lcl}
\tau^+_{P} & \pdefin & \wtype{P}, \quad \mbox{for $P \in \Pred{\Asource}^c$} \\[1mm]
\tau^+_{P} & \pdefin & \varepsilon, \quad \mbox{for $P \in \Pred{\Asource}^{nc}$} \\[1mm]
\pvec \tau^+_{A \lto B} & \pdefin & \pvec \tau^+_A \to \pvec \tau^+_B, \tau^+_A \to \tau^-_B \to \pvec \tau^-_A \\[1mm]
\pvec \tau^+_{A \cwedge B} & \pdefin & \pvec \tau^+_A, \pvec \tau^+_B \\[1mm]
\pvec \tau^+_{\exists z A} & \pdefin & \pvec \tau^+_A \\[1mm]
\pvec \tau^+_{\forall z A} & \pdefin & \pvec \tau^+_A \\[1mm]
\pvec \tau^+_{\bang A} & \pdefin & \pvec \tau^+_A
\end{array}
\quad
\begin{array}{lcl}
\tau^-_{P} & \pdefin & \varepsilon, \quad \mbox{for $P \in \Pred{\Asource}^c$} \\[1mm]
\tau^-_{P} & \pdefin & \varepsilon, \quad \mbox{for $P \in \Pred{\Asource}^{nc}$} \\[1mm]
\pvec \tau^-_{A \lto B} & \pdefin & \pvec \tau^+_A, \tau^-_B \\[1mm]
\pvec \tau^-_{A \cwedge B} & \pdefin & \pvec \tau^-_A, \pvec \tau^-_B \\[1mm]
\pvec \tau^-_{\exists z A} & \pdefin & \pvec \tau^-_A \\[1mm]
\pvec \tau^-_{\forall z A} & \pdefin & \pvec \tau^-_A \\[1mm]
\pvec \tau^-_{\bang A} & \pdefin & \btype{\pvec \tau}^-_A
\end{array}
\]
Given a tuple of formulas $\Gamma = A_1, \ldots, A_n$ we write $\tau_{\Gamma}^+$ (resp., $\tau_{\Gamma}^-$) for the tuple $\tau_{A_1}^+, \ldots, \tau_{A_n}^+$ (resp. $\tau_{A_1}^-, \ldots, \tau_{A_n}^-$).
\end{definition}

We can now present the parametrised interpretation of $\Asource$ into $\Atarget$:

\begin{definition}[Parametrised $\AL$-interpretation] \label{inter} For each formula $A$ of $\Asource$, let us associate a formula $\uInter{A}{\pvec x}{\pvec y}$ of $\Atarget$, with two fresh lists of free-variables $\pvec x$ and $\pvec y$, inductively as follows: for computational predicate symbols $P \in \Pred{\Asource}^c$ we let 
\eqleft{
\begin{array}{lcl}
\uInter{P(\pvec x)}{a}{\varepsilon} & \pdefin & \bound{\pvec x}{a}{P}, 
\end{array}
}
whereas for non-computational predicate symbols $P \in \Pred{\Asource}^{nc}$ we let
\eqleft{
\begin{array}{lcl}
\uInter{P(\pvec x)}{\varepsilon}{\varepsilon} & \pdefin & P(\pvec x).
\end{array}
}
Assuming $A$ and $B$ have interpretations $\uInter{A}{\pvec x}{\pvec y}$ and $\uInter{B}{\pvec v}{\pvec w}$, then we define
\eqleft{
\begin{array}{rclcrcl}
\uInter{A \lto B}{\pvec f, \pvec g}{\pvec x, \pvec w} & \pdefin & \uInter{A}{\pvec x}{\pvec g \pvec x \pvec w} \lto \uInter{B}{\pvec f \pvec x}{\pvec w} & \quad &
\uInter{\exists z A}{\pvec x}{\pvec y} & \pdefin & \exists z \uInter{A}{\pvec x}{\pvec y} \\[2mm]
\uInter{A \cwedge B}{\pvec x, \pvec v}{\pvec y, \pvec w} & \pdefin & \uInter{A}{\pvec x}{\pvec y} \cwedge \uInter{B}{\pvec v}{\pvec w} & &
\uInter{\forall z A}{\pvec x}{\pvec y} & \pdefin & \forall z \uInter{A}{\pvec x}{\pvec y} \\[2mm]
\uInter{\bang A}{\pvec x}{\pvec a} & \pdefin & \bang \ubq{\pvec \tau^-_A}{\pvec y}{\pvec a} \uInter{A}{\pvec x}{\pvec y} & & & &
\end{array}
}
Given a tuple of formulas $\Gamma = A_1, \ldots, A_n$, we write $\uInter{\Gamma}{\pvec x_1, \ldots, \pvec x_n}{\pvec y_1, \ldots, \pvec y_n}$ is an abbreviation for $\uInter{A_1}{\pvec x_1}{\pvec y_1}, \ldots, \uInter{A_n}{\pvec x_n}{\pvec y_n}$, assuming $A_i$ has interpretation $\uInter{A}{\pvec x_i}{\pvec y_i}$.
\end{definition}

If $A$ has interpretation $\uInter{A}{\pvec x}{\pvec y}$ we call $\pvec x$ the \emph{witnesses} of $A$, and $\pvec y$ the \emph{counter-witnesses}. We say that a formula $A$ has \emph{no computational content} if its interpretation is $\uInter{A}{\varepsilon}{\varepsilon}$, i.e. if the tuples of witnesses and counter-witnesses are both empty. Note that all of the computational content of a formula comes from the interpretation of the computational predicate symbols. The logical connectives ($\lto$ and $\otimes$), the quantifiers ($\forall$ and $\exists$) and the exponential ($\bang$) simply translate witnesses and counter-witness for the subformulas into witnesses and counter-witnesses for the compound formula. If the subformulas have no computational content then the compound formula will not have any computational content either.

\subsection{Soundness}

Given a tuple of types $\pvec \rho = \rho_1, \ldots, \rho_n$ and a type $\sigma$, let us write $\pvec \rho \to \sigma$ as an abbreviation for the type $\rho_1 \to \ldots \to \rho_n \to \sigma$. Given tuples of terms $\pvec t = t_1, \ldots, t_n$ and $\pvec s$ we write $\pvec t \pvec s$ for the tuple $t_1 \pvec s, \ldots, t_n \pvec s$.

\begin{definition}[Witnessable $\AL$ sequents] \label{def-witnessable} A sequent $\Gamma \proves A$ of $\Asource$ is said to be \emph{witnessable} in $\Atarget$ if there are tuples of closed terms $\pvec \gamma, \pvec a$ of $\Atarget$ such that 
\begin{enumerate}
	\item[(i)] $\proves_{\Atarget} \Wit_{\pvec \tau^+_{\Gamma} \to \pvec \tau^-_A \to \tau^-_{\Gamma}}(\pvec \gamma)$ and $\proves_{\Atarget} \Wit_{\pvec \tau^+_{\Gamma} \to \pvec \tau^+_A}(\pvec a)$
	\item[(ii)] $\bang \Wit_{\pvec \tau^+_{\Gamma}, \pvec \tau^-_A}(\pvec x,\pvec w), \uInter{\Gamma}{\pvec x}{\pvec \gamma \pvec x \pvec w} 
                    	\proves_{\Atarget} \uInter{A}{\pvec a \pvec x}{\pvec w}$
\end{enumerate}
\end{definition}

\begin{definition}[Sound $\AL$-interpretation] An $\AL$-interpretation of $\Asource$ into $\Atarget$ is said to be \emph{sound} if the provable sequents of $\Asource$ are witnessable in $\Atarget$. 
\end{definition} 

\begin{theorem}[Soundness of $\AL$-interpretation] \label{soundness1} Assume a fixed choice of the parameters in $\Atarget$. If
\begin{enumerate}
	\item[(i)] this choice is adequate for the formulas $\uInter{A}{\pvec x}{\pvec y}$, for all $A$ in $\Asource$, and
	\item[(ii)] the non-logical axioms of $\Asource$ are witnessable in $\Atarget$, 
\end{enumerate}
then this instance of the parametrised $\AL$-interpretation of Definition \ref{inter} is sound.
\end{theorem}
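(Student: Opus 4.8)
The plan is to prove soundness by induction on the derivation of a provable sequent $\Gamma \proves A$ of $\Asource$ in the sequent calculus of Table~\ref{ill-rules}, showing that witnessability (Definition~\ref{def-witnessable}) is preserved as one passes from the premises of each rule to its conclusion. For the last rule applied I must exhibit tuples of closed terms $\pvec \gamma, \pvec a$ of $\Atarget$ and verify the two clauses of Definition~\ref{def-witnessable} separately: clause (i), that these terms lie in $\Wit$ at the correct types, and clause (ii), that the interpreted sequent $\bang \Wit(\pvec x, \pvec w), \uInter{\Gamma}{\pvec x}{\pvec \gamma \pvec x \pvec w} \proves_{\Atarget} \uInter{A}{\pvec a \pvec x}{\pvec w}$ is derivable in $\Atarget$. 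The base cases are the identity axiom $A \proves A$, where $\pvec a$ is taken to be the identity $\lambda \pvec x . \pvec x$ and $\pvec \gamma$ the projection $\lambda \pvec x, \pvec w . \pvec w$, so that clause (ii) collapses to $\uInter{A}{\pvec x}{\pvec w} \proves_{\Atarget} \uInter{A}{\pvec x}{\pvec w}$; the rule $\efq$, where $\uInter{\bot}{\varepsilon}{\varepsilon}$ is $\bot$ and clause (ii) follows by $\efq$ in $\Atarget$ with dummy witnesses; and the non-logical axioms, which are witnessable by hypothesis (ii) of the theorem.

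For the multiplicative and quantifier rules the constructions are the usual ones from the Dialectica interpretation, transcribed to the present parametrised setting. For $\cwedge\textup{R}$, $\cwedge\textup{L}$ and the permutation rule $\ruleper$ the witness and counter-witness tuples are merely concatenated, split or reordered. For $\lto\textup{R}$ the witnessing functions are re-curried; for $\lto\textup{L}$ and $\rulecut$ the witnesses of the two premises are composed by $\lambda$-abstraction and application, which is the characteristic step of any functional interpretation. The rules $\exists\textup{R}$ and $\forall\textup{L}$ substitute a term $t$, and here I must supply the corresponding witness and check that it lies in $\Wit$; the rules $\exists\textup{L}$ and $\forall\textup{R}$ are immediate because their eigenvariable conditions provide the required freshness. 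In each of these cases clause (ii) is discharged by a short derivation in $\Atarget$ from the induction hypotheses, while clause (i) follows from Lemma~\ref{lem-W-closure}, since every term constructed is built from variables and the combinators $\S, \K$ by application.

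The genuinely parametric cases are the structural rule $\rulewkn$, the contraction rule $\rulecon$, and the two exponential rules, and it is precisely here that assumption \Assumption{5} is used. Weakening pads the tuples with dummy terms. For $\bang\textup{L}$ one applies the first clause of \ConUnit, which allows one to pass from the bounded quantifier at a singleton $\singleton{\pvec z}$ back to an instance; for $\bang\textup{R}$ one combines the induction hypothesis (via the monotonicity \Quant{1}) with \ConApp, using the composition operator $\comp{(\cdot)}{(\cdot)}$ to rebuild the bounded quantifier on the conclusion. The decisive case is contraction: a witnessing of $\Gamma, \bang A, \bang A \proves B$ demands counter-witnesses $\pvec a_1$ and $\pvec a_2$ for the two copies of $\bang A$, and I produce the single counter-witness $\join{\pvec a_1}{\pvec a_2}$ for the contracted hypothesis. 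Property \ConStrength is exactly what licenses this step, since it yields $\bang \ubq{\pvec \tau}{\pvec y}{(\join{\pvec a_1}{\pvec a_2})} C \proves_{\Atarget} \ubq{\pvec \tau}{\pvec y}{\pvec a_1} C \cwedge \ubq{\pvec \tau}{\pvec y}{\pvec a_2} C$ (for $C$ the interpretation of the relevant subformula, modulo the $\Wit$ side-conditions), recovering both demands from the single merged bound.

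I expect the contraction case to be the principal obstacle, both because the counter-witness bookkeeping — which demands are merged, in what order, and under which $\lambda$-abstractions — is delicate, and because it is the one place where the abstraction of the bounded quantifier $\ubq{\pvec \tau}{\pvec x}{\pvec a}$ is genuinely exploited. Two points must be watched throughout. First, the properties \ConUnit, \ConStrength and \ConApp (and \Quant{1}, \Quant{2}) may be invoked only on formulas of the shape $\uInter{A}{\pvec x}{\pvec y}$; this is exactly what hypothesis (i) of the theorem — adequacy for the interpreted formulas — guarantees. Second, clause (i) of witnessability requires that every newly built witness term lie in $\Wit$: this is handled uniformly by Lemma~\ref{lem-W-closure} for the combinator-built terms, and by the second (``$\Wit$-membership'') clauses of \ConUnit, \ConStrength and \ConApp, which assert that $\singleton{(\cdot)}$, $\join{(\cdot)}{(\cdot)}$ and $\comp{(\cdot)}{(\cdot)}$ are themselves in $\Wit$, together with \WitS, \WitK, \WitApp for closure of $\Wit$ under application.
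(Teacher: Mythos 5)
Your proposal follows essentially the same route as the paper's proof: induction on the derivation, with clause (i) of witnessability discharged uniformly via Lemma~\ref{lem-W-closure}, and the parametric assumptions deployed exactly where the paper uses them --- \ConUnit{} for $\bang\textup{L}$, \Quant{1}/\Quant{2} together with \ConApp{} for $\bang\textup{R}$, and \ConStrength{} with $\join{\pvec a_1}{\pvec a_2}$ for contraction. Your identification of where adequacy (hypothesis (i)) and the $\Wit$-membership clauses enter also matches the paper, so the plan is sound and complete in outline.
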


\begin{proof} The proof is similar to that of \cite[Thm. 2.2]{FO(11)}. Under the assumptions of the theorem, we must show that the provable sequents $\Gamma \proves A$ of $\Asource$ are witnessable in $\Atarget$. We do this by induction on the derivation of $\Gamma \proves_{\Asource} A$. The axioms of $\Asource$ are witnessable by assumption. Let us consider each of the logical rules and show that they turn witnessable premisses into witnessable conclusions. In each case we need to prove points $(i)$ and $(ii)$ of Definition \ref{def-witnessable}. Point $(i)$, however, will follow directly from the induction hypothesis and Lemma \ref{lem-W-closure}, since the terms witnessing the conclusion of each rule will be build from the terms witnessing the premise via simple $\lambda$-term constructions (definable from $\S$ and $\K$). Therefore, we will focus our attention on proving point $(ii)$. \\[1mm]
\emph{Cut}. By induction hypothesis we have closed terms $\pvec a_0, \pvec a_1, \pvec \gamma, \pvec \delta, \pvec b$ such that
\begin{itemize}
	\item[\IH{(i)}] $\proves \Wit(\pvec a_0)$ and $\proves \Wit(\pvec \gamma)$ and $\proves \Wit(\pvec b)$ and $\proves \Wit(\pvec \delta)$ and $\proves \Wit(\pvec a_1)$
	\item[\IH{(ii)}] $\bang \Wit(\pvec u, \pvec y), \uInter{\Gamma}{\pvec u}{\pvec \gamma \pvec u \pvec y} 
                    	\proves \uInter{A}{\pvec a_0 \pvec u}{\pvec y}$
			and
			$\bang \Wit(\pvec v, \pvec x, \pvec w), 
                    	\uInter{\Delta}{\pvec v}{\pvec \delta \pvec v \pvec x \pvec w}, 
			\uInter{A}{\pvec x}{\pvec a_1 \pvec v \pvec x \pvec w} 
                    	\proves \uInter{B}{\pvec b \pvec v \pvec x}{\pvec w}$
\end{itemize}
We claim that the terms $\tilde{\pvec \gamma} \pdefin \lambda \pvec u, \pvec v, \pvec w . \pvec \gamma \pvec u (\pvec a_1 \pvec v (\pvec a_0 \pvec u) \pvec w)$ and $\tilde{\pvec \delta} \pdefin \lambda \pvec u, \pvec v, \pvec w . \pvec \delta \pvec v (\pvec a_0 \pvec u) \pvec w$ and $\tilde{\pvec b} \pdefin \lambda \pvec u, \pvec v . \pvec b \pvec v (\pvec a_0 \pvec u)$ witness the cut rule. Let $\tilde{\pvec a} = \pvec a_1 \pvec v (\pvec a_0 \pvec u) \pvec w$. Using the induction hypothesis \IH{(i)} and \IH{(ii)} we have
\[
\begin{prooftree}
        \[
            \justifies
            \bang \Wit(\pvec u, \pvec v, \pvec w) \proves \bang \Wit(\tilde{\pvec a})
            \using \mbox{\IH{(i)}} 
        \]
        \[
            \[
                \justifies
                \bang \Wit(\pvec u, \pvec y), 
                \uInter{\Gamma}{\pvec u}{\pvec \gamma \pvec u \pvec y} 
                    \proves \uInter{A}{\pvec a_0 \pvec u}{\pvec y}
                \using \mbox{\IH{(ii)}}
            \]
            \justifies
            \bang \Wit(\pvec u, \tilde{\pvec a}), 
            \uInter{\Gamma}{\pvec u}{\tilde{\pvec \gamma} \pvec u \pvec v \pvec w} 
                \proves \uInter{A}{\pvec a_0 \pvec u}{\tilde{\pvec a}}
            \using {[\frac{\tilde{\pvec a}}{\pvec y}]}
        \]
        \justifies
        \bang \Wit(\pvec u, \pvec v, \pvec w), 
        \uInter{\Gamma}{\pvec u}{\tilde{\pvec \gamma} \pvec u \pvec v \pvec w} 
            \proves \uInter{A}{\pvec a_0 \pvec u}{\tilde{\pvec a}}
        \using (\rulecut) 
\end{prooftree}
\]
and
\[
\begin{prooftree}
        \[
            \justifies
            \Wit(\pvec u) \proves \Wit(\pvec a_0 \pvec u)
            \using \mbox{\IH{(i)}} 
        \]
        \[
            \[
                \justifies
                \bang \Wit(\pvec v, \pvec x, \pvec w), 
                \uInter{\Delta}{\pvec v}{\pvec \delta \pvec v \pvec x \pvec w}, 
                \uInter{A}{\pvec x}{\pvec a_1 \pvec v \pvec x \pvec w} 
                    \proves \uInter{B}{\pvec b \pvec v \pvec x}{\pvec w}
                \using \mbox{\IH{(ii)}}
            \]
            \justifies
            \bang \Wit(\pvec v, \pvec a_0 \pvec u, \pvec w), 
            \uInter{\Delta}{\pvec v}{\tilde{\pvec \delta} \pvec u \pvec v \pvec w}, 
            \uInter{A}{\pvec a_0 \pvec u}{\tilde{\pvec a}} 
                \proves \uInter{B}{\tilde{\pvec b} \pvec u \pvec v}{\pvec w}
            \using {[\frac{\pvec a_0 \pvec u}{\pvec x}]}
        \]
        \justifies
        \bang \Wit(\pvec u, \pvec v, \pvec w), 
        \uInter{\Delta}{\pvec v}{\tilde{\pvec \delta} \pvec u \pvec v \pvec w}, 
        \uInter{A}{\pvec a_0 \pvec u}{\tilde{\pvec a}} 
            \proves \uInter{B}{\tilde{\pvec b} \pvec u \pvec v}{\pvec w}
        \using (\rulecut) 
\end{prooftree}
\]
so that with another cut we get $\bang \Wit(\pvec u, \pvec v, \pvec w),
    \uInter{\Gamma}{\pvec u}{\tilde{\pvec \gamma} \pvec u \pvec v \pvec w},   
    \uInter{\Delta}{\pvec v}{\tilde{\pvec \delta} \pvec u \pvec v \pvec w}
        \proves \uInter{B}{\tilde{\pvec b} \pvec u \pvec v}{\pvec w}$. \\[1mm]
$(\cwedge\textup{R})$. By induction hypothesis we have closed terms $\pvec a, \pvec b, \pvec \gamma, \pvec \delta$ in $\Wit$ such that
\begin{enumerate}
	\item[(\rm{IH})] $\bang \Wit(\pvec u, \pvec y), \uInter{\Gamma}{\pvec u}{\pvec \gamma \pvec u \pvec y} 
                    	\proves \uInter{A}{\pvec a \pvec u}{\pvec y}$
			and
			$\bang \Wit(\pvec v, \pvec w), 
                    	\uInter{\Delta}{\pvec v}{\pvec \delta \pvec v \pvec w}
                    	\proves \uInter{B}{\pvec b \pvec v}{\pvec w}$
\end{enumerate}
We claim that the terms $\lambda \pvec u, \pvec y, \pvec v, \pvec w . \pvec \gamma \pvec u \pvec y$ and $\lambda \pvec u, \pvec y, \pvec v, \pvec w . \pvec \delta \pvec v \pvec w$ and $\lambda \pvec u, \pvec v . \pvec a \pvec u$ and $\lambda \pvec u, \pvec v . \pvec b \pvec v$ witness the conclusion of this rule. We have
\[
\begin{prooftree}
\[
    \[
        \justifies
        \bang \Wit(\pvec u, \pvec y), 
        \uInter{\Gamma}{\pvec u}{\pvec \gamma \pvec u \pvec y} 
            \proves \uInter{A}{\pvec a \pvec u}{\pvec y}        
	\using \mbox{(\rm{IH})} 
    \]
    \[
        \justifies
        \bang \Wit(\pvec v, \pvec w), 
        \uInter{\Delta}{\pvec v}{\pvec \delta \pvec v \pvec w}
            \proves \uInter{B}{\pvec b \pvec v}{\pvec w}
        \using \mbox{(\rm{IH})} 
    \]
    \justifies
    \bang \Wit(\pvec u, \pvec y, \pvec v, \pvec w),
    \uInter{\Gamma}{\pvec u}{\pvec \gamma \pvec u \pvec y},
    \uInter{\Delta}{\pvec v}{\pvec \delta \pvec v \pvec w}
        \proves \uInter{A}{\pvec a \pvec u}{\pvec y} \cwedge \uInter{B}{\pvec b \pvec v}{\pvec w}
    \using (\cwedge\textup{R})
\]
\justifies
\bang \Wit(\pvec u, \pvec y, \pvec v, \pvec w),
\uInter{\Gamma}{\pvec u}{\pvec \gamma \pvec u \pvec y},
\uInter{\Delta}{\pvec v}{\pvec \delta \pvec v \pvec w}
    \proves \uInter{A \cwedge B}{\pvec a \pvec u, \pvec b \pvec v}{\pvec y, \pvec w}
\using (\textup{D}\ref{inter})
\end{prooftree}
\]
$(\cwedge\textup{L})$. By induction hypothesis we have closed terms $\pvec \gamma, \pvec a, \pvec b, \pvec c$  in $\Wit$ such that
\begin{enumerate}
	\item[(\rm{IH})] $\bang \Wit(\pvec u, \pvec x, \pvec v, \pvec w),
                            	\uInter{\Gamma}{\pvec u}{\pvec \gamma \pvec u \pvec x \pvec v \pvec w}, 
				\uInter{A}{\pvec x}{\pvec a \pvec u \pvec x \pvec v \pvec w}, 
				\uInter{B}{\pvec v}{\pvec b \pvec u \pvec x \pvec v \pvec w}
                                    \proves \uInter{C}{\pvec c \pvec u \pvec x \pvec v}{\pvec w}$
\end{enumerate}
We claim that the terms $\pvec \gamma, \pvec a, \pvec b$ and $\pvec c$ witness the conclusion of the rule. We have
\[
\begin{prooftree}
\[
    \[
        \justifies
        \bang \Wit(\pvec u, \pvec x, \pvec v, \pvec w),
        \uInter{\Gamma}{\pvec u}{\pvec \gamma \pvec u \pvec x \pvec v \pvec w}, 
        \uInter{A}{\pvec x}{\pvec a \pvec u \pvec x \pvec v \pvec w}, 
        \uInter{B}{\pvec v}{\pvec b \pvec u \pvec x \pvec v \pvec w}
            \proves \uInter{C}{\pvec c \pvec u \pvec x \pvec v}{\pvec w}
	\using \mbox{(\rm{IH})} 
    \]
    \justifies
    \bang \Wit(\pvec u, \pvec x, \pvec v, \pvec w),
    \uInter{\Gamma}{\pvec u}{\pvec \gamma \pvec u \pvec x \pvec v \pvec w}, 
    \uInter{A}{\pvec x}{\pvec a \pvec u \pvec x \pvec v \pvec w}
    \cwedge
    \uInter{B}{\pvec v}{\pvec b \pvec u \pvec x \pvec v \pvec w}
        \proves \uInter{C}{\pvec c \pvec u \pvec x \pvec v}{\pvec w}
    \using (\cwedge\textup{L})
\]
\justifies
\bang \Wit(\pvec u, \pvec x, \pvec v, \pvec w),
\uInter{\Gamma}{\pvec u}{\pvec \gamma \pvec u \pvec x \pvec v \pvec w}, 
\uInter{A \cwedge B}{\pvec x, \pvec v}{\pvec a \pvec u \pvec x \pvec v \pvec w, \pvec b \pvec u \pvec x \pvec v \pvec w}
    \proves \uInter{C}{\pvec c \pvec u \pvec x \pvec v}{\pvec w}
\using (\textup{D}\ref{inter})
\end{prooftree}
\]
(\emph{$\lto\!\textup{R}$}). By induction hypothesis we have closed terms $\pvec \gamma, \pvec a, \pvec b$ in $\Wit$ such that
\begin{enumerate}
	\item[(\rm{IH})] $\bang \Wit(\pvec u, \pvec x, \pvec w), 
                    	\uInter{\Gamma}{\pvec u}{\pvec \gamma \pvec u \pvec x \pvec w}, 
			\uInter{A}{\pvec x}{\pvec a \pvec u \pvec x \pvec w} 
                    	\proves \uInter{B}{\pvec b \pvec u \pvec x}{\pvec w}$
\end{enumerate}
We claim that the terms $\pvec \gamma$ and $\pvec a$ and $\pvec b$ witness the conclusion of this rule. We have

\[
\begin{prooftree}
    \[
        \[
            \justifies
            \bang \Wit(\pvec u, \pvec x, \pvec w), 
            \uInter{\Gamma}{\pvec u}{\pvec \gamma \pvec u \pvec x \pvec w}, 
            \uInter{A}{\pvec x}{\pvec a \pvec u \pvec x \pvec w} 
                \proves \uInter{B}{\pvec b \pvec u \pvec x}{\pvec w}
            \using \mbox{(\rm{IH})} 
        \]
        \justifies
        \bang \Wit(\pvec u, \pvec x, \pvec w), 
        \uInter{\Gamma}{\pvec u}{\pvec \gamma \pvec u \pvec x \pvec w}
            \proves \uInter{A}{\pvec x}{\pvec a \pvec u \pvec x \pvec w} \lto \uInter{B}{\pvec b \pvec u \pvec x}{\pvec w}
        \using (\lto\!\textup{R})
    \]
    \justifies
    \bang \Wit(\pvec u, \pvec x, \pvec w), 
    \uInter{\Gamma}{\pvec u}{\pvec \gamma \pvec u \pvec x \pvec w}
        \proves \uInter{A \lto B}{\pvec a \pvec u, \pvec b \pvec u}{\pvec x, \pvec w}
    \using (\textup{D}\ref{inter})
\end{prooftree}
\]
(\emph{$\lto\!\textup{L}$}). By induction hypothesis we have closed terms $\pvec a, \pvec b, \pvec c, \pvec \gamma, \pvec \delta$ such that
\begin{enumerate}
	\item[\IH{(i)}] $\proves \Wit(\pvec a)$ and $\proves \Wit(\pvec b)$ and $\proves \Wit(\pvec c)$ and $\proves \Wit(\pvec \gamma)$ and $\proves \Wit(\pvec \delta)$
	\item[\IH{(ii)}] $\bang \Wit(\pvec u, \pvec y),
                                \uInter{\Gamma}{\pvec u}{\pvec \gamma \pvec u \pvec y} 
                                \proves \uInter{A}{\pvec a \pvec u}{\pvec y}$ and
                                $\bang \Wit(\pvec w, \pvec v, \pvec z), 
                                \uInter{\Delta}{\pvec w}{\pvec \delta \pvec w \pvec v \pvec z}, 
                                \uInter{B}{\pvec v}{\pvec b \pvec w \pvec v \pvec z} 
                                    \proves \uInter{C}{\pvec c \pvec w \pvec v}{\pvec z}$
\end{enumerate}
Using \IH{(i)} and \IH{(ii)} we have
\[
\begin{prooftree}
    \[
        \[
            \justifies
            \bang \Wit(\pvec u, \pvec y),
            \uInter{\Gamma}{\pvec u}{\pvec \gamma \pvec u \pvec y} 
                \proves \uInter{A}{\pvec a \pvec u}{\pvec y}
            \using \mbox{\IH{(ii)}} 
        \]
        \justifies
        \bang \Wit(\pvec u, \pvec g (\pvec a \pvec u) (\pvec b \pvec w (\pvec f(\pvec a \pvec u)) \pvec z)),
        \uInter{\Gamma}{\pvec u}{\pvec \gamma \pvec u (\pvec g (\pvec a \pvec u) 
        (\pvec b \pvec w (\pvec f(\pvec a \pvec u)) \pvec z))} 
            \proves \uInter{A}{\pvec a \pvec u}{\pvec g (\pvec a \pvec u) (\pvec b \pvec w (\pvec f(\pvec a \pvec u)) \pvec z)}
        \using {[\frac{\pvec g (\pvec a \pvec u) (\pvec b \pvec w (\pvec f(\pvec a \pvec u)) \pvec z)}{\pvec y}]}
    \]
    \justifies
    \bang \Wit(\pvec u, \pvec w, \pvec g, \pvec f,  \pvec z),
    \uInter{\Gamma}{\pvec u}{\pvec \gamma \pvec u (\pvec g (\pvec a \pvec u) 
    (\pvec b \pvec w (\pvec f(\pvec a \pvec u)) \pvec z))} 
        \proves \uInter{A}{\pvec a \pvec u}{\pvec g (\pvec a \pvec u) (\pvec b \pvec w (\pvec f(\pvec a \pvec u)) \pvec z)}
        \using \mbox{\IH{(i)}} 
\end{prooftree}
\]
and
\[
\begin{prooftree}
    \[
        \[
            \justifies
            \bang \Wit(\pvec w, \pvec v, \pvec z), 
            \uInter{\Delta}{\pvec w}{\pvec \delta \pvec w \pvec v \pvec z}, 
            \uInter{B}{\pvec v}{\pvec b \pvec w \pvec v \pvec z} 
                \proves \uInter{C}{\pvec c \pvec w \pvec v}{\pvec z}
            \using \mbox{\IH{(ii)}} 
        \]  
        \justifies
        \bang \Wit(\pvec w, \pvec f(\pvec a \pvec u), \pvec z), 
        \uInter{\Delta}{\pvec w}{\pvec \delta \pvec w (\pvec f(\pvec a \pvec u)) \pvec z}, 
        \uInter{B}{\pvec f(\pvec a \pvec u)}{\pvec b \pvec w (\pvec f(\pvec a \pvec u)) \pvec z} 
            \proves \uInter{C}{\pvec c \pvec w (\pvec f(\pvec a \pvec u))}{\pvec z}
        \using {[\frac{\pvec f (\pvec a \pvec u)}{\pvec v}]}
    \]
    \justifies
    \bang \Wit(\pvec w, \pvec f, \pvec u, \pvec z), 
    \uInter{\Delta}{\pvec w}{\pvec \delta \pvec w (\pvec f(\pvec a \pvec u)) \pvec z}, 
    \uInter{B}{\pvec f(\pvec a \pvec u)}{\pvec b \pvec w (\pvec f(\pvec a \pvec u)) \pvec z} 
        \proves \uInter{C}{\pvec c \pvec w (\pvec f(\pvec a \pvec u))}{\pvec z}
    \using \mbox{\IH{(i)}}     
\end{prooftree}
\]
Let us call the two derivations above $\pi_1$ and $\pi_2$, and let $\tilde{\pvec \delta} = \pvec \delta \pvec w (\pvec f(\pvec a \pvec u)) \pvec z$ and $\tilde{\pvec \gamma} = \pvec \gamma \pvec u (\pvec g (\pvec a \pvec u) (\pvec b \pvec w (\pvec f(\pvec a \pvec u)) \pvec z))$. Then:
\[
\begin{prooftree}
    \[
        \pi_1
        \qquad \qquad
        \pi_2
        \justifies
        \bang \Wit(\pvec u, \pvec w, \pvec g, \pvec f, \pvec z),
        \uInter{\Gamma}{\pvec u}{\tilde{\pvec \gamma}},
        \uInter{\Delta}{\pvec w}{\tilde{\pvec \delta}},
        \uInter{A}{\pvec a \pvec u}{\pvec g (\pvec a \pvec u) (\pvec b \pvec w (\pvec f(\pvec a \pvec u)) \pvec z)} \lto
        \uInter{B}{\pvec f(\pvec a \pvec u)}{\pvec b \pvec w (\pvec f(\pvec a \pvec u)) \pvec z} 
            \proves \uInter{C}{\pvec c \pvec w (\pvec f(\pvec a \pvec u))}{\pvec z}
        \using (\lto\textup{L})
    \]
    \justifies
    \bang \Wit(\pvec u, \pvec w, \pvec g, \pvec f, \pvec z),
    \uInter{\Gamma}{\pvec u}{\tilde{\pvec \gamma}},
    \uInter{\Delta}{\pvec w}{\tilde{\pvec \delta}},
    \uInter{A \lto B}{\pvec f, \pvec g}{\pvec a \pvec u, \pvec b \pvec w (\pvec f(\pvec a \pvec u)) \pvec z}
        \proves \uInter{C}{\pvec c \pvec w (\pvec f(\pvec a \pvec u))}{\pvec z}
    \using (\textup{D}\ref{inter})
\end{prooftree}
\]
\emph{Quantifiers}. As the quantifiers are treated uniformly, the witnessing terms of the premises of the rules are also witnessing terms for the conclusions: universal ($\forall$)
\[
\begin{prooftree}
\[
     \[
     \justifies
    \bang \Wit(\pvec u, \pvec x, \pvec w),
    \uInter{\Gamma}{\pvec u}{\pvec \gamma \pvec u \pvec x \pvec w}, 
    \uInter{A(t)}{\pvec x}{\pvec a \pvec u \pvec x \pvec w} 
        \proves \uInter{B}{\pvec b \pvec u \pvec x}{\pvec w}
	\using \mbox{(IH)}
\]
    \justifies
    \bang \Wit(\pvec u, \pvec x, \pvec w),
    \uInter{\Gamma}{\pvec u}{\pvec \gamma \pvec u \pvec x \pvec w}, 
    \forall z \uInter{A(z)}{\pvec x}{\pvec a \pvec u \pvec x \pvec w} 
        \proves \uInter{B}{\pvec b \pvec u \pvec x}{\pvec w}
    \using (\forall \textup{L})
\]
\justifies
\bang \Wit(\pvec u, \pvec x, \pvec w),
\uInter{\Gamma}{\pvec u}{\pvec \gamma \pvec u \pvec x \pvec w}, 
\uInter{\forall z A(z)}{\pvec x}{\pvec a \pvec u \pvec x \pvec w} 
    \proves \uInter{B}{\pvec b \pvec u \pvec x}{\pvec w}
\using (\textup{D}\ref{inter})
\end{prooftree}
\quad
\begin{prooftree}
    \[
		\[
            \justifies
       		\bang \Wit(\pvec u, \pvec y),
        		\uInter{\Gamma}{\pvec u}{\pvec \gamma \pvec u \pvec y} \proves \uInter{A(z)}{\pvec a \pvec u}{\pvec y}
			\using \mbox{(IH)} 
		\]
       		\justifies
        		\bang \Wit(\pvec u, \pvec y),
       		\uInter{\Gamma}{\pvec u}{\pvec \gamma \pvec u \pvec y} \proves \forall z \uInter{A(z)}{\pvec a \pvec u}{\pvec y}
       		\using {(\forall \textup{R})}
    \]
    \justifies
    \bang \Wit(\pvec u, \pvec y),
    \uInter{\Gamma}{\pvec u}{\pvec \gamma \pvec u \pvec y} \proves \uInter{\forall z A(z)}{\pvec a \pvec u}{\pvec y}
    \using (\textup{D}\ref{inter})
\end{prooftree}
\]
and existential ($\exists$)
\[
\begin{prooftree}
\[
   \[
    \justifies
    \bang \Wit(\pvec u, \pvec x,\pvec w),
    \uInter{\Gamma}{\pvec u}{\pvec \gamma \pvec u \pvec x \pvec w}, 
    \uInter{A(z)}{\pvec x}{\pvec a \pvec u \pvec x \pvec w} 
        \proves \uInter{B}{\pvec b \pvec u \pvec x}{\pvec w}
    \using \mbox{(IH)}
     \]
    \justifies
    \bang \Wit(\pvec u, \pvec x,\pvec w),
    \uInter{\Gamma}{\pvec u}{\pvec \gamma \pvec u \pvec x \pvec w}, 
    \exists z \uInter{A(z)}{\pvec x}{\pvec a \pvec u \pvec x \pvec w} 
        \proves \uInter{B}{\pvec b \pvec u \pvec x}{\pvec w}
    \using (\exists \textup{L})
\]
\justifies
\bang \Wit(\pvec u, \pvec x,\pvec w),
\uInter{\Gamma}{\pvec u}{\pvec \gamma \pvec u \pvec x \pvec w}, 
\uInter{\exists z A(z)}{\pvec x}{\pvec a \pvec u \pvec x \pvec w} 
    \proves \uInter{B}{\pvec b \pvec u \pvec x}{\pvec w}
\using (\textup{D}\ref{inter})
\end{prooftree}
\quad
\begin{prooftree}
    \[
	    \[
         \justifies
        \bang \Wit(\pvec u, \pvec y),
        \uInter{\Gamma}{\pvec u}{\pvec \gamma \pvec u \pvec y} 
            \proves \uInter{A(t)}{\pvec a \pvec u}{\pvec y}
		\using \mbox{(IH)}
        \]
        \justifies
         \bang \Wit(\pvec u, \pvec y),
        \uInter{\Gamma}{\pvec u}{\pvec \gamma \pvec u \pvec y} 
            \proves \exists z \uInter{A(z)}{\pvec a \pvec u}{\pvec y}
        \using {(\exists \textup{R})}
    \]
    \justifies
     \bang \Wit(\pvec u, \pvec y),
     \uInter{\Gamma}{\pvec u}{\pvec \gamma \pvec u \pvec y} 
         \proves \uInter{\exists z A(z)}{\pvec a \pvec u}{\pvec y}
    \using (\textup{D}\ref{inter})
\end{prooftree}
\]
\emph{Weakening}. By induction hypothesis the premise of the weakening rule is witnessable, i.e. we have closed terms $\pvec \gamma, \pvec b$ in $\Wit$ such that
\begin{enumerate}
	\item[(\rm{IH})] $\bang \Wit(\pvec u, \pvec w),
                            \uInter{\Gamma}{\pvec u}{\pvec \gamma \pvec u \pvec w} 
                                \proves \uInter{B}{\pvec b \pvec u}{\pvec w}$
\end{enumerate}
Let $\pvec 0$ be an arbitrary closed terms of the appropriate type. We claim that the terms $\lambda \pvec u, \pvec x, \pvec w . \pvec \gamma \pvec u \pvec w$ and $\pvec 0$ and $\lambda \pvec u, \pvec x . \pvec b \pvec u$ witness the conclusion of the weakening rule:
\[
\begin{prooftree}
    \[
        \[
            \justifies
            \bang \Wit(\pvec u,\pvec w), 
            \uInter{\Gamma}{\pvec u}{\pvec \gamma \pvec u \pvec w} 
                \proves \uInter{B}{\pvec b \pvec u}{\pvec w}
            \using \mbox{(\rm{IH})}
        \]
        \justifies
        \bang \Wit(\pvec u,\pvec w, \pvec x), 
        \uInter{\Gamma}{\pvec u}{\pvec \gamma \pvec u \pvec w},
        \bang \ubq{\pvec \tau^-_A}{\pvec y}{\pvec 0 \pvec u \pvec x \pvec w} \uInter{A}{\pvec x}{\pvec y}
            \proves \uInter{B}{\pvec b \pvec u}{\pvec w}
        \using(\textup{wkn})
    \]
    \justifies
    \bang \Wit(\pvec u, \pvec x, \pvec w),
    \uInter{\Gamma}{\pvec u}{(\lambda \pvec u, \pvec x, \pvec w . \pvec \gamma \pvec u \pvec w) \pvec u \pvec x \pvec w},
    \uInter{\bang A}{\pvec x}{\pvec 0 \pvec u \pvec x \pvec w}
        \proves \uInter{B}{(\lambda \pvec u, \pvec x . \pvec b \pvec u) \pvec u \pvec x}{\pvec w}
    \using {(\textup{D}\ref{inter})}
\end{prooftree}
\]
\emph{Contraction}. By induction hypothesis we have closed terms $\pvec \gamma, \pvec a_0, \pvec a_1, \pvec b$ such that
\begin{enumerate}
	\item[\IH{(i)}] $\proves \Wit(\pvec \gamma)$ and $\proves \Wit(\pvec a_i)$, for $i \in \{0,1\}$, and $\proves \Wit(\pvec b)$
	\item[\IH{(ii)}] $\bang \Wit(\pvec u, \pvec x_0, \pvec x_1, \pvec w),
                            \uInter{\Gamma}{\pvec u}{\pvec \gamma \pvec u \pvec x_0 \pvec x_1 \pvec w}, 
                            \uInter{\bang A}{\pvec x_0}{\pvec a_0  \pvec u \pvec x_0 \pvec x_1 \pvec w}, 
                            \uInter{\bang A}{\pvec x_1}{\pvec a_1 \pvec u \pvec x_0 \pvec x_1 \pvec w}
                                \proves \uInter{B}{\pvec b \pvec u \pvec x_0 \pvec x_1}{\pvec w}$
\end{enumerate}
We claim that the terms $\lambda \pvec u, \pvec x, \pvec w . \pvec \gamma \pvec u \pvec x \pvec x \pvec w$ and $\lambda \pvec u, \pvec x, \pvec w . \join{\pvec a_0 \pvec u \pvec x \pvec x \pvec w}{\pvec a_1 \pvec u \pvec x \pvec x \pvec w}$ and $\lambda \pvec u, \pvec x . \pvec b \pvec u \pvec x \pvec x$ witness the conclusion of the contraction rule:
\[
\begin{prooftree}
    \[
        \[
            \[
                \[
                    \[
                        \[
                            \[
                                \justifies
                                \bang \Wit(\pvec u, \pvec x_0, \pvec x_1, \pvec w),
                                \uInter{\Gamma}{\pvec u}{\pvec \gamma \pvec u \pvec x_0 \pvec x_1 \pvec w}, 
                                \uInter{\bang A}{\pvec x_0}{\pvec a_0  \pvec u \pvec x_0 \pvec x_1 \pvec w}, 
                                \uInter{\bang A}{\pvec x_1}{\pvec a_1 \pvec u \pvec x_0 \pvec x_1 \pvec w}
                                    \proves \uInter{B}{\pvec b \pvec u \pvec x_0 \pvec x_1}{\pvec w}                            
                                \using \mbox{\IH{(ii)}}
                            \]
                            \justifies
                            \bang \Wit(\pvec u, \pvec x, \pvec x, \pvec w),
                            \uInter{\Gamma}{\pvec u}{\pvec \gamma \pvec u \pvec x \pvec x \pvec w}, 
                            \uInter{\bang A}{\pvec x}{\pvec a_0  \pvec u \pvec x \pvec x \pvec w}, 
                            \uInter{\bang A}{\pvec x}{\pvec a_1 \pvec u \pvec x \pvec x \pvec w}
                                \proves \uInter{B}{\pvec b \pvec u \pvec x \pvec x}{\pvec w}                            
                            \using[\frac{\pvec x}{\pvec x_0}, \frac{\pvec x}{\pvec x_1}]
                        \]
                        \justifies
                        \bang \Wit(\pvec u, \pvec x, \pvec w),
                        \uInter{\Gamma}{\pvec u}{\pvec \gamma \pvec u \pvec x \pvec x \pvec w}, 
                        \uInter{\bang A}{\pvec x}{\pvec a_0  \pvec u \pvec x \pvec x \pvec w}, 
                        \uInter{\bang A}{\pvec x}{\pvec a_1 \pvec u \pvec x \pvec x \pvec w}
                            \proves \uInter{B}{\pvec b \pvec u \pvec x \pvec x}{\pvec w}                            
                        \using[\frac{\pvec x}{\pvec x_0}, \frac{\pvec x}{\pvec x_1}]
                        \using(\textup{con})
                    \]
                    \justifies
                    \bang \Wit(\pvec u, \pvec x, \pvec w),
                    \uInter{\Gamma}{\pvec u}{\pvec \gamma \pvec u \pvec x \pvec x \pvec w}, 
                    \bang \ubq{\pvec \tau^-_A}{\pvec y}{\pvec a_0 \pvec u \pvec x \pvec x \pvec w} \uInter{A}{\pvec x}{\pvec y}, 
                    \bang \ubq{\pvec \tau^-_A}{\pvec y}{\pvec a_1 \pvec u \pvec x \pvec x \pvec w} \uInter{A}{\pvec x}{\pvec y} 
                        \proves \uInter{B}{\pvec b \pvec u \pvec x \pvec x}{\pvec w}                            
                    \using {(\textup{D}\ref{inter})}
                \]
                \justifies
                \bang \Wit(\pvec u, \pvec x, \pvec w),
                \uInter{\Gamma}{\pvec u}{\pvec \gamma \pvec u \pvec x \pvec x \pvec w}, 
                \bang \ubq{\pvec \tau^-_A}{\pvec y}{\pvec a_0 \pvec u \pvec x \pvec x \pvec w} \uInter{A}{\pvec x}{\pvec y} \otimes
                \bang \ubq{\pvec \tau^-_A}{\pvec y}{\pvec a_1 \pvec u \pvec x \pvec x \pvec w} \uInter{A}{\pvec x}{\pvec y} 
                    \proves \uInter{B}{\pvec b \pvec u \pvec x \pvec x}{\pvec w}                            
                \using {(\otimes\textup{L})}
            \]
            \justifies
            \bang \Wit(\pvec u, \pvec x, \pvec w, \pvec a_0 \pvec u \pvec x \pvec x \pvec w, \pvec a_1 \pvec u \pvec x \pvec x \pvec w),
            \uInter{\Gamma}{\pvec u}{\pvec \gamma \pvec u \pvec x \pvec x \pvec w}, 
            \bang \ubq{\pvec \tau^-_A}{\pvec y}{\join{\pvec a_0 \pvec u \pvec x \pvec x \pvec w}{\pvec a_1 \pvec u \pvec x \pvec x \pvec w}} \uInter{A}{\pvec x}{\pvec y}
                \proves \uInter{B}{\pvec b \pvec u \pvec x \pvec x}{\pvec w}                            
            \using $\ConStrength$
        \]
        \justifies
        \bang \Wit(\pvec u, \pvec x, \pvec w),
        \uInter{\Gamma}{\pvec u}{\pvec \gamma \pvec u \pvec x \pvec x \pvec w}, 
        \bang \ubq{\pvec \tau^-_A}{\pvec y}{\join{\pvec a_0 \pvec u \pvec x \pvec x \pvec w}{\pvec a_1 \pvec u \pvec x \pvec x \pvec w}} \uInter{A}{\pvec x}{\pvec y}
            \proves \uInter{B}{\pvec b \pvec u \pvec x \pvec x}{\pvec w}                            
        \using \mbox{\IH{(i)}}       
    \]
    \justifies 
    \bang \Wit(\pvec u, \pvec x, \pvec w),
    \uInter{\Gamma}{\pvec u}{(\lambda \pvec u, \pvec x, \pvec w . \pvec \gamma \pvec u \pvec x \pvec x \pvec w) \pvec u \pvec x \pvec w}, 
    \uInter{\bang A}{\pvec x}{(\lambda \pvec u, \pvec x, \pvec w . \join{\pvec a_0 \pvec u \pvec x \pvec x \pvec w}
        {\pvec a_1 \pvec u \pvec x \pvec x \pvec w}) \pvec u \pvec x \pvec w}
        \proves \uInter{B}{(\lambda \pvec u, \pvec x . \pvec b \pvec u \pvec x \pvec x) \pvec u \pvec x}{\pvec w}
    \using {(\textup{D}\ref{inter})}
\end{prooftree}
\]
(\emph{$\bang\textup{L}$}). By induction hypothesis the premise of the dereliction rule is witnessable, i.e. we have closed terms $\pvec \gamma, \pvec a, \pvec b$ such that
\begin{enumerate}
	\item[\IH{(i)}] $\proves \Wit(\pvec \gamma)$ and $\proves \Wit(\pvec a)$ and $\proves \Wit(\pvec b)$
	\item[\IH{(ii)}] $\bang \Wit(\pvec u, \pvec x, \pvec w),
                            \uInter{\Gamma}{\pvec u}{\pvec \gamma \pvec u \pvec x \pvec w}, 
                            \uInter{A}{\pvec x}{\pvec a  \pvec u \pvec x \pvec w}
                                \proves \uInter{B}{\pvec b \pvec u \pvec x}{\pvec w}$
\end{enumerate}
We claim that the terms $\pvec \gamma$ and $\lambda \pvec u, \pvec x, \pvec w . \singleton{(\pvec a \pvec u \pvec x \pvec w)}$ and $\pvec b$ witness the conclusion of the contraction rule:
\[
\begin{prooftree}
    \[
        \[
            \[
                \justifies
                \bang \Wit(\pvec u, \pvec x, \pvec w),
                \uInter{\Gamma}{\pvec u}{\pvec \gamma \pvec u \pvec x \pvec w}, 
                \uInter{A}{\pvec x}{\pvec a  \pvec u \pvec x \pvec w}
                    \proves \uInter{B}{\pvec b \pvec u \pvec x}{\pvec w}
                \using \mbox{\IH{(ii)}}
            \]
            \justifies
            \bang \Wit(\pvec u, \pvec x, \pvec w, \pvec a  \pvec u \pvec x \pvec w),
            \uInter{\Gamma}{\pvec u}{\pvec \gamma \pvec u \pvec x \pvec w},
            \bang \ubq{\pvec \tau^-_A}{\pvec y}{\singleton{(\pvec a  \pvec u \pvec x \pvec w)}} \uInter{A}{\pvec x}{\pvec y}
                \proves \uInter{B}{\pvec b \pvec u \pvec x}{\pvec w}
            \using $\ConUnit$
        \]
        \justifies
        \bang \Wit(\pvec u, \pvec x, \pvec w),
        \uInter{\Gamma}{\pvec u}{\pvec \gamma \pvec u \pvec x \pvec w}, 
        \bang \ubq{\pvec \tau^-_A}{\pvec y}{\singleton{(\pvec a  \pvec u \pvec x \pvec w)}} \uInter{A}{\pvec x}{\pvec y}
            \proves \uInter{B}{\pvec b \pvec u \pvec x}{\pvec w}
        \using \using \mbox{\IH{(i)}}
    \]
    \justifies
    \bang \Wit(\pvec u, \pvec x, \pvec w),
    \uInter{\Gamma}{\pvec u}{\pvec \gamma \pvec u \pvec x \pvec w}, 
    \uInter{\bang A}{\pvec x}{(\lambda \pvec u, \pvec x, \pvec w . \singleton{(\pvec a \pvec u \pvec x \pvec w)}) \pvec u \pvec x \pvec w}
        \proves \uInter{B}{\pvec b \pvec u \pvec x}{\pvec w}
    \using {(\textup{D}\ref{inter})}
\end{prooftree} 
\]
(\emph{$\bang\textup{R}$}). By induction hypothesis the premise of the promotion rule is witnessable, i.e. we have closed terms $\pvec \gamma, \pvec a$ such that
\begin{enumerate}
	\item[\IH{(i)}] $\proves \Wit(\pvec \gamma)$ and $\proves \Wit(\pvec a)$
	\item[\IH{(ii)}] $\bang \Wit(\pvec u, \pvec y'),
                            \uInter{\bang \Gamma}{\pvec u}{\pvec \gamma \pvec u \pvec y'} 
                                \proves \uInter{A}{\pvec a \pvec u}{\pvec y'}$
\end{enumerate}
We claim that the terms $\lambda \pvec u, \pvec y . \comp{(\pvec \gamma \pvec u)}{\pvec y}$ and $\pvec a$ witness the conclusion of the promotion rule. If $\Gamma = B_1, \ldots, B_n$, we write $\ubq{\pvec \tau^-_\Gamma}{\pvec w}{\pvec \gamma \pvec u \pvec y'} \uInter{\Gamma}{\pvec u}{\pvec w}$ as an abbreviation for the tuple $\ubq{\pvec \tau^-_{B_1}}{\pvec w_1}{\pvec \gamma_1 \pvec u \pvec y'} \uInter{B_1}{\pvec u_1}{\pvec w_1}, \ldots, \ubq{\pvec \tau^-_{B_n}}{\pvec w_n}{\pvec \gamma_n \pvec u \pvec y'} \uInter{B_n}{\pvec u_n}{\pvec w_n}$, assuming each $B_i$ has interpretation $\uInter{B_i}{\pvec u_i}{\pvec w_i}$, :
\[
\begin{prooftree}
    \[
        \[
            \[
                \[
                    \[
                        \[
                            \[
                                \[
                                    \justifies
                                    \bang \Wit(\pvec u, \pvec y'), \uInter{\bang \Gamma}{\pvec u}{\pvec \gamma \pvec u \pvec y'}
                                        \proves \uInter{A}{\pvec a \pvec u}{\pvec y'}
                                    \using \mbox{\IH{(ii)}}
                                \]
                                \justifies
                                \bang \Wit(\pvec u, \pvec y'), 
                                \bang \ubq{\pvec \tau^-_\Gamma}{\pvec w}{\pvec \gamma \pvec u \pvec y'} \uInter{\Gamma}{\pvec u}{\pvec w} 
                                    \proves \uInter{A}{\pvec a \pvec u}{\pvec y'}
                                \using {(\textup{D}\ref{inter})}
                            \]
                            \justifies
                            \bang \Wit(\pvec u), 
                            \ubq{\pvec \tau^-_A}{\pvec y'}{\pvec y} \bang \Wit(\pvec y'),
                            \ubq{\pvec \tau^-_A}{\pvec y'}{\pvec y} \bang \ubq{\pvec \tau^-_\Gamma}{\pvec w'}{\pvec \gamma \pvec u \pvec y'} \uInter{\Gamma}{\pvec u}{\pvec w'}
                                \proves \ubq{\pvec \tau^-_A}{\pvec y'}{\pvec y} \uInter{A}{\pvec a \pvec u}{\pvec y'}
                            \using \mbox{\Quant{1}}
                        \]
                        \justifies
                        \bang \Wit(\pvec u), 
                        \bang \ubq{\pvec \tau^-_A}{\pvec y'}{\pvec y} \bang \Wit(\pvec y'),
                        \bang \ubq{\pvec \tau^-_A}{\pvec y'}{\pvec y} 
                        \bang \ubq{\pvec \tau^-_\Gamma}{\pvec w'}{\pvec \gamma \pvec u \pvec y'} \uInter{\Gamma}{\pvec u}{\pvec w'}
                            \proves \ubq{\pvec \tau^-_A}{\pvec y'}{\pvec y} \uInter{A}{\pvec a \pvec u}{\pvec y'}
                        \using (\bang\textup{L})
                    \]
                    \justifies
                    \bang \Wit(\pvec u), 
                    \bang \ubq{\pvec \tau^-_A}{\pvec y'}{\pvec y} \bang \Wit(\pvec y'),
                    \bang \ubq{\pvec \tau^-_A}{\pvec y'}{\pvec y} 
                    \bang \ubq{\pvec \tau^-_\Gamma}{\pvec w'}{\pvec \gamma \pvec u \pvec y'} \uInter{\Gamma}{\pvec u}{\pvec w'}
                        \proves~\bang \ubq{\pvec \tau^-_A}{\pvec y'}{\pvec y} \uInter{A}{\pvec a \pvec u}{\pvec y'}
                    \using (\bang\textup{R})
                \]
                \justifies
                \bang \Wit(\pvec u), 
                \bang \ubq{\pvec \tau^-_A}{\pvec y'}{\pvec y} 
                \bang \ubq{\pvec \tau^-_\Gamma}{\pvec w'}{\pvec \gamma \pvec u \pvec y'} \uInter{\Gamma}{\pvec u}{\pvec w'}
                    \proves \,\bang \ubq{\pvec \tau^-_A}{\pvec y'}{\pvec y} \uInter{A}{\pvec a \pvec u}{\pvec y'}
                \using \mbox{\Quant{2}}
            \]
            \justifies
            \bang \Wit(\pvec u, \pvec y, \pvec \gamma \pvec u), 
            \bang \ubq{\pvec \tau^-_\Gamma}{\pvec w'}{\comp{(\pvec \gamma \pvec u)}{\pvec y}} \uInter{\Gamma}{\pvec u}{\pvec w'} 
            \proves \, \bang \ubq{\pvec \tau^-_A}{\pvec y'}{\pvec y} \uInter{A}{\pvec a \pvec u}{\pvec y'}
            \using $\ConApp$
        \]
        \justifies
        \bang \Wit(\pvec u, \pvec y), 
        \bang \ubq{\pvec \tau^-_\Gamma}{\pvec w'}{\comp{(\pvec \gamma \pvec u)}{\pvec y}} \uInter{\Gamma}{\pvec u}{\pvec w'} 
            \proves \, \bang \ubq{\pvec \tau^-_A}{\pvec y'}{\pvec y} \uInter{A}{\pvec a \pvec u}{\pvec y'}
        \using \mbox{\IH{(i)}}
    \]
    \justifies 
    \bang \Wit(\pvec u, \pvec y), 
    \uInter{\bang \Gamma}{\pvec u}{\comp{(\pvec \gamma \pvec u)}{\pvec y}}
        \proves \uInter{\bang A}{\pvec a \pvec u}{\pvec y}
    \using {(\textup{D}\ref{inter})}
\end{prooftree}
\]
That concludes the proof. 
\end{proof}

\subsection{Parametrised interpretation of $\AL^\BB$ (interpreting disjunction)}

The parametrised interpretation of $\AL$-theories presented above deals uniformly with the multiplicative connectives ($A \otimes B$ and $A \lto B$), the modality ($\bang A$) and the quantifiers ($\exists z A$ and $\forall z A$), but it leaves anything else to the non-logical axioms of the theory. In particular, the reader will notice that we have not committed to any particular interpretation of the additive connectives ($A \oplus B$ and $A \& B$). As described in Subsection \ref{Subsection_theories}, we aim to capture these via the theory of booleans $\AL^\BB$, where these additive connectives are definable. 

Hence, in this section we show how, with some extra assumptions on the interpretation of the predicate symbol $\BB(x)$, the parametrised interpretation of $\AL$ can be extended to a parametrised interpretation of $\AL^\BB$, and hence, a parametrised interpretation of disjunction. 

Let $\Asource$ be an extension of $\AL^\BB$ which is obtained as the Girard translation of an $\IL^\BB$-theory, and $\Atarget$ be an extension of $\AL^\omega$ such that assumptions \Assumption{1} -- \Assumption{5} hold. In this paper we will always assume that the equality predicate is non-computational, so that $\uInter{s = t}{\varepsilon}{\varepsilon} \pdefin s = t$. We then consider the following extra assumption on the interpretation $\bound{z}{b}{\BB}$ of the boolean predicate $\BB$:

\begin{enumerate}
    \item[\Assumption{6}] Assume that for each predicate symbol $P$ of $\Asource$ we have a term in $\Atarget$ 
    \[ \ifW{P} \colon \wtype{\BB} \to \wtype{P} \to \wtype{P} \to \wtype{P} \]
    such that 
    \begin{itemize}
    	\item[$(i)$] $\proves_{\Atarget} \Wit_{\wtype{P} \to \wtype{P} \to \wtype{\BB} \to \wtype{P}}(\ifW{P})$, and 
	\item[$(ii)$] $\bang \Wit(x_1, x_2), \bang (\bound{z}{b}{\BB}), \bang (\bound{\pvec x}{{\rm if}(z, x_1, x_2)}{P}) \proves_{\Atarget} \bound{\pvec x}{\ifW{P}(b, x_1, x_2)}{P}$. 
    \end{itemize}
    We also assume that for some terms $\tilde{\true}$ and $\tilde{\false}$ of $\Atarget$ we have $\proves_{\Atarget} \bound{\true}{\tilde{\true}}{\BB}$ and $\proves_{\Atarget} \bound{\false}{\tilde{\false}}{\BB}$.
\end{enumerate}

Our assumption that $\Asource$ is the Girard translation of an $\IL^\BB$-theory is crucial for the following definition and lemma, where we only quantify over \emph{intuitionistic} formulas:

\begin{definition} For each \emph{intuitionistic} formula $A$, define the tuple of terms
\[  \ifW{A} \colon \wtype{\BB} \to  \pvec \tau^+_{\lTrans{A}} \to \pvec \tau^+_{\lTrans{A}} \to \pvec \tau^+_{\lTrans{A}} \]
by induction on $A$, as follows:
\[
\begin{array}{lcl}
	\ifW{A \wedge B}(b, \pvec x_1, \pvec v_1, \pvec x_2, \pvec v_2) & = & 
		\ifW{A}(b, \pvec x_1, \pvec x_2), \ifW{B}(b, \pvec v_1, \pvec v_2) \\[1mm]
	\ifW{A \to B}(b, \pvec f_1, \pvec g_1, \pvec f_2, \pvec g_2) & = & 
		\lambda \pvec x^{\pvec \tau^+_{\lTrans{A}}} . \ifW{B}(b, \pvec f_1 \pvec x, \pvec f_2 \pvec x),
		\lambda \pvec x^{\pvec \tau^+_{\lTrans{A}}} 
		\lambda \pvec w^{\pvec \tau^-_{\lTrans{B}}} . \join{\pvec g_1 \pvec x \pvec w}{\pvec g_2 \pvec x \pvec w} \\[1mm]
	\ifW{\exists z A}(b, \pvec x_1, \pvec x_2) & = & \ifW{A}(b, \pvec x_1, \pvec x_2) \\[1mm]
	\ifW{\forall z A}(b, \pvec x_1, \pvec x_2) & = & \ifW{A}(b, \pvec x_1, \pvec x_2)
\end{array}
\]
where, for predicate symbols $P$, we let $\ifW{P}$ be the term assumed to exist in \Assumption{6}.
\end{definition}

\begin{lemma}[Monotonicity lemma for $\BB$] \label{monotonicity-lemma} Under assumption \Assumption{6}, for each \emph{intuitionistic} formula $A$, we have:
\begin{enumerate}
	\item[(i)] $\proves_{\Atarget} \Wit_{\tau^+_{\lTrans{A}} \to \tau^+_{\lTrans{A}} \to \wtype{\BB} \to \pvec \tau^+_{\lTrans{A}}}(\ifW{A})$
	\item[(ii)] $\bang \Wit(\pvec x_1, \pvec x_2, \pvec y), \bang (\bound{z}{b}{\BB}), \bang \uInter{\lTrans{A}}{{\rm if}(z, \pvec x_1, \pvec x_2)}{\pvec y} \proves_{\Atarget} \uInter{\lTrans{A}}{\ifW{A}(b, \pvec x_1, \pvec x_2)}{\pvec y}$
\end{enumerate}
\end{lemma}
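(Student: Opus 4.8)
The statement is a monotonicity lemma asserting two things about the conditional term $\ifW{A}$: part $(i)$ that it lives in $\Wit$, and part $(ii)$ that it correctly combines two witnesses $\pvec x_1, \pvec x_2$ into a single witness whose associated interpretation follows the branch selected by the boolean $z$. Since $\ifW{A}$ is defined by induction on the structure of the \emph{intuitionistic} formula $A$ (via its Girard translation $\lTrans{A}$), the natural proof is a simultaneous induction on $A$, proving $(i)$ and $(ii)$ together so that the function-type case can appeal to the inductive hypothesis for both the premise and conclusion subformulas.

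**Base case.** For a predicate symbol $P$, part $(i)$ is exactly clause $(i)$ of assumption \Assumption{6}, and part $(ii)$ follows from clause $(ii)$ of \Assumption{6} after unfolding $\uInter{P(\pvec x)}{a}{\varepsilon} \pdefin \bound{\pvec x}{a}{P}$ from Definition \ref{inter}; here I would note that the hypothesis $\bang(\bound{z}{b}{\BB})$ supplies the boolean witness needed to invoke \Assumption{6}, and that the term-level ${\rm if}(z, \pvec x_1, \pvec x_2)$ on the witnesses is matched by the witness-level $\ifW{P}(b, \pvec x_1, \pvec x_2)$. For non-computational predicates the interpretation has empty witnesses, so both parts are trivial.

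**Inductive steps.** The conjunction and quantifier cases are routine: $(i)$ follows from the inductive hypotheses together with Lemma \ref{lem-W-closure} (closure of $\Wit$ under $\lambda$-abstraction and application), and $(ii)$ follows by splitting the $\otimes$ (for $\wedge$) or instantiating the bound variable (for $\exists, \forall$) and applying the inductive hypotheses componentwise. The main obstacle is the implication case $A \to B$, where the Girard translation introduces a bang: $\lTrans{(A \to B)} \equiv \bang\lTrans{A} \lto \lTrans{B}$, so the counter-witness component of $\ifW{A \to B}$ is defined using the join operator $\sqcup$, namely $\lambda \pvec x \lambda \pvec w . \join{\pvec g_1 \pvec x \pvec w}{\pvec g_2 \pvec x \pvec w}$. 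Handling this requires carefully unfolding the interpretation $\uInter{\bang\lTrans{A} \lto \lTrans{B}}{}{}$ — which itself contains the bounded quantifier $\ubq{}{}{}$ from the $\bang$ clause — and then using the contraction-strength property \ConStrength{} to split the single bound $\join{\cdot}{\cdot}$ back into the two bounds $\pvec g_1 \pvec x \pvec w$ and $\pvec g_2 \pvec x \pvec w$ needed to feed the two inductive hypotheses. I expect the bookkeeping of which witnesses are selected by ${\rm if}(z, \cdot, \cdot)$ and which counter-witnesses are joined, together with the correct placement of the $\bang$ modality, to be the delicate part; the arithmetic of combining these via $\sqcup$ is exactly where \Assumption{5}'s properties (particularly \ConStrength) earn their keep.
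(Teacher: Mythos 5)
Your overall strategy matches the paper's proof: induction on the intuitionistic formula $A$, with the base case discharged by \Assumption{6}, the conjunction and quantifier cases routine, and the implication case as the crux, handled by unfolding $\uInter{\bang\lTrans{A} \lto \lTrans{B}}{}{}$ and invoking \ConStrength{} on the joined counter-witness bound. Part $(i)$ via \Assumption{6}$(i)$, \ConStrength{} and Lemma \ref{lem-W-closure} is also as in the paper.

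There is, however, one genuine gap in your implication case: you say \ConStrength{} splits $\join{\pvec g_1 \pvec x \pvec w}{\pvec g_2 \pvec x \pvec w}$ into the two separate bounds ``needed to feed the two inductive hypotheses''. That is not how the argument can close. First, the induction hypothesis for $A$ is never used, and could not be: in the premise position of $\lto$ it would have to be applied \emph{contravariantly}, but part $(ii)$ is a one-directional implication, so the IH for $A$ points the wrong way; moreover, what sits in that position are counter-witness \emph{bounds}, not positive witnesses of $A$ -- which is precisely why $\ifW{A \to B}$ joins the $\pvec g_i$'s with $\sqcup$ instead of if-combining them with $\ifW{A}$. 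Second, after \ConStrength{} hands you the two bounds, you still owe the step showing that they jointly entail the bound ${\rm if}(z, \pvec g_1 \pvec x \pvec w, \pvec g_2 \pvec x \pvec w)$ occurring in the assumed premise, namely
\[
\BB(z),\;
\ubq{\pvec \tau^-_A}{\pvec y}{\pvec g_1 \pvec x \pvec w} \uInter{\lTrans{A}}{\pvec x}{\pvec y},\;
\ubq{\pvec \tau^-_A}{\pvec y}{\pvec g_2 \pvec x \pvec w} \uInter{\lTrans{A}}{\pvec x}{\pvec y}
\;\proves\;
\ubq{\pvec \tau^-_A}{\pvec y}{{\rm if}(z, \pvec g_1 \pvec x \pvec w, \pvec g_2 \pvec x \pvec w)} \uInter{\lTrans{A}}{\pvec x}{\pvec y},
\]
which is a boolean case distinction (via $\BB\textup{L}$ and the defining equations of ${\rm if}$), not an appeal to any induction hypothesis. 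With this entailment in place the chain closes in the right order: assume the join-bound, apply \ConStrength{}, apply the displayed entailment to obtain the if-bound, discharge the assumed premise to get $\uInter{\lTrans{B}}{{\rm if}(z, \pvec f_1 \pvec x, \pvec f_2 \pvec x)}{\pvec w}$, and only then apply the (covariant) induction hypothesis -- for $B$ alone -- to conclude $\uInter{\lTrans{B}}{\ifW{B}(b, \pvec f_1 \pvec x, \pvec f_2 \pvec x)}{\pvec w}$, noting that the standing assumptions give $\Wit(\pvec f_1 \pvec x, \pvec f_2 \pvec x, \pvec w)$ so that this IH is applicable.
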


\begin{proof} By induction on the complexity of the formula $A$. $(i)$ follows directly from assumptions \Assumption{6} $(i)$ and \ConStrength, and Lemma \ref{lem-W-closure}. $(ii)$ The case where $A$ is a predicate symbol also follows directly from \Assumption{6} $(ii)$. The cases of conjunction, existential and universal quantifiers are easy to verify. Let us focus on the case of implication. Assume $\Wit(\pvec f_1, \pvec g_1, \pvec f_2, \pvec g_2, \pvec x, \pvec w)$ and $\bang (\bound{z}{b}{\BB})$ (and hence $\BB(z)$). Noting that
\[ 
(*) \quad \BB(z),
	\ubq{\pvec \tau^-_A}{\pvec y}{\pvec g_1 \pvec x \pvec w} \uInter{\lTrans{A}}{\pvec x}{\pvec y},
	\ubq{\pvec \tau^-_A}{\pvec y}{\pvec g_1 \pvec x \pvec w} \uInter{\lTrans{A}}{\pvec x}{\pvec y}
	\proves \ubq{\pvec \tau^-_A}{\pvec y}{{\rm if}(z, \pvec g_1 \pvec x \pvec w, \pvec g_2 \pvec x \pvec w)} \uInter{\lTrans{A}}{\pvec x}{\pvec y}
\]
we have
\[
\begin{array}{rcl}
    &  & \uInter{\lTrans{(A \to B)}}{{\rm if}(z, \pvec f_1, \pvec g_1, \pvec f_2, \pvec g_2)}{\pvec x, \pvec w} \\[1mm]
    & \equiv & \bang \ubq{\pvec \tau^-_A}{\pvec y}{{\rm if}(z, \pvec g_1 \pvec x \pvec w, \pvec g_2 \pvec x \pvec w)} \uInter{\lTrans{A}}{\pvec x}{\pvec y} 
    		     \lto \uInter{\lTrans{B}}{{\rm if}(z, \pvec f_1 \pvec x, \pvec f_2 \pvec x)}{\pvec w}\\[1mm]
    & \stackrel{(*)}{\Rightarrow} &
    	\bang \ubq{\pvec \tau^-_A}{\pvec y}{\pvec g_1 \pvec x \pvec w} \uInter{\lTrans{A}}{\pvec x}{\pvec y}
	~\otimes~
	\bang \ubq{\pvec \tau^-_A}{\pvec y}{\pvec g_2 \pvec x \pvec w} \uInter{\lTrans{A}}{\pvec x}{\pvec y} 
    		     \lto \uInter{\lTrans{B}}{{\rm if}(z, \pvec f_1 \pvec x, \pvec f_2 \pvec x)}{\pvec w} \\[1mm]
    & \stackrel{\textup{\ConStrength}}{\Rightarrow} &
    	\bang \ubq{\pvec \tau^-_A}{\pvec y}{\join{\pvec g_1 \pvec x \pvec w}{\pvec g_2 \pvec x \pvec w}} \uInter{\lTrans{A}}{\pvec x}{\pvec y} 
    		     \lto \uInter{\lTrans{B}}{{\rm if}(z, \pvec f_1 \pvec x, \pvec f_2 \pvec x)}{\pvec w} \\[1mm]
    & \stackrel{(\textup{IH})}{\Rightarrow} &
    	\bang \ubq{\pvec \tau^-_A}{\pvec y}{\join{\pvec g_1 \pvec x \pvec w}{\pvec g_2 \pvec x \pvec w}} \uInter{\lTrans{A}}{\pvec x}{\pvec y} 
    		     \lto \uInter{\lTrans{B}}{\ifW{B}(b, \pvec f_1 \pvec x, \pvec f_2 x)}{\pvec w} \\[2mm]
    & \equiv &
    	\uInter{\lTrans{(A \to B)}}{\ifW{A \to B}(b, \pvec f_1, \pvec g_1, \pvec f_2, \pvec g_2)}{\pvec x, \pvec w}
\end{array}
\]
since the assumptions imply $\Wit(\pvec f_1 \pvec x, \pvec f_2 \pvec x, \pvec w)$ -- needed for the (IH).
\end{proof}

Using the monotonicity lemma above (Lemma \ref{monotonicity-lemma}) we can then show that the axioms $A[\true/z], A[\false/z], \BB(z) \proves A$ of $\AL^\BB$ are interpretable when $A$ is the Girard translation of an intuitionistic formula:

\begin{theorem} Under assumption \Assumption{6}, for any intuitionistic formula $A$ the sequent $\bang \lTrans{A}[\true/z], \bang \lTrans{A}[\false/z], \bang \BB(z) \proves \lTrans{A}$ is witnessable in $\Atarget$.
\end{theorem}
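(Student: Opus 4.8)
The plan is to verify directly the two clauses of Definition~\ref{def-witnessable} for this sequent, with the Monotonicity Lemma (Lemma~\ref{monotonicity-lemma}) doing the real work. Write $\pvec\tau^+ \pdefin \pvec\tau^+_{\lTrans{A}}$ and $\pvec\tau^- \pdefin \pvec\tau^-_{\lTrans{A}}$, and let $\pvec x_1,\pvec x_2$ (both of type $\pvec\tau^+$) be the witnesses of the hypotheses $\bang\lTrans{A}[\true/z]$ and $\bang\lTrans{A}[\false/z]$, let $b$ (of type $\wtype{\BB}$) be the witness of $\bang\BB(z)$, and let $\pvec w$ be the counter-witness of the conclusion $\lTrans{A}$. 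I would take as the witness of the conclusion the term $\pvec a \pdefin \ifW{A}(b,\pvec x_1,\pvec x_2)$ furnished by the Monotonicity Lemma, and as the counter-witnesses of the two main hypotheses the singletons $\singleton{\pvec w}$ (the counter-witness of $\bang\BB(z)$ being empty, since the negative type of $\BB(z)$ is $\varepsilon$). So the closed witnessing terms are $\pvec\gamma_1 = \pvec\gamma_2 = \lambda\pvec x_1,\pvec x_2,b,\pvec w.\,\pvec\eta\,\pvec w$ and $\lambda\pvec x_1,\pvec x_2,b.\,\ifW{A}(b,\pvec x_1,\pvec x_2)$.

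Clause~(i) is then routine: $\ifW{A}$ is in $\Wit$ by Lemma~\ref{monotonicity-lemma}(i) (at the predicate level this is \Assumption{6}(i)), while $\singleton{\pvec w}=\pvec\eta\,\pvec w$ is in $\Wit$ because $\proves_{\Atarget}\Wit(\pvec\eta)$ by \ConUnit together with \WitApp; closing these under $\lambda$-abstraction and application via Lemma~\ref{lem-W-closure} gives the required closed witnessing terms of the correct types.

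For clause~(ii) I would reason as follows. The interpretation of the third hypothesis is $\bang\,\bound{z}{b}{\BB}$ (its bounded quantifier is over the empty tuple), from which we get $\bound{z}{b}{\BB}$, and hence $\BB(z)$ by \Assumption{3} and so $\bang\BB(z)$. With the chosen bound $\singleton{\pvec w}$, the first hypothesis reads $\bang\,\ubq{\pvec\tau^-}{\pvec y}{\singleton{\pvec w}}\uInter{\lTrans{A}[\true/z]}{\pvec x_1}{\pvec y}$; since $\bang\Wit(\pvec w)$ is available from the $\bang\Wit$-context of clause~(ii), property \ConUnit (instantiated with bounded variable $\pvec y$ and value $\pvec w$) yields $\uInter{\lTrans{A}[\true/z]}{\pvec x_1}{\pvec w}$, which promotes to $\bang\,\uInter{\lTrans{A}[\true/z]}{\pvec x_1}{\pvec w}$ as all remaining hypotheses are under $\bang$. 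Symmetrically, the second hypothesis yields $\bang\,\uInter{\lTrans{A}[\false/z]}{\pvec x_2}{\pvec w}$. Now set $G(z)\pdefin\uInter{\lTrans{A}}{{\rm if}(z,\pvec x_1,\pvec x_2)}{\pvec w}$. Using the defining equations of ${\rm if}$ and the fact that the interpretation respects provable equalities of witness terms, one checks $G[\true/z]\Leftrightarrow\uInter{\lTrans{A}[\true/z]}{\pvec x_1}{\pvec w}$ and $G[\false/z]\Leftrightarrow\uInter{\lTrans{A}[\false/z]}{\pvec x_2}{\pvec w}$. Applying the boolean case-distinction principle $\Bleft$ to the formula $\bang\,G$ — whose instance is $\bang\,G[\true/z],\,\bang\,G[\false/z],\,\bang\BB(z)\proves\bang\,G(z)$ — to the two promoted facts and to $\bang\BB(z)$, we obtain $\bang\,\uInter{\lTrans{A}}{{\rm if}(z,\pvec x_1,\pvec x_2)}{\pvec w}$. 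Feeding this, together with $\bang\Wit(\pvec x_1,\pvec x_2,\pvec w)$ and $\bang\,\bound{z}{b}{\BB}$, into Lemma~\ref{monotonicity-lemma}(ii) produces $\uInter{\lTrans{A}}{\ifW{A}(b,\pvec x_1,\pvec x_2)}{\pvec w}=\uInter{\lTrans{A}}{\pvec a}{\pvec w}$, which is exactly the right-hand side of clause~(ii).

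The step I expect to be the main obstacle is the bookkeeping around the variable $z$, which occurs both inside the selector ${\rm if}(z,\pvec x_1,\pvec x_2)$ and as a genuine free variable of $\lTrans{A}$. One must verify carefully that substituting $\true$ (resp.\ $\false$) for $z$ in $G$ simultaneously collapses the selector to $\pvec x_1$ (resp.\ $\pvec x_2$) and rewrites $\lTrans{A}$ into $\lTrans{A}[\true/z]$ (resp.\ $\lTrans{A}[\false/z]$), so that $G[\true/z]$ genuinely matches the instance extracted from the first hypothesis; this is where the commutation of the interpretation with the substitution $[\true/z]$ and the congruence ${\rm if}(\true,\pvec x_1,\pvec x_2)=\pvec x_1$ are used. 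A secondary delicacy is justifying that $\Bleft$ is legitimately applicable to the compound target formula $\bang\,G$ (it is, since $\Bleft$ is derivable in $\AL^\BB$ for arbitrary formulas from the boolean axioms and the definable additive disjunction) and keeping the $\bang$/promotion bookkeeping consistent through the \ConUnit and case-analysis steps. Everything else follows the uniform pattern of the soundness proof (Theorem~\ref{soundness1}).
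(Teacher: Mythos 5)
Your proposal is correct and follows essentially the same route as the paper: the same witnessing terms ($\ifW{A}(b,\pvec x_1,\pvec x_2)$ for the conclusion and $\singleton{}$-bounds for the hypotheses), the instance of $\BB$L applied to the target formula built from ${\rm if}(z,\pvec x_1,\pvec x_2)$, the Monotonicity Lemma \ref{monotonicity-lemma}, and \ConUnit, with clause (i) handled via Lemma \ref{monotonicity-lemma}(i) and Lemma \ref{lem-W-closure}. The only difference is the order of the cuts (you apply \ConUnit first and the Monotonicity Lemma last, the paper does the reverse), which is immaterial; your extra remarks on justifying $\BB$L for compound target formulas and on the substitution bookkeeping for $z$ are points the paper passes over silently.
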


\begin{proof} Starting with the following consequence of $\BB$\emph{L}
\[ \bang \Wit(\pvec x_1, \pvec x_2, \pvec y),  
   \bang \uInter{\lTrans{A}[\true/z]}{{\rm if}(\true, \pvec x_1, \pvec x_2)}{\pvec y}, 
   \bang \uInter{\lTrans{A}[\false/z]}{{\rm if}(\false, \pvec x_1, \pvec x_2)}{\pvec y}, \bang \BB(z)
    \proves_{\Atarget} \uInter{\lTrans{A}}{{\rm if}(z, \pvec x_1, \pvec x_2)}{\pvec y} \]
we can apply Lemma \ref{monotonicity-lemma} $(ii)$ to obtain
\[ \bang \Wit(\pvec x_1, \pvec x_2, \pvec y), 
  \bang \uInter{\lTrans{A}[\true/z]}{\pvec x_1}{\pvec y}, 
  \bang \uInter{\lTrans{A}[\false/z]}{\pvec x_2}{\pvec y}, 
  \bang(\bound{z}{b}{\BB}) \proves_{\Atarget} \uInter{\lTrans{A}}{\ifW{\lTrans{A}}(b, \pvec x_1, \pvec x_2)}{\pvec y} \]
and by \ConUnit, assuming $\bang \Wit(\pvec x_1, \pvec x_2, \pvec y)$, we have
\[ 
  \bang \ubq{\pvec \tau^-_{\lTrans{A}}}{\pvec y'}{\singleton(\pvec y)} \uInter{\lTrans{A}[\true/z]}{\pvec x_1}{\pvec y'}, 
  \bang \ubq{\pvec \tau^-_{\lTrans{A}}}{\pvec y'}{\singleton(\pvec y)} \uInter{\lTrans{A}[\false/z]}{\pvec x_2}{\pvec y'}, 
  \bang(\bound{z}{b}{\BB}) \proves_{\Atarget} \uInter{\lTrans{A}}{\ifW{\lTrans{A}}(b, \pvec x_1, \pvec x_2)}{\pvec y} \]
The witnessing terms are clearly in $\Wit$ (referring to Lemma \ref{monotonicity-lemma} $(i)$).
\end{proof}

That the axioms $\BB$\emph{R} are witnessable follows from the second part of \Assumption{6}, and $\proves \neg (\true = \false)$ is interpreted by itself, since we are assuming that the equality predicate is non-computational. Therefore, assumptions \Assumption{1} -- \Assumption{6} guarantee the soundness of the interpretation of $\Asource$ into $\Atarget$.

\subsection{Parametrised interpretation of $\AHAomega$ (interpreting induction)}

As we have done for the theory of booleans $\BB$ above, we can also prove a general monotonicity lemma, and a corresponding parametric interpretation, for the theory of natural numbers $\NN$ and finite types $\Type$. Hence, here we take $\Asource = \AHAomega = \lTrans{(\HAomega)}$ and $\Atarget = \NAHA^\omega = \lTrans{(\nHAomega)}$. On top of the assumptions \Assumption{1} -- \Assumption{6} we must add assumptions that ensure the soundness of the induction axiom schema, and also the typing axioms:
\begin{enumerate}
\item[\Assumption{7}] Assume that for each predicate symbol $P$ of $\Asource$ we have a term in $\Atarget$
\[ \apW{P} \colon (\NN \to \wtype{P}) \to \wtype{\NN} \to \wtype{P} \]
such that
\begin{itemize}
	\item $\proves_{\Atarget} \Wit_{\wtype{P} \to \wtype{P} \to \wtype{\NN} \to \wtype{P}}(\apW{P})$
	\item $\bang \forall n^\NN \Wit(fn), \bang (\bound{n}{a}{\NN}), \bang (\bound{\pvec x}{f n}{P}) \proves_{\Atarget} \bound{\pvec x}{\apW{P}(f)(a)}{P}$
	\item for each $a \in \wtype{\NN}$ there exists an $N_a \in \NN$ such that $\bound{n}{a}{\NN} \proves_{\Atarget} n \leq N_a$
\end{itemize}

\item[\Assumption{8}] Assume that in $\Atarget$ we have a family of terms $\tilde{\ap}$ such that
\[ \bound{f}{\tilde{f}}{\sigma \to \tau}, \bound{x}{\tilde{x}}{\sigma} \proves_{\Atarget} \bound{\ap(f, x)}{\tilde{\ap}(\tilde{f}, \tilde{x})}{\tau} \]
and that for each constant $c^\tau$ of $\Asource$ we have a term $\tilde{c}$ of $\Atarget$ such that
\begin{itemize}
	\item $\proves_{\Atarget} \bound{c}{\tilde{c}}{\tau}$
\end{itemize}
In particular, it follows that 
\begin{itemize}
	\item for each numeral $n \in \NN$ there exists a term $a_n$ such that $\proves_{\Atarget} \bound{n}{a_n}{\NN}$.
\end{itemize}
\end{enumerate}

Let us see how these assumptions imply the soundness of the non-logical axioms of $\AHAomega$.

\begin{definition} For each \emph{intuitionistic} formula $A$, define the tuple of terms
\[ 
\apW{A} \colon \pvec (\NN \to \pvec \tau^+_{\lTrans{A}}) \to \wtype{\NN} \to \pvec \tau^+_{\lTrans{A}}
\]
by induction on $A$, as follows:
\[
\begin{array}{lcl}
	\apW{A \wedge B}(\pvec f_A, \pvec f_B)(a) & = & 
		\apW{A}(\pvec f_A)(a), \apW{B}(\pvec f_B)(a) \\[1mm]
	\apW{A \to B}(\pvec f, \pvec g)(a) & = & 
		\lambda \pvec x^{\pvec \tau^+_{\lTrans{A}}} . \apW{B}(\lambda i . \pvec f i \pvec x)(a);
		\lambda \pvec x^{\pvec \tau^+_{\lTrans{A}}}, \pvec w^{\pvec \tau^-_{\lTrans{B}}} . \pvec g^{\sqcup} N_a \pvec x \pvec w \\[1mm]
	\apW{\exists x A}(\pvec f)(a) & = & \apW{A}(\pvec f)(a) \\[1mm]
	\apW{\forall x A}(\pvec f)(a) & = & \apW{A}(\pvec f)(a)
\end{array}
\]
where, for predicate symbols $P$ we let $\apW{P}$ be the term assumed to exist in \Assumption{7}, and $\pvec g^{\sqcup}$ is defined recursively as
\[
\pvec g^{\sqcup} n \pvec x \pvec w = 
\left\{
	\begin{array}{lcl}
		\pvec g 0 \pvec x \pvec w & {\rm if} \; n = 0 \\[2mm]
		\join{\pvec g n \pvec x \pvec w}{\pvec g^{\sqcup} (n-1) \pvec x \pvec w} & {\rm if} \; n > 0
	\end{array}
\right.
\]
\end{definition}

\begin{lemma}[Monotonicity lemma for $\NN$] \label{monotonicity-lemma-int} Under assumption \Assumption{7}, for each \emph{intuitionistic} formula $A$, we have:
\begin{enumerate}
	\item[$(i)$] $\proves_{\Atarget} \Wit_{(\NN \to \pvec \tau^+_{\lTrans{A}}) \to \wtype{\NN} \to \pvec \tau^+_{\lTrans{A}}}(\apW{P})$
	\item[$(ii)$] $\bang \forall n^\NN \Wit(\pvec f n), \bang \Wit(\pvec y), \bang (\bound{n}{a}{\NN}), \uInter{\lTrans{A}}{\pvec f n}{\pvec y} \proves_{\Atarget} \uInter{\lTrans{A}}{\apW{A}{(\pvec f)(a)}}{\pvec y}$
\end{enumerate}
\end{lemma}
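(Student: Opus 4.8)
The plan is to mirror the proof of Lemma~\ref{monotonicity-lemma} (the monotonicity lemma for $\BB$), proceeding by induction on the intuitionistic formula $A$, with the single choice operator $\ifW{\cdot}$ replaced by the $N_a$-fold aggregation $\apW{\cdot}$ and the single use of \ConStrength replaced by an iterated one. Part $(i)$ then goes exactly as in Lemma~\ref{monotonicity-lemma}$(i)$: the components of $\apW{A}$ are assembled from the predicate-symbol terms $\apW{P}$ (which lie in $\Wit$ by the first bullet of \Assumption{7}), the join $\sqcup$ (in $\Wit$ by \ConStrength), $\lambda$-abstraction and application, so membership in $\Wit$ follows from Lemma~\ref{lem-W-closure}. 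The one extra point is that the counter-witness component $\pvec g^\sqcup$ is defined by recursion up to $N_a$, so I additionally use that $\Wit$ is closed under this bounded-join recursion, which holds in $\Atarget$ since its recursor is majorizable.

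For part $(ii)$ the base case $A = P$ is the second bullet of \Assumption{7}, and the cases of $\wedge$, $\forall$, $\exists$ are handled as in the $\BB$ lemma: for $\wedge$ one splits the $\cwedge$ and applies the two induction hypotheses; for $\forall x$ one applies the induction hypothesis under $\forall x$; and for $\exists x$, since $\lTrans{(\exists x A_0)} = \exists x\,\bang\lTrans{A_0}$, one eliminates the existential, transports the induction hypothesis for $A_0$ through the bounded quantifier coming from $\bang$ by \Quant{1} (using \Quant{2} to recover $\bang\Wit$ of the bound variable inside), and reintroduces $\exists x$.

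The crux is the implication $A \to B$, where $\lTrans{(A \to B)} = \bang\lTrans{A} \lto \lTrans{B}$ and the positive witness splits as $\pvec F n = (\pvec f n, \pvec g n)$ with counter-witness $\pvec x, \pvec w$. Unfolding both interpretations, I must derive
\[ \bang\ubq{\pvec\tau^-_{\lTrans{A}}}{\pvec y}{\pvec g^\sqcup N_a \pvec x \pvec w}\,\uInter{\lTrans{A}}{\pvec x}{\pvec y} \lto \uInter{\lTrans{B}}{\apW{B}(\lambda i.\,\pvec f i\,\pvec x)(a)}{\pvec w} \]
from the index-$n$ hypothesis $\bang\ubq{\pvec\tau^-_{\lTrans{A}}}{\pvec y}{\pvec g n\,\pvec x\,\pvec w}\,\uInter{\lTrans{A}}{\pvec x}{\pvec y}\lto\uInter{\lTrans{B}}{\pvec f n\,\pvec x}{\pvec w}$ together with $\bound{n}{a}{\NN}$. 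Assuming the aggregated antecedent, the first step is to show that the finite join $\pvec g^\sqcup N_a\,\pvec x\,\pvec w = \bigsqcup_{k \leq N_a}\pvec g k\,\pvec x\,\pvec w$ dominates each summand, i.e. that $\ubq{\pvec\tau^-_{\lTrans{A}}}{\pvec y}{\pvec g^\sqcup N_a\,\pvec x\,\pvec w}\uInter{\lTrans{A}}{\pvec x}{\pvec y}$ entails $\ubq{\pvec\tau^-_{\lTrans{A}}}{\pvec y}{\pvec g n\,\pvec x\,\pvec w}\uInter{\lTrans{A}}{\pvec x}{\pvec y}$ whenever $n \leq N_a$; this is an iterated application of \ConStrength, established by a subsidiary induction on the bound, using that $\bound{n}{a}{\NN} \proves n \leq N_a$ by the third bullet of \Assumption{7}. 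Feeding the recovered index-$n$ antecedent into the hypothesis yields $\uInter{\lTrans{B}}{\pvec f n\,\pvec x}{\pvec w}$, and since $\Wit(\pvec f k\,\pvec x)$ holds for all $k$ by \WitApp, the induction hypothesis for $B$ applied to $\lambda i.\,\pvec f i\,\pvec x$ turns this into $\uInter{\lTrans{B}}{\apW{B}(\lambda i.\,\pvec f i\,\pvec x)(a)}{\pvec w}$, discharging the implication.

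I expect the domination step to be the main obstacle: whereas the $\BB$ case needs only a single $\sqcup$, here the join ranges over the segment $\{0,\dots,N_a\}$ whose length depends on the parameter $a$, so the concrete two-fold \ConStrength must be replaced by an induction showing that $\pvec g^\sqcup N_a$ bounds $\pvec g n$ for every $n \leq N_a$. The availability of the uniform numeral bound $N_a$ furnished by \Assumption{7} is exactly what makes this aggregation well-founded, and it is the one genuinely new ingredient relative to Lemma~\ref{monotonicity-lemma}.
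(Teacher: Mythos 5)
Your proposal is correct and follows essentially the same route as the paper: your domination step is precisely the paper's auxiliary claim $(*)$, proved by the same subsidiary induction on the bound using \ConStrength{} and the third bullet of \Assumption{7}, after which the implication case is discharged by weakening the antecedent to the aggregated bound and applying the induction hypothesis for $B$ to $\lambda n.\,\pvec f n\,\pvec x$ (with the assumptions supplying $\bang\forall n^{\NN}\Wit(\pvec f n \pvec x)$, as you note via \WitApp). The remaining cases --- part $(i)$, predicate symbols, $\wedge$, $\forall$, $\exists$ --- are handled in the paper exactly as you describe, by inheritance from the proof of the monotonicity lemma for $\BB$.
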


\begin{proof} The proof is very similar to that of the monotonicity lemma for $\BB$ (Lemma \ref{monotonicity-lemma}). In the case of implication we first need to prove 
\[ (*) \quad 
	\bang \ubq{\pvec \tau^-_A}{\pvec y}{\pvec g^{\sqcup} N_a \pvec x \pvec w} \uInter{\lTrans{A}}{\pvec x}{\pvec y}, 
	\bang (\bound{n}{a}{\NN}) 
	\proves_{\Atarget} \ubq{\pvec \tau^-_A}{\pvec y}{\pvec g n \pvec x \pvec w} \uInter{\lTrans{A}}{\pvec x}{\pvec y}
\]
which we can do by first proving, using induction on $k$ and then taking $k = N_a$,  
\[ \bang \ubq{\pvec \tau^-_A}{\pvec y}{\pvec g^{\sqcup} k \pvec x \pvec w} \uInter{\lTrans{A}}{\pvec x}{\pvec y}, \bang (n \leq k) 
	\proves_{\Atarget} \ubq{\pvec \tau^-_A}{\pvec y}{\pvec g n \pvec x \pvec w} \uInter{\lTrans{A}}{\pvec x}{\pvec y}
\]
The above also uses applications of \ConStrength~for the induction step. Hence, assuming $\bang \forall n^\NN \Wit(\pvec f n, \pvec g n)$ and $\bang \Wit(\pvec x, \pvec w)$ and $\bang (\bound{n}{a}{\NN})$ we have
\[
\begin{array}{rcl}
    \uInter{\lTrans{(A \to B)}}{\pvec f n, \pvec g n}{\pvec x, \pvec w} 
    & \equiv & \bang \ubq{\pvec \tau^-_A}{\pvec y}{\pvec g n \pvec x \pvec w} \uInter{\lTrans{A}}{\pvec x}{\pvec y} 
    		     \lto \uInter{\lTrans{B}}{\pvec f n \pvec x}{\pvec w}\\[1mm]
    & \stackrel{(*)}{\Rightarrow} &
    	\bang \ubq{\pvec \tau^-_A}{\pvec y}{\pvec g^{\sqcup} N_a \pvec x \pvec w} \uInter{\lTrans{A}}{\pvec x}{\pvec y} 
    		     \lto \uInter{\lTrans{B}}{(\lambda n . \pvec f n \pvec x)(n)}{\pvec w} \\[1mm]
    & \stackrel{(\textup{IH})}{\Rightarrow} &
    	\bang \ubq{\pvec \tau^-_A}{\pvec y}{\pvec g^{\sqcup} N_a \pvec x \pvec w} \uInter{\lTrans{A}}{\pvec x}{\pvec y} 
    		     \lto \uInter{\lTrans{B}}{\apW{B}(\lambda n . \pvec f n \pvec x)(a)}{\pvec w} \\[2mm]
    & \equiv &
    	\uInter{\lTrans{(A \to B)}}{\apW{A \to B}(\pvec f, \pvec g)(a)}{\pvec x, \pvec w}
\end{array}
\]
since the assumptions imply $\bang \forall n^\NN \Wit(\pvec f n \pvec x)$ -- needed for the (IH). 
\end{proof}

\begin{proposition}\label{lTrans:induction} Under assumption \Assumption{7}, the induction rule
\[ 
\begin{prooftree}
    \proves \lTrans{A}(0)
    \quad
    \bang \lTrans{A}(n), \bang \NN(n) \proves \lTrans{A}(n+1)
    \justifies
    \bang \NN(n) \proves \lTrans{A}(n)
\end{prooftree}
\]
is witnessable in $\Atarget$.
\end{proposition}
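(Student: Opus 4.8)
The plan is to interpret induction by primitive recursion on the witnesses, and then to make the resulting witness uniform in the eigenvariable by invoking the monotonicity lemma for $\NN$ (Lemma \ref{monotonicity-lemma-int}). By the induction hypothesis the two premisses are witnessable. From the base case $\proves \lTrans{A}(0)$ I obtain a closed tuple $\pvec a_0 \in \Wit$ with $\bang \Wit(\pvec w) \proves_{\Atarget} \uInter{\lTrans{A}(0)}{\pvec a_0}{\pvec w}$. From the step $\bang \lTrans{A}(n), \bang \NN(n) \proves \lTrans{A}(n+1)$ I obtain closed tuples $\pvec b$ (the witness producer for $\lTrans{A}(n+1)$) and $\pvec \delta$ (the counter-witness feedback for $\bang \lTrans{A}(n)$), all in $\Wit$, such that
\[ \bang \Wit(\pvec x, a, \pvec w),\; \bang \ubq{\pvec \tau^-_{\lTrans{A}}}{\pvec y}{\pvec \delta \pvec x a \pvec w}\, \uInter{\lTrans{A}(n)}{\pvec x}{\pvec y},\; \bang \bound{n}{a}{\NN} \proves_{\Atarget} \uInter{\lTrans{A}(n+1)}{\pvec b \pvec x a}{\pvec w}, \]
the counter-witness slot of $\bang \NN(n)$ being empty. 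Finally, \Assumption{8} provides a closed term $\mathrm{num} \in \Wit$ with $\bound{i}{\mathrm{num}(i)}{\NN}$ for every numeral $i$, obtained by $\mathrm{num}(0) = \tilde 0$ and $\mathrm{num}(i+1) = \widetilde{\suc}(\mathrm{num}(i))$.

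First I would define a family $\pvec f \colon \NN \to \pvec \tau^+_{\lTrans{A}}$ by the recursor, setting $\pvec f(0) \pdefin \pvec a_0$ and $\pvec f(i+1) \pdefin \pvec b\,(\pvec f(i))\,(\mathrm{num}(i))$, and propose $\pvec a \pdefin \lambda a.\,\apW{A}(\pvec f)(a)$ as the witness producer for the conclusion (there is no counter-witness producer to supply). Two facts about $\pvec f$ are then proved by induction inside $\Atarget$. Claim (I): $\bang \forall i^\NN \Wit(\pvec f(i))$, by numerical induction from $\Wit(\pvec a_0)$, $\Wit(\pvec b)$, $\Wit(\mathrm{num}(i))$ (the latter via \Assumption{4} and $\bound{i}{\mathrm{num}(i)}{\NN}$), and \WitApp. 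Claim (II): for all $\pvec w \in \Wit$, $\bang \Wit(\pvec w) \proves_{\Atarget} \uInter{\lTrans{A}(i)}{\pvec f(i)}{\pvec w}$, i.e. $\pvec f(i)$ witnesses $\lTrans{A}(i)$ uniformly in its counter-witness. The base case is the witnessability of $\proves \lTrans{A}(0)$. For the step, from the invariant $\bang \Wit(\pvec y) \proves \uInter{\lTrans{A}(i)}{\pvec f(i)}{\pvec y}$ at stage $i$, I would first lift it to a bounded quantifier: by \Quant{1} applied to this sequent together with $\proves \ubq{\pvec \tau^-_{\lTrans{A}}}{\pvec y}{c} \bang \Wit(\pvec y)$ from \Quant{2}, one gets $\proves \ubq{\pvec \tau^-_{\lTrans{A}}}{\pvec y}{c} \uInter{\lTrans{A}(i)}{\pvec f(i)}{\pvec y}$ for the specific bound $c \pdefin \pvec \delta\,(\pvec f(i))\,(\mathrm{num}(i))\,\pvec w$; feeding this (under $\bang$) into the step hypothesis instantiated at $i$ with $\pvec x := \pvec f(i)$, $a := \mathrm{num}(i)$ yields $\uInter{\lTrans{A}(i+1)}{\pvec b\,(\pvec f(i))\,(\mathrm{num}(i))}{\pvec w} = \uInter{\lTrans{A}(i+1)}{\pvec f(i+1)}{\pvec w}$, which is the invariant at $i+1$.

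Instantiating Claim (II) at the eigenvariable $n$ gives $\uInter{\lTrans{A}(n)}{\pvec f(n)}{\pvec w}$. Combining this with Claim (I) and the hypothesis $\bang \bound{n}{a}{\NN}$ (which also yields $\NN(n)$ through \Assumption{3}), Lemma \ref{monotonicity-lemma-int}$(ii)$ produces $\uInter{\lTrans{A}(n)}{\apW{A}(\pvec f)(a)}{\pvec w}$, that is $\uInter{\lTrans{A}(n)}{\pvec a\,a}{\pvec w}$, which is exactly point $(ii)$ of Definition \ref{def-witnessable} for the conclusion $\bang \NN(n) \proves \lTrans{A}(n)$. Point $(i)$ reduces, via Lemma \ref{monotonicity-lemma-int}$(i)$ ($\apW{A} \in \Wit$) and Lemma \ref{lem-W-closure}, to showing that $\pvec f$ is in $\Wit$ as a functional.

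I expect the main obstacle to be precisely that last point: obtaining $\Wit_{\NN \to \pvec \tau^+_{\lTrans{A}}}(\pvec f)$ for the recursively defined $\pvec f$. Pointwise membership (Claim (I)) is immediate, and it is all that Lemma \ref{monotonicity-lemma-int}$(ii)$ and \Assumption{7} require for point $(ii)$; but the witness producer $\pvec a$ needs $\pvec f$ to be in $\Wit$ as a \emph{function}, whereas \Assumption{4} closes $\Wit$ only under $\S$, $\K$ and application, not under the recursor. For the Dialectica instance, where $\Wit \equiv \top$, this is vacuous; for the bounded instances, where $\Wit$ is majorizability, the plain recursor is not self-majorizing, and the remedy is to build (or majorize) $\pvec f$ with the monotone recursor of the Majorizability remark, so that $\apW{A}(\pvec f)(a)$ genuinely lands in $\Wit$. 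Verifying the existence and $\Wit$-membership of this monotone recursor in $\Atarget$ is the one genuinely delicate ingredient; the counter-witness bookkeeping via $\pvec \delta$ and the lifting through \Quant{1}--\Quant{2} is the other place demanding care, but is routine given the quantifier assumptions.
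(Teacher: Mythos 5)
Your proposal is correct and takes essentially the same route as the paper's own proof: realizers for the two premisses, a witness family $\pvec f$ defined by primitive recursion (using the numeral witnesses $a_n$ provided by \Assumption{8}), a counter-witness-uniform invariant $\bang \Wit(\pvec y), \bang \NN(n) \proves_{\Atarget} \uInter{\lTrans{A}(n)}{\pvec f n}{\pvec y}$ established by induction on $n$, and a final application of the monotonicity lemma for $\NN$ (Lemma \ref{monotonicity-lemma-int}) yielding the witness $\apW{A}(\pvec f)(a)$. The ``main obstacle'' you single out --- that point $(i)$ of Definition \ref{def-witnessable} requires $\Wit(\pvec f)$ \emph{as a function}, which the closure conditions \WitS, \WitK, \WitApp{} cannot deliver for a term built with the recursor --- is genuine, but it is not a divergence from the paper: the paper's proof is silent on this point as well, establishing only the pointwise fact $\bang \forall n^{\NN} \Wit(\pvec f n)$, so your remark (and the instance-by-instance remedy via a monotone recursor in the bounded cases) flags a detail the paper glosses over rather than a different method.
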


\begin{proof} We must show that if the two premises are witnessable then the conclusion is also witnessable. Let $\pvec s, \pvec r, \pvec t$ in $\Wit$ be realisers for the premises:
\begin{enumerate}
	\item[$(i)$] $\bang \Wit(\pvec y) \proves_{\Atarget} \uInter{\lTrans{A}(0)}{\pvec s}{\pvec y}$
	\item[$(ii)$] $\bang \Wit(\pvec x, \pvec y),
	  \bang \ubq{\tau^-_{\lTrans{A}}}{\pvec y'}{\pvec r a \pvec x \pvec y} \uInter{\lTrans{A}(n)}{\pvec x}{\pvec y'}, 
	  \bang (\bound{n}{a}{\NN}) 
	  \proves_{\Atarget}
	  \uInter{\lTrans{A}(n+1)}{\pvec t a \pvec x}{\pvec y}$
\end{enumerate}
From the assumption that for each $n \in \NN$ there exists an $a_n \in \wtype{\NN}$ such that $\bound{n}{a_n}{\NN}$, we have
\begin{enumerate}
	\item[$(iii)$] $\bang \Wit(\pvec x, \pvec y),
	  \bang \ubq{\tau^-_{\lTrans{A}}}{\pvec y'}{\pvec r a_n \pvec x \pvec y} \uInter{\lTrans{A}(n)}{\pvec x}{\pvec y'}, 
	  \bang \NN(n)
	  \proves_{\Atarget}
	  \uInter{\lTrans{A}(n+1)}{\pvec t a_n \pvec x}{\pvec y}$
\end{enumerate}
which implies
\begin{enumerate}
	\item[$(iv)$] $\bang \Wit(\pvec x),
	  \bang \forall \pvec y^{\Wit{}} \uInter{\lTrans{A}(n)}{\pvec x}{\pvec y}, 
	  \bang \NN(n)
	  \proves_{\Atarget} \,
	  \bang \forall \pvec y^{\Wit{}} \uInter{\lTrans{A}(n+1)}{\pvec t a_n \pvec x}{\pvec y}$
\end{enumerate}
Let $\pvec f n$ be defined by primitive recursion on $n$ as 
\[
\pvec f n = 
\left \{
	\begin{array}{lcl}
		\pvec s & {\rm if} \; n = 0 \\[2mm]
		\pvec t a_n (\pvec f (n-1)) & {\rm if} \; n > 0
	\end{array}
\right.
\]
Since $\Wit(\pvec s, \pvec t, a_n)$, it follows that $\bang \forall n^\NN \Wit(\pvec f n)$. From $(i)$ and $(iv)$, by induction on $n$ we have
\begin{enumerate}
	\item[$(v)$] 
	$\bang \Wit(\pvec y),
	  \bang \NN(n)
	  \proves_{\Atarget}
	  \uInter{\lTrans{A}(n)}{\pvec f n}{\pvec y}$
\end{enumerate}
and, by Lemma \ref{monotonicity-lemma-int}, $\bang \Wit(\pvec y), \bang (\bound{n}{a}{\NN}) \proves_{\Atarget} \uInter{\lTrans{A}(n)}{\apW{A}(\pvec f) (a)}{\pvec y}$.
\end{proof}

Using \Assumption{8} we can also guarantee the soundness of the typing axioms of $\AHAomega$. The remaining non-logical axioms of $\AHAomega$ only involve equations on natural numbers, and are easily seen to be witnessable. It follows that assumptions \Assumption{1} -- \Assumption{8} guarantee the soundness of the interpretation of $\AHAomega$ into $\NAHA^\omega$.

\section{Parametrised Interpretations of $\IL$}
\label{IL-interpretations}

We now describe how the parametrised interpretation of $\AL$-theories gives rise to two parametrised interpretations of $\IL$-theories. 

\begin{definition}[$\AL$-parameters from $\IL$-parameters] \label{def-parameters-translation} In the context of $\IL$-theories,  when we refer to assumptions \Assumption{1} -- \Assumption{8}, we mean the \emph{forgetful translation} of original $\AL$-assumptions, by considering the following translation of $\IL$-parameters:
\[
\begin{array}{lcl}
	\lWit_\tau(x) & \pdefin & \lTrans{(\Wit_\tau(x))} \\[2mm]
	\lbound{\pvec x}{a}{P} & \pdefin & \lTrans{(\bound{\pvec x}{a}{P})} \\[2mm]
	\lubq{\pvec \tau}{\pvec x}{\pvec a}{A} & \pdefin & \lTrans{(\ubq{\pvec \tau}{\pvec x}{\pvec a} \fTrans{A})} \\[2mm]
\end{array}
\]
\end{definition}

For instance, \Assumption{1} would say that when interpreting an $\IL$-theory $\Isource$ into an $\IL$-theory $\Itarget$, the theory $\Itarget$ must be an extension of $\IL^\omega$. What is important is that from the assumptions that $\Isource$ and $\Itarget$ satisfy\footnote{By Proposition \ref{prop:lb-equivalence} it follows that $\bTrans{\Isource}$ and $\bTrans{\Itarget}$ also satisfy the $\AL$-version of \Assumption{1} -- \Assumption{8} for the parameters
\[
\begin{array}{lcl}
	\Wit_\tau^\circ(x) & \pdefin & \bTrans{(\Wit_\tau(x))} \\[2mm]
	\bbound{\pvec x}{a}{P} & \pdefin & \bTrans{(\bound{\pvec x}{a}{P})} \\[2mm]
	\bubq{\pvec \tau}{\pvec x}{\pvec a}{A} & \pdefin & \bTrans{(\ubq{\pvec \tau}{\pvec x}{\pvec a} \fTrans{A})} \\[2mm]
\end{array}	
\]
} the $\IL$-version of \Assumption{1} -- \Assumption{8} for the parameters $\{\bound{\pvec x}{a}{P}\}_{P \in \Pred{\Isource}^c}$, $\{\Wit_\tau(x)\}_{\tau \in \Type}$ and $\{\ubq{\pvec \tau}{\pvec x}{\pvec a} A\}_{\pvec \tau \in \Type; A \in \Formulas{\Itarget}}$, it must follow that $\lTrans{\Isource}$ and $\lTrans{\Itarget}$ satisfy the $\AL$-version of \Assumption{1} -- \Assumption{8} for the parameters $\{\lbound{\pvec x}{a}{P}\}_{P \in \Pred{\lTrans{\Isource}}^c}$, $\{\lWit_\tau(x)\}_{\tau \in \Type}$ and $\{\lubq{\pvec \tau}{\pvec x}{\pvec a} A\}_{\pvec \tau \in \Type; A \in \Formulas{\Itarget}}$.

\begin{figure}
\[
\begin{tikzcd}[row sep=huge, column sep = huge]
\Isource \arrow[r, "\lInter{\cdot}{\pvec x}{\pvec y}; \; \bInter{\cdot}{\pvec x}{\pvec y}"] \arrow[d,  "\lTrans{(\cdot)};\, \bTrans{(\cdot)}"]
& \Itarget \\
\lTrans{\Isource} \simeq \bTrans{\Isource} \arrow[r, "\uInter{\,\cdot\,}{\pvec x}{\pvec y}"]
& \lTrans{\Itarget} \simeq \bTrans{\Itarget} \arrow[u, "\fTrans{(\cdot)}" ] 
\end{tikzcd}
\]
\caption{Parametrised interpretations of $\Isource$ into $\Itarget$} 
\label{Diagram_interpret}
\end{figure}

We will normally omit the type parameter $\tau$ in the formulas above when this can be easily inferred from the context. For an $\IL$-theory, we write $\forall \bound{\pvec x}{a}{P} \, A$ and $\exists \bound{\pvec x}{a}{P} \, A$ as abbreviations for $\forall \pvec x (\bound{\pvec x}{a}{P} \to A)$ and $\exists \pvec x (\bound{\pvec x}{a}{P} \wedge A)$, respectively.

\begin{definition}[$\IL$-interpretations] \label{def:combined-inter} Given the choice of $\IL$ parameters in $\Itarget$, consider the $\lTrans{(\cdot)}$-translation of these parameters in the $\AL$-theory $\lTrans{\Itarget}$ with the corresponding $\AL$-interpretation $A \mapsto \uInter{A}{\pvec x}{\pvec y}$ of $\lTrans{\Isource}$ into $\lTrans{\Itarget}$. This gives rise to the following $\IL$-interpretation of $\Isource$ into $\Itarget$
\[  A \quad \mapsto \quad \fTrans{(\uInter{\lTrans{A}}{\pvec x}{\pvec y})}  \]
which we will abbreviate as $\lInter{A}{\pvec x}{\pvec y} \,\pdefin\, \fTrans{(\uInter{\lTrans{A}}{\pvec x}{\pvec y})}$. Similarly, consider the $\bTrans{(\cdot)}$-translation of the  parameters in $\bTrans{\Itarget}$ with the corresponding $\AL$-interpretation $A \mapsto \uInter{A}{\pvec x}{\pvec y}$ of $\bTrans{\Isource}$ into $\bTrans{\Itarget}$. This gives rise to the following $\IL$-interpretation of $\Isource$ into $\Itarget$
\[  A \quad \mapsto \quad \fTrans{(\uInter{\bTrans{A}}{\pvec x}{\pvec y})}  \]
which we will abbreviate as $\bInter{A}{\pvec x}{\pvec y} \,\pdefin\, \fTrans{(\uInter{\bTrans{A}}{\pvec x}{\pvec y})}$.
\end{definition}

The relation between the parametrised interpretation of $\AL$-theories, and the two parametrised interpretations of $\IL$-theories is summarised in Figure \ref{Diagram_interpret}.

\begin{proposition} \label{prop-lInter} The following equivalences are provable in $\Itarget$
\[
\begin{array}{lcl}
\lInter{P(\pvec x)}{a}{} & \Leftrightarrow &  \bound{\pvec x}{a}{P} \quad \mbox{if $P \in \Pred{\Isource}^{c}$} \\[2mm]
\lInter{P(\pvec x)}{}{} & \Leftrightarrow & P(\pvec x) \quad \mbox{if $P \in \Pred{\Isource}^{nc}$} \\[2mm]
\lInter{A \to B}{\pvec f, \pvec g}{\pvec x, \pvec w} & \Leftrightarrow & \ubq{\pvec \tau^-_{\lTrans{A}}}{\pvec y}{\pvec g \pvec x \pvec w} \lInter{A}{\pvec x}{\pvec y} \to \lInter{B}{\pvec f \pvec x}{\pvec w} \\[2mm]
\lInter{A \wedge B}{\pvec x, \pvec v}{\pvec y, \pvec w} & \Leftrightarrow & \lInter{A}{\pvec x}{\pvec y} \wedge \lInter{B}{\pvec v}{\pvec w} \\[2mm]
\lInter{A \vee B}{b, \pvec x, \pvec v}{\pvec y, \pvec w} & \Leftrightarrow & \exists \bound{z}{b}{\BB} (\pcond{z}{\ubq{\pvec \tau^-_{\lTrans{A}}}{\pvec y'}{\pvec y} \lInter{A}{\pvec x}{\pvec y'}}{\ubq{\pvec \tau^-_{\lTrans{B}}}{\pvec w'}{\pvec w}\lInter{B}{\pvec v}{\pvec w'}}) \\[2mm]
\lInter{\exists z A}{\pvec x}{\pvec y} & \Leftrightarrow & \exists z \ubq{\pvec \tau^-_{\lTrans{A}}}{\pvec y'}{\pvec y} \lInter{A}{\pvec x}{\pvec y'} \\[2mm]
\lInter{\forall z A}{\pvec x}{\pvec y} & \Leftrightarrow & \forall z \lInter{A}{\pvec x}{\pvec y} 
\end{array}
\]
In particular, we have that for computational predicate symbols $P$:
\eqleft{
\begin{array}{lcl}
\lInter{\exists z^P A}{c, \pvec x}{\pvec y} & \Leftrightarrow & \exists \bound{z}{c}{P} \ubq{\pvec \tau^-_{\lTrans{A}}}{\pvec y'}{\pvec y} \lInter{A}{\pvec x}{\pvec y'} \\[2mm]
\lInter{\forall z^P A}{\pvec f}{b, \pvec y} & \Leftrightarrow & \forall \bound{z}{b}{P} \, \lInter{A}{\pvec f b}{\pvec y}
\end{array}
}
\end{proposition}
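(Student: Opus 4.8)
The plan is to establish every equivalence by unfolding the three layers packed into the definition $\lInter{A}{\pvec x}{\pvec y} \pdefin \fTrans{(\uInter{\lTrans A}{\pvec x}{\pvec y})}$: first Girard's translation $\lTrans{(\cdot)}$ of Definition \ref{g-trans}, then the parametrised $\AL$-interpretation $\uInter{\cdot}{\cdot}{\cdot}$ of Definition \ref{inter} applied with the $\lTrans{(\cdot)}$-translated parameters of Definition \ref{def-parameters-translation}, and finally the forgetful translation $\fTrans{(\cdot)}$ of Definition \ref{forget}. The computation rests on one auxiliary observation, proved by a one-line induction on $C$: for every $\IL$-formula $C$ we have $\fTrans{(\lTrans C)} = C$ syntactically (not merely up to $\Leftrightarrow$), since $\fTrans{(\cdot)}$ erases exactly the occurrences of $\bang$ that $\lTrans{(\cdot)}$ inserts. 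Because every parameter occurring in the interpretation already has the shape $\lTrans{(\cdots)}$ — namely $\lbound{\pvec x}{a}{P} = \lTrans{(\bound{\pvec x}{a}{P})}$, $\lubq{\pvec \tau}{\pvec y}{\pvec a}{D} = \lTrans{(\ubq{\pvec \tau}{\pvec y}{\pvec a}\fTrans D)}$, and likewise for $\Wit$ — the outermost $\fTrans{(\cdot)}$ cancels those $\lTrans{(\cdot)}$'s on the nose, returning $\ubq{\pvec \tau}{\pvec y}{\pvec a}\fTrans D$ from $\lubq{\pvec \tau}{\pvec y}{\pvec a}{D}$.

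With this the first block of equivalences is mechanical. For a computational $P$ we read $\uInter{\lTrans{(P(\pvec x))}}{a}{}= \lbound{\pvec x}{a}{P}$, whose image under $\fTrans{(\cdot)}$ is $\bound{\pvec x}{a}{P}$; the non-computational, $\cwedge/\wedge$ and $\forall$ clauses are immediate as all three operations act componentwise. The only clauses that do anything are $\to$ and $\exists$, precisely because $\lTrans{(A\to B)} = \bang\lTrans A \lto \lTrans B$ and $\lTrans{(\exists z A)} = \exists z\,\bang\lTrans A$ create a $\bang$. Unfolding the $\lto$-clause and then the $\bang$-clause of Definition \ref{inter} gives $\bang\,\ubq{\pvec \tau^-_{\lTrans A}}{\pvec y}{\pvec g \pvec x \pvec w}\uInter{\lTrans A}{\pvec x}{\pvec y}\lto\uInter{\lTrans B}{\pvec f \pvec x}{\pvec w}$; applying $\fTrans{(\cdot)}$ deletes the outer $\bang$, rewrites $\lto$ as $\to$, and by the auxiliary observation turns $\ubq{\pvec \tau^-_{\lTrans A}}{\pvec y}{\pvec g \pvec x \pvec w}\uInter{\lTrans A}{\pvec x}{\pvec y}$ into $\ubq{\pvec \tau^-_{\lTrans A}}{\pvec y}{\pvec g \pvec x \pvec w}\lInter A {\pvec x}{\pvec y}$ — exactly the stated right-hand side. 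The $\exists$ case is identical, the $\bang$ under the quantifier producing $\ubq{\pvec \tau^-_{\lTrans A}}{\pvec y'}{\pvec y}$.

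For disjunction I would avoid $\lTrans{(A\vee B)}$ and instead expand the defining abbreviation $A\vee B \pdefin \exists z^\BB(((z=\true)\to A)\wedge((z=\false)\to B))$ of Proposition \ref{prop-def-disjunction}, feeding it through the $\exists$-, $\wedge$- and $\to$-clauses just obtained. Here $\BB$ is computational, so $\lInter{\BB(z)}{b}{}\Leftrightarrow\bound{z}{b}{\BB}$, whereas equality is non-computational, so its associated bounded quantifier ranges over the empty tuple and the antecedents $z=\true$, $z=\false$ pass through untouched. This produces $\exists z\,\ubq{\pvec \tau^-_{\lTrans A},\pvec \tau^-_{\lTrans B}}{\pvec y',\pvec w'}{\pvec y,\pvec w}(\bound{z}{b}{\BB}\wedge((z=\true)\to\lInter A {\pvec x}{\pvec y'})\wedge((z=\false)\to\lInter B {\pvec v}{\pvec w'}))$. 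To bring this into the claimed conditional shape I would use that $\bound{z}{b}{\BB}$ forces $\BB(z)$ (Assumption \Assumption{3}) and split on the boolean value of $z$: by $\proves\neg(\true=\false)$ the inactive disjunct collapses, leaving on the $A$-branch only $\ubq{\pvec \tau^-_{\lTrans A}}{\pvec y'}{\pvec y}\lInter A {\pvec x}{\pvec y'}$ (symmetrically for $B$), which is $\pcond{z}{\ubq{\pvec \tau^-_{\lTrans A}}{\pvec y'}{\pvec y}\lInter A {\pvec x}{\pvec y'}}{\ubq{\pvec \tau^-_{\lTrans B}}{\pvec w'}{\pvec w}\lInter B {\pvec v}{\pvec w'}}$.

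Finally, the two ``in particular'' statements follow by expanding $\forall z^P A = \forall z(P(z)\to A)$ and $\exists z^P A = \exists z(P(z)\wedge A)$ through the general clauses. The universal case is clean: as $P$ is computational its counter-witness type is empty, the bounded quantifier generated by the $\to$-clause is vacuous, and one reads off $\forall \bound{z}{b}{P}\,\lInter A {\pvec f b}{\pvec y}$ directly. The existential case reduces, after unfolding, to commuting the bounded quantifier $\ubq{\pvec \tau^-_{\lTrans A}}{\pvec y'}{\pvec y}$ past the $\pvec y'$-free conjunct $\bound{z}{c}{P}$. I expect this — together with the two analogous steps in the disjunction case — to be the only real obstacle: it requires showing, from \Quant{1}, \Quant{2} and the conditions \ConUnit, \ConStrength, \ConApp alone, that the abstract bounded quantifier distributes over a conjunction one of whose conjuncts omits the bound variables, and that a bounded quantifier over the empty tuple is redundant. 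The forward direction is immediate from the monotonicity \Quant{1}; the converse is the one place where the argument genuinely relies on the structural axioms for $\ubq{\pvec \tau}{\pvec x}{\pvec a}$ rather than on bare unfolding, and so is where I would concentrate the care.
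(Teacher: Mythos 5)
Most of your proposal coincides with the paper's own proof: for predicates, $\wedge$, $\to$, $\exists$, $\forall$ and the relativised universal quantifier the paper does exactly the mechanical three-layer unfolding you describe, and your auxiliary observation $\fTrans{(\lTrans{C})} = C$ is precisely what the paper uses (implicitly) in its forgetful-translation steps. The two places you flagged as ``the only real obstacle'' are, however, genuine gaps, and one of them cannot be repaired along the route you chose.

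\emph{Disjunction.} Expanding the abbreviation $A \vee B$ and pushing it through the $\exists$/$\wedge$/$\to$ clauses, as you propose, yields
\[
\exists z\, \ubq{\pvec\tau^-_{\lTrans{A}},\,\pvec\tau^-_{\lTrans{B}}}{\pvec y'\!,\pvec w'}{\pvec y,\pvec w}\bigl(\bound{z}{b}{\BB} \wedge ((z=\true)\to\lInter{A}{\pvec x}{\pvec y'}) \wedge ((z=\false)\to\lInter{B}{\pvec v}{\pvec w'})\bigr),
\]
with a \emph{single joint} bounded quantifier over the concatenated tuple scoping over the whole matrix. To reach the claimed form you would have to (a) split the joint quantifier over $(\pvec y',\pvec w')$ into two separate quantifiers, and (b) commute each of them past the conjunct $\bound{z}{b}{\BB}$ and past the antecedents $(z=\true)$, $(z=\false)$. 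Neither step is available: $\ubq{\pvec\tau}{\pvec x}{\pvec a}$ is an abstract parameter, and assumption \Assumption{5} (i.e. \Quant{1}, \Quant{2}, \ConUnit, \ConStrength, \ConApp) contains no axiom relating the quantifier on a concatenated tuple to iterated quantifiers, and none allowing commutation with $\wedge$ or $\to$; even the later assumptions \Quant{3}--\Quant{5} give only distribution over $\cwedge$ with a variable-free conjunct and commutation with $\forall/\exists$, not what you need. Your boolean case split using $\proves\neg(\true=\false)$ takes place \emph{inside} the scope of the abstract quantifier, so it cannot rescue this. The paper avoids the problem entirely by unfolding $\lTrans{(A\vee B)}$ not from the intuitionistic abbreviation but as $\bang\lTrans{A}\avee\bang\lTrans{B}$ (Proposition \ref{prop-definable-connectives}, the extended Girard translation on $\IL^\BB$): there the exponentials sit inside the two conditional branches, so interpreting generates one bounded quantifier \emph{per branch}, already in the claimed position, and the forgetful translation finishes the job by pure unfolding --- no quantifier manipulations and no case analysis at all.

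\emph{Relativised existential.} Here your diagnosis is exactly right and your worry is justified: commuting $\ubq{\pvec\tau^-_{\lTrans{A}}}{\pvec y'}{\pvec y}$ past the $\pvec y'$-free conjunct $\bound{z}{c}{P}$ does \emph{not} follow from \Quant{1}, \Quant{2}, \ConUnit, \ConStrength, \ConApp. The paper's own proof closes this step by invoking \Quant{3}, the distribution axiom $\ubq{\pvec \tau}{\pvec x}{\pvec a}{(A \cwedge B(\pvec x))} \Leftrightarrow A \cwedge \ubq{\pvec \tau}{\pvec x}{\pvec a}{B(\pvec x)}$ for $\pvec x \not\in \FV(A)$, which is an \emph{additional} hypothesis on the parameter (stated only in Section 3.3). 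So this clause is provable, but only under an assumption beyond those you allowed yourself; no amount of care with \ConUnit, \ConStrength, \ConApp will derive it.
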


\begin{proof} See Appendix A. \end{proof}

\begin{proposition} \label{prop-bInter} The following equivalences are provable in $\Itarget$
\eqleft{
\begin{array}{lcl}
\bInter{P(\pvec x)}{a}{} & \Leftrightarrow &  \bound{\pvec x}{a}{P} \quad \mbox{if $P \in \Pred{\Isource}^{c}$} \\[2mm]
\bInter{P(\pvec x)}{}{} & \Leftrightarrow & P(\pvec x) \quad \mbox{if $P \in \Pred{\Isource}^{nc}$} \\[2mm]
\bInter{A \to B}{\pvec f, \pvec g}{\pvec x, \pvec w} & \Leftrightarrow & \ubq{\pvec \tau^+_{\bTrans{A}}, \pvec \tau^-_{\bTrans{B}}}{\pvec x', \pvec w'}{\pvec x, \pvec w} (\bInter{A}{\pvec x'}{\pvec g \pvec x' \pvec w'} \to \bInter{B}{\pvec f \pvec x'}{\pvec w'}) \\[2mm]
\bInter{A \wedge B}{\pvec x, \pvec v}{\pvec y, \pvec w} & \Leftrightarrow & \bInter{A}{\pvec x}{\pvec y} \wedge \bInter{B}{\pvec v}{\pvec w} \\[2mm]
\bInter{A \vee B}{\pvec x, \pvec v, b}{\pvec y, \pvec w} & \Leftrightarrow & \exists \bound{z}{b}{\BB} (\pcond{z}{\bInter{A}{\pvec x}{\pvec y}}{\bInter{B}{\pvec v}{\pvec w}}) \\[2mm]
\bInter{\exists z A}{\pvec x}{\pvec y} & \Leftrightarrow & \exists z \bInter{A}{\pvec x}{\pvec y} \\[2mm]
\bInter{\forall z A}{\pvec x}{\pvec y} & \Leftrightarrow & \ubq{\pvec \tau^-_{\bTrans{A}}}{\pvec y'}{\pvec y} \forall z \bInter{A}{\pvec x}{\pvec y'} 
\end{array}
}
In particular, we have that for computational predicate symbols $P$
\eqleft{
\begin{array}{lcl}
\bInter{\exists z^P A}{\pvec x, c}{\pvec y} & \Leftrightarrow & \exists \bound{z}{c}{P} \; \bInter{A}{\pvec x}{\pvec y} \\[2mm]
\bInter{\forall z^P A}{\pvec f}{c, \pvec y} & \Leftrightarrow & \ubq{\wtype{P}, \btype{\pvec \tau}^-_{\bTrans{A}}}{c', \pvec y'}{c, \pvec y} \ubq{\pvec \tau^-_{\bTrans{A}}}{c'', \pvec y''}{c', \pvec y'} \forall \bound{z}{c''}{P} \, \bInter{A}{\pvec f c''}{\pvec y''}   
\end{array}
}
\end{proposition}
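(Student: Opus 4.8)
The plan is to establish every listed equivalence by a single induction on the intuitionistic formula $A$, unwinding the definition $\bInter{A}{\pvec x}{\pvec y} = \fTrans{(\uInter{\bTrans{A}}{\pvec x}{\pvec y})}$ one layer at a time; the argument runs parallel to the one for $\lInter{\cdot}{\cdot}{\cdot}$ in Proposition~\ref{prop-lInter}. Before touching the connectives I would record two bookkeeping facts about the outer translations. A short induction on $\IL$-formulas $C$, reading off Definitions~\ref{g-trans} and~\ref{forget}, shows $\fTrans{(\bTrans{C})} = C$: each clause of $\bTrans{(\cdot)}$ either inserts a $\bang$, which $\fTrans{(\cdot)}$ deletes, or copies a connective, which $\fTrans{(\cdot)}$ reproduces. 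Combined with the definition $\bubq{\pvec \tau}{\pvec y}{\pvec a}{D} = \bTrans{(\ubq{\pvec \tau}{\pvec y}{\pvec a} \fTrans{D})}$ from Definition~\ref{def-parameters-translation}, this yields the key identity
\[ \fTrans{(\bubq{\pvec \tau}{\pvec y}{\pvec a}{D})} = \ubq{\pvec \tau}{\pvec y}{\pvec a} \fTrans{D}, \]
which is precisely the mechanism that converts the $\AL$-level bounded quantifier introduced by the $\bang$-clause of Definition~\ref{inter} into the $\IL$-level bounded quantifier appearing on the right-hand sides. Since the forgetful translation preserves provability, $\fTrans{(\cdot)}$ sends $\bTrans{\Itarget}$-derivations to $\Itarget$-derivations, so all the resulting identities may be read as equivalences provable in $\Itarget$.

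With these in place, the cases free of a translation-inserted exponential are immediate. For non-computational $P$, and for $\cwedge$ and $\exists$, the map $\fTrans{(\cdot)}$ commutes directly with the connective, giving $\bInter{P(\pvec x)}{}{} \Leftrightarrow P(\pvec x)$, $\bInter{A \wedge B}{\cdot}{\cdot} \Leftrightarrow \bInter{A}{\cdot}{\cdot} \wedge \bInter{B}{\cdot}{\cdot}$ and $\bInter{\exists z A}{\cdot}{\cdot} \Leftrightarrow \exists z \bInter{A}{\cdot}{\cdot}$. For computational $P$ we have $\bTrans{(P(\pvec x))} = \bang P(\pvec x)$ with $\pvec \tau^-_{P} = \varepsilon$, so the $\bang$-clause yields $\bang(\bbound{\pvec x}{a}{P})$, whose $\fTrans{(\cdot)}$-image is $\bound{\pvec x}{a}{P}$. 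The two cases where the inserted $\bang$ does real work are $\bTrans{(\forall z A)} = \bang\forall z \bTrans{A}$ and $\bTrans{(A \to B)} = \bang(\bTrans{A} \lto \bTrans{B})$: combining the $\forall$-clause, respectively the $\lto$-clause, with the $\bang$-clause introduces a bounded quantifier over $\pvec \tau^-_{\bTrans{A}}$, respectively over $\pvec \tau^+_{\bTrans{A}}, \pvec \tau^-_{\bTrans{B}}$, and applying $\fTrans{(\cdot)}$ together with the identity above delivers exactly the displayed formulas.

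The disjunction clause is the one requiring more than bookkeeping, and I expect it to be the main obstacle. Because $\vee$ is a defined connective (Proposition~\ref{prop-def-disjunction}), I would first pass from $\bTrans{(A \vee B)}$ to $\bTrans{A} \avee \bTrans{B}$ using Proposition~\ref{prop-definable-connectives}$(ii)$ --- a genuine provable equivalence rather than a syntactic identity, which is exactly why the whole proposition is stated up to provability in $\Itarget$ --- and then compute the $\AL$-interpretation of the unfolded additive disjunction $\exists z (\bang \BB(z) \cwedge ((\bang(z = \true) \lto \bTrans{A}) \cwedge (\bang(z = \false) \lto \bTrans{B})))$. Here the computational boolean predicate contributes a witness $b$ with $\bbound{z}{b}{\BB}$, while each guarded implication $\bang(z = \true) \lto \bTrans{A}$ interprets (since equality is non-computational and carries no witnesses) simply as $\bang(z = \true) \lto \uInter{\bTrans{A}}{\pvec x}{\pvec y}$; applying $\fTrans{(\cdot)}$ collapses the two guards into the single conditional $\pcond{z}{\bInter{A}{\pvec x}{\pvec y}}{\bInter{B}{\pvec v}{\pvec w}}$ and turns $\exists z (\bang \BB(z) \cwedge \cdots)$ into $\exists \bound{z}{b}{\BB}(\cdots)$, as required.

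Finally, the two ``in particular'' equivalences are obtained by specialising the clauses just proved to quantifiers relativised to a computational predicate $P$. For $\exists z^P A = \exists z (P(z) \wedge A)$ one chains the $\exists$- and $\wedge$-clauses and substitutes $\bInter{P(z)}{c}{} \Leftrightarrow \bound{z}{c}{P}$. For $\forall z^P A = \forall z (P(z) \to A)$ one chains the $\forall$- and $\to$-clauses, using that $\bTrans{(\forall z^P A)} = \bang\forall z \bang(\bang P(z) \lto \bTrans{A})$ carries two inserted exponentials and hence two stacked bounded quantifiers; substituting $\pvec \tau^+_{\bTrans{P}} = \wtype{P}$ and $\pvec \tau^-_{\bTrans{P}} = \varepsilon$, and then commuting the inner $\forall z$ past the bounds (which do not mention $z$) to reform $\forall \bound{z}{c''}{P}$, yields the last line. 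This type-level bookkeeping, rather than any new idea, is the fiddly part of that computation.
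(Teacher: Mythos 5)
Your plan reproduces the paper's own proof (Appendix B): the same case-by-case unfolding of $\bInter{A}{\pvec x}{\pvec y} \pdefin \fTrans{(\uInter{\bTrans{A}}{\pvec x}{\pvec y})}$, and your two bookkeeping identities, $\fTrans{(\bTrans{C})} = C$ and $\fTrans{(\bubq{\pvec\tau}{\pvec y}{\pvec a}{D})} = \ubq{\pvec\tau}{\pvec y}{\pvec a} \fTrans{D}$, are exactly what the paper's ``$\stackrel{\textup{D}\ref{forget}}{\equiv}$'' steps use implicitly. Your predicate, conjunction, existential, universal, implication and relativised-existential cases are all correct as described. Two of your justifications, however, have genuine holes.

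\emph{Disjunction.} You propose to replace $\bTrans{(A \vee B)}$ by $\bTrans{A} \avee \bTrans{B}$ \emph{under} the interpretation, citing Proposition \ref{prop-definable-connectives}$(ii)$. That proposition gives a provable equivalence of $\AL$-formulas, but the interpretation of Definition \ref{inter} is defined by recursion on syntax and is not invariant under provable equivalence, so equivalents cannot be substituted inside $\uInter{\cdot}{\pvec x}{\pvec y}$. Concretely: composing Definition \ref{g-trans} strictly, the guarded disjunct $(z = \true) \to A$ becomes $\bang(\bang(z = \true) \lto \bTrans{A})$, whose interpretation is $\bang \bubq{\pvec\tau^-_{\bTrans{A}}}{\pvec y'}{\pvec y}{(\bang(z = \true) \lto \uInter{\bTrans{A}}{\pvec x}{\pvec y'})}$ with $\pvec y$ of type $\btype{\pvec\tau^-_{\bTrans{A}}}$, whereas in the unfolding of $\bTrans{A} \avee \bTrans{B}$ the guard carries no outer $\bang$, no bounded quantifier appears, and the counter-witness has type $\pvec\tau^-_{\bTrans{A}}$. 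The two interpretations are not even formulas in the same free variables, so no appeal to equivalence of the source formulas can identify them. What actually licenses the step is the convention stated immediately before Proposition \ref{prop-definable-connectives}: the Girard translations are \emph{extended} to $\IL^\BB$ by adopting $\bTrans{(A \vee B)} \pdefin \bTrans{A} \avee \bTrans{B}$ as a translation clause, Proposition \ref{prop-definable-connectives} serving only to show that this extension is coherent with reading $\vee$ as the defined connective of Proposition \ref{prop-def-disjunction}. Under that definitional reading --- which is how the paper computes in Appendix B --- your calculation of the $\avee$-interpretation is exactly right and yields the displayed clause.

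\emph{Relativised universal quantifier.} You claim the inner $\forall z$ may be moved past the bounds ``since the bounds do not mention $z$''. For the abstract parameter $\ubq{\pvec\tau}{\pvec x}{\pvec a} A$ this is not free: nothing in \Assumption{5} --- that is, \Quant{1}, \Quant{2} and the contraction conditions --- forces the bounded quantifier to commute with $\forall z$, any more than $\forall z (C(z) \vee B_0) \to (\forall z\, C(z)) \vee B_0$ is intuitionistically valid even though $z$ is not free in $B_0$; a parameter with such disjunctive structure can satisfy the basic assumptions while the commutation fails. This commutation is precisely the additional assumption \Quant{4}, introduced only later in the paper, and the paper's Appendix B invokes \Quant{4} by name at exactly this step. (Chaining the $\forall$- and $\to$-clauses also rewrites under the outer bound, which uses \Quant{1}; that one at least is among the basic assumptions.) So your proof of the last ``in particular'' equivalence is incomplete as stated: it must carry \Quant{4} as a hypothesis, and without it the equivalence can fail for an admissible choice of the parameter.
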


\begin{proof} See Appendix B. \end{proof}

\subsection{Soundness of the $\lInter{\cdot}{}{}$-interpretation}

Let us now see how we can derive the soundness for the $\IL$-interpretation $\lInter{\cdot}{}{}$ from the soundness of the corresponding $\AL$-interpretation $\uInter{\cdot}{}{}$. 

\begin{definition}[$\lInter{\cdot}{}{}$-witnessable $\IL$-sequents] A sequent $\Gamma \proves A$ of $\Isource$ is said to be $\lInter{\cdot}{}{}$-\emph{witnessable} in $\Itarget$ if there are closed terms $\pvec \gamma, \pvec a$ of $\Itarget$ such that 
\begin{enumerate}
	\item[(i)] $\proves_{\Itarget} \Wit_{\pvec \tau^+_{\lTrans{\Gamma}} \to \pvec \tau^-_{\lTrans{A}} \to \wtype{\tau^-_{\lTrans{\Gamma}}}}(\pvec \gamma)$ and $\proves_{\Itarget} \Wit_{\pvec \tau^+_{\lTrans{\Gamma}} \to \pvec \tau^+_{\lTrans{A}}}(\pvec a)$, and
	\item[(ii)] $\Wit_{\pvec \tau^+_{\lTrans{\Gamma}}, \pvec \tau^-_{\lTrans{A}}}(\pvec x,\pvec w), \ubq{\pvec \tau^-_{\lTrans{\Gamma}}}{\pvec y}{\pvec \gamma \pvec x \pvec w} \lInter{\Gamma}{\pvec x}{\pvec y} \proves_{\Itarget} \lInter{A}{\pvec a \pvec x}{\pvec w}$.
\end{enumerate}
\end{definition}

\begin{definition}[Sound $\IL$-interpretation] An $\IL$-interpretation of $\Isource$ into $\Itarget$ is said to be \emph{sound} if the provable sequents of $\Isource$ are witnessable in $\Itarget$.
\end{definition} 

\begin{lemma} \label{trans-forget-trans} $\lTrans{(\fTrans{(\uInter{\lTrans{A}}{\pvec x}{\pvec y})})} \equiv \uInter{\lTrans{A}}{\pvec x}{\pvec y}$.
\end{lemma}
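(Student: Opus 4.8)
The plan is to read the composite operation $C \mapsto \lTrans{(\fTrans{C})}$ as acting only on the logical skeleton of $C$, and to prove the slightly stronger structural claim that $\uInter{\lTrans{A}}{\pvec x}{\pvec y}$ always lies in the image of the Girard translation $\lTrans{(\cdot)}$; the lemma then follows immediately. The reason this should work is that, unfolding Definitions \ref{g-trans} and \ref{forget}, the translation $\fTrans{(\cdot)}$ deletes every $\bang$ and replaces $\cwedge, \lto$ by $\wedge, \to$, while $\lTrans{(\cdot)}$ reinserts a $\bang$ at exactly the two ``Girard positions'' (the antecedent of an implication and the body of an existential). These are precisely the positions at which the interpretation of $\lTrans{A}$ produces its exponentials, so the round-trip restores $\uInter{\lTrans{A}}{\pvec x}{\pvec y}$ verbatim.

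Two ingredients feed the argument. First, an easy induction over Definition \ref{g-trans} gives $\fTrans{(\lTrans{B})} \equiv B$ for every intuitionistic $B$ (in the $\to$ and $\exists$ clauses the inserted $\bang$ is erased by $\fTrans{(\cdot)}$ and then restored), whence $\lTrans{(\fTrans{(\lTrans{B})})} \equiv \lTrans{B}$: every formula in the image of $\lTrans{(\cdot)}$ is fixed by the round-trip. Second, by Definition \ref{def-parameters-translation} all parameters entering the $\lInter{\cdot}{}{}$-setting are themselves $\lTrans{(\cdot)}$-images, namely $\lbound{\pvec x}{a}{P} = \lTrans{(\bound{\pvec x}{a}{P})}$ and $\lubq{\pvec \tau}{\pvec x}{\pvec a}{D} = \lTrans{(\ubq{\pvec \tau}{\pvec x}{\pvec a} \fTrans{D})}$ for every $\AL$-formula $D$. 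Hence the atomic and bounded-quantifier building blocks are already in the image. This is precisely the hypothesis that makes the statement true: for arbitrary $\AL$-parameters the round-trip need not be the identity.

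The main step is then an induction on the intuitionistic formula $A$, showing $\uInter{\lTrans{A}}{\pvec x}{\pvec y}$ is in the image of $\lTrans{(\cdot)}$, using that this image is closed under atoms, $\cwedge$, the shape $\bang C \lto D$, $\forall z$, and the shape $\exists z \bang C$ — exactly the five clauses of $\lTrans{(\cdot)}$. The atomic, $\wedge$ and $\forall$ cases are immediate from ingredient two and closure. For $A \to B$, Definition \ref{inter} gives $\uInter{\lTrans{(A \to B)}}{\pvec f, \pvec g}{\pvec x, \pvec w} \equiv \bang \ubq{\pvec \tau^-_{\lTrans{A}}}{\pvec y}{\pvec g \pvec x \pvec w} \uInter{\lTrans{A}}{\pvec x}{\pvec y} \lto \uInter{\lTrans{B}}{\pvec f \pvec x}{\pvec w}$, whose antecedent is a bounded quantification of the parametric form $\lubq{\pvec \tau^-_{\lTrans{A}}}{\pvec y}{\pvec g \pvec x \pvec w}{\uInter{\lTrans{A}}{\pvec x}{\pvec y}}$ (in the image by ingredient two) and whose consequent is in the image by the induction hypothesis; closure under $\bang C \lto D$ finishes the case. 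The $\exists z B$ case is identical, producing $\exists z \bang C$ with $C$ again a parameter-wrapped instance of the induction hypothesis. The one piece of bookkeeping is that these recursive clauses instantiate the witness and counter-witness variables of subformulas by terms (e.g. $\pvec f \pvec x$, $\pvec g \pvec x \pvec w$); since $\lTrans{(\fTrans{(\cdot)})}$ touches only the propositional and quantifier structure and never atomic term positions, it commutes with such substitutions, so the induction hypothesis transfers from the variable form to the instantiated form.

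I expect the only genuine obstacle to be keeping this exponential bookkeeping honest: one must check that every $\bang$ occurring in $\uInter{\lTrans{A}}{\pvec x}{\pvec y}$ sits at a Girard position and that no such position is left without one. This is exactly the content of the induction, since the interpretation introduces $\bang$ only through its clause for $\bang A$, which is fed by $\lTrans{(A \to B)}$ and $\lTrans{(\exists z B)}$ at implication-antecedents and existential-bodies, while any remaining $\bang$'s lie inside the $\lTrans{(\cdot)}$-image parameters and are absorbed by ingredient one. Everything else is routine structural induction.
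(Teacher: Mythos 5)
Your proof is correct and is essentially the paper's proof: the paper disposes of this lemma by ``a simple induction on the structure of $A$'', and your argument is exactly that induction made explicit, resting on the retraction identity $\fTrans{(\lTrans{B})} \equiv B$ for intuitionistic $B$, on the observation that the parameters $\lbound{\pvec x}{a}{P}$ and $\lubq{\pvec \tau}{\pvec y}{\pvec a}{(\cdot)}$ are $\lTrans{(\cdot)}$-images by Definition \ref{def-parameters-translation}, and on commutation of both translations with term substitution. Your reformulation of the inductive claim as ``$\uInter{\lTrans{A}}{\pvec x}{\pvec y}$ lies in the image of $\lTrans{(\cdot)}$'' is just a clean packaging of the same case analysis, and your remark that the lemma would fail for arbitrary $\AL$-parameters correctly isolates where Definition \ref{def-parameters-translation} is used.
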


\begin{proof} By a simple induction on the structure of $A$.
\end{proof}

\begin{lemma} \label{c-wit-wit} If $\Gamma \proves A$ is $\lInter{\cdot}{}{}$-witnessable in $\Itarget$, then $\bang \lTrans{\Gamma} \proves \lTrans{A}$ is witnessable in $\lTrans{\Itarget}$.
\end{lemma}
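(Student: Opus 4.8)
The plan is to show that the very same witnessing terms $\pvec\gamma,\pvec a$ transport across the Girard translation $\lTrans{(\cdot)}$. So suppose $\pvec\gamma,\pvec a$ are closed terms of $\Itarget$ realising conditions (i) and (ii) of $\lInter{\cdot}{}{}$-witnessability of $\Gamma \proves A$. Since by Definition \ref{g-trans} the theory $\lTrans{\Itarget}$ has the same constants and function symbols as $\Itarget$, these are also closed terms of $\lTrans{\Itarget}$, with the same intrinsic types. I claim they witness $\bang\lTrans{\Gamma} \proves \lTrans{A}$ in $\lTrans{\Itarget}$ in the sense of Definition \ref{def-witnessable}. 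The type bookkeeping is immediate once one recalls $\pvec\tau^+_{\bang\lTrans{\Gamma}} = \pvec\tau^+_{\lTrans{\Gamma}}$ and $\pvec\tau^-_{\bang\lTrans{\Gamma}} = \btype{\pvec\tau}^-_{\lTrans{\Gamma}}$, so that the types of $\pvec\gamma$ and $\pvec a$ demanded by condition (i) of $\lInter{\cdot}{}{}$-witnessability are exactly those required by Definition \ref{def-witnessable}(i) for the sequent $\bang\lTrans{\Gamma} \proves \lTrans{A}$.

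For part (i) of Definition \ref{def-witnessable} I would apply Proposition \ref{ltrans-prop} with empty context to the two $\IL$-provable judgements $\proves_{\Itarget}\Wit(\pvec\gamma)$ and $\proves_{\Itarget}\Wit(\pvec a)$. Since $\lWit = \lTrans{(\Wit)}$ by Definition \ref{def-parameters-translation}, this delivers $\proves_{\lTrans{\Itarget}}\Wit(\pvec\gamma)$ and $\proves_{\lTrans{\Itarget}}\Wit(\pvec a)$ for the $\Wit$-predicate of $\lTrans{\Itarget}$, which is precisely Definition \ref{def-witnessable}(i).

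For part (ii), I apply the full Proposition \ref{ltrans-prop} to the provable $\IL$-sequent in condition (ii) of $\lInter{\cdot}{}{}$-witnessability. Each hypothesis acquires a leading $\bang$: using $\lWit = \lTrans{(\Wit)}$ the hypothesis $\Wit(\pvec x,\pvec w)$ becomes $\bang\Wit(\pvec x,\pvec w)$, and the conclusion $\lInter{A}{\pvec a\pvec x}{\pvec w}$ becomes $\uInter{\lTrans{A}}{\pvec a\pvec x}{\pvec w}$ by Lemma \ref{trans-forget-trans}. For the second hypothesis, unfolding the definitions of $\lubq{}{}{}{\cdot}$ (Definition \ref{def-parameters-translation}) and of $\lInter{\cdot}{}{}$ (Definition \ref{def:combined-inter}) gives the literal identity $\lTrans{(\ubq{\pvec\tau^-_{\lTrans{\Gamma}}}{\pvec y}{\pvec\gamma\pvec x\pvec w}\lInter{\Gamma}{\pvec x}{\pvec y})} \equiv \lubq{\pvec\tau^-_{\lTrans{\Gamma}}}{\pvec y}{\pvec\gamma\pvec x\pvec w}{\uInter{\lTrans{\Gamma}}{\pvec x}{\pvec y}}$ (read componentwise over the tuple $\Gamma$), so that prefixing the $\bang$ supplied by the translation produces $\bang\lubq{\pvec\tau^-_{\lTrans{\Gamma}}}{\pvec y}{\pvec\gamma\pvec x\pvec w}{\uInter{\lTrans{\Gamma}}{\pvec x}{\pvec y}}$, which by the $\bang$-clause of Definition \ref{inter} is exactly $\uInter{\bang\lTrans{\Gamma}}{\pvec x}{\pvec\gamma\pvec x\pvec w}$. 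The resulting sequent $\bang\Wit(\pvec x,\pvec w),\, \uInter{\bang\lTrans{\Gamma}}{\pvec x}{\pvec\gamma\pvec x\pvec w} \proves_{\lTrans{\Itarget}} \uInter{\lTrans{A}}{\pvec a\pvec x}{\pvec w}$ is precisely Definition \ref{def-witnessable}(ii), completing the argument.

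I expect the main obstacle to be exactly this last identification: matching the single $\bang$ that Proposition \ref{ltrans-prop} places on each hypothesis with the $\bang$ demanded by the interpretation of $\bang\lTrans{\Gamma}$. Making this precise is a matter of combining the definition of the translated quantifier parameter $\lubq{}{}{}{\cdot}$ with the definition of $\lInter{\cdot}{}{}$ and Lemma \ref{trans-forget-trans}, and of checking that these syntactic identities survive the substitutions of $\pvec a\pvec x$ for $\pvec x$ and of $\pvec\gamma\pvec x\pvec w$ for the bound — which they do, since $\equiv$ is literal equality and both the Girard and forgetful translations commute with substitution of terms.
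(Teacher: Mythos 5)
Your proposal is correct and follows essentially the same route as the paper's own proof: reuse the same closed terms, push the provable $\Itarget$-sequent through Proposition \ref{ltrans-prop}, collapse $\lTrans{(\fTrans{(\cdot)})}$ via Lemma \ref{trans-forget-trans}, and identify the translated hypothesis $\bang\lubq{\pvec\tau^-_{\lTrans{\Gamma}}}{\pvec y}{\pvec\gamma\pvec x\pvec w}{\uInter{\lTrans{\Gamma}}{\pvec x}{\pvec y}}$ with $\uInter{\bang\lTrans{\Gamma}}{\pvec x}{\pvec\gamma\pvec x\pvec w}$ via Definitions \ref{def-parameters-translation}, \ref{def:combined-inter} and \ref{inter}. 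You are in fact slightly more explicit than the paper, which leaves the type bookkeeping and condition (i) implicit.
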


\begin{proof} By assumption we have closed terms $\pvec \gamma, \pvec a$ of $\Itarget$ such that $\proves_{\Itarget} \Wit_{\pvec \tau^+_{\lTrans{\Gamma}} \to \pvec \tau^-_{\lTrans{A}} \to \wtype{\tau^-_{\lTrans{\Gamma}}}}(\pvec \gamma)$ and $\proves_{\Itarget} \Wit_{\pvec \tau^+_{\lTrans{\Gamma}} \to \pvec \tau^+_{\lTrans{A}}}(\pvec a)$ and 
\[ 
\Wit_{\pvec \tau^+_{\lTrans{\Gamma}}, \pvec \tau^-_{\lTrans{A}}}(\pvec x,\pvec w),
\ubq{\pvec \tau^-_{\lTrans{\Gamma}}}{\pvec y}{\pvec \gamma \pvec x \pvec w} \fTrans{(\uInter{\lTrans{\Gamma}}{\pvec x}{\pvec y})} 
	\proves_{\Itarget} \fTrans{(\uInter{\lTrans{A}}{\pvec a \pvec x}{\pvec w})}. \]
By Proposition \ref{ltrans-prop} we have 
\[ 
\lWit_{\pvec \tau^+_{\lTrans{\Gamma}}, \pvec \tau^-_{\lTrans{A}}}(\pvec x,\pvec w),
\bang \lubq{\pvec \tau^-_{\lTrans{\Gamma}}}{\pvec y}{\pvec \gamma \pvec x \pvec w} \uInter{\lTrans{\Gamma}}{\pvec x}{\pvec y} 
	\proves_{\lTrans{\Itarget}} \lTrans{(\fTrans{(\uInter{\lTrans{A}}{\pvec a \pvec x}{\pvec w})})} .\]
By Lemma \ref{trans-forget-trans} this implies
\[
\lWit_{\pvec \tau^+_{\lTrans{\Gamma}}, \pvec \tau^-_{\lTrans{A}}}(\pvec x,\pvec w),
\bang \lubq{\pvec \tau^-_{\lTrans{\Gamma}}}{\pvec y}{\pvec \gamma \pvec x \pvec w} \uInter{\lTrans{\Gamma}}{\pvec x}{\pvec y} 
\proves_{\lTrans{\Itarget}} \uInter{\lTrans{A}}{\pvec a \pvec x}{\pvec w}
\]
Hence $\bang \lTrans{\Gamma} \proves \lTrans{A}$ is witnessable in $\lTrans{\Itarget}$.
\end{proof}

\begin{theorem}[Soundness of $\lInter{\cdot}{}{}$-interpretation] \label{soundness2} Assume a fixed choice of $\IL$-parameters $\{\bound{\pvec x}{a}{P}\}_{P \in \Pred{\Isource}^c}$, $\{\Wit_\tau(x)\}_{\tau \in \Type}$ and $\{\ubq{\pvec \tau}{\pvec x}{\pvec a} A\}_{\pvec \tau \in \Type; A \in \Formulas{\Itarget}}$ in $\Itarget$. If
\begin{enumerate}
	\item[$(i)$] the corresponding $\AL$-parameters, $\{\lbound{\pvec x}{a}{P}\}_{P \in \Pred{\lTrans{\Isource}}^c}$, $\{\lWit_\tau(x)\}_{\tau \in \Type}$ and $\{\lubq{\pvec \tau}{\pvec x}{\pvec a} A\}_{\pvec \tau \in \Type; A \in \Formulas{\lTrans{\Itarget}}}$, are an adequate choice for the formulas $\uInter{A}{\pvec x}{\pvec y}$, for all $A$ in $\lTrans{\Isource}$, and
	\item[$(ii)$] the non-logical axioms of $\Isource$ are $\lInter{\cdot}{}{}$-witnessable in $\Itarget$, 
\end{enumerate}
then the $\lInter{\cdot}{}{}$-interpretation of $\Isource$ into $\Itarget$ is sound.
\end{theorem}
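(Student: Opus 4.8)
The plan is to exploit the commuting square of Figure \ref{Diagram_interpret}, reducing the soundness of the $\IL$-interpretation $\lInter{\cdot}{}{}$ to the already established soundness of the $\AL$-interpretation $\uInter{\cdot}{}{}$ (Theorem \ref{soundness1}), with the Girard translation $\lTrans{(\cdot)}$ and the forgetful translation $\fTrans{(\cdot)}$ transporting witnessability in the two directions.

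First I would verify that the hypotheses of Theorem \ref{soundness1} hold for the $\AL$-interpretation of $\lTrans{\Isource}$ into $\lTrans{\Itarget}$ equipped with the $\lTrans{(\cdot)}$-translated parameters of Definition \ref{def-parameters-translation}. Adequacy of these parameters for the formulas $\uInter{A}{\pvec x}{\pvec y}$, $A \in \lTrans{\Isource}$, is exactly hypothesis $(i)$. For witnessability of the non-logical axioms of $\lTrans{\Isource}$, recall that each such axiom has the shape $\bang \lTrans{\Gamma} \proves \lTrans{A}$ with $\Gamma \proves A \in \Ax{\Isource}$; hypothesis $(ii)$ gives that $\Gamma \proves A$ is $\lInter{\cdot}{}{}$-witnessable in $\Itarget$, and Lemma \ref{c-wit-wit} converts this directly into witnessability of $\bang \lTrans{\Gamma} \proves \lTrans{A}$ in $\lTrans{\Itarget}$. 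Hence Theorem \ref{soundness1} applies and the $\AL$-interpretation of $\lTrans{\Isource}$ into $\lTrans{\Itarget}$ is sound.

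Given now any provable sequent $\Gamma \proves_{\Isource} A$, I would lift it along the Girard translation using Proposition \ref{ltrans-prop} to obtain $\bang \lTrans{\Gamma} \proves_{\lTrans{\Isource}} \lTrans{A}$; by the soundness just established this sequent is witnessable in $\lTrans{\Itarget}$, producing closed terms $\pvec \gamma, \pvec a$ in $\lWit$ with $\bang \lWit(\pvec x, \pvec w),\, \bang \lubq{\pvec \tau^-_{\lTrans{\Gamma}}}{\pvec y}{\pvec \gamma \pvec x \pvec w}{\uInter{\lTrans{\Gamma}}{\pvec x}{\pvec y}} \proves_{\lTrans{\Itarget}} \uInter{\lTrans{A}}{\pvec a \pvec x}{\pvec w}$.

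The final, and main, step is the converse of Lemma \ref{c-wit-wit}: turning this $\lTrans{\Itarget}$-witnessability back into $\lInter{\cdot}{}{}$-witnessability of $\Gamma \proves A$ in $\Itarget$. Here I would apply the forgetful translation to the derivation just obtained, mirroring the proof of Lemma \ref{c-wit-wit} but running $\fTrans{(\cdot)}$ in place of $\lTrans{(\cdot)}$. This requires the forgetful analogue of Proposition \ref{ltrans-prop} — that $\fTrans{(\cdot)}$ preserves $\AL$-derivability, which holds rule by rule since $\otimes, \lto, \bang$ collapse to $\wedge, \to$ and erasure — together with the observation that $\fTrans{(\lTrans{C})} \equiv C$ for every $\IL$-formula $C$, whence $\fTrans{(\lTrans{\Itarget})} = \Itarget$ and the translated derivation indeed lives in $\Itarget$. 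Under $\fTrans{(\cdot)}$ the context $\bang \lWit(\pvec x, \pvec w)$ collapses to $\Wit(\pvec x, \pvec w)$, the translated bounded quantifier $\bang \lubq{\pvec \tau^-_{\lTrans{\Gamma}}}{\pvec y}{\pvec \gamma \pvec x \pvec w}{\uInter{\lTrans{\Gamma}}{\pvec x}{\pvec y}}$ becomes $\ubq{\pvec \tau^-_{\lTrans{\Gamma}}}{\pvec y}{\pvec \gamma \pvec x \pvec w} \lInter{\Gamma}{\pvec x}{\pvec y}$, and the succedent becomes $\lInter{A}{\pvec a \pvec x}{\pvec w}$, using $\fTrans{(\lTrans{C})} \equiv C$, Lemma \ref{trans-forget-trans}, and the definitions in Definition \ref{def-parameters-translation}; this is precisely clause $(ii)$ of $\lInter{\cdot}{}{}$-witnessability, while clause $(i)$ follows since the witnessing types are unchanged and $\fTrans{(\lWit)} \equiv \Wit$. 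I expect the genuine work to lie entirely in this last step — keeping the parameter translations, the finite types, and the $\fTrans{(\cdot)} \circ \lTrans{(\cdot)}$ round-trips aligned — the remainder being a direct assembly of the cited results.
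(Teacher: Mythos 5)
Your proposal is correct and follows essentially the same route as the paper's own proof: verify the hypotheses of Theorem \ref{soundness1} (using Lemma \ref{c-wit-wit} for the axioms), lift a provable sequent via Proposition \ref{ltrans-prop}, apply the soundness of the $\AL$-interpretation, and pull the resulting witnesses back through the forgetful translation. The only difference is that you spell out the final step --- that $\fTrans{(\cdot)}$ preserves derivability, that $\fTrans{(\lTrans{C})} \equiv C$, and how the parameters of Definition \ref{def-parameters-translation} collapse --- which the paper compresses into the single phrase ``by the forgetful translation''.
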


\begin{proof} The second assumption and Lemma \ref{c-wit-wit} imply that all the non-logical axioms of $\lTrans{\Isource}$ are witnessable in $\lTrans{\Itarget}$. Therefore, given the $\AL$-parameters $\{\lbound{\pvec x}{a}{P}\}_{P \in \Pred{\lTrans{\Isource}}^c}$, $\{\lWit_\tau(x)\}_{\tau \in \Type}$ and $\{\lubq{\pvec \tau}{\pvec x}{\pvec a} A\}_{\pvec \tau \in \Type; A \in \Formulas{\lTrans{\Itarget}}}$, by the first assumption and the Soundness Theorem (Theorem~\ref{soundness1}) we have that this instance of the parametrised interpretation $\uInter{\cdot}{}{}$ is a sound interpretation of $\lTrans{\Isource}$ into $\lTrans{\Itarget}$. Now, assume $\Gamma \proves_{\Isource} A$. By Proposition \ref{ltrans-prop} we have $\bang \lTrans{\Gamma} \proves_{\lTrans{\Isource}} \lTrans{A}$. By the soundness of $A \mapsto \uInter{A}{\pvec x}{\pvec y}$ we have closed terms $\pvec \gamma, \pvec a$ of $\lTrans{\Itarget}$ such that 
\begin{enumerate}
	\item $\proves_{\lTrans{\Itarget}} \lWit_{\pvec \tau^+_{\lTrans{\Gamma}} \to \pvec \tau^-_{\lTrans{A}} \to \wtype{\tau^-_{\lTrans{\Gamma}}}}(\pvec \gamma)$ and $\proves_{\lTrans{\Itarget}} \lWit_{\pvec \tau^+_{\lTrans{\Gamma}} \to \pvec \tau^+_{\lTrans{A}}}(\pvec a)$
	\item $\bang \lWit_{\pvec \tau^+_{\lTrans{\Gamma}}, \pvec \tau^-_{\lTrans{A}}}(\pvec x,\pvec w), 
		\bang \lubq{\pvec \tau^-_{\lTrans{\Gamma}}}{\pvec y}{\pvec \gamma \pvec x \pvec w} \uInter{\lTrans{\Gamma}}{\pvec x}{\pvec y} 
                    	\proves_{\lTrans{\Itarget}} \uInter{\lTrans{A}}{\pvec a \pvec x}{\pvec w}$.
\end{enumerate}
Hence, by the forgetful translation (Definition \ref{forget}), we have
\begin{enumerate}
	\item $\proves_{\Itarget} \Wit_{\pvec \tau^+_{\lTrans{\Gamma}} \to \pvec \tau^-_{\lTrans{A}} \to \wtype{\tau^-_{\lTrans{\Gamma}}}}(\pvec \gamma)$ and $\proves_{\Itarget} \Wit_{\pvec \tau^+_{\lTrans{\Gamma}} \to \pvec \tau^+_{\lTrans{A}}}(\pvec a)$
	\item $\Wit_{\pvec \tau^+_{\lTrans{\Gamma}}, \pvec \tau^-_{\lTrans{A}}}(\pvec x,\pvec w), 
		\ubq{\pvec \tau^-_{\lTrans{\Gamma}}}{\pvec y}{\pvec \gamma \pvec x \pvec w} \lInter{\Gamma}{\pvec x}{\pvec y} 
                    	\proves_{\Itarget} \lInter{A}{\pvec a \pvec x}{\pvec w}$.
\end{enumerate}
\end{proof}

\subsection{Soundness of the $\bInter{\cdot}{}{}$-interpretation}

In a similar manner we can derive the soundness for the $\bInter{\cdot}{}{}$-interpretation from the soundness of the $\AL$-interpretation $\uInter{\cdot}{}{}$. 

\begin{definition}[$\bInter{\cdot}{}{}$-witnessable $\IL$-sequents] A sequent $\Gamma \proves A$ of $\Isource$ is said to be $\bInter{\cdot}{}{}$-\emph{witnessable} in $\Itarget$ if there are closed terms $\pvec \gamma, \pvec a$ of $\Itarget$ such that 
\begin{enumerate}
	\item[(i)] $\proves_{\Itarget} \Wit_{\pvec \tau^+_{\bTrans{\Gamma}} \to \pvec \tau^-_{\bTrans{A}} \to  \tau^-_{\bTrans{\Gamma}}}(\pvec \gamma)$ and $\proves_{\Itarget} \Wit_{\pvec \tau^+_{\bTrans{\Gamma}} \to \pvec \tau^+_{\bTrans{A}}}(\pvec a)$, and
	\item[(ii)] $\Wit_{\pvec \tau^+_{\bTrans{\Gamma}}, \pvec \tau^-_{\bTrans{A}}}(\pvec x,\pvec w), \bInter{\Gamma}{\pvec x}{\pvec \gamma \pvec x \pvec w} \proves_{\Itarget} \bInter{A}{\pvec a \pvec x}{\pvec w}$.
\end{enumerate}
\end{definition}

\begin{lemma} \label{btrans-forget-btrans} $\bTrans{(\fTrans{(\uInter{\bTrans{A}}{\pvec x}{\pvec y})})} \equiv \uInter{\bTrans{A}}{\pvec x}{\pvec y}$.
\end{lemma}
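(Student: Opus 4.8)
The plan is to mirror the proof of Lemma~\ref{trans-forget-trans} verbatim, replacing the Girard translation $\lTrans{(\cdot)}$ by $\bTrans{(\cdot)}$ everywhere and inducting on the structure of the intuitionistic formula $A$. Before the induction I would isolate two auxiliary facts. The first is that $\fTrans{(\bTrans{E})} \equiv E$ for every intuitionistic formula $E$: this is an immediate induction from Definitions~\ref{g-trans} and~\ref{forget}, since each clause of $\bTrans{(\cdot)}$ introduces exactly the connective — and the leading $\bang$ — that the matching clause of $\fTrans{(\cdot)}$ erases. The second, which is the engine of the whole argument, is that the round trip fixes the $\bTrans{}$-translated parameters: from the defining equation $\bubq{\pvec \tau}{\pvec x}{\pvec a}{F} \pdefin \bTrans{(\ubq{\pvec \tau}{\pvec x}{\pvec a} \fTrans{F})}$ (Definition~\ref{def-parameters-translation} and its footnote) together with the first fact one gets $\fTrans{(\bubq{\pvec \tau}{\pvec x}{\pvec a}{F})} \equiv \ubq{\pvec \tau}{\pvec x}{\pvec a} \fTrans{F}$, and hence
\[ \bTrans{(\fTrans{(\bubq{\pvec \tau}{\pvec x}{\pvec a}{F})})} \; \equiv \; \bubq{\pvec \tau}{\pvec x}{\pvec a}{F} \]
for any target formula $F$.

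Then I would run the induction on $A$, in each case unfolding $\uInter{\bTrans{A}}{\pvec x}{\pvec y}$ by Definition~\ref{g-trans} followed by Definition~\ref{inter}, applying $\fTrans{(\cdot)}$ and then $\bTrans{(\cdot)}$, and closing with the induction hypothesis and the engine identity. The non-computational predicate case is immediate, and the cases of $A \wedge B$ (multiplicative conjunction, handled componentwise) and $\exists z A$ (which the interpretation and both translations leave in place) reduce directly to the induction hypotheses on the immediate subformulas, with no exponential bookkeeping. The remaining cases — a computational predicate $P$, an implication $A \to B$, and a universal $\forall z A$ — are those in which $\bTrans{(\cdot)}$ prefixes a $\bang$, so that $\uInter{\bTrans{A}}{\pvec x}{\pvec y}$ takes the shape $\bang \bubq{\pvec \tau^-_{\bTrans{A}}}{\pvec y}{\pvec a}{(\dots)}$ coming from the exponential clause of Definition~\ref{inter}; here one uses the engine identity on the bounded-quantifier wrapper together with the induction hypotheses on the subformulas (for $\to$ recalling that $\bTrans{(E \to F)}$, and for $\forall$ that $\bTrans{(\forall z\,E)}$, each regenerate the leading $\bang$).

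The main obstacle is precisely this exponential bookkeeping, which is the one genuine difference from the $\lTrans{}$ proof. Because $\bTrans{(\cdot)}$ — unlike $\lTrans{(\cdot)}$ — prefixes a $\bang$ to atoms, implications and universals, the parameter $\bubq{\pvec \tau^-_{\bTrans{A}}}{\pvec y}{\pvec a}{(\dots)}$ is itself of the form $\bTrans{(\text{universal-like})}$ and so already begins with $\bang$; composed with the outer $\bang$ supplied by Definition~\ref{inter} this yields a doubled exponential, whereas $\fTrans{(\cdot)}$ collapses all leading $\bang$'s and $\bTrans{(\cdot)}$ restores a single one. The delicate point is therefore to verify that the $\bang$ contributed by the interpretation of the exponential and the $\bang$ contributed by the $\bTrans{(\cdot)}$-translation of the parameter land in the same position, so that the two sides agree (appealing, where the two exponentials meet, to the idempotence $\bang{\bang C} \Leftrightarrow \bang{C}$ available in $\AL$). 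Once this alignment is checked the induction goes through exactly as for Lemma~\ref{trans-forget-trans}.
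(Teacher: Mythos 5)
You set up exactly the right scaffolding, and it matches what the paper's one-line proof intends: induction on $A$, the identity $\fTrans{(\bTrans{E})} \equiv E$ for intuitionistic $E$, and the round-trip identity $\bTrans{(\fTrans{(\bubq{\pvec \tau}{\pvec x}{\pvec a}{F})})} \equiv \bubq{\pvec \tau}{\pvec x}{\pvec a}{F}$, which is indeed the engine of the implication and universal cases. The gap is in how you discharge the exponential bookkeeping, and it already shows up at the base case. For a computational predicate symbol $P$ we have $\pvec \tau^-_P = \varepsilon$, so by Definition \ref{inter} (cf.\ the computation in Appendix B) $\uInter{\bTrans{(P(\pvec x))}}{a}{} \equiv \bang(\bbound{\pvec x}{a}{P})$, with no bounded-quantifier wrapper at all, while the round trip returns $\bTrans{(\bound{\pvec x}{a}{P})} = \bbound{\pvec x}{a}{P}$ without the outer $\bang$. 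Your tool for such mismatches is the idempotence $\bang\bang C \Leftrightarrow \bang C$, justified by the claim that the formula under the extra $\bang$ ``already begins with $\bang$''. Nothing guarantees this: the parameters are abstract, and in the paper's own instantiations $\bound{x}{a}{\tau} \pdefin \tau(x) \wedge (x = a)$, so that $\bbound{\pvec x}{a}{P} = \bang \tau(x) \cwedge \bang(x=a)$ is a tensor, not a $\bang$-formula. The discrepancy is then $\bang C$ versus $C$ with $C$ not of the form $\bang C'$, and idempotence cannot bridge it: in affine logic $C \proves \bang C$ is not available for arbitrary $C$, since it requires a promotion over a fully $\bang$'ed context.

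The missing ingredient is Proposition \ref{prop:lb-equivalence}: from $\bTrans{E} \Leftrightarrow_{\AL} \bang \lTrans{E}$ one gets that every $\bTrans{}$-image absorbs an exponential, $\bang \bTrans{E} \Leftrightarrow_{\AL} \bTrans{E}$. Since the formula left under the spurious outer $\bang$ is always a $\bTrans{}$-image --- $\bbound{\pvec x}{a}{P}$ at atoms, and $\bubq{\pvec \tau^-_{\bTrans{A}}}{\pvec y}{\pvec a}{(\cdots)}$ at implications and universals, which is precisely what your engine identity exhibits --- this closes every case, the conjunction and existential cases then following from the induction hypotheses by congruence. This repair also makes explicit what your write-up leaves implicit: in the $\bang$-prefixed cases the lemma can only hold as a provable equivalence in $\AL$, not as a syntactic identity; that weaker reading is all that is needed where the lemma is applied, namely in Lemma \ref{o-wit-wit}.
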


\begin{proof} By a simple induction on the structure of $A$.
\end{proof}

\begin{lemma} \label{o-wit-wit} If $\Gamma \proves A$ is $\bInter{\cdot}{}{}$-witnessable in $\Itarget$, then $\bTrans{\Gamma} \proves_{\bTrans{\Itarget}} \bTrans{A}$ is witnessable in $\bTrans{\Itarget}$.
\end{lemma}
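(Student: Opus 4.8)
The plan is to mirror the argument for the $\lInter{\cdot}{}{}$-case (Lemma \ref{c-wit-wit}), replacing the Girard translation $\lTrans{(\cdot)}$ by $\bTrans{(\cdot)}$ throughout and keeping in mind that the $\AL$-interpretation of $\bTrans{\Isource}$ into $\bTrans{\Itarget}$ is carried out with the $\circ$-parameters $\Wit^\circ_\tau(x) \equiv \bTrans{(\Wit_\tau(x))}$, $\bbound{\pvec x}{a}{P}$ and $\bubq{\pvec \tau}{\pvec x}{\pvec a}{A}$ of Definition \ref{def-parameters-translation}. First I would unfold the hypothesis: by $\bInter{\cdot}{}{}$-witnessability there are closed terms $\pvec \gamma, \pvec a$ of $\Itarget$ meeting the two typing conditions $(i)$ together with
\[ \Wit_{\pvec \tau^+_{\bTrans{\Gamma}}, \pvec \tau^-_{\bTrans{A}}}(\pvec x,\pvec w), \; \bInter{\Gamma}{\pvec x}{\pvec \gamma \pvec x \pvec w} \proves_{\Itarget} \bInter{A}{\pvec a \pvec x}{\pvec w}. \]
Recalling from Definition \ref{def:combined-inter} that $\bInter{B}{\pvec x}{\pvec y} \equiv \fTrans{(\uInter{\bTrans{B}}{\pvec x}{\pvec y})}$, this is a provable $\IL$-sequent with antecedent $\Wit_{\pvec \tau^+_{\bTrans{\Gamma}}, \pvec \tau^-_{\bTrans{A}}}(\pvec x,\pvec w), \fTrans{(\uInter{\bTrans{\Gamma}}{\pvec x}{\pvec \gamma \pvec x \pvec w})}$ and succedent $\fTrans{(\uInter{\bTrans{A}}{\pvec a \pvec x}{\pvec w})}$.

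Next I would push this derivation into $\bTrans{\Itarget}$ by applying the $\bTrans{(\cdot)}$-clause of Proposition \ref{ltrans-prop}. Unlike the $\lTrans{(\cdot)}$-clause, this one places no extra $\bang$ in front of the antecedent; this is precisely why the definition of $\bInter{\cdot}{}{}$-witnessability carries no bounding-quantifier wrapping of $\bInter{\Gamma}{\cdot}{\cdot}$ (whereas the $\lInter{\cdot}{}{}$-version does, reflecting the $\bang \ubq{\cdot}{\cdot}{\cdot}$ created by $\uInter{\bang\lTrans{\Gamma}}{\cdot}{\cdot}$). We obtain
\[ \bTrans{(\Wit_{\pvec \tau^+_{\bTrans{\Gamma}}, \pvec \tau^-_{\bTrans{A}}}(\pvec x,\pvec w))}, \; \bTrans{(\fTrans{(\uInter{\bTrans{\Gamma}}{\pvec x}{\pvec \gamma \pvec x \pvec w})})} \proves_{\bTrans{\Itarget}} \bTrans{(\fTrans{(\uInter{\bTrans{A}}{\pvec a \pvec x}{\pvec w})})}. \]
Here $\bTrans{(\Wit_{\pvec \tau^+_{\bTrans{\Gamma}}, \pvec \tau^-_{\bTrans{A}}}(\pvec x,\pvec w))}$ is, by Definition \ref{def-parameters-translation}, exactly $\Wit^\circ_{\pvec \tau^+_{\bTrans{\Gamma}}, \pvec \tau^-_{\bTrans{A}}}(\pvec x,\pvec w)$, and Lemma \ref{btrans-forget-btrans} gives $\bTrans{(\fTrans{(\uInter{\bTrans{B}}{\pvec x}{\pvec y})})} \equiv \uInter{\bTrans{B}}{\pvec x}{\pvec y}$ for $B = \Gamma$ and $B = A$. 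Substituting, the sequent collapses to
\[ \Wit^\circ_{\pvec \tau^+_{\bTrans{\Gamma}}, \pvec \tau^-_{\bTrans{A}}}(\pvec x,\pvec w), \; \uInter{\bTrans{\Gamma}}{\pvec x}{\pvec \gamma \pvec x \pvec w} \proves_{\bTrans{\Itarget}} \uInter{\bTrans{A}}{\pvec a \pvec x}{\pvec w}. \]

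Finally I would reconcile this with Definition \ref{def-witnessable}, in which the $\Wit$-predicate of the antecedent occurs under a $\bang$. Applying $(\bang\textup{L})$ to each conjunct of $\Wit^\circ_{\pvec \tau^+_{\bTrans{\Gamma}}, \pvec \tau^-_{\bTrans{A}}}(\pvec x,\pvec w)$ — equivalently, using that every $\bTrans{(\cdot)}$-formula is $\bang$-stable, since $\bTrans{C} \Leftrightarrow \bang \lTrans{C}$ by Proposition \ref{prop:lb-equivalence} — upgrades the antecedent to $\bang \Wit^\circ_{\pvec \tau^+_{\bTrans{\Gamma}}, \pvec \tau^-_{\bTrans{A}}}(\pvec x,\pvec w)$, yielding exactly clause $(ii)$ of witnessability in $\bTrans{\Itarget}$ relative to the $\circ$-parameters. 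Clause $(i)$ follows by applying the same $\bTrans{(\cdot)}$-clause of Proposition \ref{ltrans-prop} to the closed typing judgements $\proves_{\Itarget} \Wit(\pvec \gamma)$ and $\proves_{\Itarget} \Wit(\pvec a)$, which become $\proves_{\bTrans{\Itarget}} \Wit^\circ(\pvec \gamma)$ and $\proves_{\bTrans{\Itarget}} \Wit^\circ(\pvec a)$. Hence $\bTrans{\Gamma} \proves \bTrans{A}$ is witnessable in $\bTrans{\Itarget}$. The one point requiring care — the analogue of the delicate step in Lemma \ref{c-wit-wit} — is bookkeeping that every occurrence of $\uInter{\cdot}{}{}$ and of $\Wit$ on the $\bTrans{\Itarget}$ side refers to the $\circ$-parameters, so that the translated sequent matches Definition \ref{def-witnessable} verbatim; once this is checked, the argument is purely formal.
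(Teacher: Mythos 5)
Your proposal is correct and takes essentially the same approach as the paper: the paper's own proof consists of the single remark that it is ``similar to the proof of Lemma \ref{c-wit-wit}'', and your argument is precisely that mirroring --- unfold $\bInter{\cdot}{}{}$-witnessability, apply the $\bTrans{(\cdot)}$-clause of Proposition \ref{ltrans-prop}, collapse $\bTrans{(\fTrans{(\cdot)})}$ via Lemma \ref{btrans-forget-btrans}, and recover the $\bang$ on $\Wit^\circ$ by dereliction. Your side remark explaining why $\bInter{\cdot}{}{}$-witnessability carries no bounding-quantifier wrapping of the context (the $\bTrans{(\cdot)}$-translation introduces no extra $\bang$) is exactly the bookkeeping the paper leaves implicit.
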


\begin{proof}  The proof is similar to the proof of Lemma~\ref{c-wit-wit}.
\end{proof}

\begin{theorem}[Soundness of the $\bInter{\cdot}{}{}$-interpretation] \label{soundness3} Assume a fixed choice of $\IL$-parameters $\{\bound{\pvec x}{a}{P}\}_{P \in \Pred{\Isource}^c}$, $\{\Wit_\tau(x)\}_{\tau \in \Type}$ and $\{\ubq{\pvec \tau}{\pvec x}{\pvec a} A\}_{\pvec \tau \in \Type; A \in \Formulas{\Itarget}}$ in $\Itarget$. If
\begin{enumerate}
	\item[$(i)$] the corresponding $\AL$-parameters, $\{\bbound{\pvec x}{a}{P}\}_{P \in \Pred{\lTrans{\Isource}}^c}$, $\{\Wit^\circ_\tau(x)\}_{\tau \in \Type}$ and $\{\bubq{\pvec \tau}{\pvec x}{\pvec a} A\}_{\pvec \tau \in \Type; A \in \Formulas{\lTrans{\Itarget}}}$, are an adequate choice for the formulas $\uInter{A}{\pvec x}{\pvec y}$, for all $A$ in $\bTrans{\Isource}$, and
	\item[$(ii)$] all the non-logical axioms of $\Isource$ are $\bInter{\cdot}{}{}$-witnessable in $\Itarget$, 
\end{enumerate}
then the $\bInter{\cdot}{}{}$-interpretation of $\Isource$ into $\Itarget$ is sound.
\end{theorem}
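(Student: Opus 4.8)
The plan is to mirror the proof of Theorem~\ref{soundness2}, replacing every ingredient tied to the $\lTrans{(\cdot)}$ translation by its $\bTrans{(\cdot)}$ counterpart. First I would invoke assumption $(ii)$ together with Lemma~\ref{o-wit-wit} to conclude that every non-logical axiom of $\bTrans{\Isource}$ is witnessable in $\bTrans{\Itarget}$ under the $\uInter{\cdot}{}{}$-interpretation. Combining this with assumption $(i)$ --- that the $\bTrans{(\cdot)}$-translated parameters $\{\bbound{\pvec x}{a}{P}\}$, $\{\Wit^\circ_\tau(x)\}$, $\{\bubq{\pvec \tau}{\pvec x}{\pvec a}{A}\}$ are adequate for the formulas $\uInter{A}{\pvec x}{\pvec y}$ with $A$ ranging over $\bTrans{\Isource}$ --- the Soundness Theorem (Theorem~\ref{soundness1}) then yields that the $\uInter{\cdot}{}{}$-interpretation of $\bTrans{\Isource}$ into $\bTrans{\Itarget}$ is sound.

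Next, given a provable sequent $\Gamma \proves_{\Isource} A$, I would apply Proposition~\ref{ltrans-prop} in its $\bTrans{(\cdot)}$ form to obtain $\bTrans{\Gamma} \proves_{\bTrans{\Isource}} \bTrans{A}$. In contrast to the $\lTrans{(\cdot)}$ case, there is no outer $\bang$ decorating the context $\bTrans{\Gamma}$: the exponentials are instead absorbed into the $\bTrans{(\cdot)}$ translation of the predicate symbols, implications and universal quantifiers. Soundness of the $\uInter{\cdot}{}{}$-interpretation then supplies closed terms $\pvec \gamma, \pvec a$ of $\bTrans{\Itarget}$ witnessing $\bTrans{\Gamma} \proves \bTrans{A}$ in the sense of the two clauses of Definition~\ref{def-witnessable}, with the witnessing-type annotations attached to $\pvec \tau^{+}_{\bTrans{\Gamma}}, \pvec \tau^{-}_{\bTrans{A}}$ and $\tau^{-}_{\bTrans{\Gamma}}$.

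Finally, I would push these witnessing data down to $\Itarget$ along the forgetful translation (Definition~\ref{forget}). Applying $\fTrans{(\cdot)}$ to the $\Wit$-statements and to the entailment, and reading $\fTrans{(\uInter{\bTrans{\Gamma}}{\pvec x}{\pvec \gamma \pvec x \pvec w})}$ and $\fTrans{(\uInter{\bTrans{A}}{\pvec a \pvec x}{\pvec w})}$ as $\bInter{\Gamma}{\pvec x}{\pvec \gamma \pvec x \pvec w}$ and $\bInter{A}{\pvec a \pvec x}{\pvec w}$ respectively, gives precisely clauses $(i)$ and $(ii)$ of $\bInter{\cdot}{}{}$-witnessability for $\Gamma \proves A$; here Lemma~\ref{btrans-forget-btrans} guarantees that translating forth and back does not disturb the interpretation, and the forgetful translation carries provability in $\bTrans{\Itarget}$ back to provability in $\Itarget$ and leaves all type annotations untouched.

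The main point to get right --- rather than a genuine obstacle --- is the bookkeeping forced by the difference between the two Girard translations. Unlike the $\lInter{\cdot}{}{}$ case, the $\bInter{\cdot}{}{}$-witnessability condition carries \emph{no} bounding quantifier $\ubq{\pvec \tau^{-}_{\bTrans{\Gamma}}}{\pvec y}{\pvec \gamma \pvec x \pvec w}$ around the interpreted context, which matches exactly the absence of the outer $\bang$ on $\bTrans{\Gamma}$ above; correspondingly, the witnessing type of $\pvec \gamma$ is the plain $\pvec \tau^{+}_{\bTrans{\Gamma}} \to \pvec \tau^{-}_{\bTrans{A}} \to \tau^{-}_{\bTrans{\Gamma}}$ produced by Definition~\ref{def-witnessable} for $\bTrans{\Gamma} \proves \bTrans{A}$, with no extra bounding-type wrapper. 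I would double-check that these types coincide verbatim so that clause $(i)$ transfers under $\fTrans{(\cdot)}$ without adjustment.
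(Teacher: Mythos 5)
Your proposal is correct and is essentially the paper's own argument: the paper proves Theorem~\ref{soundness3} by saying only ``similar to the proof of Theorem~\ref{soundness2}'', and your write-up is precisely that adaptation --- Lemma~\ref{o-wit-wit} in place of Lemma~\ref{c-wit-wit}, $\bTrans{\Gamma} \proves_{\bTrans{\Isource}} \bTrans{A}$ from Proposition~\ref{ltrans-prop} in place of $\bang \lTrans{\Gamma} \proves \lTrans{A}$, Theorem~\ref{soundness1} for the sound $\uInter{\cdot}{}{}$-interpretation of $\bTrans{\Isource}$ into $\bTrans{\Itarget}$, and the forgetful translation to pull the witnessing data back to $\Itarget$. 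Your bookkeeping remark is also accurate: since the exponentials are absorbed into $\bTrans{(\cdot)}$, the $\bInter{\cdot}{}{}$-witnessability clauses carry no outer bounding quantifier and the type of $\pvec \gamma$ is the plain $\pvec \tau^{+}_{\bTrans{\Gamma}} \to \pvec \tau^{-}_{\bTrans{A}} \to \tau^{-}_{\bTrans{\Gamma}}$, so the transfer under $\fTrans{(\cdot)}$ needs no adjustment.
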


\begin{proof} Similar to the proof of Theorem~\ref{soundness2}.
\end{proof}

\subsection{Comparing the interpretations $\bInter{\cdot}{}{}$ and $\lInter{\cdot}{}{}$ }

Given that $\bang \lTrans{A}$ is equivalent to $\bTrans{A}$ over $\AL$ (cf. Proposition \ref{prop:lb-equivalence}), one should expect that the interpretations $\bInter{\cdot}{}{}$ and $\lInter{\cdot}{}{}$ are also, in some sense, equivalent. In order to prove such relation it seems that we need to place the following extra assumptions on the parameter $\ubq{\pvec \tau}{\pvec y}{\pvec a} A$ in order to ensure that it behaves as a bounded universal quantifier:
\begin{enumerate}
    \item[\Quant{3}] $\ubq{\pvec \tau}{\pvec x}{\pvec a}{(A \cwedge B(\pvec x))} \Leftrightarrow_{\Atarget} A \cwedge \ubq{\pvec \tau}{\pvec x}{\pvec a}{B(\pvec x)}$, if $\pvec x \not \in \FV{(A)}$
    \item[\Quant{4}] $\forall z \ubq{\pvec \tau}{\pvec x}{\pvec a}{A} \Leftrightarrow_{\Atarget} \ubq{\pvec \tau}{\pvec x}{\pvec a}{\forall z A}$
    \item[\Quant{5}] $\exists z \ubq{\pvec \tau}{\pvec x}{\pvec a}{A} \Rightarrow_{\Atarget} \ubq{\pvec \tau}{\pvec x}{\pvec a} \exists z A$
\end{enumerate}

We then obtain a relationship between $\lInter{A}{\pvec x}{\pvec y}$ and $\bInter{A}{\pvec x}{\pvec y}$ as follows:

\begin{theorem} \label{compare-interpretations} For each formula $A$ there are tuples of closed terms $\pvec s_1, \pvec t_1$ and $\pvec s_2, \pvec t_2$  such that
\begin{enumerate}
	\item[$(i)$] $\Wit_{\pvec \tau^+_{\lTrans{A}}, \pvec \tau^-_{\bTrans{A}}}(\pvec x, \pvec y), \ubq{\pvec \tau^-_{\lTrans{A}}}{\pvec y'}{\pvec s_1 \pvec x \pvec y} \lInter{A}{\pvec x}{\pvec y'} \proves_{\IL^\omega} \bInter{A}{\pvec t_1 \pvec x}{\pvec y}$
	\item[$(ii)$] $\Wit_{\pvec \tau^+_{\bTrans{A}}, \pvec \tau^-_{\lTrans{A}}}(\pvec x, \pvec y), \bInter{A}{\pvec x}{\pvec s_2 \pvec x \pvec y} \proves_{\IL^\omega} \ubq{\pvec \tau^-_{\lTrans{A}}}{\pvec y'}{\pvec y} \lInter{A}{\pvec t_2 \pvec x}{\pvec y'}$
	\item[$(iii)$] $\proves_{\IL^\omega} \Wit(\pvec s_1) \wedge \Wit(\pvec s_2) \wedge \Wit(\pvec t_1) \wedge \Wit(\pvec t_2)$, of appropriate types.
\end{enumerate}
\end{theorem}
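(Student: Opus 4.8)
The plan is to prove all three statements \emph{simultaneously} by induction on the structure of the intuitionistic formula $A$, extracting the witnessing tuples $\pvec s_1, \pvec t_1$ and $\pvec s_2, \pvec t_2$ in lockstep with the two provability claims. The guiding intuition comes from $\bTrans{A} \Leftrightarrow_{\AL} \bang \lTrans{A}$ (Proposition \ref{prop:lb-equivalence}) together with the clause $\uInter{\bang B}{\pvec x}{\pvec a} \equiv \bang \ubq{\pvec \tau^-_B}{\pvec y}{\pvec a} \uInter{B}{\pvec x}{\pvec y}$: after applying the forgetful translation (which drops the $\bang$) the $\bInter{\cdot}{}{}$-interpretation should agree, up to a change of witnesses, with the bounded universal closure $\ubq{\pvec \tau^-_{\lTrans{A}}}{\pvec y'}{\pvec y} \lInter{A}{\pvec x}{\pvec y'}$ of the $\lInter{\cdot}{}{}$-interpretation over its counter-witnesses. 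Statements $(i)$ and $(ii)$ are exactly the two directions of this witness-adjusted equivalence. Throughout I would rewrite both interpretations using the unfolded equivalences of Propositions \ref{prop-lInter} and \ref{prop-bInter}, and carry out all derivations over $\IL^\omega$ using only the abstract properties of the parameters, so that the argument is independent of the concrete instance.

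For the base cases, when $P$ is a predicate symbol we have $\pvec \tau^-_{\lTrans{P}} = \varepsilon$, so the bounded quantifier is vacuous and $\lInter{P(\pvec x)}{a}{} \Leftrightarrow \bound{\pvec x}{a}{P} \Leftrightarrow \bInter{P(\pvec x)}{a}{}$ holds directly; the required terms are identities (which are in $\Wit$ by Lemma \ref{lem-W-closure}). Conjunction is componentwise, using \Quant{3} to distribute the outer bounded quantifier across $\wedge$. The quantifier cases are precisely where \Quant{4} and \Quant{5} are needed: in Propositions \ref{prop-lInter} and \ref{prop-bInter} the bounded quantifier sits in \emph{different} positions for the two interpretations (e.g. $\lInter{\exists z A}{}{}$ carries the bound inside, whereas $\bInter{\forall z A}{}{}$ carries it outside), and \Quant{4}, \Quant{5} are exactly what let me commute $\ubq{}{}{}$ past $\forall z$ and $\exists z$ to reconcile the two shapes. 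Disjunction is similar but more clerical, since both interpretations expand via $\exists \bound{z}{b}{\BB}$ and the term-by-cases conditional $\pcond{z}{\cdots}{\cdots}$; the bounded-quantifier bookkeeping there is again discharged by assumptions \Quant{3}--\Quant{5}.

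The main obstacle is the implication case, because of contravariance. Here $\lInter{A \to B}{\pvec f, \pvec g}{\pvec x, \pvec w}$ unfolds to $\ubq{\pvec \tau^-_{\lTrans{A}}}{\pvec y}{\pvec g \pvec x \pvec w} \lInter{A}{\pvec x}{\pvec y} \to \lInter{B}{\pvec f \pvec x}{\pvec w}$, while $\bInter{A \to B}{\pvec f, \pvec g}{\pvec x, \pvec w}$ carries an \emph{outer} bounded quantifier $\ubq{\pvec \tau^+_{\bTrans{A}}, \pvec \tau^-_{\bTrans{B}}}{\pvec x', \pvec w'}{\pvec x, \pvec w}(\bInter{A}{\pvec x'}{\pvec g \pvec x' \pvec w'} \to \bInter{B}{\pvec f \pvec x'}{\pvec w'})$. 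To pass between them I would use the induction hypothesis for $B$ in the \emph{same} direction as the goal but the one for $A$ in the \emph{opposite} direction, so that $(i)$ for $A \to B$ consumes $(i)$ for $B$ and $(ii)$ for $A$, and symmetrically for $(ii)$; this is why the two statements must be proved together. The new witness terms for the implication are assembled by $\lambda$-abstraction and application from those supplied by the hypotheses, together with the $\eta$-, $\sqcup$- and $\circ$-operators when bounds must be merged or relocated. Part $(iii)$ then follows automatically: every produced term is built from the hypothesis terms and the combinators $\S, \K$ by application and by the $\Wit$-preserving operators of \Assumption{5}, so $\Wit$-membership is inherited via Lemma \ref{lem-W-closure} and the closure clauses \ConUnit, \ConStrength, \ConApp. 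The delicate point is ensuring that after the contravariant swap the bounded-quantifier bounds line up correctly and that no free counter-witness is left unbounded — this alignment is the crux of the argument.
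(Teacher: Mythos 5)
Your skeleton matches the paper's proof: a simultaneous induction on $A$ establishing $(i)$ and $(ii)$ together, with the contravariant use of the induction hypotheses at implication ($(i)$ for $A \to B$ consuming $(i)$ for $B$ and $(ii)$ for $A$, and symmetrically), and with $(iii)$ discharged through Lemma \ref{lem-W-closure}. The genuine gap lies in the quantifier cases, which you reduce entirely to commuting $\ubq{}{\pvec y'}{\pvec y}$ past the quantifiers via \Quant{4} and \Quant{5}. Commutation cannot suffice, because the two sides of the comparison have different \emph{numbers} of bounded quantifiers, not merely differently placed ones. Consider $(ii)$ for $\exists z A$: by Proposition \ref{prop-bInter} the hypothesis satisfies $\bInter{\exists z A}{\pvec x}{\pvec s_2 \pvec x \pvec y} \Leftrightarrow \exists z \, \bInter{A}{\pvec x}{\pvec s_2 \pvec x \pvec y}$, which contains no bounded quantifier at all, whereas the goal $\ubq{\pvec \tau^-_{\lTrans{(\exists z A)}}}{\pvec y'}{\pvec y} \lInter{\exists z A}{\pvec t_2 \pvec x}{\pvec y'}$ unfolds by Proposition \ref{prop-lInter} into \emph{two} nested ones, namely $\ubq{}{\pvec y'}{\pvec y} \exists z \ubq{}{\pvec y''}{\pvec y'} \lInter{A}{\pvec t_2 \pvec x}{\pvec y''}$. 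The induction hypothesis manufactures exactly one layer, and \Quant{5} only moves an existing layer past $\exists z$; neither creates the second.

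The missing idea is the bound-duplication mechanism: \ConApp\ applied to a \emph{composed} bound term, which (modulo the $\Wit$ side conditions, and after the forgetful translation) reads $\ubq{}{\pvec y'}{\comp{\pvec f}{\pvec z}} A \proves \ubq{}{\pvec y}{\pvec z}\, \ubq{}{\pvec y'}{\pvec f \pvec y} A$, used in tandem with \ConUnit\ and singleton bounds $\singleton{(\cdot)}$ to collapse the spurious layers afterwards. The paper packages this into a preliminary self-bounding lemma for $\bInter{\cdot}{}{}$ (Lemma \ref{comp-mon}): for every $A$ there is a closed tuple $\pvec a$ in $\Wit$ such that $\bInter{A}{\pvec x}{\pvec a \pvec x \pvec y} \proves_{\IL^\omega} \ubq{\pvec \tau^-_{\bTrans{A}}}{\pvec y'}{\pvec y} \bInter{A}{\pvec x}{\pvec y'}$, proved by its own induction in which \Quant{5} does work only in the existential case and all other cases follow from \ConApp. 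This lemma, together with systematic choices of composed bounds $\comp{\pvec f}{\pvec z}$ and singletons $\singleton{(\cdot)}$ as the actual witnessing terms, is what makes the existential case of $(ii)$, the universal case of $(i)$, and both implication directions close; you invoke $\eta$, $\sqcup$, $\circ$ only as term-level bookkeeping for implication, not as the logical device that reconciles the mismatched quantifier depth. As written, your induction does not go through in the quantifier cases.
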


\begin{proof} See Appendix C. \end{proof}

It follows from Theorem~\ref{compare-interpretations} that the interpretations $\lInter{A}{\pvec x}{\pvec y}$ and $\bInter{A}{\pvec x}{\pvec y}$ are in fact two different presentations of the same interpretation, in the sense that these two interpretations must necessarily have the same set of characterising principles, and in particular will validate the same set of principles.

\section{Concrete Interpretations of $\HAomega$}
\label{sec-instances}

Let us conclude by considering several instances of the parametrised $\IL$ interpretations $\lInter{\cdot}{}{}$ and $\bInter{\cdot}{}{}$ (Definition \ref{def:combined-inter}). By Theorems \ref{soundness2} and \ref{soundness3}, in order to prove the soundness of the derived $\IL$ interpretation of $\Isource$ into $\Itarget$, it is enough to check that the choice of parameters is adequate for the formulas in the image of the interpretation, and that the non-logical axioms of $\Isource$ are witnessable in $\Itarget$. 

For simplicity, for all instantiations considered here we always take the source theory to be $\Isource = \HAomega$ (equality only for type level $0$, quantifier-free formulas decidable) and the target theory to be $\Itarget = \nHAomega$ (equality available for all types). This means that the predicate symbols we must consider are typing assertions $\rho(x)$, for each finite type $\rho$, and we will consider all of them as computational, i.e. in $\Pred{\Isource}^c$. In this case $\lTrans{\Isource} = \AHAomega$ and $\lTrans{\Itarget} = \NAHA^\omega$. That deals with assumptions \Assumption{1} and \Assumption{2}.

We consider three groups of instantiations, depending on the choice of the parameter $\bound{x}{a}{\tau}$, which we will take to be either $x =_\tau a$ (equality), $x \in_\tau a$ (set inclusion) or $x \leq^*_\tau a$ (majorizability). In each of these cases, and for the corresponding instances of $\Wit$ that we will consider, it should be straightforward to verify that assumptions \Assumption{6}, \Assumption{7} and \Assumption{8} are satisfied (disjunction, induction and finite types). For instance, in the case of majorizability, when $\Wit_\tau(x) = x \leq^*_\tau x$, assumption \Assumption{7} becomes $\forall n^\NN (fn \leq^*_\tau fn), n \leq a, x \leq^*_\tau f n \proves x \leq^*_\tau \apW{P}(f)(a)$ which is satisfied for $\apW{P}(f)(a) = \max_{n \leq a} f n$; and \Assumption{8} follows directly from the definition of majorizability, taking $\tilde{\ap} = \ap$. Note that self-majorizability $x \leq^*_{\tau^*} x$ for the sequence type $\tau^*$ boils down to every element of the sequence $x$ being self-majorizing, i.e. $\forall i < |x| (x_i \leq^*_\tau x_i)$.

\subsection{Interpretations where $\bound{x}{a}{\tau} \; \pdefin \; x =_\tau a$}

The instances where $\bound{x}{a}{\tau}$ is chosen to be $\tau(x) \wedge (x = a)$, with $\wtype{\tau} = \tau$ and $m^{\BB}_\tau(f)(a) = m^{\NN}_\tau(f)(a) = f(a)$, which we call \emph{precise interpretations}, include the seminal interpretations such as G\"odel's Dialectica interpretation, its Diller-Nahm variant, and Kreisel's modified realizability. In these cases the soundness of the interpretation is already known, so we will simply show in detail how the parameters are instantiated to obtain these interpretations, without duelling too much on their soundness. 

\paragraph {\bf Modified realizability interpretation} Consider the following instantiation of the parameters:
\[
\begin{array}{ccccccc}
\bound{x}{a}{\tau} & \wtype{\tau} & \Wit_\tau(x) & \ubq{\pvec \tau}{\pvec x}{\varepsilon} A & \btype{\pvec \tau} & \ifW{\tau} & \apW{\tau} \\
\hline
\tau(x) \wedge (x = a) & \tau & \rm{true} & \forall \pvec x^{\pvec \tau} \, A  & \varepsilon & {\rm if}_{\tau} & \lambda f . f
\end{array}
\]
where $\varepsilon$ denotes the empty tuple of terms or types. Since $\bTrans{A}$ is equivalent to $\bang \lTrans{A}$, we see that $\bTrans{A}$ has interpretation $\bInter{A}{\pvec x}{\varepsilon}$, and we normally omit the empty tuple symbol. Using Proposition \ref{prop-bInter}, we then see that this instantiation leads to the following interpretation:

\begin{proposition}[Kreisel's modified realizability of $\HAomega$] With the parameters instantiated as above we have:
\eqleft{
\begin{array}{lcl}
\bInter{A \to B}{\pvec f}{} & \Leftrightarrow & \forall \pvec x (\bInter{A}{\pvec x}{} \to \bInter{B}{\pvec f \pvec x}{}) \\[2mm]
\bInter{A \wedge B}{\pvec x, \pvec v}{} & \Leftrightarrow & \bInter{A}{\pvec x}{} \wedge \bInter{B}{\pvec v}{} \\[2mm]
\bInter{A \vee B}{\pvec x, \pvec v, b}{} & \Leftrightarrow & \BB(b) \wedge \pcond{b}{\bInter{A}{\pvec x}{}}{\bInter{B}{\pvec v}{}} \\[2mm]
\bInter{\exists z^\tau A}{\pvec x, c}{} & \Leftrightarrow & \tau(c) \wedge \bInter{A[c/z]}{\pvec x}{} \\[2mm]
\bInter{\forall z^\tau A}{\pvec f}{} & \Leftrightarrow & \forall z^\tau \bInter{A}{\pvec f z}{}
\end{array}
}
so that $\bInter{A}{\pvec x}{}$ can be seen to correspond to $\pvec x \mr A$.
\end{proposition}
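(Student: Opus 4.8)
The plan is to read the clauses off directly from the general equivalences of Proposition~\ref{prop-bInter}, instantiated with the parameters of the table, and to simplify using the fact that in this instance all counter-witnesses disappear. The first step is to record the structural fact that $\pvec \tau^-_{\bTrans{A}} = \varepsilon$ for every intuitionistic formula $A$. This follows by induction on $A$: since here $\btype{\pvec \tau} = \varepsilon$, the exponential occurring in $\bTrans{(\cdot)}$ resets the negative type to empty via $\pvec \tau^-_{\bang C} = \btype{(\pvec \tau^-_C)} = \varepsilon$, and the $\bTrans{(\cdot)}$ translation places an exponential in front of every predicate symbol, implication, and universal quantifier, while the conjunction and existential clauses preserve emptiness. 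Hence every $\bInter{A}{\pvec x}{\pvec y}$ has empty counter-witness tuple $\pvec y = \varepsilon$, as was already observed before the statement, and we may write $\bInter{A}{\pvec x}{}$ throughout.

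With the counter-witnesses eliminated, the conjunction and bare existential clauses of Proposition~\ref{prop-bInter} reduce immediately by dropping the empty counter-witness tuples. For the existential over a (computational) typing predicate $\tau$ I would use the special case $\bInter{\exists z^P A}{\pvec x, c}{\pvec y} \Leftrightarrow \exists \bound{z}{c}{P}\,\bInter{A}{\pvec x}{\pvec y}$ with $P = \tau$: unfolding $\bound{z}{c}{\tau}$ as $\tau(z) \wedge z = c$, the bounded existential forces $z = c$ and collapses this to $\tau(c) \wedge \bInter{A[c/z]}{\pvec x}{}$. The disjunction clause is handled the same way, unfolding $\bound{z}{b}{\BB}$ (with $\wtype{\BB} = \BB$) as $\BB(z) \wedge z = b$, so that $\exists \bound{z}{b}{\BB}(\pcond{z}{\cdots}{\cdots})$ becomes $\BB(b) \wedge \pcond{b}{\bInter{A}{\pvec x}{}}{\bInter{B}{\pvec v}{}}$.

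The implication and universal clauses require understanding the abstract bounded quantifier $\ubq{\pvec \tau}{\pvec x}{\pvec a}(\cdot)$, which for these parameters is $\ubq{\pvec \tau}{\pvec x}{\varepsilon} A = \forall \pvec x^{\pvec \tau} A$, i.e.\ plain typed universal quantification (all bounds $\btype{\pvec \tau}$ being empty and $\Wit_\tau$ being $\mathrm{true}$). In the implication clause the quantifier $\ubq{\pvec \tau^+_{\bTrans{A}}, \pvec \tau^-_{\bTrans{B}}}{\pvec x', \pvec w'}{\pvec x, \pvec w}$ loses its $\pvec w'$-component (since $\pvec \tau^-_{\bTrans{B}} = \varepsilon$), loses its bound $\pvec x, \pvec w$ (being of the empty $\btype{(\cdot)}$ types), and loses the counter-witness $\pvec g\,\pvec x'\,\pvec w'$ of $A$; what remains is $\forall \pvec x\,(\bInter{A}{\pvec x}{} \to \bInter{B}{\pvec f \pvec x}{})$. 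For the universal clause the nested bounded quantifiers of Proposition~\ref{prop-bInter} all collapse to ordinary typed quantifiers for the same reason, and the innermost $\forall \bound{z}{c''}{\tau}$, upon unfolding $\bound{z}{c''}{\tau}$ as $\tau(z) \wedge z = c''$, identifies the bound variable $z$ with the argument fed to $\pvec f$, leaving $\forall z^\tau \bInter{A}{\pvec f z}{}$.

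The main obstacle is the type-bookkeeping: one must verify carefully that every bounding type is indeed empty in this instance, that the abstract quantifier genuinely reduces to typed universal quantification, and, above all, that the chain of nested bounded quantifiers appearing in the universal-quantifier clause of Proposition~\ref{prop-bInter} collapses cleanly. Once these reductions are checked, each line matches the displayed Kreisel clauses; a concluding induction on $A$ then identifies $\bInter{A}{\pvec x}{}$ with the standard realizability predicate $\pvec x \mr A$.
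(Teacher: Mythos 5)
Your proposal is correct and takes essentially the same route as the paper, whose entire proof is ``Direct from Proposition \ref{prop-bInter}'': you simply make explicit the routine bookkeeping that this citation leaves implicit (the inductive check that $\pvec \tau^-_{\bTrans{A}} = \varepsilon$, the collapse of $\ubq{\pvec \tau}{\pvec x}{\varepsilon}$ to typed universal quantification, and the elimination of $\bound{z}{c}{\tau} \equiv \tau(z) \wedge z = c$ via the equality). All of these verifications are sound, so this is the paper's argument written out in full detail.
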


\begin{proof} Direct from Proposition \ref{prop-bInter}.
\end{proof}

\paragraph {\bf Dialectica interpretation} Consider this instantiation of the parameters:
\[
\begin{array}{ccccccc}
\bound{x}{a}{\tau} & \wtype{\tau} & \Wit_\tau(x) & \ubq{\pvec \tau}{\pvec x}{\pvec a} A & \btype{\pvec \tau} & \ifW{\tau} & \apW{\tau} \\
\hline
\tau(x) \wedge (x = a) & \tau & \rm{true} & A[\pvec a / \pvec x]  & \pvec \tau & {\rm if}_{\tau} & \lambda f . f
\end{array}
\]

\begin{proposition}[G\"odel's Dialectica interpretation of $\HAomega$] With the parameters instantiated as above we have:
\eqleft{
\begin{array}{lcl}
\lInter{A \to B}{\pvec f, \pvec g}{\pvec x, \pvec w} & \Leftrightarrow & \lInter{A}{\pvec x}{\pvec g \pvec x \pvec w} \to \lInter{B}{\pvec f \pvec x}{\pvec w} \\[2mm]
\lInter{A \wedge B}{\pvec x, \pvec v}{\pvec y, \pvec w} & \Leftrightarrow & \lInter{A}{\pvec x}{\pvec y} \wedge \lInter{B}{\pvec v}{\pvec w} \\[2mm]
\lInter{A \vee B}{b, \pvec x, \pvec v}{\pvec y, \pvec w} & \Leftrightarrow & \BB(b) \wedge \pcond{b}{\lInter{A}{\pvec x}{\pvec y}}{\lInter{B}{\pvec v}{\pvec w}} \\[2mm]
\lInter{\exists z^\tau A}{c, \pvec x}{\pvec y} & \Leftrightarrow & \tau(c) \wedge \lInter{A[c/z]}{\pvec x}{\pvec y} \\[2mm]
\lInter{\forall z^\tau A}{\pvec f}{\pvec y, b} & \Leftrightarrow & \tau(b) \to \lInter{A[b/z]}{\pvec f b}{\pvec y}
\end{array}
}
so that $\lInter{A}{\pvec x}{\pvec y}$ can be seen to correspond to $A_D(\pvec x; \pvec y)$.
\end{proposition}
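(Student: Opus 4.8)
The plan is to read off every clause directly from the general structural equivalences of Proposition~\ref{prop-lInter}, specialising the two parameters that carry the design freedom: the abstract bounded quantifier, here set to $\ubq{\pvec \tau}{\pvec x}{\pvec a} A \pdefin A[\pvec a/\pvec x]$, and the predicate interpretation $\bound{x}{a}{\tau} \pdefin \tau(x) \wedge (x = a)$. Because $\lInter{\cdot}{}{}$ is defined uniformly over the logical structure, no new induction on formulas is required; the work consists entirely of simplifying the abstract quantifier into an ordinary substitution and then eliminating the equality-bounded quantifiers introduced by the typed quantifier clauses.

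First I would dispatch the clauses in which the parameter $\ubq{\cdot}{\cdot}{\cdot}$ occurs. In the implication clause of Proposition~\ref{prop-lInter} the antecedent $\ubq{\pvec \tau^-_{\lTrans{A}}}{\pvec y}{\pvec g \pvec x \pvec w} \lInter{A}{\pvec x}{\pvec y}$ becomes $\lInter{A}{\pvec x}{\pvec y}[\pvec g \pvec x \pvec w/\pvec y] = \lInter{A}{\pvec x}{\pvec g \pvec x \pvec w}$, giving exactly the stated clause. In the disjunction and the untyped existential clauses the inner $\ubq{\pvec \tau^-_{\lTrans{A}}}{\pvec y'}{\pvec y} \lInter{A}{\pvec x}{\pvec y'}$ collapses to $\lInter{A}{\pvec x}{\pvec y}$ upon substituting $\pvec y$ for $\pvec y'$. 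The conjunction clause and the plain universal clause contain no parameter and so transfer verbatim.

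Next I would treat the typed quantifiers, which is the only point needing more than pure substitution. Starting from the specialised clauses for computational predicate symbols in Proposition~\ref{prop-lInter} and unfolding $\bound{z}{c}{\tau} = \tau(z) \wedge (z = c)$, the bounded existential becomes $\exists z (\tau(z) \wedge z = c \wedge \lInter{A}{\pvec x}{\pvec y})$ and the bounded universal becomes $\forall z ((\tau(z) \wedge z = b) \to \lInter{A}{\pvec f b}{\pvec y})$. Each of these is equivalent over $\Itarget$ to, respectively, $\tau(c) \wedge \lInter{A[c/z]}{\pvec x}{\pvec y}$ and $\tau(b) \to \lInter{A[b/z]}{\pvec f b}{\pvec y}$, by the usual elimination of an equality-bounded quantifier $\exists z (z = c \wedge \varphi(z)) \Leftrightarrow \varphi(c)$ and $\forall z (z = b \to \varphi(z)) \Leftrightarrow \varphi(b)$, together with the commutation of substitution with the interpretation, $\lInter{A}{\pvec x}{\pvec y}[c/z] \equiv \lInter{A[c/z]}{\pvec x}{\pvec y}$ (since $z$ is an ordinary variable, not a witness variable).

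The proof has no genuine obstacle; the only care required is bookkeeping, namely checking that the substitutions $[\pvec a/\pvec x]$ hidden inside the abstract quantifier land on the correct components of the witness and counter-witness tuples, so that the applied terms $\pvec g \pvec x \pvec w$, $\pvec f \pvec x$ and $\pvec f b$ appear in the right argument positions. All of this is already accounted for in the statement of Proposition~\ref{prop-lInter}, so the result follows directly from it.
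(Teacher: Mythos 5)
Your proof is correct and follows essentially the same route as the paper: both read the clauses directly off Proposition~\ref{prop-lInter} with $\ubq{\pvec \tau}{\pvec x}{\pvec a} A \pdefin A[\pvec a/\pvec x]$ and $\bound{x}{a}{\tau} \pdefin \tau(x) \wedge (x=a)$, and both eliminate the resulting equality-bounded quantifiers via $\exists z^\tau (z = t \wedge \varphi(z)) \Leftrightarrow \tau(t) \wedge \varphi(t)$ and $\forall z^\tau (z = t \to \varphi(z)) \Leftrightarrow \tau(t) \to \varphi(t)$, valid in $\nHAomega$. The paper's proof only adds a side remark (not needed for the stated equivalences) that the resulting formulas $\lInter{A}{\pvec x}{\pvec y}$ are decidable, which is what makes the choice of $\join{\pvec y_1}{\pvec y_2}$ by case distinction adequate for condition \ConStrength.
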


\begin{proof} Direct from Proposition \ref{prop-lInter}, using the above instantiation of the parameters. We are using here the equivalences
\[ \exists x^\tau (x = t \wedge A(x)) \Leftrightarrow \tau(t) \wedge A(t) \quad \mbox{and} \quad \forall x^\tau (x = t \to A(x)) \Leftrightarrow \tau(t) \to A(t) \]
which are valid in $\nHAomega$, in order to remove equality on higher-types. In this way, since our source theory is $\HAomega$, we also have that for all formulas $A$ of $\HAomega$, the formula $\lInter{A}{\pvec x}{\pvec y}$ will be decidable. This property is essential for satisfying condition \ConStrength, where $\join{\pvec y_1}{\pvec y_2}$ is defined via a case distinction involving the formula $\lInter{A}{\pvec x}{\pvec y}$. This is also the only place where we make use of the assumption that the choice of parameters only needs to be adequate for the formulas in the image of the interpretation.
\end{proof}

\paragraph {\bf Diller-Nahm interpretation} Consider this instantiation of the parameters:
\[
\begin{array}{ccccccc}
\bound{x}{a}{\tau} & \wtype{\tau} & \Wit_\tau(x) & \ubq{\pvec \tau}{\pvec x}{\pvec a} A & \btype{\pvec \tau} & \ifW{\tau} & \apW{\tau} \\
\hline
\tau(x) \wedge (x = a) & \tau & \rm{true} & \forall \pvec x \in_{\pvec \tau} \pvec a \, A  & \pvec \tau^* & {\rm if}_{\tau} & \lambda f . f
\end{array}
\]

\begin{proposition} \label{dn-formulation} With the parameters instantiated as above we have:
\[
\begin{array}{lcl}
\lInter{A \to B}{\pvec f, \pvec g}{\pvec x, \pvec w} & \Leftrightarrow & \forall \pvec y \in_{\pvec 	\tau^-_{\lTrans{A}}} \pvec g \pvec x \pvec w \, \lInter{A}{\pvec x}{\pvec y } \to \lInter{B}{\pvec f \pvec x}{\pvec w} \\[2mm]
\lInter{A \wedge B}{\pvec x, \pvec v}{\pvec y, \pvec w} & \Leftrightarrow & \lInter{A}{\pvec x}{\pvec y} \wedge \lInter{B}{\pvec v}{\pvec w} \\[2mm]
\lInter{A \vee B}{b, \pvec x, \pvec v}{\pvec y, \pvec w} & \Leftrightarrow & \BB(b) \wedge \pcond{b}{\forall \pvec y' \in_{\pvec 	\tau^-_{\lTrans{A}}} \pvec y \lInter{A}{\pvec x}{\pvec y'}}{\forall \pvec w' \in_{\pvec 	\tau^-_{\lTrans{B}}} \pvec w {\lInter{B}{\pvec v}{\pvec w'}}} \\[2mm]
\lInter{\exists z^\tau A}{c, \pvec x}{\pvec y} & \Leftrightarrow & \tau(c) \wedge \forall \pvec y' \in_{\pvec \tau^-_{\lTrans{A}}} \pvec y \, \lInter{A[c/z]}{\pvec x}{\pvec y'} \\[2mm]
\lInter{\forall z^\tau A}{\pvec f}{c, \pvec y} & \Leftrightarrow & \tau(c) \to \lInter{A[c/z]}{\pvec f c}{\pvec y}
\end{array}
\]
\end{proposition}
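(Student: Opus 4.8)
The plan is to read off each equivalence directly from Proposition~\ref{prop-lInter}, specialising its general and particular clauses to the Diller--Nahm choice of parameters. The only parameter that differs from the Dialectica instance is the bounded quantifier, now $\ubq{\pvec \tau}{\pvec x}{\pvec a} A \equiv \forall \pvec x \in_{\pvec \tau} \pvec a \, A$ (with $\bound{x}{a}{\tau} \equiv \tau(x) \wedge (x = a)$ as before), so the computation is essentially that of the Dialectica proposition with $A[\pvec a/\pvec x]$ replaced throughout by $\forall \pvec x \in_{\pvec \tau} \pvec a \, A$. First I would dispose of the two clauses in which $\ubq{\pvec \tau}{\pvec x}{\pvec a}(\cdot)$ does not occur: the clause for $A \wedge B$ is copied verbatim from Proposition~\ref{prop-lInter}, and since the universal quantifier is interpreted uniformly, the particular clause $\lInter{\forall z^\tau A}{\pvec f}{c, \pvec y}$ reduces to exactly the same formula $\tau(c) \to \lInter{A[c/z]}{\pvec f c}{\pvec y}$ as in the Dialectica case, after removing the equality bound.

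The implication clause is then immediate: substituting $\ubq{\pvec \tau^-_{\lTrans{A}}}{\pvec y}{\pvec g \pvec x \pvec w} \lInter{A}{\pvec x}{\pvec y} \equiv \forall \pvec y \in_{\pvec \tau^-_{\lTrans{A}}} \pvec g \pvec x \pvec w \, \lInter{A}{\pvec x}{\pvec y}$ into the implication clause of Proposition~\ref{prop-lInter} produces the stated form with no further manipulation.

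For the disjunction and existential clauses I would substitute the parameters and then eliminate the equality-based \emph{outer} bound, exactly as in the Dialectica proof, using the equivalences
\[ \exists x^\tau (x = t \wedge A(x)) \Leftrightarrow \tau(t) \wedge A(t) \quad \mbox{and} \quad \forall x^\tau (x = t \to A(x)) \Leftrightarrow \tau(t) \to A(t), \]
valid in $\nHAomega$. Concretely, the particular clause $\lInter{\exists z^\tau A}{c, \pvec x}{\pvec y} \Leftrightarrow \exists \bound{z}{c}{\tau} \, \forall \pvec y' \in_{\pvec \tau^-_{\lTrans{A}}} \pvec y \, \lInter{A}{\pvec x}{\pvec y'}$ unfolds to $\exists z (\tau(z) \wedge z = c \wedge \forall \pvec y' \in_{\pvec \tau^-_{\lTrans{A}}} \pvec y \, \lInter{A}{\pvec x}{\pvec y'})$, which collapses to $\tau(c) \wedge \forall \pvec y' \in_{\pvec \tau^-_{\lTrans{A}}} \pvec y \, \lInter{A[c/z]}{\pvec x}{\pvec y'}$. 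The disjunction clause is handled the same way: removing the outer $\exists \bound{z}{b}{\BB}$ leaves $\BB(b)$ together with the case distinction $\pcond{b}{\cdots}{\cdots}$, in which each branch retains its bounded quantifier $\forall \pvec y' \in_{\pvec \tau^-_{\lTrans{A}}} \pvec y$, respectively $\forall \pvec w' \in_{\pvec \tau^-_{\lTrans{B}}} \pvec w$.

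The hard part is purely bookkeeping: one must keep careful track of which bounded quantifiers are the \emph{outer} witness-bounds (eliminated by the equality equivalences above) and which are the \emph{inner} $\ubq{\pvec \tau}{\pvec x}{\pvec a}(\cdot)$-quantifiers over counter-witnesses, which must survive the simplification and become the characteristic $\forall \pvec y' \in \pvec y$ of the Diller--Nahm interpretation. I do not expect to invoke decidability of $\lInter{A}{\pvec x}{\pvec y}$ anywhere; unlike the Dialectica case, $\join{\pvec y_1}{\pvec y_2}$ is here interpreted as concatenation of finite sequences (since $\btype{\pvec \tau} = \pvec \tau^*$), so condition \ConStrength\ holds without any case analysis — but that observation belongs to the verification of adequacy rather than to the formula-level computation carried out in this proposition.
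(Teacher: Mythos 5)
Your proposal is correct and takes essentially the same route as the paper: the paper's proof is precisely ``Direct from Proposition~\ref{prop-lInter}, using the above instantiation of the parameters,'' with the equality-elimination equivalences $\exists x^\tau (x = t \wedge A(x)) \Leftrightarrow \tau(t) \wedge A(t)$ and $\forall x^\tau (x = t \to A(x)) \Leftrightarrow \tau(t) \to A(t)$ (stated explicitly only in the neighbouring Dialectica proposition) doing the work of collapsing the outer witness-bounds while the inner $\forall \pvec y' \in \pvec y$ quantifiers survive, exactly as you describe. Your closing observation that decidability of $\lInter{A}{\pvec x}{\pvec y}$ is not needed here, since $\join{\pvec y_1}{\pvec y_2}$ is set union, is also accurate and correctly filed under adequacy rather than under this formula-level computation.
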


\begin{proof} Direct from Proposition \ref{prop-lInter}, using the above instantiation of the parameters.
\end{proof}

The treatment of disjunction and existential quantifier in the instance above appears to diverge from the standard Diller-Nahm interpretation, but the following proposition shows that this is in fact an equivalent way of presenting the Diller-Nahm interpretation.

\begin{proposition}[Correspondence with Diller-Nahm interpretation of $\HAomega$] The interpretation above $\lInter{A}{\pvec x}{\pvec y}$ can be seen to correspond to $\dnInter{A}{\pvec x}{\pvec y}$, in the sense that for each $A$ there are terms $\pvec s_1, \pvec t_1$ and $\pvec s_2,\pvec t_2$ such that
\begin{enumerate}
	\item[(i)] $\forall \pvec y' \in \pvec s_1 \pvec x \pvec y \lInter{A}{\pvec x}{\pvec y'} \proves \dnInter{A}{\pvec t_1 \pvec x}{\pvec y}$
 	\item[(ii)] $\forall \pvec y' \in \pvec s_2 \pvec x \pvec y \, \dnInter{A}{\pvec x}{\pvec y'} \proves \lInter{A}{\pvec t_2 \pvec x}{\pvec y}$
\end{enumerate}
\end{proposition}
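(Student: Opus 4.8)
The plan is to proceed by induction on the structure of the intuitionistic formula $A$, constructing the four tuples $\pvec s_1, \pvec t_1$ and $\pvec s_2, \pvec t_2$ simultaneously. The guiding observation is that, in this instantiation, $\lInter{\cdot}{}{}$ and $\dnInter{\cdot}{}{}$ have identical clauses for atoms, conjunction, implication, and universal quantification, and differ \emph{only} at the existential quantifier and disjunction: by Proposition \ref{dn-formulation} the $\lInter{\cdot}{}{}$-clauses there carry an extra bounded universal quantifier $\forall \pvec y' \in \pvec y$ over the counter-witnesses of the matrix, whereas the standard Diller--Nahm clauses do not. This extra quantifier is exactly the trace of the $\bang$ introduced by the Girard translation, since $\lTrans{(\exists x A)} = \exists x \bang \lTrans{A}$ (and analogously for disjunction). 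Because in this instance the bounded quantifiers $\forall \pvec x \in_{\pvec \tau} \pvec a$, the concatenation $\join{\cdot}{\cdot}$, and the singleton $\singleton{(\cdot)}$ are concrete list operations, the whole argument reduces to flattening and collecting such bounded quantifiers.

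The base case (typing predicates $\tau(x)$) is immediate: the two interpretations coincide and have empty counter-witness tuples, so all four terms are identities and the bounds are vacuous. For conjunction and universal quantification I would simply relay the inductive terms. For implication I would use the induction hypothesis contravariantly: in direction $(i)$ the bounding function $\pvec g$ for the antecedent is obtained by composing $\pvec s_2^{A}$ (direction $(ii)$ for $A$) with $\pvec s_1^{B}, \pvec t_1^{B}$ (direction $(i)$ for $B$), collecting the resulting lists with $\join{\cdot}{\cdot}$, and symmetrically for $(ii)$; this case is the most bookkeeping-heavy but structurally routine since both interpretations already share the clause $\forall \pvec y \in \pvec g \pvec x \pvec w$. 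Condition $(iii)$ follows throughout from Lemma \ref{lem-W-closure} together with the $\Wit$-membership clauses of \ConStrength{} and \ConUnit, as every constructed term is built from $\S, \K$, the list operations, and the inductive terms.

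The genuine content is the existential and disjunction cases. Take $\exists z^\tau A$ and direction $(i)$. Unfolding $\lInter{\exists z^\tau A}{c,\pvec x}{\pvec Y} \Leftrightarrow \tau(c) \wedge \forall \pvec y' \in \pvec Y \lInter{A[c/z]}{\pvec x}{\pvec y'}$ under the outer bound $\forall \pvec Y \in \pvec s_1 (c,\pvec x)\pvec y$ produces two \emph{nested} bounded quantifiers $\forall \pvec Y \in \pvec s_1(c,\pvec x)\pvec y\,\forall \pvec y' \in \pvec Y$, which I would flatten into a single bound $\forall \pvec y' \in L$, with $L$ the concatenation of the lists in $\pvec s_1(c,\pvec x)\pvec y$. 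Choosing $\pvec s_1(c,\pvec x)\pvec y := \singleton{(\pvec s_1^{A}\pvec x \pvec y)}$ and $\pvec t_1(c,\pvec x) := c, \pvec t_1^{A}\pvec x$ makes $L = \pvec s_1^{A}\pvec x \pvec y$, so induction hypothesis $(i)$ for $A$ yields $\dnInter{A[c/z]}{\pvec t_1^{A}\pvec x}{\pvec y}$, hence $\dnInter{\exists z^\tau A}{\pvec t_1(c,\pvec x)}{\pvec y}$. For direction $(ii)$ one instead \emph{builds} a large enough list, setting $\pvec s_2(c,\pvec x)\pvec y := \bigsqcup_{\pvec y'' \in \pvec y}\pvec s_2^{A}\pvec x \pvec y''$ (the concatenation over the list $\pvec y$, as in the $\pvec g^{\sqcup}$ construction of Proposition \ref{lTrans:induction}): for each $\pvec y'' \in \pvec y$ the hypothesis $\forall \pvec w \in \pvec s_2(c,\pvec x)\pvec y\,\dnInter{A}{\pvec x}{\pvec w}$ restricts to $\forall \pvec w \in \pvec s_2^{A}\pvec x \pvec y''\,\dnInter{A}{\pvec x}{\pvec w}$, and induction hypothesis $(ii)$ for $A$ gives $\lInter{A}{\pvec t_2^{A}\pvec x}{\pvec y''}$, which universally quantified over $\pvec y'' \in \pvec y$ reconstructs $\lInter{\exists z^\tau A}{\pvec t_2(c,\pvec x)}{\pvec y}$. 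The disjunction case runs identically, carrying the boolean selector $b$ through the conditional $\pcond{b}{\cdots}{\cdots}$ and applying the same flatten/collect argument in each branch. I expect the main obstacle to be exactly this matching of nested bounded quantifiers: aligning the list-flattening on one side with the list-building on the other across the differing counter-witness types (sequence types $\btype{\pvec \tau} = \pvec \tau^*$ for $\lInter{\cdot}{}{}$ against the unbounded types for $\dnInter{\cdot}{}{}$), together with the minor point that extracting the typing conjunct $\tau(c)$ from under a bounded quantifier requires the bounding lists to be nonempty, which the singleton and collection definitions guarantee.
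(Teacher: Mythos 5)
Your proposal is correct and takes essentially the same route as the paper's own proof: induction on $A$, with the key cases resolved by exactly the same term choices --- the singleton bound $\{\pvec s_1^A \pvec x \pvec y\}$ for the existential quantifier in direction $(i)$, the indexed union $\bigcup_{\pvec y'' \in \pvec y} \pvec s_2^A \pvec x \pvec y''$ in direction $(ii)$, and contravariant composition of the inductive terms (direction $(ii)$ for $A$ with direction $(i)$ for $B$, collected by indexed unions) at implication. The paper likewise treats only the existential and implication cases in detail, and your remarks about flattening nested bounded quantifiers and about nonemptiness of the bounding lists (needed to pull out the conjunct $\tau(c)$) correspond precisely to the steps in its derivation.
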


\begin{proof} By induction on $A$. \\[2mm]
Existential quantifier $(i)$. Let $\pvec s_1, \pvec t_1$ be given by induction hypothesis. Then:
\eqleft{
\begin{array}{lcl}
	& & \forall \pvec  y' \in \{\pvec s_1 \pvec x \pvec y\} \lInter{\exists z^\tau A}{c, \pvec x}{\pvec y'} \\[2mm]
	& \stackrel{\mathrm{P}~\ref{dn-formulation}}{\Leftrightarrow} & \forall \pvec  y' \in \{\pvec s_1 \pvec x \pvec y\} ( \tau(c) 
			\wedge \forall \pvec y'' \in \pvec y' \, \lInter{A[c/z]}{\pvec x}{\pvec y''} ) \\[2mm]
	& \Rightarrow&  \tau(c) \wedge \forall \pvec y'' \in \pvec s_1 \pvec x \pvec y \lInter{A[c/z]}{\pvec x}{\pvec y''} \\[2mm]
	& \stackrel{(\mathrm{IH}_{(i)})}{\Rightarrow} & \tau(c) \wedge \dnInter{(A[c/z])}{\pvec t_1 \pvec x}{\pvec y} \\[2mm]
	& \stackrel{\textrm{DN def.}}{\equiv} & \dnInter{(\exists z^\tau A)}{c, \pvec t_1 \pvec x}{\pvec y} 
\end{array}
}
Existential quantifier $(ii)$. Let $\pvec s_2, \pvec t_2$ be given by induction hypothesis. Then:
\eqleft{
\begin{array}{lcl}
	& & \forall \pvec y'' \in \bigcup_{\pvec y' \in \pvec y} \pvec s_2 \pvec x \pvec y' \, \dnInter{(\exists z^\tau A)}{c, \pvec x}{\pvec y''} \\[2mm]
	& \stackrel{\textrm{DN def.}}{\equiv} & \forall \pvec y'' \in \bigcup_{\pvec y' \in \pvec y} \pvec s_2 \pvec x \pvec y' \, (\tau(c) \wedge \dnInter{A[c/z]}{\pvec x}{\pvec y''}) \\[2mm]
	& \Rightarrow & \tau(c) \wedge \forall \pvec y'' \in \bigcup_{\pvec y' \in \pvec y} \pvec s_2 \pvec x \pvec y' \, \dnInter{A[c/z]}{\pvec x}{\pvec y''} \\[2mm]
	& \Rightarrow & \tau(c) \wedge \forall \pvec y' \in \pvec y \forall \pvec y'' \in \pvec s_2 \pvec x \pvec y' \, \dnInter{A[c/z]}{\pvec x}{\pvec y''} \\[2mm]
	& \stackrel{(\mathrm{IH}_{(ii)})}{\Rightarrow} & \tau(c) \wedge \forall \pvec y' \in \pvec y \lInter{A[c/z]}{\pvec t_2 \pvec x}{\pvec y'} \\[2mm]
	& \stackrel{\mathrm{P}~\ref{dn-formulation}}{\Leftrightarrow} & \lInter{(\exists z^\tau A)}{c, \pvec t_2 \pvec x}{\pvec y''}
\end{array}
}
Implication $(i)$. Let $\pvec s_1^B, \pvec t_1^B$ and $\pvec s_2^A, \pvec t_2^A$ be given by induction hypothesis. Then:
\eqleft{
\begin{array}{lcl}
	& & \forall \pvec x' \in \{ \pvec t_2^A \pvec x \} \forall \pvec w' \in \pvec s_1^B (\pvec f \pvec x') \pvec w \lInter{A \to B}{\pvec f, \pvec g}{\pvec x', \pvec w'} \\[2mm] 
	& \Rightarrow &
		\forall \pvec w' \in \pvec s_1^B (\pvec f (\pvec t_2^A \pvec x)) \pvec w \lInter{A \to B}{\pvec f, \pvec g}{\pvec t_2^A \pvec x, \pvec w'} \\[2mm] 
	& \stackrel{\mathrm{P}~\ref{dn-formulation}}{\Leftrightarrow} & 
		\forall \pvec w' \in \pvec s_1^B (\pvec f (\pvec t_2^A \pvec x)) \pvec w (\forall \pvec y \in \pvec g (\pvec t_2^A \pvec x) \pvec w' \, \lInter{A}{\pvec t_2^A \pvec x}{\pvec y} \to \lInter{B}{\pvec f (\pvec t_2^A \pvec x)}{\pvec w'}) \\[2mm]
	& \Rightarrow & 
		\forall \pvec w' \in \pvec s_1^B (\pvec f (\pvec t_2^A \pvec x)) \pvec w \forall \pvec y \in \pvec g (\pvec t_2^A \pvec x) \pvec w' \, \lInter{A}{\pvec t_2^A \pvec x}{\pvec y} \\[2mm]
	& & \hspace{30mm} \to \forall \pvec w' \in \pvec s_1^B (\pvec f (\pvec t_2^A \pvec x)) \pvec w \lInter{B}{\pvec f (\pvec t_2^A \pvec x)}{\pvec w'} \\[2mm]
	& \stackrel{(\mathrm{IH}_{(i)}, \mathrm{IH}_{(ii)})}{\Rightarrow} & \forall \pvec w' \in \pvec s_1^B (\pvec f (\pvec t_2^A \pvec x)) \pvec w \forall \pvec y \in \pvec g (\pvec t_2^A \pvec x) \pvec w'  \forall \pvec y' \in \pvec s_2^A \pvec x \pvec y \, \dnInter{A}{\pvec x}{\pvec y'} \\[2mm]
	& & \hspace{30mm} \to \dnInter{B}{\pvec t_1^B(\pvec f (\pvec t_2^A \pvec x))}{\pvec w} \\[2mm]
	& \Rightarrow & 
		\forall \pvec y' \in \bigcup_{\pvec w' \in \pvec s_1^B (\pvec f (\pvec t_2^A \pvec x)) \pvec w} \bigcup_{\pvec y \in \pvec g (\pvec t_2^A \pvec x) \pvec w'} \pvec s_2^A \pvec x \pvec y  \, \dnInter{A}{\pvec x}{\pvec y'} \\[2mm]
	& & \hspace{30mm} \to \dnInter{B}{\pvec t_1^B(\pvec f (\pvec t_2^A \pvec x))}{\pvec w}  \\[2mm]
	& \stackrel{\textrm{DN def.}}{\equiv} & \dnInter{(A \to B)}{\pvec t_1^{A \to B}[\pvec f, \pvec g]}{\pvec x, \pvec w}
\end{array}
}
where $\pvec t_1^{A \to B}[\pvec f, \pvec g] \equiv \lambda \pvec x, \pvec w . \bigcup_{\pvec w' \in \pvec s_1^B (\pvec f (\pvec t_2^A \pvec x)) \pvec w} \bigcup_{\pvec y \in \pvec g (\pvec t_2^A \pvec x) \pvec w'} \pvec s_2^A \pvec x \pvec y, \lambda \pvec x . \pvec t_1^B(\pvec f (\pvec t_2^A \pvec x))$. \\[2mm]
Implication $(ii)$. Let $\pvec s_1^A, \pvec t_1^A$ and $\pvec s_2^B, \pvec t_2^B$ be given by induction hypothesis. Then:
\eqleft{
\begin{array}{lcl}
	& & \forall \pvec x' \in \{ \pvec t_1^A \pvec x \} \forall \pvec w' \in \pvec s_2^B (\pvec f \pvec x') \pvec w \dnInter{(A \to B)}{\pvec f, \pvec g}{\pvec x', \pvec w'} \\[2mm] 
	& \Rightarrow &
		\forall \pvec w' \in \pvec s_2^B (\pvec f (\pvec t_1^A \pvec x)) \pvec w \dnInter{(A \to B)}{\pvec f, \pvec g}{\pvec t_1^A \pvec x, \pvec w'} \\[2mm] 
	& \stackrel{\textrm{DN def.}}{\equiv} & 
		\forall \pvec w' \in \pvec s_2^B (\pvec f (\pvec t_1^A \pvec x)) \pvec w (\forall \pvec y \in \pvec g (\pvec t_1^A \pvec x) \pvec w' \, \dnInter{A}{\pvec t_1^A \pvec x}{\pvec y} \\[2mm]
	& & \hspace{25mm} \to \dnInter{B}{\pvec f (\pvec t_1^A \pvec x)}{\pvec w'}) \\[2mm]
	& \Rightarrow & 
		\forall \pvec w' \in \pvec s_2^B (\pvec f (\pvec t_1^A \pvec x)) \pvec w \forall \pvec y \in \pvec g (\pvec t_1^A \pvec x) \pvec w' \, \dnInter{A}{\pvec t_1^A \pvec x}{\pvec y} \\[2mm]
	& & \hspace{25mm} \to \forall \pvec w' \in \pvec s_2^B (\pvec f (\pvec t_1^A \pvec x)) \pvec w \dnInter{B}{\pvec f (\pvec t_1^A \pvec x)}{\pvec w'} \\[2mm]
	& \stackrel{(\mathrm{IH}_{(i)}, \mathrm{IH}_{(ii)})}{\Rightarrow} & \forall \pvec w' \in \pvec s_2^B (\pvec f (\pvec t_1^A \pvec x)) \pvec w \forall \pvec y \in \pvec g (\pvec t_1^A \pvec x) \pvec w'  \forall \pvec y' \in \pvec s_1^A \pvec x \pvec y \, \lInter{A}{\pvec x}{\pvec y'} \\[2mm]
	& & \hspace{30mm} \to \lInter{B}{\pvec t_2^B(\pvec f (\pvec t_1^A \pvec x))}{\pvec w} \\[2mm]
	& \Rightarrow & 
		\forall \pvec y' \in \bigcup_{\pvec w' \in \pvec s_2^B (\pvec f (\pvec t_1^A \pvec x)) \pvec w} \bigcup_{\pvec y \in \pvec g (\pvec t_1^A \pvec x) \pvec w'} \pvec s_1^A \pvec x \pvec y  \, \lInter{A}{\pvec x}{\pvec y'} \to \lInter{B}{\pvec t_2^B(\pvec f (\pvec t_1^A \pvec x))}{\pvec w} \\[2mm]
	& \stackrel{\mathrm{P}~\ref{dn-formulation}}{\Leftrightarrow} & \lInter{A \to B}{\lambda \pvec x, \pvec w . \bigcup_{\pvec w' \in \pvec s_2^B (\pvec f (\pvec t_1^A \pvec x)) \pvec w} \bigcup_{\pvec y \in \pvec g (\pvec t_1^A \pvec x) \pvec w'} \pvec s_1^A \pvec x \pvec y, \lambda \pvec x . \pvec t_2^B(\pvec f (\pvec t_1^A \pvec x))}{\pvec x, \pvec w} 
\end{array}
}
\end{proof}

\begin{remark}[Stein's family of interpretations] In \cite{Stein(79)} Stein describes a family of interpretations parametrised by a number $n$. The idea is that when $\rho$ is a type of type level $\geq n$ we treat the contraction in a way similar to the Diller-Nahm interpretation and so $\btype{\rho}=\rho^*$ and $\ubq{\rho}{x}{a} A \pdefin \forall x \in_{\rho} \textrm{range}(a) \, A$, where $a$ is a function from the pure type $(n-1)$. But when $\rho$ is a type of type level $< n$ we treat it as modified realizability and therefore $\btype{\rho} = \varepsilon$ and $\ubq{\rho}{x}{\varepsilon} A \pdefin \forall x^\rho A$. Although we could consider combinations of this with the various interpretations of quantifiers, we will leave this for future work.
\end{remark}

\begin{remark}[Diller-Nahm with majorizability] One could also consider the following choice of parameters
\[
\begin{array}{ccccccc}
\bound{x}{a}{\tau} & \wtype{\tau} & \Wit_\tau(x) & \ubq{\pvec \tau}{\pvec x}{\pvec a} A & \btype{\pvec \tau} & \ifW{\tau} & \apW{\tau} \\
\hline
\tau(x) \wedge (x = a) & \tau & \rm{true} & \forall \pvec x \leq^*_{\pvec \tau} \pvec a \, A  & \pvec \tau & {\rm if}_{\tau} & \lambda f . f
\end{array}
\]
which corresponds to a version of the Diller-Nahm interpretation where set inclusion is replaced by majorizability. Unfortunately this does not seem to lead to a sound interpretation, and indeed we cannot satisfy condition \ConUnit, as there is no term $\singleton{}$ which satisfies:
\[ \forall \pvec y \leq^*_{\pvec \tau} \singleton{(\pvec z)} A[\pvec y] \proves A[\pvec z] \]
in $\Itarget = \nHAomega$ for an arbitrary $\pvec z$, since this would imply $\pvec z \leq^*_{\pvec \tau} \singleton{(\pvec z)}$. One could then try to take $\{\Wit_\tau(x)\}_{\tau \in \Type}$ to be ``$x$ is monotone of type $\tau$'', i.e. $W_\tau(x) = x \leq^*_\tau x$, but in this case the axiom \[ \proves \rec_{\rho} \in \rho \to (\rho \to \NN \to \rho) \to \NN \to \rho \] is no longer witnessable, since $\rec$ is not self-majorizing (monotone).
\end{remark}

\subsection{Interpretations where $\bound{x}{a}{\tau} \; \pdefin \; x \leq^*_\tau \! a$}

The instances where $\bound{x}{a}{\tau}$ is chosen to be $x \leq^*_\tau a$, with $\wtype{\tau} = \tau$, which we call \emph{bounded interpretations}, include the bounded functional interpretation \cite{FerreiraOliva2005}, and the bounded modified realizability \cite{FerreiraNunes2006}. In this case we will also discuss a new interpretation: the bounded Diller-Nahm interpretation. Let $\mforall x^\tau A$ be an abbreviation for $\forall x^\tau (x \leq_\tau^* x \to A)$.

\paragraph {\bf Bounded modified realizability} Consider this instantiation of the parameters:
\[
\begin{array}{ccccccc}
\bound{x}{a}{\tau} & \wtype{\tau} & \Wit_\tau(x) & \ubq{\pvec \tau}{\pvec x}{\varepsilon} A & \btype{\pvec \tau} & \ifW{\tau}(b, x, y) & \apW{\tau} \\
\hline\\[-12pt]
\tau(x) \wedge (x \leq^*_\tau a) & \tau & x \leq^*_\tau x & \mforall \pvec x^{\pvec \tau} A  & \varepsilon & \max_{\tau}(x, y) & \lambda f . f
\end{array}
\]

Again we see that in $\bInter{A}{\pvec x}{\pvec y}$ the tuple $\pvec y$ will be empty and the types $\btype{\pvec \tau}$ can be omitted. 

\begin{proposition}[Bounded modified realizability, \cite{FerreiraNunes2006}] With the parameters instantiated as above we have:
\eqleft{
\begin{array}{lcl}
\bInter{A \to B}{\pvec f}{} & \Leftrightarrow & \mforall \pvec x^{\tau^+_{\bTrans{A}}}  (\bInter{A}{\pvec x}{} \to \bInter{B}{\pvec f \pvec x}{}) \\[1mm]
\bInter{A \wedge B}{\pvec x, \pvec v}{} & \Leftrightarrow & \bInter{A}{\pvec x}{} \wedge \bInter{B}{\pvec v}{} \\[1mm]
\bInter{A \vee B}{\pvec x, \pvec v}{} & \Leftrightarrow & \bInter{A}{\pvec x}{} \vee \bInter{B}{\pvec v}{} \\[1mm]
\bInter{\exists z^\tau A}{\pvec x, c}{} & \Leftrightarrow & \exists z^\tau \! \leq^*_\tau\! c \, \bInter{A}{\pvec x}{} \\[1mm]
\bInter{\forall z^\tau A}{\pvec f}{} & \Leftrightarrow & \forall z^\tau \! \leq^*_\tau\! b \, \bInter{A}{\pvec f b}{}
\end{array}
}
so that $\bInter{A}{\pvec x}{}$ can be seen to correspond to $\pvec x \bmr A$.
\end{proposition}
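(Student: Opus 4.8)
The entire statement is a specialisation of Proposition~\ref{prop-bInter} to the parameters in the table, so the plan is to substitute those parameters into each clause and simplify. The engine of the simplification is the observation that, since $\btype{\pvec \tau} = \varepsilon$ for every $\pvec \tau$, one has $\pvec \tau^-_{\bTrans{A}} = \varepsilon$ for \emph{every} formula $A$. This is a one-line induction on $A$ using the clauses of the $\bTrans{(\cdot)}$ translation: $\bTrans{A}$ always carries an outermost (or, for $\cwedge$, component-wise) $\bang$, and $\pvec \tau^-_{\bang C} = \btype{\pvec \tau^-_C} = \varepsilon$. Consequently every counter-witness tuple occurring in $\bInter{A}{\pvec x}{\pvec y}$ is empty (as already noted before the proposition), and every abstract bound $\pvec a$, being of type $\btype{(\cdots)} = \varepsilon$, is empty as well; hence each occurrence of $\ubq{\pvec \sigma}{\pvec u}{\pvec a}(-)$ collapses to $\ubq{\pvec \sigma}{\pvec u}{\varepsilon}(-) = \mforall \pvec u^{\pvec \sigma}(-)$, i.e.\ to $\forall \pvec u (\pvec u \leq^* \pvec u \to -)$.

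With this in hand I would run through the clauses of Proposition~\ref{prop-bInter} in turn. Conjunction and the untyped existential are immediate, since no bounded quantifier appears. For implication, substituting turns the generic prefix $\ubq{\pvec \tau^+_{\bTrans{A}},\pvec \tau^-_{\bTrans{B}}}{\pvec x',\pvec w'}{\pvec x,\pvec w}$ into $\mforall \pvec x^{\pvec \tau^+_{\bTrans{A}}}$ (here $\pvec \tau^-_{\bTrans{B}}$ and the bound are empty), while the counter-witness slot of $\bInter{A}{\cdot}{\cdot}$ disappears; this gives $\mforall \pvec x^{\pvec \tau^+_{\bTrans{A}}}(\bInter{A}{\pvec x}{} \to \bInter{B}{\pvec f \pvec x}{})$, which is the stated clause. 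The typed existential $\exists z^\tau A$ follows directly from the ``in particular'' clause $\bInter{\exists z^P A}{\pvec x,c}{\pvec y} \Leftrightarrow \exists \bound{z}{c}{P}\,\bInter{A}{\pvec x}{\pvec y}$ with $P = \tau$ and $\bound{z}{c}{\tau} = \tau(z) \wedge z \leq^*_\tau c$, yielding $\exists z^\tau \leq^*_\tau c\,\bInter{A}{\pvec x}{}$.

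The two places that require more than mechanical substitution are disjunction and the typed universal, and I expect these to be the main obstacle. For $\vee$, Proposition~\ref{prop-bInter} gives $\bInter{A\vee B}{\pvec x,\pvec v,b}{} \Leftrightarrow \exists \bound{z}{b}{\BB}\,(\pcond{z}{\bInter{A}{\pvec x}{}}{\bInter{B}{\pvec v}{}})$ with $\bound{z}{b}{\BB} = \BB(z)\wedge z \leq^*_\BB b$. I would take the boolean bound $b$ to be the top element $\false$: then $\exists z \leq^*_\BB b$ ranges over \emph{all} booleans, the body is exactly the defining formula of $\vee$ in $\IL^\BB$ (Proposition~\ref{prop-def-disjunction}), and we recover $\bInter{A}{\pvec x}{} \vee \bInter{B}{\pvec v}{}$; conversely any disjunction produces this bounded-existential form for $b=\false$. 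This is precisely the step where the boolean witness becomes redundant, which is why $b$ does not appear on the left-hand side of the stated clause. For $\forall z^\tau A$, the ``in particular'' clause carries two nested bounded quantifiers, coming from the two $\bang$'s in $\bTrans{(\forall z^\tau A)} = \bang \forall z\,\bang(\bang \tau(z) \lto \bTrans{A})$; under $\btype{\pvec \tau} = \varepsilon$ these collapse (using \ConUnit\ and \ConStrength\ to pass between the two layers) to a single bounded quantifier $\forall z^\tau \leq^*_\tau b$, with the witness applied to the bound $b$ rather than to $z$, giving $\forall z^\tau \leq^*_\tau b\,\bInter{A}{\pvec f b}{}$. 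This collapse, together with the maximal-boolean argument for $\vee$, is the only point at which the concrete shape of $\leq^*$ and the fact that $\mforall$ is a genuine bounded quantifier are used.

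Finally, reading off the five clauses and comparing them with the definition of bounded modified realizability in \cite{FerreiraNunes2006} identifies $\bInter{A}{\pvec x}{}$ with $\pvec x \bmr A$, completing the proof.
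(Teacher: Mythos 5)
Your proposal is correct and is essentially the paper's own proof: the paper disposes of this proposition with the one-liner ``Direct from Proposition~\ref{prop-bInter}, using the above instantiation of the parameters'', and your substitution analysis --- that $\pvec \tau^-_{\bTrans{A}} = \varepsilon$ for every $A$, so all counter-witness slots vanish and every $\ubq{\pvec \sigma}{\pvec u}{\varepsilon}$ reads $\mforall \pvec u^{\pvec \sigma}$, with the top boolean absorbing the witness $b$ in the $\vee$-clause --- just makes that one-liner explicit. One minor correction: in the $\forall z^\tau A$ clause no appeal to \ConUnit{} or \ConStrength{} is needed, because the outer of the two nested bounded quantifiers in Proposition~\ref{prop-bInter} binds a tuple of variables of type $\btype{\wtype{\tau}, \pvec \tau^-_{\bTrans{A}}} = \varepsilon$, i.e.\ the empty tuple, so it disappears purely definitionally, leaving only the inner quantifier and yielding $\mforall b^\tau \, \forall z \!\leq^*_\tau\! b \, \bInter{A}{\pvec f b}{}$ (the $\mforall b^\tau$ prefix being left implicit in the paper's statement, which displays $b$ free).
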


\begin{proof} Direct from Proposition \ref{prop-bInter}, using the above instantiation of the parameters.
\end{proof}

\paragraph {\bf Bounded functional interpretation} Consider this instantiation of the parameters:
\[
\begin{array}{ccccccc}
\bound{x}{a}{\tau} & \wtype{\tau} & \Wit_\tau(x) & \ubq{\pvec \tau}{\pvec x}{\pvec a} A & \btype{\pvec \tau} & \ifW{\tau}(b, x, y) & \apW{\tau} \\
\hline\\[-12pt]
\tau(x) \wedge (x \leq^*_\tau a) & \tau & x \leq^*_\tau x & \mforall \pvec x \leq^*_{\pvec \tau} \pvec a \, A  & \pvec \tau & \max_{\tau}(x, y) & \lambda f . f
\end{array}
\]

\begin{proposition}[Bounded functional interpretation, \cite{FerreiraOliva2005}] With the parameters instantiated as above we have:
\eqleft{
    \begin{array}{lcl}
        \lInter{A \to B}{\pvec f, \pvec g}{\pvec x, \pvec w} & \Leftrightarrow & \mforall \pvec y \leq^*_{\pvec \tau^-_{\lTrans{A}}} \! \pvec g \pvec x \pvec w \, \lInter{A}{\pvec x}{\pvec y} \to \lInter{B}{\pvec f \pvec x}{\pvec w} \\[1mm]
        \lInter{A \wedge B}{\pvec x, \pvec v}{\pvec y, \pvec w} & \Leftrightarrow & \lInter{A}{\pvec x}{\pvec y} \wedge \lInter{B}{\pvec v}{\pvec w} \\[1mm]
        \lInter{A \vee B}{b, \pvec x, \pvec v}{\pvec y, \pvec w} & \Leftrightarrow & \mforall \pvec y' \leq^*_{\pvec \tau^-_{\lTrans{A}}} \!\pvec y \, \lInter{A}{\pvec x}{\pvec y'} \vee \mforall \pvec w' \leq^*_{\pvec \tau^-_{\lTrans{B}}} \! \pvec w \, \lInter{B}{\pvec v}{\pvec w'} \\[1mm]
        \lInter{\exists z^\tau A}{c, \pvec x}{\pvec y} & \Leftrightarrow & \exists z \leq^*_{\tau} c \mforall \pvec y' \leq^*_{\pvec \tau^-_{\lTrans{A}}} \! \pvec y \, \lInter{A}{\pvec x}{\pvec y'} \\[1mm]
        \lInter{\forall z^\tau A}{\pvec f}{b, \pvec y} & \Leftrightarrow & \forall z^\tau \leq^*_{\tau} b \, \lInter{A}{\pvec f b}{\pvec y}
    \end{array}
}
so that $\lInter{A}{\pvec x}{\pvec y}$ can be seen to correspond to $A_B(\pvec x; \pvec y)$.
\end{proposition}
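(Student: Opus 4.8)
The statement is an instance of the generic equivalences already recorded in Proposition~\ref{prop-lInter}, so the plan is to substitute the bounded choice of parameters into each clause of that proposition and simplify. The only parameters entering Proposition~\ref{prop-lInter} are the predicate interpretation $\bound{\pvec x}{a}{P}$ and the contraction quantifier $\ubq{\pvec \tau}{\pvec x}{\pvec a}(\cdot)$, which here are $\tau(x) \wedge (x \leq^*_\tau a)$ and $\mforall \pvec x \leq^*_{\pvec \tau} \pvec a\,(\cdot)$. First I would clear the routine clauses. For a computational predicate symbol, $\lInter{P(\pvec x)}{a}{} \Leftrightarrow \bound{\pvec x}{a}{P}$ is the first line of Proposition~\ref{prop-lInter}. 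Substituting $\ubq{\pvec \tau^-_{\lTrans A}}{\pvec y}{\pvec g \pvec x \pvec w} = \mforall \pvec y \leq^*_{\pvec \tau^-_{\lTrans A}} \pvec g \pvec x \pvec w$ into the implication clause yields the first displayed equivalence verbatim; the conjunction clause is untouched; and for the quantifiers I would feed $\bound{z}{c}{\tau} = \tau(z)\wedge z \leq^*_\tau c$ and $\bound{z}{b}{\tau} = \tau(z)\wedge z \leq^*_\tau b$ into the ``in particular'' clauses of Proposition~\ref{prop-lInter}, unfolding the bounded existential and universal to $\exists z \leq^*_\tau c\,(\cdots)$ and $\forall z \leq^*_\tau b\,(\cdots)$ as in the cases treated just above for Dialectica and Diller--Nahm.

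The one clause that is not a literal substitution is disjunction, and this is where the real work lies. Here Proposition~\ref{prop-lInter} gives
\[
\lInter{A \vee B}{b, \pvec x, \pvec v}{\pvec y, \pvec w} \Leftrightarrow \exists \bound{z}{b}{\BB} \bigl( \pcond{z}{\mforall \pvec y' \leq^* \pvec y \, \lInter{A}{\pvec x}{\pvec y'}}{\mforall \pvec w' \leq^* \pvec w \, \lInter{B}{\pvec v}{\pvec w'}} \bigr),
\]
and I must show that this bounded existential over the boolean $z$ collapses to the genuine disjunction $\mforall \pvec y' \leq^* \pvec y \, \lInter{A}{\pvec x}{\pvec y'} \vee \mforall \pvec w' \leq^* \pvec w \, \lInter{B}{\pvec v}{\pvec w'}$, in which the witness $b$ has disappeared. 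The forward direction is immediate: $\BB(z)$ forces $z = \true$ or $z = \false$, and the conditional $\pcond{z}{\cdot}{\cdot}$ then delivers whichever disjunct. The converse is the delicate point, and I expect it to be the main obstacle: given, say, the left disjunct I must exhibit a boolean $z$ with $z \leq^*_\BB b$ realising the conditional, which forces me to check that $b$ majorizes the truth value I pick (and symmetrically for the right disjunct). This is exactly the boolean-bound bookkeeping: I would use that $\leq^*_\BB$ is the majorizability relation on booleans together with the terms $\tilde{\true}, \tilde{\false}$ supplied by \Assumption{6}, arguing that the relevant witness $b$ dominates both $\true$ and $\false$ so that either choice of $z$ is admissible. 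Monotonicity of $\mforall$, i.e.\ \Quant{1} together with the downward-closure structure of $\leq^*$, is what allows the bounded quantifiers over $\pvec y', \pvec w'$ to be matched on both sides.

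Finally I would read off the correspondence with the bounded functional interpretation of \cite{FerreiraOliva2005}: after these simplifications each clause of the instantiated $\lInter{\cdot}{}{}$ coincides with the defining clause of $A_B(\pvec x; \pvec y)$, so that $\lInter{A}{\pvec x}{\pvec y}$ is provably equivalent (in $\Itarget$) to $A_B(\pvec x; \pvec y)$ by a straightforward induction on $A$ in which each connective and quantifier is handled by the matching line above. The upshot is that the bulk of the proposition is mechanical substitution into Proposition~\ref{prop-lInter}, and the only genuinely new checking is the collapse of the boolean-witnessed existential in the disjunction clause.
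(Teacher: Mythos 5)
Your overall strategy coincides with the paper's: its entire proof reads ``Direct from Proposition \ref{prop-lInter}'', and your plan of substituting the bounded parameters $\bound{x}{a}{\tau} \pdefin \tau(x)\wedge(x\leq^*_\tau a)$ and $\ubq{\pvec\tau}{\pvec x}{\pvec a}A \pdefin \mforall \pvec x \leq^*_{\pvec\tau}\pvec a\,A$ into each clause is exactly that. The implication, conjunction and relativised-quantifier lines are indeed verbatim instantiations, and you are also right that the disjunction line is the only one where the generic clause of Proposition \ref{prop-lInter} does not literally match the displayed formula, since the boolean-bounded existential must collapse to a genuine disjunction.

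However, your resolution of that collapse contains a step that fails. In the right-to-left direction you assert that ``the relevant witness $b$ dominates both $\true$ and $\false$ so that either choice of $z$ is admissible''. Under the paper's conventions ($\true := 0$, $\false := 1$, and $x \leq^*_{\BB} y$ being just $x \leq y$), this is false: $b = \true$ majorizes $\true$ but not $\false$. Concretely, if only the right disjunct $\mforall \pvec w' \leq^*_{\pvec\tau^-_{\lTrans{B}}} \pvec w \, \lInter{B}{\pvec v}{\pvec w'}$ holds and $b = \true$, then no boolean $z \leq^*_{\BB} b$ can satisfy $\pcond{z}{\cdots}{\cdots}$, so the instantiated $\exists \bound{z}{b}{\BB}(\cdots)$ is false while the right-hand side is true; read as a pointwise biconditional in the free variable $b$, the disjunction line is refutable, and no argument along your lines can close it. What is true --- and what ``corresponds to $A_B$'' has to mean here, in the same sense the paper makes explicit for Diller--Nahm via the witness-transforming terms $\pvec s_i, \pvec t_i$ --- is that the witness may be \emph{adjusted}: from the right disjunct one passes to the new witness $b := \false$ (equivalently $\max_{\BB}(b,\false)$, or the $\tilde\false$ of \Assumption{6}), for which the bounded existential does hold. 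Your proof should state this witness adjustment instead of the domination claim; note that the left disjunct needs no adjustment, since $\true = 0$ lies below every $b$ --- exactly the asymmetry the domination claim misses.
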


\begin{proof} Direct from Proposition \ref{prop-lInter}. \end{proof}

\begin{remark} In order to extend the source theory with bounded quantifiers, in this case one must add an ``intensional'' majorizability relation $x \trianglelefteq y$, which satisfies
\[ f \trianglelefteq g \quad \Rightarrow \quad \forall x, y (x \trianglelefteq y \to (f x \trianglelefteq g y) \wedge (f x \trianglelefteq f y)) \]
with a rule-version of the other direction:
\[
\begin{prooftree}
\Gamma, x \trianglelefteq y \proves (f x \trianglelefteq g y) \wedge (f x \trianglelefteq f y)
\justifies 
\Gamma \proves f \trianglelefteq g 
\end{prooftree}
\]
Adding the other direction as an axiom would require us to produce a majorant for arbitrary $x$'s and $y$'s, which we do not have in $\Itarget = \nHAomega$.
\end{remark}

\paragraph {\bf Bounded Diller-Nahm interpretation} Let us consider now what we believe is another novel functional interpretation of $\HAomega$, where contraction is treated like in the Diller-Nahm interpretation (via finite sets), but the typing axioms are treated as in the bounded interpretations (via majorizability). As above, we are considering $\Isource = \HAomega$ and $\Itarget = \nHAomega$, but consider the following instantiation of the parameters:
\[
\begin{array}{ccccccc}
\bound{x}{a}{\tau} & \wtype{\tau} & \Wit_\tau(x) & \ubq{\pvec \tau}{\pvec x}{\pvec a} A & \btype{\pvec \tau} & \ifW{\tau}(b, x, y) & \apW{\tau}\\ 
\hline\\[-12pt]
\tau(x) \wedge (x \leq^*_\tau a) & \tau & x \leq^*_\tau x  & \mforall \pvec x \in_{\pvec \tau} \pvec a \, A  & \pvec \tau^* & \max_{\tau}(x, y) & \lambda f . f
\end{array}
\]
With these parameters the $\lInter{\cdot}{}{}$-interpretation becomes:
\[
\begin{array}{lcl}
\lInter{A \to B}{\pvec f, \pvec g}{\pvec x, \pvec w} 
	& \equiv & \mforall \pvec y \in_{\pvec \tau^-_{\lTrans{A}}} \pvec g \pvec x \pvec w \, \lInter{A}{\pvec x}{\pvec y} \to \lInter{B}{\pvec f \pvec x}{\pvec w} \\[1mm]
\lInter{A \wedge B}{\pvec x, \pvec v}{\pvec y, \pvec w} 
	& \equiv & \lInter{A}{\pvec x}{\pvec y} \wedge \lInter{B}{\pvec v}{\pvec w} \\[1mm]
\lInter{A \vee B}{\pvec x, \pvec v}{\pvec y, \pvec w} 
	& \equiv & \mforall \pvec y' \in_{\pvec \tau^-_{\lTrans{A}}} \pvec y \, \lInter{A}{\pvec x}{\pvec y'} \vee \mforall \pvec w' \in_{\pvec \tau^-_{\lTrans{B}}} \pvec w \, \lInter{B}{\pvec v}{\pvec w'} \\[1mm]
\lInter{\exists z^\tau A}{c, \pvec x}{\pvec y} 
	& \equiv & \exists z \leq^*_\tau c \mforall \pvec y' \in_{\pvec \tau^-_{\lTrans{A}}} \pvec y \lInter{A}{\pvec x}{\pvec y'} \\[1mm]
\lInter{\forall z^\tau A}{\pvec f}{b, \pvec y} 
	& \equiv & \forall z \leq^*_\tau b \, \lInter{A}{\pvec f b}{\pvec y}
\end{array}
\]

\begin{proposition}[Bounded Diller-Nahm interpretation] \label{BDN} The derived functional interpretation above is a sound interpretation of $\HAomega$.
\end{proposition}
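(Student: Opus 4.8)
The plan is to deduce the proposition from the general soundness theorem for the $\lInter{\cdot}{}{}$-interpretation, Theorem \ref{soundness2}. Since we have fixed $\Isource = \HAomega$ and $\Itarget = \nHAomega$, that theorem reduces the claim to two checks: that the displayed choice of parameters is adequate for the formulas in the image of the interpretation (i.e.\ satisfies the $\AL$-versions of \Assumption{1}--\Assumption{8}), and that each non-logical axiom of $\HAomega$ is $\lInter{\cdot}{}{}$-witnessable in $\nHAomega$.

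Most of the assumptions are either immediate or already covered by the generic discussion opening Section \ref{sec-instances}. Assumptions \Assumption{1} and \Assumption{2} hold by the choice of source and target theories and by declaring every typing predicate computational. For \Assumption{3} one uses $\tau(x) \wedge (x \leq^*_\tau a) \proves \tau(x)$, and for \Assumption{4} that $\tau(x)\wedge(x \leq^*_\tau a)\proves a \leq^*_\tau a$, since a majorant is always self-majorizing; the combinator conditions \WitS, \WitK, \WitApp are the standard majorizability facts that $\S,\K$ are self-majorizing and that application preserves self-majorizability. Assumptions \Assumption{6}, \Assumption{7} and \Assumption{8} are precisely the majorizability instances treated generically there, with $\ifW{\tau} = \max_\tau$, $\apW{P}(f)(a) = \max_{n \leq a} f n$, and $\tilde{\ap} = \ap$; the typing axioms, recursor included, are witnessable because every closed term is majorizable in $\eHAomegaS$ (the Remark on Majorizability).

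The substantive work lies in \Assumption{5}, where the parameter $\ubq{\pvec\tau}{\pvec x}{\pvec a}A = \mforall \pvec x \in_{\pvec\tau} \pvec a\, A$ is the Diller--Nahm finite-set bounded quantifier, now restricted to self-majorizing witnesses. Conditions \Quant{1} and \Quant{2} are immediate (monotonicity of bounded quantification, and the tautology $\pvec x \leq^*_{\pvec\tau} \pvec x \to \bang(\pvec x \leq^*_{\pvec\tau} \pvec x)$). For the remaining three conditions I would take $\singleton{(\cdot)}$ to be singleton-set formation, $\join{(\cdot)}{(\cdot)}$ concatenation of finite sequences, and $\comp{\pvec f}{\pvec z} = \bigcup_{\pvec y \in \pvec z} \pvec f \pvec y$ the indexed union --- exactly the combinators of the usual Diller--Nahm verification. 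The logical halves of \ConUnit, \ConStrength and \ConApp are then the familiar set-inclusion arguments ($\pvec z \in \singleton{\pvec z}$; each $\pvec x_i$ is contained in $\join{\pvec x_1}{\pvec x_2}$; and $\pvec x \in \pvec f \pvec y$ with $\pvec y \in \pvec z$ gives $\pvec x \in \comp{\pvec f}{\pvec z}$), and the $\mforall$-restriction causes no trouble because in each case the element to which we specialise is hypothesised to be self-majorizing (e.g.\ in \ConUnit the hypothesis $\bang\Wit_{\pvec\tau}(\pvec z)$ supplies $\pvec z \leq^*_{\pvec\tau} \pvec z$).

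The main obstacle is the second, $\Wit$-typing half of \ConUnit, \ConStrength and \ConApp. Unlike in the plain Diller--Nahm interpretation, where $\Wit$ is $\mathrm{true}$ and these conditions are vacuous, here $\Wit_\tau(x)$ is self-majorizability, so one must verify that $\singleton{(\cdot)}$, $\join{(\cdot)}{(\cdot)}$ and $\comp{(\cdot)}{(\cdot)}$ are themselves self-majorizing at the relevant sequence types. Since these are genuine list operations rather than pure $\S,\K$-combinations, they are not supplied by Lemma \ref{lem-W-closure}, so the verification must be carried out directly: singleton and concatenation are easily seen to respect $\leq^*_{\tau^*}$, while the monotonicity of the indexed union follows from the majorizability of the list recursor used to define it (again via the Remark on Majorizability). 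Once \Assumption{1}--\Assumption{8} are in place, point (ii) is automatic: the boolean axioms are witnessable by the monotonicity lemma for $\BB$ (Lemma \ref{monotonicity-lemma}) together with \Assumption{6}, the induction rule by Proposition \ref{lTrans:induction}, the typing axioms by \Assumption{8}, and the purely equational axioms directly. Hence the $\lInter{\cdot}{}{}$-interpretation is sound.
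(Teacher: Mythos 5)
Your proposal takes the same route as the paper's own proof: invoke Theorem \ref{soundness2}; observe that \WitS, \WitK, \WitApp, \Quant{1}, \Quant{2} are immediate once $\Wit_\tau(x)$ is self-majorizability; discharge \ConUnit, \ConStrength, \ConApp with $\singleton(\pvec z) = \{\pvec z\}$, $\join{\pvec y_1}{\pvec y_2} = \pvec y_1 \cup \pvec y_2$ and $\comp{\pvec f}{\pvec z} = \bigcup_{\pvec x \in \pvec z}\pvec f \pvec x$ via the three set-inclusion sequents; and reduce the non-logical axioms to \Assumption{6}--\Assumption{8}, handled as in the bounded interpretations. Up to that point the two arguments coincide.

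The divergence is your treatment of what you call the main obstacle: the $\Wit$-typing halves of \ConUnit, \ConStrength, \ConApp, i.e.\ $\proves \Wit(\pvec\eta)$, $\proves \Wit(\lambda \pvec y_1, \pvec y_2 . \join{\pvec y_1}{\pvec y_2})$ and $\proves \Wit(\lambda \pvec f, \pvec z . \comp{\pvec f}{\pvec z})$. You are right that these obligations are real (the witnessing terms for contraction, dereliction and promotion are built from these combinators, so point $(i)$ of witnessability needs them), that Lemma \ref{lem-W-closure} does not supply them, and that the paper's proof displays only the logical halves and says nothing about them. But the two justifications you give do not hold. First, under the componentwise majorizability on sequence types from the paper's Remark ($a \leq^*_{\tau^*} b$ iff $|a| \leq |b|$, $a_i \leq^*_\tau b_i$ for $i < |a|$, and $b_i \leq^*_\tau b_i$ for $i < |b|$), concatenation is \emph{not} monotone, hence not self-majorizing: $\langle 0 \rangle \leq^*_{\NN^*} \langle 0, 5 \rangle$ and $\langle 10 \rangle \leq^*_{\NN^*} \langle 10 \rangle$, yet $\langle 0 \rangle \cup \langle 10 \rangle = \langle 0, 10 \rangle \not\leq^*_{\NN^*} \langle 0, 5, 10 \rangle = \langle 0,5\rangle \cup \langle 10\rangle$, since $10 \not\leq 5$ at index $1$; and all sequences involved are self-majorizing, so restricting to $\Wit$ inputs does not rescue the claim. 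One would need either a smarter implementation of $\sqcup$ (a descending merge works at $\NN^*$, but has no obvious analogue at higher types, where $\leq^*_\tau$ is not total) or a coarser, set-like majorizability on $\tau^*$ (every element of $a$ majorized by \emph{some} element of $b$), under which concatenation and indexed union do become monotone. Second, the Remark on Majorizability only produces, for each closed term, \emph{some} closed majorant; $\Wit_\tau(t)$ demands $t \leq^*_\tau t$, which is strictly stronger and does not follow from the list recursor being majorized by $\tilde{L}$. So the step you single out as the crux is exactly the one your argument fails to close. To be fair, the paper's proof is silent on the same point, so this is as much a gap in its write-up as in yours; but your proposal makes the claim explicit and therefore inherits the obligation to prove it.
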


\begin{proof} In order to prove the soundness for the interpretation it is enough to show that this choice of parameters satisfies the conditions of Theorem \ref{soundness2}, i.e. that 
\begin{enumerate}
	\item[$(i)$] the corresponding $\AL$ parameters, $\lbound{x}{a}{\tau}$, $\lWit_\tau(x)$ and $\lubq{\pvec \tau}{\pvec x}{\pvec a}{A}$, are an adequate choice for the formulas $\uInter{A}{\pvec x}{\pvec y}$, for all $A$ in $\lTrans{\Isource}$, and
	\item[$(ii)$] all the non-logical axioms of $\Isource$ are $\lInter{\cdot}{}{}$-witnessable in $\Itarget$. 
\end{enumerate}
Since $\lWit_\tau(x)$ is the assumption that $x$ is self-majorizing (i.e. monotone), conditions \WitS, \WitK, \WitApp~easily follow. \Quant{1} and \Quant{2} are also straightforward. The conditions for validating ``contraction" \ConUnit, \ConStrength, \ConApp~hold by taking $\singleton(x) = \{ x \}$ and $\join{\pvec y_1}{\pvec y_2} = \pvec y_1 \cup \pvec y_2$ and $\comp{\pvec f}{\pvec z} = \cup_{\pvec x \in \pvec z} \pvec f \pvec x$ as indeed we have:
\begin{enumerate}
    \item[\ConUnit] $(\pvec z \leq^* \pvec z), \mforall \pvec y \in \{ \pvec z \} A[\pvec y] \proves_{\Atarget} A[\pvec z]$
    \item[\ConStrength] $\mforall \pvec y \in \pvec y_1 \cup \pvec y_2 A[\pvec y] 
        \proves_{\Atarget} \mforall \pvec y \in \pvec y_1 A[\pvec y] \wedge \mforall \pvec y \in \pvec y_2 A[\pvec y]$
    \item[\ConApp] $\mforall \pvec y \in \cup_{\pvec x \in \pvec z} \pvec f \pvec x \, A[\pvec y] 
        \proves_{\Atarget} \mforall \pvec x \in \pvec z \, \mforall \pvec y \in \pvec f \pvec x \, A[\pvec y]$
\end{enumerate}
That concludes the proof that the choice of parameters is an adequate choice for the formulas $\uInter{A}{\pvec x}{\pvec y}$, for all $A$ in $\lTrans{\Isource}$. We must also show that the non-logical axioms of $\Isource$ are $\lInter{\cdot}{}{}$-witnessable in $\Itarget$. This can be done by verifying that the assumptions \Assumption{6}, \Assumption{7} and \Assumption{8} can be satisfied by appropriate terms, which is straightforward (similar to the bounded interpretations).
\end{proof}

\begin{remark} \label{remark-possible-equiv} It could turn out, however, that this ``Bounded Diller-Nahm interpretation'' is actually equivalent (in the sense of having the same characterising principles) as the Diller-Nahm interpretation or the bounded functional interpretation. This is still open. But we suspect this will not be the case, since being a member of a finite set is strictly stronger than being majorized by some element. More precisely, from $x \in_\tau a$ we indeed have $x \leq^* \max a$. But from the assumption $x \leq_\tau^* a$ we cannot in general find a finite set $\tilde{a}$ (depending only on $a$) such that $x \in \tilde{a}$. This should be settled once we have investigated the characterising principles of this new interpretation, which we plan to do in a follow up paper.
\end{remark}

\subsection{Interpretations where $\bound{x}{a}{\tau} \; \pdefin \; x \in_{\tau} a$}

The instances where $\bound{x}{a}{\tau}$ is chosen to be $x \in_{\tau} a$, with $\wtype{\tau} = \tau^*$ and $\apW{P}(f)(a) = \bigcup \{ f z : z \in a \}$, which we call \emph{Herbrand interpretations}, give some new interpretations for $\HAomega$ which are related with recently developed functional interpretations for nonstandard arithmetic \cite{BergBriseidSafarik2012}. In fact, to obtain the latter interpretations one needs to consider two types of predicate symbols, as explained in Section \ref{sectionfinalremarks}.

\paragraph {\bf Herbrand realizability (for IL)} Consider the following instantiation of the parameters:
\[
\begin{array}{ccccccc}
\bound{x}{a}{\tau} & \wtype{\tau} & \Wit_\tau(x) & \ubq{\pvec \tau}{\pvec x}{\varepsilon} A & \btype{\pvec \tau} & \ifW{\tau}(b, x, y) & \apW{\tau}(f)(a) \\
\hline
\tau(x) \wedge (x \in_\tau a) & \tau^* & \mathrm{true} & \forall \pvec x^{\pvec \tau} A  & \varepsilon & x \cup y & \bigcup\limits_{z \in a} f z
\end{array}
\]
Again we see that in $\bInter{A}{\pvec x}{\pvec y}$ the tuple $\pvec y$ will be empty and hence we omit it.

\begin{proposition}[Herbrand realizability]\label{Herbrandrealizability} With the parameters instantiated as above we have:
\eqleft{
\begin{array}{lcl}
\bInter{A \to B}{\pvec f}{} & \Leftrightarrow & \forall \pvec x^{\tau^+_{\bTrans{A}}}  (\bInter{A}{\pvec x}{} \to \bInter{B}{\pvec f \pvec x}{}) \\[2mm]
\bInter{A \wedge B}{\pvec x, \pvec v}{} & \Leftrightarrow & \bInter{A}{\pvec x}{} \wedge \bInter{B}{\pvec v}{} \\[2mm]
\bInter{A \vee B}{\pvec x, \pvec v, b}{} & \Leftrightarrow & \BB(b) \wedge (\pcond{b}{\bInter{A}{\pvec x}{}}{\bInter{B}{\pvec v}{}}) \\[2mm]
%
\bInter{\exists z^\tau A}{\pvec x, c}{} & \Leftrightarrow & \exists z \in_\tau c \bInter{A}{\pvec x}{} \\[2mm]
\bInter{\forall z^\tau A}{\pvec f}{} & \Leftrightarrow & \forall z \in_\tau b  \bInter{A}{\pvec f b}{}\end{array}
}
\end{proposition}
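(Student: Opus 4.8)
The plan is to read off every equivalence directly from Proposition~\ref{prop-bInter} after substituting the present parameters and simplifying, exactly as was done for Kreisel's modified realizability and the Dialectica interpretation above. The governing simplification is that here $\btype{\pvec \tau} = \varepsilon$: since $\pvec \tau^-_{\bTrans{(\bang A)}} = \btype{\pvec \tau}^-_{\bTrans A}$, an easy induction over the clauses defining $\pvec \tau^-_{(\cdot)}$ shows that \emph{every} formula in the image of $\bTrans{(\cdot)}$ has an empty tuple of counter-witnesses. Hence $\bInter{A}{\pvec x}{\pvec y}$ always has $\pvec y = \varepsilon$ (as in modified realizability), and each bounded universal $\ubq{\pvec \tau^-_{\bTrans A}}{\pvec y'}{\pvec y}(\cdots)$ occurring in Proposition~\ref{prop-bInter} ranges over the empty tuple and disappears. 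Together with the defining equations $\ubq{\pvec \tau}{\pvec x}{\varepsilon} A \equiv \forall \pvec x^{\pvec \tau} A$ and $\bound{x}{a}{\tau} \equiv \tau(x) \wedge (x \in_\tau a)$ this turns the two abbreviations into $\forall \bound{x}{a}{\tau} A \equiv \forall x \in_\tau a\, A$ and $\exists \bound{x}{a}{\tau} A \equiv \exists x \in_\tau a\, A$.

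First I would dispatch the easy clauses. With $\pvec y = \varepsilon$ throughout, the conjunction and existential clauses transcribe verbatim, and the universal clause $\bInter{\forall z A}{\pvec x}{\pvec y} \Leftrightarrow \ubq{\pvec \tau^-_{\bTrans A}}{\pvec y'}{\pvec y} \forall z \bInter{A}{\pvec x}{\pvec y'}$ collapses to $\forall z \bInter{A}{\pvec x}{}$. For implication, the counter-witness $\pvec g$ and the outer bounded quantifier $\ubq{\pvec \tau^+_{\bTrans A}, \pvec \tau^-_{\bTrans B}}{\pvec x', \pvec w'}{\pvec x, \pvec w}$ reduce (the $\pvec w$-components being empty, the bound being $\varepsilon$) to the plain $\forall \pvec x^{\tau^+_{\bTrans A}}$, giving $\forall \pvec x^{\tau^+_{\bTrans A}} (\bInter{A}{\pvec x}{} \to \bInter{B}{\pvec f \pvec x}{})$. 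The typed quantifiers follow from the two special cases at the end of Proposition~\ref{prop-bInter}: the existential clause becomes $\exists z \in_\tau c\, \bInter{A}{\pvec x}{}$ immediately, while in the universal clause the nested quantifiers $\ubq{\wtype{\tau}, \btype{\pvec \tau}^-_{\bTrans A}}{c', \pvec y'}{c, \pvec y}\,\ubq{\pvec \tau^-_{\bTrans A}}{c'', \pvec y''}{c', \pvec y'}$ collapse --- the $\btype{\pvec \tau}^-$ and $\pvec \tau^-$ slots being empty --- so that the nested $\wtype{\tau}$-bounds contract to a single bound $b$ and one is left with $\forall z \in_\tau b\, \bInter{A}{\pvec f b}{}$.

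The one clause that needs genuine attention, and which I expect to be the main obstacle, is disjunction. Starting from $\bInter{A \vee B}{\pvec x, \pvec v, b}{} \Leftrightarrow \exists \bound{z}{b}{\BB}(\pcond{z}{\bInter{A}{\pvec x}{}}{\bInter{B}{\pvec v}{}})$ and unfolding $\bound{z}{b}{\BB}$ gives $\exists z(\BB(z) \wedge z \in_\BB b \wedge \pcond{z}{\bInter{A}{\pvec x}{}}{\bInter{B}{\pvec v}{}})$, in which the witness $b$ lives in $\wtype{\BB} = \BB^*$; the work is to reconcile this with the single boolean $b$ and the literal $\BB(b)$ displayed in the statement. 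I would settle this using $\BB$\emph{L} and the boolean axioms of $\AL^\BB$, showing that membership of a witnessing boolean in the sequence can be replaced by a representative boolean $b$, after which $\pcond{z}{\cdots}{\cdots}$ rewrites to $\pcond{b}{\cdots}{\cdots}$. Once this bookkeeping is settled, all remaining steps are routine rewriting under the instantiated parameters.
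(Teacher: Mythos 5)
Your handling of every clause except disjunction is correct and is exactly the paper's own proof: the paper's argument is the one-liner ``direct from Proposition~\ref{prop-bInter}, using the above instantiation of the parameters'', resting on the same observation you prove by induction, namely that $\btype{\pvec \tau} = \varepsilon$ forces every formula in the image of $\bTrans{(\cdot)}$ to have an empty tuple of counter-witnesses, so that the bounded quantifiers $\ubq{\pvec \tau^-_{\bTrans{A}}}{\pvec y'}{\pvec y}$ vanish and the remaining ones become plain typed quantifiers. Your transcriptions of the implication, conjunction, and relativised quantifier clauses are all fine.

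The genuine gap is your disjunction step, and it is not bookkeeping that $\BB$\emph{L} can settle. Your unfolding
\[
\bInter{A \vee B}{\pvec x, \pvec v, b}{} \;\Leftrightarrow\; \exists z \,\bigl(\BB(z) \wedge z \in_\BB b \wedge (\pcond{z}{\bInter{A}{\pvec x}{}}{\bInter{B}{\pvec v}{}})\bigr), \qquad b \colon \wtype{\BB} = \BB^*,
\]
is the honest output of the interpretation, but it is \emph{not} equivalent to the displayed clause $\BB(b) \wedge (\pcond{b}{\bInter{A}{\pvec x}{}}{\bInter{B}{\pvec v}{}})$ in which $b$ is a single boolean. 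One direction is unproblematic: from a boolean $b$ pass to the singleton sequence $\singleton{b}$. The converse would require producing, from $\pvec x, \pvec v$ and the sequence $b$, a boolean $b'$ with $\pcond{b'}{\bInter{A}{\pvec x}{}}{\bInter{B}{\pvec v}{}}$; taking $b = \langle \true, \false \rangle$, the membership formula is provably just $\bInter{A}{\pvec x}{} \vee \bInter{B}{\pvec v}{}$, so such a $b'$ would uniformly decide which disjunct holds --- precisely the decision Herbrandization is designed to avoid, and not available in $\Itarget = \nHAomega$ for arbitrary $A$, $B$. ($\BB$\emph{L} performs case analysis on a boolean already in hand; it cannot manufacture the selector.) What has actually gone wrong here is the statement rather than your unfolding: the printed disjunction clause is carried over from the modified realizability instance, where $\bound{z}{b}{\BB}$ is $\BB(z) \wedge (z = b)$ and the equivalence $\exists z (z = b \wedge C(z)) \Leftrightarrow C(b)$ applies; under the Herbrand parameters that equality becomes membership in a finite sequence, and the existential quantifier cannot be eliminated --- the clause stays Herbrandized, behaving like a genuine (undecided) disjunction, in line with the Herbrand Diller--Nahm clause of Proposition~\ref{PropHFI}. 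So your proof should end with the membership form you derived; as written, the proposal hinges on a reconciliation step that cannot be carried out.
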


\begin{proof} Direct from Proposition \ref{prop-bInter}, using the above instantiation of the parameters.
\end{proof}

\paragraph {\bf Herbrand Diller-Nahm interpretation} Consider the following instantiation of the parameters:
\[
\begin{array}{ccccccc}
\bound{x}{a}{\tau} & \wtype{\tau} & \Wit_\tau(x) & \ubq{\pvec \tau}{\pvec x}{\pvec a} A & \btype{\pvec \tau} & \ifW{\tau}(b, x, y) & \apW{\tau}(f)(a) \\
\hline
\tau(x) \wedge (x \in_\tau a) & \tau^* & \mathrm{true} & \forall \pvec x \in_{\pvec \tau} \pvec a \, A  & \pvec \tau^* & x \cup y & \bigcup\limits_{z \in a} f z
\end{array}
\]

\begin{proposition}[Herbrand Diller-Nahm interpretation] \label{PropHFI} With the parameters instantiated as above we have:
\eqleft{
\begin{array}{lcl}
\lInter{A \to B}{\pvec f, \pvec g}{\pvec x, \pvec w} & \Leftrightarrow & \forall \pvec y \in_{\tau^+_{\lTrans{A}}} \! \pvec g \pvec x \pvec w \, \lInter{A}{\pvec x}{\pvec y} \to \lInter{B}{\pvec f \pvec x}{\pvec w} \\[2mm]
\lInter{A \wedge B}{\pvec x, \pvec v}{\pvec y, \pvec w} & \Leftrightarrow & \lInter{A}{\pvec x}{\pvec y} \wedge \lInter{B}{\pvec v}{\pvec w} \\[2mm]
\lInter{A \vee B}{b, \pvec x, \pvec v}{\pvec y, \pvec w} & \Leftrightarrow & \forall \pvec y' \in_{\tau^+_{\lTrans{A}}} \! \pvec y \, \lInter{A}{\pvec x}{\pvec y'} \vee \forall \pvec w' \in_{\tau^+_{\lTrans{B}}} \! \pvec w \, \lInter{B}{\pvec v}{\pvec w'} \\[2mm]
%
\lInter{\exists z^\tau A}{\pvec x}{\pvec y} & \Leftrightarrow & \exists z \in_\tau c \forall \pvec y' \in_{\tau^+_{\lTrans{A}}} \! \pvec y \, \lInter{A}{\pvec x}{\pvec y'} \\[2mm]
\lInter{\forall z^\tau A}{\pvec x}{b, \pvec y} & \Leftrightarrow & \forall z \in_\tau b \lInter{A}{\pvec x}{\pvec y}
\end{array}
}
and this is a sound interpretation of $ \HAomega$.
\end{proposition}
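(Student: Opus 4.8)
The plan is to establish the two assertions separately, following the template of Propositions~\ref{dn-formulation} and~\ref{BDN}. The displayed equivalences are obtained by substituting the chosen parameters $\bound{x}{a}{\tau} \pdefin \tau(x) \wedge (x \in_\tau a)$ and $\ubq{\pvec\tau}{\pvec x}{\pvec a}A \pdefin \forall \pvec x \in_{\pvec\tau}\pvec a\,A$ into the generic clauses of Proposition~\ref{prop-lInter}; the implication, conjunction, existential and universal clauses specialise immediately, using the instances $\lInter{\exists z^P A}{c,\pvec x}{\pvec y}$ and $\lInter{\forall z^P A}{\pvec f}{b,\pvec y}$ recorded there, and the boolean--existential form of disjunction collapses to the genuine $\vee$ displayed above in exactly the same way as for the bounded functional interpretation.

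For soundness I would invoke Theorem~\ref{soundness2}, so it suffices to verify its two hypotheses. For hypothesis~$(i)$ (adequacy of the $\lTrans{(\cdot)}$-images of the parameters for the formulas $\uInter{A}{\pvec x}{\pvec y}$), observe first that $\Wit_\tau(x) = \mathrm{true}$ makes \WitK, \WitS, \WitApp\ and every $\Wit$-side-condition of \ConUnit--\ConApp\ trivial; \Quant{1} is the evident monotonicity of $\forall \pvec x \in_{\pvec\tau}\pvec a$ and \Quant{2} is immediate since $\Wit_{\pvec\tau}$ is identically true. The three contraction conditions are then discharged exactly as in Proposition~\ref{BDN}, taking $\singleton(\pvec z)=\{\pvec z\}$, $\join{\pvec y_1}{\pvec y_2}=\pvec y_1\cup\pvec y_2$ and $\comp{\pvec f}{\pvec z}=\bigcup_{\pvec x\in\pvec z}\pvec f\pvec x$: \ConUnit\ uses $\pvec z\in\{\pvec z\}$, \ConStrength\ uses $\pvec y_i\subseteq\pvec y_1\cup\pvec y_2$, and \ConApp\ uses that $\forall\pvec x\in\bigcup_{\pvec y\in\pvec z}\pvec f\pvec y$ distributes as $\forall\pvec y\in\pvec z\,\forall\pvec x\in\pvec f\pvec y$.

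For hypothesis~$(ii)$ ($\lInter{\cdot}{}{}$-witnessability of the non-logical axioms of $\HAomega$), I would check assumptions~\Assumption{6}--\Assumption{8} for the present $\ifW{\tau}(b,x,y)=x\cup y$ and $\apW{\tau}(f)(a)=\bigcup_{z\in a}fz$, which feed the monotonicity Lemmas~\ref{monotonicity-lemma} and~\ref{monotonicity-lemma-int} and the induction Proposition~\ref{lTrans:induction}. For \Assumption{6}, $\ifW{P}(b,x_1,x_2)=x_1\cup x_2$ works because ${\rm if}(z,x_1,x_2)$ equals $x_1$ or $x_2$ and hence is a subset of $x_1\cup x_2$, so membership is preserved; one takes $\tilde{\true}=\{\true\}$, $\tilde{\false}=\{\false\}$. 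For \Assumption{7}, $\apW{P}(f)(a)=\bigcup_{z\in a}fz$ works since $n\in a$ and $\pvec x\in fn$ give $\pvec x\in\bigcup_{z\in a}fz$, and the required numeral bound $N_a$ with $\bound{n}{a}{\NN}\proves n\le N_a$ exists because $a$ is a \emph{finite} set, so $N_a=\max a$ suffices. For \Assumption{8}, I would put $\tilde{\ap}(\tilde f,\tilde x)=\{\ap(g,y):g\in\tilde f,\ y\in\tilde x\}$ and $\tilde c=\{c\}$; membership of $\ap(f,x)$ then follows from $f\in\tilde f$ and $x\in\tilde x$. With these, Theorem~\ref{soundness2} yields soundness.

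I do not expect any single hard inequality to be the obstacle; the difficulty is rather the bookkeeping of confirming that the finite-set semantics of $x\in_\tau a$ is coherent across the three roles the parameter plays at once---contraction, the boolean and arithmetic monotonicity lemmas, and the typing axioms. The one genuinely load-bearing point is the finiteness of $a$ in \Assumption{7}: it is what makes $\bigcup_{z\in a}fz$ a legitimate finite-sequence witness and simultaneously supplies the bound $N_a$, and it is also what separates this interpretation from the failed majorizability variant (the Diller--Nahm with majorizability remark), where \ConUnit\ cannot be met because no singleton majorant exists, whereas here the exact membership $\pvec z\in\{\pvec z\}$ makes \ConUnit\ trivial.
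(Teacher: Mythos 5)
Your proposal is correct and follows essentially the same route as the paper: the displayed equivalences come from instantiating Proposition~\ref{prop-lInter}, and soundness is obtained from Theorem~\ref{soundness2} by checking adequacy (with $\singleton(\pvec z)=\{\pvec z\}$, $\join{\pvec y_1}{\pvec y_2}=\pvec y_1\cup\pvec y_2$, $\comp{\pvec f}{\pvec z}=\bigcup_{\pvec x\in\pvec z}\pvec f\pvec x$) together with \Assumption{6}--\Assumption{8}, which is exactly what the paper delegates to the general framework with a one-line proof. The only difference is that you spell out the verifications (including $N_a=\max a$ for \Assumption{7} and the pointwise-application set for \Assumption{8}) that the paper declares ``straightforward'' and leaves implicit.
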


\begin{proof} Direct from Proposition \ref{prop-lInter}, using the above instantiation of the parameters.
\end{proof}

The Herbrand realizability for intuitionistic logic given by Proposition~\ref{Herbrandrealizability} and the Herbrand Diller-Nahm interpretation given by Proposition~\ref{PropHFI} are in a sense "rediscovered" interpretations. In fact, these interpretations are closely related with the interpretations given in \cite{BergBriseidSafarik2012} for $\HAomega_{\st}$. They are also closely connected with the interpretation for ``pure logic'' considered by Gilda Ferreira and Fernando Ferreira in the paper \cite{Ferreira2017}. Moreover, Fernando Ferreira has a preprint \cite{FF(ta)} where he considers essentially the Herbrand Diller-Nahm interpretation for $\HAomega$ as well as an extension to second order arithmetic.

Our parametrised interpretations allow us to consider also a Herbrand version of the bounded functional interpretation. 

\paragraph {\bf Herbrandized bfi} We conclude this list of instantiations with what we believe is yet another novel functional interpretation of $\HAomega$, where contraction is treated like the Herbrandized interpretations, but the typing axioms are treated as in the bounded interpretations:
\[
\begin{array}{ccccccc}
\bound{x}{a}{\tau} & \wtype{\tau} & \Wit_\tau(x) & \ubq{\pvec \tau}{\pvec x}{\pvec a} A & \btype{\pvec \tau} & \ifW{\tau}(b, x, y) & \apW{\tau}(f)(a) \\
\hline\\[-12pt]
\tau(x) \wedge (x \in_\tau a) & \tau^* & x \leq_\tau^* x & \mforall \pvec x \leq_{\pvec \tau}^* \! \pvec a \, A  & \pvec \tau & x \cup y & \bigcup\limits_{z \in a} f z
\end{array}
\]

\begin{proposition}[Herbrandized bounded functional interpretation of $\HAomega$] \label{h-bfi} With the parameters instantiated as above we have:
\eqleft{
\begin{array}{lcl}
\lInter{A \to B}{\pvec f, \pvec g}{\pvec x, \pvec w} & \Leftrightarrow & \mforall \pvec y \leq^*_{\pvec \tau^-_{\lTrans{A}}} \pvec g \pvec x \pvec w \, \lInter{A}{\pvec x}{\pvec y} \to \lInter{B}{\pvec f \pvec x}{\pvec w} \\[2mm]
\lInter{A \wedge B}{\pvec x, \pvec v}{\pvec y, \pvec w} & \Leftrightarrow & \lInter{A}{\pvec x}{\pvec y} \wedge \lInter{B}{\pvec v}{\pvec w} \\[2mm]
\lInter{A \vee B}{\pvec x, \pvec v}{\pvec y, \pvec w} & \Leftrightarrow & \mforall \pvec y' \leq^*_{\pvec \tau^-_{\lTrans{A}}} \pvec y \, \lInter{A}{\pvec x}{\pvec y'}  \vee \mforall \pvec w' \leq^*_{\pvec \tau^-_{\lTrans{B}}} \pvec w\, \lInter{B}{\pvec v}{\pvec w'} \\[2mm]
%
\lInter{\exists z^\tau A}{c, \pvec x}{\pvec y} & \Leftrightarrow & \exists z \in_\tau c \mforall \pvec y' \leq^*_{\pvec \tau^-_{\lTrans{A}}} \pvec y \, \lInter{A}{\pvec x}{\pvec y'}\\[2mm]
\lInter{\forall z^\tau A}{\pvec f}{b, \pvec y} & \Leftrightarrow & \forall z \in_\tau b \, \lInter{A}{\pvec f b}{\pvec y}\\[2mm]
\end{array}
}
and this is a sound interpretation of $ \HAomega$.
\end{proposition}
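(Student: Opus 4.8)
The interpretation equivalences displayed above are obtained exactly as in the previous instances, by unwinding Proposition \ref{prop-lInter} under the stated choice of parameters, so the substantive content is soundness. By Theorem \ref{soundness2}, soundness reduces to two tasks: (i) checking that the corresponding $\AL$-parameters $\lbound{\pvec x}{a}{P}$, $\lWit_\tau(x)$ and $\lubq{\pvec \tau}{\pvec x}{\pvec a}{A}$ form an adequate choice for the formulas $\uInter{A}{\pvec x}{\pvec y}$ with $A$ in $\lTrans{\Isource}$, i.e. that assumptions \Assumption{1}--\Assumption{5} hold; and (ii) checking that the non-logical axioms of $\Isource = \HAomega$ are $\lInter{\cdot}{}{}$-witnessable in $\Itarget = \nHAomega$, for which it suffices to verify \Assumption{6}, \Assumption{7} and \Assumption{8} (following the template of Proposition \ref{BDN}). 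The plan is to treat (i) and (ii) separately, exploiting the fact that each of the two degrees of freedom is handled precisely as in an interpretation whose verification is already available.

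For task (i), the key observation is that the parameters governing contraction --- namely $\Wit_\tau(x) = (x \leq^*_\tau x)$, the quantifier $\ubq{\pvec \tau}{\pvec x}{\pvec a}{A} = \mforall \pvec x \leq^*_{\pvec \tau} \pvec a \, A$, and $\btype{\pvec \tau} = \pvec \tau$ --- coincide with those of the bounded functional interpretation. Hence \WitS, \WitK and \WitApp follow from the self-majorizability of the combinators and closure of majorizability under application (Lemma \ref{lem-W-closure}); \Quant{1} and \Quant{2} are immediate; and \ConUnit, \ConStrength, \ConApp hold by taking $\singleton{(\pvec z)} = \pvec z$, $\join{\pvec y_1}{\pvec y_2} = \max(\pvec y_1, \pvec y_2)$ and $\comp{\pvec f}{\pvec z} = \pvec f \pvec z$, the last using monotonicity of $\pvec f \in \Wit$. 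What must still be checked are \Assumption{3} and \Assumption{4} for the new bound $\bound{\pvec x}{a}{P} = P(\pvec x) \wedge (\pvec x \in_\tau a)$: the former is trivial, while the latter asks that $\pvec x \in_\tau a$ provably force $a \leq^*_{\tau^*} a$.

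For task (ii), the parameters governing the predicate symbols are exactly those of the Herbrand Diller-Nahm interpretation (Proposition \ref{PropHFI}). Thus \Assumption{6} is met with $\ifW{\tau}(b, x, y) = x \cup y$, which absorbs both branches of the case distinction regardless of the boolean $b$; \Assumption{7} is met with $\apW{P}(f)(a) = \bigcup_{z \in a} f z$, taking $N_a = \max a$ for the boundedness clause on $\NN$; and \Assumption{8} is met by the union-based application $\tilde{\ap}(\tilde f, \tilde x) = \{\ap(g, y) : g \in \tilde f,\ y \in \tilde x\}$ together with singleton witnesses $\tilde c = \{c\}$ for the constants. These verifications are routine once one knows that the Herbrand operations $\cup$ and $\bigcup$ behave well with respect to membership.

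The hard part --- and the place where the two sides genuinely interact --- is ensuring that the self-majorizability discipline imposed by $\Wit$ survives the membership-based Herbrand witnessing. Concretely, \Assumption{4} demands $\bound{\pvec x}{a}{P} \proves_{\Atarget} \Wit_{\wtype{P}}(a)$, i.e. that every element of a witnessing sequence $a$ be self-majorizing, and consequently the singleton witnesses $\tilde c = \{c\}$ of \Assumption{8} must themselves lie in $\Wit$. This is precisely the tension flagged for the ``Diller-Nahm with majorizability'' variant above: the recursor $\rec$ is not self-majorizing, so one cannot naively place it inside a self-majorizing singleton. I expect this to be the crux of the argument, and the main step is to reconcile the two constraints --- by systematically restricting the witnessing sequences to self-majorizing ones and supplying, for each constant, a self-majorizing representative provided by the Majorizability remark --- and then to confirm that under this reading \Assumption{4}, \Assumption{7} and \Assumption{8} are simultaneously satisfiable; everything else is a direct transcription of the bounded and Herbrandized verifications.
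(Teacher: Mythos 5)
Your skeleton matches the paper's (very terse) proof: read the displayed equivalences off Proposition \ref{prop-lInter}, then reduce soundness via Theorem \ref{soundness2} to checking \Assumption{1}--\Assumption{8}, importing the contraction-side conditions (\Quant{1}, \Quant{2}, \ConUnit, \ConStrength, \ConApp) from the bounded functional interpretation --- legitimate, since $\Wit_\tau$, $\ubq{\pvec \tau}{\pvec x}{\pvec a}A$ and $\btype{\pvec \tau}$ coincide with that instance --- and the predicate-side data ($\ifW{\tau}$, $\apW{\tau}$, $\tilde{\ap}$) from the Herbrand Diller-Nahm instance. The paper's own proof says essentially nothing beyond this.

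However, your proposal is not a completed proof, and the gap sits exactly where you place the ``crux''. Your final paragraph is a plan (``confirm that \Assumption{4}, \Assumption{7} and \Assumption{8} are simultaneously satisfiable''), and the repair you sketch cannot work as stated, because the bound here is \emph{membership}, not majorization. Witnessing the typing axiom $\proves \tau(\rec_\sigma)$ requires a closed $a$ with both $\proves_{\Itarget} \Wit_{\tau^*}(a)$ --- which, by the paper's remark on sequence types, means \emph{every} member of $a$ is self-majorizing --- and $\proves_{\Itarget} \rec_\sigma \in_\tau a$; by congruence of equality these together yield $\rec_\sigma \leq^*_\tau \rec_\sigma$, which is precisely what the paper's ``Diller-Nahm with majorizability'' remark points out is false. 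Supplying ``a self-majorizing representative'' for each constant does not help, since $\rec_\sigma$ itself must be a member, and restricting witnessing sequences to self-majorizing ones only sharpens the contradiction; the same tension is why \Assumption{4} fails as literally stated ($x \in_\tau a$ constrains nothing about the other members of $a$). Any genuine repair must change the data --- e.g.\ weaken $x \in_\tau a$ to ``$x$ is majorized by some member of $a$'', or weaken $\Wit$ on the sequence types --- rather than merely restrict it. In fairness, you have located a real soft spot that the published proof dismisses as ``quite straightforward''; but judged as a proof of the stated proposition, your argument stalls at the one step it declares to be the main one.
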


\begin{proof} It is easy to check that these equivalences hold, by applying Proposition~\ref{prop-lInter} using the above instantiation of the parameters. That this is a sound interpretation, then follows by verifying assumption \Assumption{1}-\Assumption{8}, which is quite straightforward and follows the same patterns as in the previous instances.
\end{proof}

A remark similar to Remark~\ref{remark-possible-equiv} also applies here. We suspect that this is a new interpretation, but will only be certain once we have investigated its characterising principles.

\begin{remark}[A Herbrandized Dialectica] One might also consider an instantiation of the parameters as follows:
\[
\begin{array}{ccccccc}
\bound{x}{a}{\tau} & \wtype{\tau} & \Wit_\tau(x) & \ubq{\pvec \tau}{\pvec x}{\pvec a} A & \btype{\pvec \tau} & \ifW{\tau}(b, x, y) & \apW{\tau}(f)(a) \\
\hline
\tau(x) \wedge (x \in_\tau a) & \tau^* & \rm{true} & A[\pvec a / \pvec x]  & \pvec \tau & x \cup y & \bigcup\limits_{z \in a} f z
\end{array}
\]
which would correspond to a ``Herbrandized" version of the Dialectica interpretation. In this case contraction is dealt with in a precise way, but quantifiers are approximated by finite sets. In $\HAomega$, however, where definition by cases is available, it's easy to check that this would give rise to an interpretation which is equivalent to the original Dialectica, since the interpretation of the quantifiers
\eqleft{
\begin{array}{lcl}
\lInter{\exists z^\tau A}{c, \pvec x}{\pvec y} & \Leftrightarrow & \exists z \in_\tau \! c \, \lInter{A}{\pvec x}{\pvec y} \\[2mm]
\lInter{\forall z^\tau A}{\pvec f}{\pvec y, c} & \Leftrightarrow & \forall z \in_\tau \! c \, \lInter{A}{\pvec f c}{\pvec y}
\end{array}
}
can be effectively replaced by precise witnesses
\eqleft{
\begin{array}{lcl}
\lInter{\exists z^\tau A}{c, \pvec x}{\pvec y} & \Leftrightarrow & \lInter{A[c/z]}{\pvec x}{\pvec y} \\[2mm]
\lInter{\forall z^\tau A}{\pvec f}{\pvec y, c} & \Leftrightarrow & \lInter{A[c/z]}{\pvec f c}{\pvec y}
\end{array}
}
\end{remark}

\section{Final Remarks}
\label{sectionfinalremarks}

We have described above a general framework for unifying several functional interpretation, which we then used to discover new interpretations. These are summarised in Figure \ref{summary-table}.

\begin{figure}[t]
\begin{center}
\resizebox{\columnwidth}{!}{%
\begin{tabular}{c|c|c|c|c|c|c}
    $\wtype{\tau}$ & $\bound{x}{a}{\tau}$ & $ \ifW{\tau}(b, x, y) \; / \; \apW{\tau}(f, a)$ & $\btype{\pvec \tau}$ & $\ubq{\pvec \tau}{\pvec x}{\pvec a} A$ & $\Wit_\tau(x)$ & {\bf Interpretation} \\[2mm]
    \hline
    $\tau$ & $x =_\tau a$ &  ${\rm if}_\tau(b, x, y) \; / \; f(a)$ & $\pvec \tau$ & $A[\pvec a / \pvec x]$ & true & Dialectica  \\[2mm]
    $\tau$ & $x =_\tau a$ &  ${\rm if}_\tau(b, x, y) \; / \; f(a)$ & $\varepsilon$ & $\forall \pvec x^{\pvec \tau} A$ & true & Modified realizability \\[2mm]
    $\tau$ & $x =_\tau a$ &  ${\rm if}_\tau(b, x, y) \; / \; f(a)$ & $\pvec \tau$ & $\forall \pvec x \leq^*_{\pvec \tau} \! \pvec a \, A$ & true / $x \leq^*_\tau x$  & (\emph{combination not sound}) \\[2mm]
    $\tau$ & $x =_\tau a$ & ${\rm if}_\tau(b, x, y) \; / \; f(a)$ & $\pvec \tau^*$ & $\forall \pvec x \in_{\pvec \tau} \! \pvec a \, A$ & true & Diller-Nahm  \\[2mm]
    \hline
    $\tau$ & $x \leq^*_\tau a$ & $\max_\tau(x, y) \; / \; f(a)$ & $\pvec \tau$ & $A[\pvec a / \pvec x]$ & $x \leq^*_\tau x$ & (\emph{combination not sound}) \\[2mm]
    $\tau$ & $x \leq^*_\tau a$ & $\max_\tau(x, y) \; / \; f(a)$ & $\varepsilon$ & $\mforall \pvec x^{\pvec \tau} A$ & $x \leq^*_\tau x$ & Bounded modified realizability \\[2mm]
    $\tau$ & $x \leq^*_\tau a$ & $\max_\tau(x, y) \; / \; f(a)$ & $\pvec \tau$ & $\mforall \pvec x \leq^*_{\pvec \tau} \! \pvec a \, A$ & $x \leq^*_\tau x$ & Bounded functional interpretation \\[2mm]
    $\tau$ & $x \leq^*_\tau a$ & $\max_\tau(x, y) \; / \; f(a)$ & $\pvec \tau^*$ & $\mforall \pvec x \in_{\pvec \tau} \! \pvec a \, A$ & $x \leq^* x$ & {\bf Bounded Diller-Nahm} \\[2mm]
    \hline
    $\tau^*$ & $x \in_\tau a$ & $x \cup y \; / \; \bigcup\limits_{z \in a} f z$ & $\pvec \tau$ & $A[\pvec a / \pvec x]$ & true & Herbrand Dialectica ( $\simeq$ Dialectica) \\[2mm]
    $\tau^*$ & $x \in_\tau a$ &   $x \cup y \; / \; \bigcup\limits_{z \in a} f z$ & $\varepsilon$ & $\forall \pvec x^{\pvec \tau} A$ & true & Herbrand realizability (for IL) \\[2mm]
    $\tau^*$ & $x \in_\tau a$ &   $x \cup y \; / \; \bigcup\limits_{z \in a} f z$ & $\pvec \tau$ & $\mforall \pvec x \leq^*_{\pvec \tau} \! \pvec a \, A$ & $x \leq^*_\tau x$ & {\bf Herbrandized bfi} \\[2mm]
    $\tau^*$ & $x \in_\tau a$ &   $x \cup y \; / \; \bigcup\limits_{z \in a} f z$ & $\pvec \tau^*$ & $\forall \pvec x \in_{\pvec \tau} \! \pvec a \, A$ & true & Herbrand Diller-Nahm
\end{tabular}
}
\end{center}
\caption{Summary of instantiations (with the two novel interpretations in \textbf{bold})}
\label{summary-table}
\end{figure}

A notable family of functional interpretations that we are not covering in this paper is Kohlenbach's \emph{monotone functional interpretations} (see \cite{K(96),K(08)}). We focus here on the different ways a \emph{formula} can be given a functional interpretation. The monotone functional interpretation in fact makes use of these same interpretations of formulas, but with a different interpretation of \emph{proofs}. More precisely, given the interpretation of a formula $A$ as $\uInter{A}{\pvec x}{\pvec y}$, we are focusing here on the soundness theorem that guarantees the existence of terms $\pvec t$ such that $\uInter{A}{\pvec t}{\pvec y}$ for provable $A$. In the monotone functional interpretation a different soundness proof is used, which, for provable $A$, guarantees the existence of terms $\tilde{\pvec t}$ such that $\exists \pvec x \! \leq^* \! \tilde{\pvec t} \, \uInter{A}{\pvec x}{\pvec y}$, where $\leq^*$ is Bezem's strong majorizability relation. Hence, one could consider ``monotone" soundness theorems for each of the interpretations discussed here, but we leave this to future work.

As shown in the previous section, the parametrised interpretations presented in this paper can be used as a way to discover new interpretations. The instances that we considered are by no means exhaustive. For instance, we think that the interpretations for nonstandard arithmetic from \cite{BergBriseidSafarik2012,DG(18),FerreiraGaspar2015} should also fit in our framework. The idea is to consider not just the typing predicate symbols $\tau(x)$, but also the standard predicate symbol $\st(x)$ as computational symbols, giving rise to the parameters $\bound{x}{a}{\tau}$ and $\bound{x}{a}{\st}$, which each can be given a different interpretation (on top of the choice of interpreting contraction via $\ubq{}{x}{a} A$). 
Suitable choices for these should lead to the known interpretations of nonstandard arithmetic, but might also give rise to new  interpretations for nonstandard arithmetic. This study, however, goes behind the scope of this paper.

Another question concerns variants with truth \cite{GO(10)}. We think that it may be possible to obtain the existing interpretations with truth, and maybe to find new ones, using our parametrised interpretations, by changing the interpretation of $\bang A$ in Definition \ref{inter} to $\uInter{\bang A}{\pvec x}{\pvec a} \pdefin \bang \ubq{}{\pvec y}{\pvec a} \uInter{A}{\pvec x}{\pvec y} \, \otimes \, \bang A$. We also leave this to future work. 

Usually, functional interpretations are accompanied by a characterisation theorem where one shows the equivalence between a formula and its interpretation. In order to show such equivalence one requires some principles -- typically, a form of Choice and of Markov's principle are among such principles -- which are called the characteristic principles of the interpretation. In the  case of our parametrised interpretation we do not know if such a (parametric) theorem holds. We were able to define parametrised characteristic principles and obtain the result but only assuming that the characteristic principles are interpretable (by themselves). This does not solve the problem since it may happen that the theory with the principles may not be consistent. However, for each particular instantiation described in this paper the parametrised characteristic principles indeed correspond to the actual characteristic principles of the interpretation obtained with that instantiation. So, it seems that if the resulting theory is consistent, then the parametrised interpretation admits a characterisation theorem.

Finally, it is well-known that intuitionistic functional interpretations are related with classical ones by means of a negative translation. For example, as shown in \cite{Avigad2006,StreicherKohlenbach2007}, Jean-Louis Krivine's negative translation is the correct tool to connect G\"{o}del's \emph{Dialectica} with Shoenfield's interpretation. Other factorisations were obtained in \cite{DG(ta),Gaspar2009,StreicherKohlenbach2007,OlivaStreicher2008}. It is our impression that composing our intuitionistic parametrised interpretation with various negative translations would entail parametrised classical interpretations that allows one to obtain all the standard interpretations for classical logic, showing factorisations are a general feature among functional interpretations. We also leave this to a future study.

\bibliographystyle{plain} 

\bibliography{dblogic}

\newpage

\section*{Appendix A: Proof of Proposition \ref{prop-lInter}}

\noindent Straightforward by simply unfolding definitions: \\[2mm]
If $A = P(\pvec x)$ and $P$ is a computational predicate symbol then:
\[
\lInter{P(\pvec x)}{a}{} 
	\stackrel{\textup{D}\ref{def:combined-inter}}{\Leftrightarrow} \fTrans{(\uInter{\lTrans{(P(\pvec x))}}{a}{})}
	\stackrel{\textup{D}\ref{g-trans}}{\Leftrightarrow} \fTrans{(\uInter{P(\pvec x)}{a}{})}
	\stackrel{\textup{D}\ref{inter}}{\Leftrightarrow} \fTrans{(\lbound{\pvec x}{a}{P})}
	\stackrel{\textup{D}\ref{forget}}{\Leftrightarrow} \bound{\pvec x}{a}{P}	
\]
If $A = P(\pvec x)$ and $P$ is a non-computational predicate symbol then:
\[ \lInter{P(\pvec x)}{}{} 
	\stackrel{\textup{D}\ref{def:combined-inter}}{\Leftrightarrow} \fTrans{(\uInter{\lTrans{(P(\pvec x))}}{}{})}
	\stackrel{\textup{D}\ref{g-trans}}{\Leftrightarrow} \fTrans{(\uInter{P(\pvec x)}{}{})} 
	\stackrel{\textup{D}\ref{inter}}{\Leftrightarrow} \fTrans{(P(\pvec x))}
	\stackrel{\textup{D}\ref{forget}}{\Leftrightarrow} P(\pvec x)	
\]
Implication.
\eqleft{
\begin{array}{lcl}
	\lInter{A \to B}{\pvec f, \pvec g}{\pvec x,\pvec w} 
	& \stackrel{\textup{D}\ref{def:combined-inter}}{\Leftrightarrow} & \fTrans{(\uInter{\lTrans{(A \to B)}}{\pvec f,\pvec g}{\pvec x,\pvec w})}  \\[1mm]
	& \stackrel{\textup{D}\ref{g-trans}}{\Leftrightarrow} & \fTrans{(\uInter{\bang \lTrans{A} \lto \lTrans{B}}{\pvec f,\pvec g}{\pvec x,\pvec w})}    \\[1mm]
	& \stackrel{\textup{D}\ref{inter}}{\Leftrightarrow} &  \fTrans{(\bang \lubq{\pvec \tau^-_{\lTrans{A}}}{\pvec y}{\pvec g \pvec x \pvec w}{\uInter{ \lTrans{A}}{\pvec x}{\pvec y}} 
		\lto \uInter{ \lTrans{B}}{ \pvec f \pvec x}{\pvec w})} 	 \\[1mm]
	& \stackrel{\textup{D}\ref{def-parameters-translation}}{\Leftrightarrow} &  \fTrans{(\bang \lTrans{( \ubq{\pvec \tau^-_{\lTrans{A}}}{\pvec y}{\pvec g \pvec x \pvec w} \fTrans{(\uInter{\lTrans{A}}{\pvec x}{\pvec y})} )}
		\lto \uInter{ \lTrans{B}}{ \pvec f \pvec x}{\pvec w})} 	 \\[1mm]
	& \stackrel{\textup{D}\ref{forget}}{\Leftrightarrow} &  \ubq{\pvec \tau^-_{\lTrans{A}}}{\pvec y}{\pvec g \pvec x \pvec w} \fTrans{(\uInter{ \lTrans{A}}{\pvec x}{\pvec y})} \to \fTrans{(\uInter{ \lTrans{B}}{\pvec f \pvec x}{\pvec w})}  \\[1mm]
	& \stackrel{\textup{D}\ref{def:combined-inter}}{\Leftrightarrow} & \ubq{\pvec \tau^-_{\lTrans{A}}}{\pvec y}{\pvec g \pvec x \pvec w} \lInter{A }{\pvec x}{\pvec y} \to \lInter{B}{\pvec f \pvec x}{\pvec w}  
\end{array}
}
Conjunction.
\eqleft{
\begin{array}{lcl}
	\lInter{A \wedge B}{\pvec x, \pvec v}{\pvec y,\pvec w} 
	& \stackrel{\textup{D}\ref{def:combined-inter}}{\Leftrightarrow}& \fTrans{(\uInter{\lTrans{(A \wedge B)}}{\pvec x,\pvec v}{\pvec y,\pvec w})}   \\[1mm]
	& \stackrel{\textup{D}\ref{g-trans}}{\Leftrightarrow} & \fTrans{(\uInter{\lTrans{A} \cwedge \lTrans{B}}{\pvec x,\pvec v}{\pvec y,\pvec w})}   \\[1mm]
	& \stackrel{\textup{D}\ref{inter}}{\Leftrightarrow} &  \fTrans{( \uInter{ \lTrans{A}}{\pvec x}{\pvec y} \cwedge \uInter{ \lTrans{B}}{  \pvec v}{\pvec w})} 
	\stackrel{\textup{D}\ref{forget}}{\Leftrightarrow} \fTrans{(\uInter{ \lTrans{A}}{\pvec x}{\pvec y})} \wedge \fTrans{(\uInter{ \lTrans{B}}{\pvec v}{\pvec w})}
	\stackrel{\textup{D}\ref{def:combined-inter}}{\Leftrightarrow} \lInter{A}{\pvec x}{\pvec y} \wedge \lInter{B}{\pvec v}{\pvec w} 
\end{array}
}
Disjunction. Recall that $A \vee B$ is defined as $\exists z^\BB (((z = \true) \to A) \wedge ((z = \false) \to B))$ (Proposition \ref{prop-def-disjunction}). Hence
\[
\footnotesize{\begin{array}{lcl}
	\lInter{A \vee B}{b, \pvec x, \pvec v}{\pvec y, \pvec w} 
	& \stackrel{\textup{D}\ref{def:combined-inter}}{\Leftrightarrow} & \fTrans{(\uInter{\lTrans{(A \vee B)}}{b, \pvec x,\pvec v}{\pvec y,\pvec w})}
	\stackrel{\textup{D}\ref{g-trans}}{\Leftrightarrow} \fTrans{(\uInter{\exists z^\BB (\lpcond{z}{\bang \lTrans{A}}{\bang\lTrans{B}})}{b, \pvec x,\pvec v}{\pvec y,\pvec w})}  \\[1mm]
	& \stackrel{\textup{D}\ref{inter}}{\Leftrightarrow} & \fTrans{(\exists \lbound{z}{b}{\BB} (\lpcond{z}{\bang \lubq{\pvec \tau^-_{\lTrans{A}}}{\pvec y'}{\pvec y}{\uInter{\lTrans{A}}{\pvec x}{\pvec y'}}}{\bang \lubq{\pvec \tau^-_{\lTrans{B}}}{\pvec w'}{\pvec w}{\uInter{\lTrans{B}}{\pvec v}{\pvec w'}}}))}	 \\[1mm]
	& \stackrel{\textup{D}\ref{forget}}{\Leftrightarrow} &  \exists \bound{z}{b}{\BB} (\pcond{z}{\ubq{\pvec \tau^-_{\lTrans{A}}}{\pvec y'}{\pvec y} \fTrans{(\uInter{\lTrans{A}}{\pvec x}{\pvec y'})}}{\ubq{\pvec \tau^-_{\lTrans{B}}}{\pvec w'}{\pvec w} \fTrans{(\uInter{\lTrans{B}}{\pvec v}{\pvec w'})}}) \\[1mm]
	& \stackrel{\textup{D}\ref{def:combined-inter}}{\Leftrightarrow} & \exists \bound{z}{b}{\BB} (\pcond{z}{\ubq{\pvec \tau^-_{\lTrans{A}}}{\pvec y'}{\pvec y} \lInter{A}{\pvec x}{\pvec y'}}{\ubq{\pvec \tau^-_{\lTrans{B}}}{\pvec w'}{\pvec w} \lInter{B}{\pvec v}{\pvec w'}}) 
\end{array}}
\]
Existential quantifier.
\eqleft{
\begin{array}{lcl}
	\lInter{\exists z A}{\pvec x}{\pvec y} 
	& \stackrel{\textup{D}\ref{def:combined-inter}}{\Leftrightarrow}& \fTrans{(\uInter{\lTrans{(\exists z A)}}{\pvec x}{\pvec y})}   \\[1mm]
	& \stackrel{\textup{D}\ref{g-trans}}{\Leftrightarrow} & \fTrans{(\uInter{\exists z \bang \lTrans{A}}{\pvec x}{\pvec y})}    \\[1mm]
	& \stackrel{\textup{D}\ref{inter}}{\Leftrightarrow} & \fTrans{(\exists z \bang \lubq{\pvec \tau^-_{\lTrans{A}}}{\pvec y'}{\pvec y}{\uInter{\lTrans{A}}{\pvec x}{\pvec y'}})}	 \\[1mm]
	& \stackrel{\textup{D}\ref{forget}}{\Leftrightarrow} &  \exists z \ubq{\pvec \tau^-_{\lTrans{A}}}{\pvec y'}{\pvec y} \fTrans{(\uInter{\lTrans{A}}{\pvec x}{\pvec y'})}
	\stackrel{\textup{D}\ref{def:combined-inter}}{\Leftrightarrow} \exists z \ubq{\pvec \tau^-_{\lTrans{A}}}{\pvec y'}{\pvec y} \lInter{A}{\pvec x}{\pvec y'}
\end{array}
}
Universal quantifier.
\[
	\lInter{\forall z A}{\pvec x}{\pvec y} 
	\stackrel{\textup{D}\ref{def:combined-inter}}{\Leftrightarrow} \fTrans{(\uInter{\lTrans{(\forall z A)}}{\pvec x}{\pvec y})}
	\stackrel{\textup{D}\ref{g-trans}}{\Leftrightarrow} \fTrans{(\uInter{\forall z\lTrans{A}}{\pvec x}{\pvec y})}
	\stackrel{\textup{D}\ref{inter}}{\Leftrightarrow} \fTrans{(\forall z \uInter{\lTrans{A}}{\pvec x}{\pvec y})}
	\stackrel{\textup{D}\ref{forget}}{\Leftrightarrow}  \forall z \fTrans{(\uInter{\lTrans{A}}{\pvec x}{\pvec y})}
	\stackrel{\textup{D}\ref{def:combined-inter}}{\Leftrightarrow} \forall z \lInter{A}{\pvec x}{\pvec y} 
\]
Relativised (computational) existential quantifier.
\eqleft{
\begin{array}{lcl}
	\lInter{\exists z^P A}{c, \pvec x}{\pvec y} 
	& \stackrel{\textup{N}\ref{notation:qual-IL}}{\Leftrightarrow}& \lInter{\exists z (P(z) \wedge A)}{c, \pvec x}{\pvec y}   \\[1mm]
	& \stackrel{\textup{D}\ref{def:combined-inter}}{\Leftrightarrow}& \fTrans{(\uInter{\lTrans{(\exists z (P(z) \wedge A))}}{c, \pvec x}{\pvec y})}   \\[1mm]
	& \stackrel{\textup{D}\ref{g-trans}}{\Leftrightarrow} & \fTrans{(\uInter{\exists z \bang ((P(z))^* \cwedge \lTrans{A})}{c, \pvec x}{\pvec y})}    \\[1mm]
	& \stackrel{\textup{D}\ref{inter}}{\Leftrightarrow} & \fTrans{(\exists z \bang \lubq{\pvec \tau^-_{\lTrans{A}}}{\pvec y'}{\pvec y}{((\lbound{z}{c}{P}) \cwedge \uInter{\lTrans{A}}{\pvec x}{\pvec y'})})} \\[1mm]
	& \stackrel{\textup{D}\ref{forget}}{\Leftrightarrow} & \exists z \ubq{\pvec \tau^-_{\lTrans{A}}}{\pvec y'}{\pvec y} ((\bound{z}{c}{P}) \wedge \fTrans{(\uInter{\lTrans{A}}{\pvec x}{\pvec y})})  \\[1mm]
	& \stackrel{\textup{(IH)}}{\Leftrightarrow} & \exists z \ubq{\pvec \tau^-_{\lTrans{A}}}{\pvec y'}{\pvec y} ((\bound{z}{c}{P}) \wedge \lInter{A}{\pvec x}{\pvec y})         \\[1mm]
	& \stackrel{\textup{\Quant{3}}}{\Leftrightarrow} & \exists \bound{z}{c}{P} \ubq{\pvec \tau^-_{\lTrans{A}}}{\pvec y'}{\pvec y} \lInter{A}{\pvec x}{\pvec y} 
\end{array}
}
Relativised (computational) universal quantifier.
\eqleft{
\begin{array}{lcl}
	\lInter{\forall z^P A}{\pvec f}{\pvec y, b} 
	& \stackrel{\textup{N}\ref{notation:qual-IL}}{\Leftrightarrow}& \lInter{\forall z (P(z) \to A)}{\pvec f}{\pvec y, b}   \\[1mm]
	& \stackrel{\textup{D}\ref{def:combined-inter}}{\Leftrightarrow}& \fTrans{(\uInter{\lTrans{(\forall z (P(z) \to A))}}{\pvec f}{\pvec y, b})}   \\[1mm]
	& \stackrel{\textup{D}\ref{g-trans}}{\Leftrightarrow}& \fTrans{(\uInter{\forall z (\bang P(z) \lto \lTrans{A})}{\pvec f}{\pvec y, b})}    \\[1mm]
	& \stackrel{\textup{D}\ref{inter}}{\Leftrightarrow}& \fTrans{(\forall z (\bang(\lbound{z}{b}{P}) \lto \uInter{\lTrans{A}}{\pvec f b}{\pvec y})}	 \\[1mm]
	& \stackrel{\textup{D}\ref{forget}}{\Leftrightarrow}& \forall \bound{z}{b}{P} \, \fTrans{(\uInter{\lTrans{A}}{\pvec f b}{\pvec y})} 
	\stackrel{\textup{D}\ref{def:combined-inter}}{\Leftrightarrow} \forall \bound{z}{b}{P} \, \lInter{A}{\pvec f b}{\pvec y}
\end{array}
}

\newpage

\section*{Appendix B: Proof of Proposition \ref{prop-bInter}}

\noindent Straightforward by simply unfolding definitions: \\[2mm]
If $A = P(\pvec x)$ with $P$ a computational predicate symbol then
\eqleft{
\bInter{P(\pvec x)}{a}{} 
	\stackrel{\textup{D}\ref{def:combined-inter}}{\equiv} \fTrans{(\uInter{\bTrans{(P(\pvec x))}}{a}{})}
	\stackrel{\textup{D}\ref{g-trans}}{\equiv} \fTrans{(\uInter{\bang P(\pvec x)}{a}{})}
	\stackrel{\textup{D}\ref{inter}}{\equiv} \fTrans{(\bang (\bbound{\pvec x}{a}{P}))}
	\stackrel{\textup{D}\ref{forget}}{\equiv} \bound{\pvec x}{a}{P} 
}
If $A = P(\pvec x)$ with $P$ a non-computational predicate symbol then
\eqleft{
\bInter{P(\pvec x)}{}{} 
	\stackrel{\textup{D}\ref{def:combined-inter}}{\equiv} \fTrans{(\uInter{(\bTrans{P(\pvec x))}}{}{})}
	\stackrel{\textup{D}\ref{g-trans}}{\equiv} \fTrans{(\uInter{\bang P(\pvec x)}{}{})}
	\stackrel{\textup{D}\ref{inter}}{\equiv} \fTrans{(\bang P(\pvec x))}
	\stackrel{\textup{D}\ref{forget}}{\equiv} P(\pvec x)
}
Implication.
\eqleft{
\begin{array}{lcl}
	\bInter{A \to B}{\pvec f, \pvec g}{\pvec x,\pvec w} 
	& \stackrel{\textup{D}\ref{def:combined-inter}}{\equiv} & \fTrans{(\uInter{\bTrans{(A \to B)}}{\pvec f,\pvec g}{\pvec x,\pvec w})}   \\[1mm]
	& \stackrel{\textup{D}\ref{g-trans}}{\equiv} & \fTrans{(\uInter{\bang (\bTrans{A} \lto \bTrans{B})}{\pvec f,\pvec g}{\pvec x,\pvec w})}  \\[1mm]
	& \stackrel{\textup{D}\ref{inter}}{\equiv} &  \fTrans{(\bang \bubq{\pvec \tau^+_{\bTrans{A}}, \pvec \tau^-_{\bTrans{B}}}{\pvec x', \pvec w'}{\pvec x, \pvec w} (\uInter{ \bTrans{A}}{\pvec x'}{\pvec g \pvec x' \pvec w'} \lto \uInter{ \bTrans{B}}{ \pvec f \pvec x'}{\pvec w'}))} 	 \\[1mm]
	& \stackrel{\textup{D}\ref{forget}}{\equiv} &  \ubq{\pvec \tau^+_{\bTrans{A}}, \pvec \tau^-_{\bTrans{B}}}{\pvec x', \pvec w'}{\pvec x, \pvec w} (\fTrans{(\uInter{ \bTrans{A}}{\pvec x'}{\pvec g \pvec x' \pvec w'})} \to \fTrans{(\uInter{ \bTrans{B}}{\pvec f \pvec x'}{\pvec w'})}) \\[1mm]
	& \stackrel{\textup{D}\ref{def:combined-inter}}{\equiv} &  \ubq{\pvec \tau^+_{\bTrans{A}}, \pvec \tau^-_{\bTrans{B}}}{\pvec x', \pvec w'}{\pvec x, \pvec w} (\bInter{A}{\pvec x'}{\pvec g \pvec x' \pvec w'} \to \bInter{B }{\pvec f \pvec x'}{\pvec w'}) 
\end{array}
}
Conjunction.
\eqleft{
\begin{array}{lcl}
	\lInter{A \wedge B}{\pvec x, \pvec v}{\pvec y,\pvec w} 
	& \stackrel{\textup{D}\ref{def:combined-inter}}{\equiv}& \fTrans{(\uInter{\bTrans{(A \wedge B)}}{\pvec x,\pvec v}{\pvec y,\pvec w})} \\[1mm]
	& \stackrel{\textup{D}\ref{g-trans}}{\equiv} & \fTrans{(\uInter{\bTrans{A}\otimes \bTrans{B})}{\pvec x,\pvec v}{\pvec y,\pvec w})}   \\[1mm]
	& \stackrel{\textup{D}\ref{inter}}{\equiv} &  \fTrans{(\uInter{ \bTrans{A}}{\pvec x}{\pvec y}\otimes \uInter{ \bTrans{B}}{  \pvec v}{\pvec w})}
	\stackrel{\textup{D}\ref{forget}}{\equiv}  \fTrans{(\uInter{\bTrans{A}}{\pvec x}{\pvec y})} \wedge \fTrans{(\uInter{ \bTrans{B}}{  \pvec v}{\pvec w})} 
	\stackrel{\textup{D}\ref{def:combined-inter}}{\equiv} \bInter{A}{\pvec x}{\pvec y} \wedge \bInter{B}{\pvec v}{\pvec w} 
\end{array}
}
Disjunction.
\eqleft{
\begin{array}{lcl}
	\bInter{A \vee B}{\pvec x, \pvec v, b}{\pvec y, \pvec w} 
	& \stackrel{\textup{D}\ref{def:combined-inter}}{\equiv}& \fTrans{(\uInter{\bTrans{(A \vee B)}}{\pvec x,\pvec v, b}{\pvec y,\pvec w})}   \\[1mm]
	& \stackrel{\textup{D}\ref{g-trans}}{\equiv} & \fTrans{(\uInter{\exists z^\BB (\lpcond{z}{\bTrans{A}}{\bTrans{B}})}{ \pvec x,\pvec v, b}{\pvec y,\pvec w})}   \\[1mm]
	& \stackrel{\textup{D}\ref{inter}}{\equiv} & \fTrans{(\exists \bbound{z}{b}{\BB} (\lpcond{z}{\uInter{\bTrans{A}}{\pvec x}{\pvec y}}{ \uInter{\bTrans{B}}{\pvec v}{\pvec w}}))}	\\[1mm]
	& \stackrel{\textup{D}\ref{forget}}{\equiv} &  \exists \bound{z}{b}{\BB} (\pcond{z}{\fTrans{(\uInter{\bTrans{A}}{\pvec x}{\pvec y})}}{\fTrans{(\uInter{\bTrans{B}}{\pvec v}{\pvec w})}})  \\[1mm]
	& \stackrel{\textup{D}\ref{def:combined-inter}}{\equiv} &\exists \bound{z}{b}{\BB} (\pcond{z}{\bInter{A}{\pvec x}{\pvec y}}{\bInter{B}{\pvec v}{\pvec w}}) 
\end{array}
}
Existential quantifier.
\[
	\bInter{\exists z A}{\pvec x}{\pvec y} 
	\stackrel{\textup{D}\ref{def:combined-inter}}{\equiv} \fTrans{(\uInter{\bTrans{(\exists z A)}}{\pvec x}{\pvec y})}
	\stackrel{\textup{D}\ref{g-trans}}{\equiv} \fTrans{(\uInter{\exists z \bTrans{A}}{\pvec x}{\pvec y})}
	\stackrel{\textup{D}\ref{inter}}{\equiv} \fTrans{(\exists z \uInter{\bTrans{A}}{\pvec x}{\pvec y})} 
	\stackrel{\textup{D}\ref{forget}}{\equiv} \exists z \fTrans{(\uInter{\bTrans{A}}{\pvec x}{\pvec y})} 
	\stackrel{\textup{D}\ref{def:combined-inter}}{\equiv} \exists z \bInter{A}{\pvec x}{\pvec y} 
\]
Universal quantifier.
\eqleft{
\begin{array}{lcl}
	\bInter{\forall z A}{\pvec x}{\pvec y} 
	& \stackrel{\textup{D}\ref{def:combined-inter}}{\equiv}& \fTrans{(\uInter{\bTrans{(\forall z A)}}{\pvec x}{\pvec y})}   \\[1mm]
	& \stackrel{\textup{D}\ref{g-trans}}{\equiv} & \fTrans{(\uInter{\bang \forall z\bTrans{A}}{\pvec x}{\pvec y})}    \\[1mm]
	& \stackrel{\textup{D}\ref{inter}}{\equiv} & \fTrans{(\bang \bubq{\pvec \tau^-_{\bTrans{A}}}{\pvec y'}{\pvec y} \forall z \uInter{\bTrans{A}}{\pvec x}{\pvec y'})}	 \\[1mm]
	& \stackrel{\textup{D}\ref{forget}}{\equiv} & \ubq{\pvec \tau^-_{\bTrans{A}}}{\pvec y'}{\pvec y} \forall z \fTrans{(\uInter{\bTrans{A}}{\pvec x}{\pvec y'})}  \\[1mm]
	& \stackrel{\textup{D}\ref{def:combined-inter}}{\equiv} & \ubq{\pvec \tau^-_{\bTrans{A}}}{\pvec y'}{\pvec y} \forall z \bInter{A}{\pvec x}{\pvec y'}
\end{array}
}
Relativised (computational) existential quantifier.
\eqleft{
\begin{array}{lcl}
\bInter{\exists z^P A}{\pvec x, c}{\pvec y}
	& \stackrel{\textup{N}\ref{notation:qual-IL}}{\equiv}& \bInter{\exists z (P(z) \wedge A)}{\pvec x, c}{\pvec y}   \\[1mm]
	& \stackrel{\textup{D}\ref{def:combined-inter}}{\equiv}& \fTrans{(\uInter{\bTrans{(\exists z (P(z) \wedge A))}}{\pvec x, c}{\pvec y})}   \\[1mm]
	& \stackrel{\textup{D}\ref{g-trans}}{\equiv} & \fTrans{(\uInter{\exists z (\bang P(z) \otimes \bTrans{A})}{\pvec x, c}{\pvec y})}  \\[1mm]
	& \stackrel{\textup{D}\ref{inter}}{\equiv} & \fTrans{(\exists z ((\bbound{z}{c}{P}) \otimes \uInter{\bTrans{A}}{\pvec x}{\pvec y}))}  \\[1mm]
	& \stackrel{\textup{D}\ref{forget}}{\equiv} & \exists \bound{z}{c}{P} \, \fTrans{(\uInter{\bTrans{A}}{\pvec x}{\pvec y})}  \\[1mm]
	& \stackrel{\textup{D}\ref{def:combined-inter}}{\equiv} & \exists \bound{z}{c}{P} \, \bInter{A}{\pvec x}{\pvec y}
\end{array}
}
Relativised (computational) universal quantifier. 
\[
\begin{array}{lcl}
	\bInter{\forall z^P A}{\pvec f}{c, \pvec y} 
	& \stackrel{\textup{N}\ref{notation:qual-IL}}{\equiv}& \bInter{\forall z (P(z) \to A)}{\pvec f}{c, \pvec y}  \\[1mm]
	& \stackrel{\textup{D}\ref{def:combined-inter}}{\equiv}& \fTrans{(\uInter{\bTrans{(\forall z (P(z) \to A))}}{\pvec f}{c, \pvec y})}   \\[1mm]
	& \stackrel{\textup{D}\ref{g-trans}}{\equiv} & \fTrans{(\uInter{\bang \forall z \bang (\bang P(z) \lto \bTrans{A})}{\pvec f}{c, \pvec y})}    \\[1mm]
	& \stackrel{\textup{D}\ref{inter}}{\equiv} & \fTrans{(\bang \bubq{\wtype{P},\btype{\pvec \tau}^-_A}{c', \pvec y'}{c, \pvec y} \forall z \bang \bubq{\pvec \tau^-_{\bTrans{A}}}{c'', \pvec y''}{c', \pvec y'} (\bang (\bbound{z}{c}{P}) \lto \uInter{\bTrans{A}}{\pvec f c''}{\pvec y''}))}	 \\[1mm]
	& \stackrel{\textup{\Quant{4}}}{\equiv} & \fTrans{(\bang \bubq{\wtype{P},\btype{\pvec \tau}^-_A}{c', \pvec y'}{c, \pvec y} \bang \bubq{\pvec \tau^-_{\bTrans{A}}}{c'', \pvec y''}{c', \pvec y'} \forall z (\bang (\bbound{z}{c''}{P}) \lto \uInter{\bTrans{A}}{\pvec f c''}{\pvec y''}))}	  \\[1mm]
	& \stackrel{\textup{D}\ref{forget}}{\equiv} & \ubq{\wtype{P}, \btype{\pvec \tau}^-_{\bTrans{A}}}{c', \pvec y'}{c, \pvec y} \ubq{{\pvec \tau}^-_{\bTrans{A}}}{c'', \pvec y''}{c', \pvec y'} \forall \bound{z}{c''}{P} \, \fTrans{( \uInter{\bTrans{A}}{\pvec f c''}{\pvec y''} )}    \\[1mm]
	& \stackrel{\textup{D}\ref{def:combined-inter}}{\equiv} & \ubq{\wtype{P}, \btype{\pvec \tau}^-_{\bTrans{A}}}{c', \pvec y'}{c, \pvec y} \ubq{\pvec \tau^-_{\bTrans{A}}}{c'', \pvec y''}{c', \pvec y'} \forall \bound{z}{c''}{P} \, \bInter{A}{\pvec f c''}{\pvec y''}   
\end{array}
\]

\section*{Appendix C: Proof of Theorem \ref{compare-interpretations}}

\noindent We will first prove the following lemma:

\begin{lemma} \label{comp-mon} For each formula $A$ of $\IL^\omega$ there exists a tuple of closed terms $\pvec a$ such that 
\[
\proves_{\IL^\omega} \Wit_{\pvec \tau^+_{\bTrans{A}} \to \btype{\pvec \tau^-_{\bTrans{A}}} \to \pvec \tau^-_{\bTrans{A}}}(\pvec a) \quad \mbox{and} \quad 
\Wit_{\pvec \tau^+_{\bTrans{A}}, \btype{\pvec \tau^-_{\bTrans{A}}}}(\pvec x, \pvec y), \bInter{A}{\pvec x}{\pvec a \pvec x \pvec y} \proves_{\IL^\omega} \ubq{\pvec \tau^-_{\bTrans{A}}}{\pvec y'}{\pvec y} \bInter{A}{\pvec x}{\pvec y'}
\]
\end{lemma}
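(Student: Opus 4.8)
The plan is to prove the lemma by structural induction on the formula $A$, unfolding $\bInter{A}{\pvec x}{\pvec y}$ via Proposition~\ref{prop-bInter} at each step. The point is that the master counter-witness $\pvec a \pvec x \pvec y$, of type $\pvec \tau^-_{\bTrans A}$, must—once the interpretation holds against it—dominate every counter-witness $\pvec y'$ bounded by $\pvec y$ (of type $\btype{\pvec \tau^-_{\bTrans A}}$). Throughout, part $(i)$—that each $\pvec a$ lies in $\Wit$ with the displayed type—I would handle uniformly at the end: every $\pvec a$ we construct is assembled from $\S$, $\K$, the combinators $\singleton{(\cdot)}, \join{(\cdot)}{(\cdot)}, \comp{(\cdot)}{(\cdot)}$, and the witnesses supplied by the induction hypothesis, all of which are in $\Wit$ by the second clauses of \ConUnit, \ConStrength, \ConApp together with Lemma~\ref{lem-W-closure}; hence the composite term is in $\Wit$ by Lemma~\ref{lem-W-closure} again.

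First I would dispose of the base and ``unwrapped'' cases. For a predicate symbol the counter-witness type $\pvec \tau^-_{\bTrans P}$ is empty, so the bounded quantifier is vacuous and $\pvec a = \varepsilon$ works. For $A \wedge B$, $A \vee B$ and $\exists z A$ the translation $\bTrans{(\cdot)}$ places no outermost $\bang$ on the counter-witnesses, so these connectives commute with the abstract bounded quantifier: taking the master witnesses $\pvec a_A, \pvec a_B$ from the induction hypothesis and concatenating (resp. joining via $\join{(\cdot)}{(\cdot)}$) them, one reassembles the target $\ubq{}{\pvec y'}{\pvec y}\bInter{A}{\pvec x}{\pvec y'}$ using \Quant{3} to push the boolean guard and the conjuncts not depending on $\pvec y'$ through the bounded quantifier, \Quant{5} to commute it with $\exists z$, and \ConStrength to split a bound over a pair of subformula counter-witnesses. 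These steps are routine once the induction hypotheses are in place.

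The genuine content sits in the $A \to B$ and $\forall z A$ cases, precisely the cases in which $\bTrans{(\cdot)}$ wraps the subformula in a $\bang$, so that $\pvec \tau^-_{\bTrans{(A\to B)}} = \btype{(\pvec \tau^+_{\bTrans A}, \pvec \tau^-_{\bTrans B})}$ and $\pvec \tau^-_{\bTrans{\forall z A}} = \btype{\pvec \tau^-_{\bTrans A}}$. Here $\bInter{A\to B}{\pvec f,\pvec g}{\pvec x,\pvec w}$ and $\bInter{\forall z A}{\pvec x}{\pvec y}$ are, by Proposition~\ref{prop-bInter}, already abstract bounded quantifiers at the top level, and the lemma's hypothesis bounds these by a further layer of $\btype{(\cdot)}$. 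The key observation is that collapsing such a nested pair of bounds is exactly what \ConApp achieves: instantiating \ConApp with $\pvec f$ the identity combinator $\S\K\K$ (which typechecks precisely because the outer range type equals $\btype{(\cdot)}$ of the inner one) yields, from $\bInter{A\to B}{\pvec f,\pvec g}{\comp{\mathrm{id}}{(\pvec X,\pvec W)}}$, the bounded statement $\ubq{}{(\pvec x,\pvec w)}{(\pvec X,\pvec W)}\bInter{A\to B}{\pvec f,\pvec g}{\pvec x,\pvec w}$, and likewise for $\forall z A$. Thus the master witness is $\comp{\mathrm{id}}{(\cdot)}$, and, notably, no induction hypothesis on $A$ or $B$ is needed in these cases, since the outer bounded quantifier treats the whole compound formula as a single body.

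The main obstacle I anticipate is bookkeeping rather than conceptual: keeping the tuples of witnessing and counter-witnessing types straight through the $\bTrans{(\cdot)}$ translation and the repeated applications of $\btype{(\cdot)}$, and verifying in the $A\to B$ case that the contravariant occurrence of $A$ inside $\bInter{A}{\pvec x'}{\pvec g \pvec x' \pvec w'}$—where the $A$-bound is itself computed from the $(A\to B)$-counter-witness—does not disturb the collapse, so that a single application of \ConApp still suffices. Checking that the resulting $\pvec a$ has exactly the type $\pvec \tau^+_{\bTrans A} \to \btype{\pvec \tau^-_{\bTrans A}} \to \pvec \tau^-_{\bTrans A}$ demanded in part $(i)$ is the place where this type discipline must be carried out with care.
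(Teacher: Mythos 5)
Your proposal matches the paper's proof essentially step for step: structural induction on $A$, with the $\to$ and $\forall$ cases discharged by \ConApp~instantiated with the identity combinator (no induction hypothesis needed, exploiting $\pvec \tau^-_{\bTrans{(A\to B)}} = \btype{(\pvec \tau^+_{\bTrans{A}}, \pvec \tau^-_{\bTrans{B}})}$ so that the master witness is $\comp{\mathrm{id}}{(\cdot)}$), the existential case by the induction hypothesis followed by \Quant{5}, and part $(i)$ handled uniformly via Lemma~\ref{lem-W-closure}. The only divergence is one of emphasis — the paper presents $\exists z A$ as the sole non-trivial case while you place the weight on $\to$/$\forall$ — but the per-case mechanisms coincide.
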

\begin{proof} By induction on the complexity of the formula $A$. The only non-trivial case is when $A$ is an existential formula $\exists z A$: Assume $\pvec a$ is the witness for $A$, and $\Wit_{\pvec \tau^+_{\bTrans{(\exists z A)}}, \btype{\pvec \tau^-_{\bTrans{(\exists z A)}}}}(\pvec x, \pvec y)$, then, by definition, we have $\Wit_{\pvec \tau^+_{\bTrans{A}}, \btype{\pvec \tau^-_{\bTrans{A}}}}(\pvec x, \pvec y)$ and hence
\[
\begin{array}{lcl}
	\bInter{\exists z A}{\pvec x}{\pvec a \pvec x \pvec y} 
		& \stackrel{\textup{P}\ref{prop-bInter}}{\Leftrightarrow} & \exists z \bInter{A}{\pvec x}{\pvec a \pvec x \pvec y} \\[1mm]
		& \stackrel{(\textup{IH})}{\Rightarrow} & \exists z \ubq{\pvec \tau^-_{\bTrans{A}}}{\pvec y'}{\pvec y} \bInter{A}{\pvec x}{\pvec y'} \\[1mm]
		& \stackrel{\textup{\Quant{5}}}{\Rightarrow} & \ubq{\pvec \tau^-_{\bTrans{A}}}{\pvec y'}{\pvec y} \exists z \bInter{A}{\pvec x}{\pvec y'}  \\[1mm]
		& \stackrel{\textup{P}\ref{prop-bInter}}{\Leftrightarrow} & \ubq{\pvec \tau^-_{\bTrans{(\exists z A)}}}{\pvec y'}{\pvec y} \bInter{\exists z A}{\pvec x}{\pvec y'}  
\end{array}
\]
All other cases follow directly from our assumption \ConApp. For instance, writing ${\rm id}$ for the identity function, for implication $A \to B$ we have:
\[
\begin{array}{lcl}
		& & \bInter{A \to B}{\pvec f, \pvec g}{\comp{{\rm id}}{(\pvec x, \pvec w)}} \\[1mm]
		& \stackrel{\textup{P}\ref{prop-bInter}}{\Leftrightarrow} & \ubq{\pvec \tau^+_{\bTrans{A}}, \pvec \tau^-_{\bTrans{B}}}{\pvec x'', \pvec w''}{\comp{{\rm id}}{(\pvec x, \pvec w)}} (\bInter{A}{\pvec x''}{\pvec g \pvec x'' \pvec w''} \to \bInter{B}{\pvec f \pvec x''}{\pvec w''}) \\[1mm]
		& \stackrel{\textup{\ConApp}}{\Rightarrow} & \ubq{\btype{\pvec \tau^+_{\bTrans{A}}, \pvec \tau^-_{\bTrans{B}}}}{\pvec x', \pvec w'}{\pvec x, \pvec w} \ubq{\pvec \tau^+_{\bTrans{A}}, \pvec \tau^-_{\bTrans{B}}}{\pvec x'', \pvec w''}{\pvec x', \pvec w'} (\bInter{A}{\pvec x''}{\pvec g \pvec x'' \pvec w''} \to \bInter{B}{\pvec f \pvec x''}{\pvec w''})  \\[1mm]
		& \stackrel{\textup{P}\ref{prop-bInter}}{\Leftrightarrow} & \ubq{\pvec \tau^-_{\bTrans{(A \to B)}}}{\pvec x', \pvec w'}{\pvec x, \pvec w} \bInter{A \to B}{\pvec f, \pvec g}{\pvec x', \pvec w'}  
\end{array}
\]
since $\tau^-_{\bTrans{(A \to B)}} = \btype{\tau^+_{\bTrans{A}}, \tau^-_{\bTrans{B}}}$.
\end{proof}

We then prove points $(i)$ and $(ii)$ simultaneously by induction on the complexity of the formula $A$, using Propositions~\ref{prop-lInter} and \ref{prop-bInter}. The only non-trivial cases are the cases of implication and the quantifiers, so we will focus on these cases. During the proof we will make use of our assumptions \ConApp~ and \ConUnit, which only hold when the terms in question are in $\Wit$. This will be the case, however, since by induction hypothesis, the terms we are working with are already in $\Wit$, and the free-variables are also assumed to be in $\Wit$. The constructed terms will then be guaranteed to be in $\Wit$ (point $(iii)$) by Lemma \ref{lem-W-closure}. \\[2mm]
Universal quantifier $(i)$. Let $\pvec s_1, \pvec t_1$ be given by induction hypothesis, and assume $\Wit_{\pvec \tau^+_{\lTrans{(\forall z A)}}, \pvec \tau^-_{\bTrans{(\forall z A)}}}(\pvec x, \pvec y)$, i.e. $\Wit_{\pvec \tau^+_{\lTrans{A}}, \btype{\pvec \tau^-_{\bTrans{A}}}}(\pvec x, \pvec y)$. Then:
\eqleft{
\begin{array}{lcl}
	\ubq{\pvec \tau^-_{\lTrans{(\forall z A)}}}{\pvec y'}{\comp{(\pvec s_1 \pvec x)}{\pvec y}} \lInter{\forall z A}{\pvec x}{\pvec y'} 
		& \stackrel{\textup{P}\ref{prop-lInter}}{\Leftrightarrow} & \ubq{\pvec \tau^-_{\lTrans{A}}}{\pvec y'}{\comp{(\pvec s_1 \pvec x)}{\pvec y}} \forall z \lInter{A}{\pvec x}{\pvec y'} \\[1mm]
		& \stackrel{\textup{\ConApp}}{\Rightarrow} & \ubq{\pvec \tau^-_{\bTrans{A}}}{\pvec y'}{\pvec y} \ubq{\pvec \tau^-_{\lTrans{A}}}{\pvec y''}{\pvec s_1 \pvec x \pvec y'} \forall z \lInter{A}{\pvec x}{\pvec y''} \\[1mm]
		& \stackrel{\textup{\Quant{4}}}{\Rightarrow} &  \ubq{\pvec \tau^-_{\bTrans{A}}}{\pvec y'}{\pvec y} \forall z \ubq{\pvec \tau^-_{\lTrans{A}}}{\pvec y''}{\pvec s_1 \pvec x \pvec y'} \lInter{A}{\pvec x}{\pvec y''}  \\[1mm]
		& \stackrel{(\textup{IH}_{(i)})}{\Rightarrow} & \ubq{\pvec \tau^-_{\bTrans{A}}}{\pvec y'}{\pvec y} \forall z \bInter{A}{\pvec t_1 \pvec x}{\pvec y'}
		\stackrel{\textup{P}\ref{prop-bInter}}{\Leftrightarrow} \bInter{\forall z A}{\pvec t_1 \pvec x}{\pvec y}
\end{array}
}
Universal quantifier $(ii)$. Let $\pvec s_2, \pvec t_2$ be given by induction hypothesis, and assume $\Wit_{\pvec \tau^+_{\bTrans{(\forall z A)}}, \pvec \tau^-_{\lTrans{(\forall z A)}}}(\pvec x, \pvec y)$. Then:
\eqleft{
\begin{array}{lcl}
	\bInter{\forall z A}{\pvec x}{\singleton{(\pvec s_2 \pvec x \pvec y)}} 
	& \stackrel{\textup{P}\ref{prop-bInter}}{\equiv} & \ubq{\pvec \tau^-_{\bTrans{A}}}{\pvec y'}{\singleton{(\pvec s_2 \pvec x \pvec y)}} \forall z \bInter{A}{\pvec x}{\pvec y'}  \\[1mm]
	& \stackrel{\textup{\ConUnit}}{\Rightarrow} & \forall z \bInter{A}{\pvec x}{\pvec s_2 \pvec x \pvec y}\\[1mm]
	& \stackrel{(\textup{IH}_{(ii)})}{\Rightarrow} & \forall z \ubq{\pvec \tau^-_{\lTrans{A}}}{\pvec y'}{\pvec y} \lInter{A}{\pvec t_2 \pvec x}{\pvec y'}  \\[1mm]
	& \stackrel{\textup{\Quant{4}}}{\Rightarrow} & \ubq{\pvec \tau^-_{\lTrans{A}}}{\pvec y'}{\pvec y} \forall z \lInter{A}{\pvec t_2 \pvec x}{\pvec y'} 
	\stackrel{\textup{P}\ref{prop-lInter}}{\Rightarrow} \ubq{\pvec \tau^-_{\lTrans{(\forall z A)}}}{\pvec y'}{\pvec y} \lInter{\forall z A}{\pvec t_2 \pvec x}{\pvec y}
\end{array}
}
Existential quantifier $(i)$. Let $\pvec s_1, \pvec t_1$ be given by induction hypothesis and assume $\Wit_{\pvec \tau^+_{\lTrans{(\exists z A)}}, \pvec \tau^-_{\bTrans{(\exists z A)}}}(\pvec x, \pvec y)$. Then:
\eqleft{
\begin{array}{lcl}
	\ubq{\pvec \tau^-_{\lTrans{(\exists z A)}}}{\pvec y'}{\eta(\pvec s_1 \pvec x \pvec y)} \lInter{\exists z A}{\pvec x}{\pvec y'} 
	& \stackrel{\textup{P}\ref{prop-lInter}}{\equiv} & \ubq{\pvec \tau^-_{\lTrans{A}}}{\pvec y'}{\eta(\pvec s_1 \pvec x \pvec y)} \exists z \ubq{\pvec \tau^-_{\lTrans{A}}}{\pvec y''}{\pvec y'}\lInter{A}{\pvec x}{\pvec y''}  \\[1mm]
	& \stackrel{\textup{\ConUnit}}{\Rightarrow} &  \exists z \ubq{\pvec \tau^-_{\lTrans{A}}}{\pvec y'}{\pvec s_1 \pvec x \pvec y}\lInter{A}{\pvec x}{\pvec y'} \\[1mm]
	& \stackrel{(\textup{IH}_{(i)})}{\Rightarrow} & \exists z \bInter{A}{\pvec t_1 \pvec x}{\pvec y}
	\stackrel{\textup{P}\ref{prop-bInter}}{\equiv} \bInter{\exists z A}{\pvec t_1 \pvec x}{\pvec y}
\end{array}
}
Existential quantifier $(ii)$. Let $\pvec a$ be as in Lemma \ref{comp-mon}, and $\pvec s_2, \pvec t_2$ be given by induction hypothesis and assume $\Wit_{\pvec \tau^+_{\bTrans{(\exists z A)}}, \pvec \tau^-_{\lTrans{(\exists z A)}}}(\pvec x, \pvec y)$, then:
\eqleft{
\begin{array}{lcl}
	\bInter{\exists z A}{\pvec x}{\pvec a \pvec x (\comp{(\lambda \pvec y . \singleton{(\pvec s_2 \pvec x \pvec y)})}{\pvec y})} 
		& \stackrel{\textup{L}~\ref{comp-mon}}{\Rightarrow} & \ubq{\pvec \tau^-_{\exists z A}}{\pvec y'}{\comp{(\lambda \pvec y . \singleton{(\pvec s_2 \pvec x \pvec y)})}{\pvec y}} \bInter{\exists z A}{\pvec x}{\pvec y'}  \\[1mm]
		& \stackrel{\textup{\ConApp}}{\Rightarrow} & \ubq{\pvec \tau^-_{\lTrans{ A}}}{\pvec y'}{\pvec y} \ubq{\pvec \tau^-_{\bTrans{(\exists z A)}}}{\pvec y''}{\singleton{(\pvec s_2 \pvec x \pvec y')}} \bInter{\exists z A}{\pvec x}{\pvec y''}  \\[1mm]
		& \stackrel{\textup{P}\ref{prop-bInter}}{\equiv} & \ubq{\pvec \tau^-_{\lTrans{ A}}}{\pvec y'}{\pvec y} \ubq{\pvec \tau^-_{\bTrans{ A}}}{\pvec y''}{\singleton{(\pvec s_2 \pvec x \pvec y')}} \exists z \bInter{A}{\pvec x}{\pvec y''}  \\[1mm]
		& \stackrel{\textup{\ConUnit}}{\Rightarrow} & \ubq{\pvec \tau^-_{\lTrans{ A}}}{\pvec y'}{\pvec y} \exists z \bInter{A}{\pvec x}{\pvec s_2 \pvec x \pvec y'}  \\[1mm]
		& \stackrel{(\textup{IH}_{(ii)})}{\Rightarrow} & \ubq{\pvec \tau^-_{\lTrans{ A}}}{\pvec y'}{\pvec y} \exists z \ubq{\pvec \tau^-_{\lTrans{ A}}}{\pvec y''}{\pvec y'} \lInter{A}{\pvec t_2 \pvec x}{\pvec y''} \\[1mm]
		& \stackrel{\textup{P}\ref{prop-lInter}}{\equiv} & \ubq{\pvec \tau^-_{\lTrans{(\exists z A)}}}{\pvec y'}{\pvec y} \lInter{\exists z A}{\pvec t_2 \pvec x}{\pvec y'} 
\end{array}
}
Implication $(i)$. Let $\pvec s_1^B, \pvec t_1^B, \pvec s_2^A, \pvec t_2^A$ be given by induction hypothesis. Assume that $\Wit_{\pvec \tau^+_{\lTrans{(A\to B)}}}(\pvec f, \pvec g)$ and $\Wit_{\pvec \tau^-_{\bTrans{(A\to B)}}}(\pvec x, \pvec w)$. Then
\[
\footnotesize{\begin{array}{lcl}
	& & 
	
	\ubq{\pvec \tau^+_{\lTrans{A}}, \pvec \tau^-_{\lTrans{B}}}{\pvec x', \pvec w'}
	    { \comp{(\lambda \pvec u, \pvec v . \singleton{(\pvec t_2^A \pvec u)}, \pvec s_1^B ( \pvec f (\pvec t_2 \pvec u) ) \pvec v)}{(\pvec x, \pvec w)}  } 
			\lInter{A \to B}{\pvec f, \pvec g}{\pvec x',\pvec w'} \\[1mm]

	& \stackrel{\mbox{\footnotesize{\ConApp}}}{\Rightarrow} & 

	\ubq{\pvec \tau^+_{\bTrans{A}}, \pvec \tau^-_{\bTrans{B}}}{\pvec x'', \pvec w''}{\pvec x, \pvec w} 
	\ubq{\pvec \tau^+_{\lTrans{A}}, \pvec \tau^-_{\lTrans{B}}}{\pvec x', \pvec w'}
	    { \singleton{(\pvec t_2^A \pvec x'')}, \pvec s_1^B ( \pvec f (\pvec t_2 \pvec x'') ) \pvec w''  } 
			\lInter{A \to B}{\pvec f, \pvec g}{\pvec x',\pvec w'} \\[1mm]
		
	& \stackrel{\mbox{\footnotesize{\ConUnit}}}{\Rightarrow} & 

	\ubq{\pvec \tau^+_{\bTrans{A}}, \pvec \tau^-_{\bTrans{B}}}{\pvec x'', \pvec w''}{\pvec x, \pvec w} 
	\ubq{\pvec \tau^-_{\lTrans{B}}}{\pvec w'}{ \pvec s_1^B ( \pvec f (\pvec t_2 \pvec x'') ) \pvec w''  } 
			\lInter{A \to B}{\pvec f, \pvec g}{\pvec t_2^A \pvec x'',\pvec w'} \\[1mm]

	& \stackrel{\textup{P}\ref{prop-lInter}}{\Leftrightarrow} & 
	
	\ubq{\pvec \tau^+_{\bTrans{A}}, \pvec \tau^-_{\bTrans{B}}}{\pvec x'', \pvec w''}{\pvec x, \pvec w} 
	\ubq{\pvec \tau^-_{\lTrans{B}}}{\pvec w'}{ \pvec s_1^B ( \pvec f (\pvec t_2 \pvec x'') ) \pvec w''  } \\[1mm]
	& & \hspace{10mm}
	(\ubq{\pvec \tau^-_{\lTrans{A}}}{\pvec y}{\pvec g (\pvec t_2^A \pvec x'') \pvec w'} \lInter{A}{\pvec t_2^A \pvec x''}{\pvec y} 
				\to \lInter{B}{\pvec f (\pvec t_2^A \pvec x'')}{\pvec w'}) \\[1mm]

	& \stackrel{\mbox{\footnotesize{\Quant{1}}}}{\Rightarrow} & 

	\ubq{\pvec \tau^+_{\bTrans{A}}, \pvec \tau^-_{\bTrans{B}}}{\pvec x'', \pvec w''}{\pvec x, \pvec w} \\[1mm]
	& & \hspace{10mm}
	(\ubq{\pvec \tau^-_{\lTrans{B}}}{\pvec w'}{ \pvec s_1^B ( \pvec f (\pvec t_2 \pvec x'') ) \pvec w''  } 
	 \ubq{\pvec \tau^-_{\lTrans{A}}}{\pvec y}{\pvec g (\pvec t_2^A \pvec x'') \pvec w'} \lInter{A}{\pvec t_2^A \pvec x''}{\pvec y} \\[1mm] 
	 & & \hspace{20mm}
				\to \ubq{\pvec \tau^-_{\lTrans{B}}}{\pvec w'}{ \pvec s_1^B ( \pvec f (\pvec t_2 \pvec x'') ) \pvec w''  }
				     \lInter{B}{\pvec f (\pvec t_2^A \pvec x'')}{\pvec w'}) \\[1mm]

	& \stackrel{\mbox{\footnotesize{\ConApp}}}{\Rightarrow} & 
	
	\ubq{\pvec \tau^+_{\bTrans{A}}, \pvec \tau^-_{\bTrans{B}}}{\pvec x'', \pvec w''}{\pvec x, \pvec w} \\[1mm]
	& & \hspace{10mm}
	(\ubq{\pvec \tau^-_{\lTrans{A}}}{\pvec y}{ \comp{ \pvec g (\pvec t_2^A \pvec x'') }{ (\pvec s_1^B ( \pvec f (\pvec t_2 \pvec x'') ) \pvec w'') } } 
	 	\lInter{A}{\pvec t_2^A \pvec x''}{\pvec y} \\[1mm] 
	 & & \hspace{20mm}
				\to \ubq{\pvec \tau^-_{\lTrans{B}}}{\pvec w'}{ \pvec s_1^B ( \pvec f (\pvec t_2 \pvec x'') ) \pvec w''  }
				     \lInter{B}{\pvec f (\pvec t_2^A \pvec x'')}{\pvec w'}) \\[1mm]

	& \stackrel{(\mathrm{IH}_{(i)}, \mathrm{IH}_{(ii)})}{\Rightarrow} & 
	
	\ubq{\pvec \tau^+_{\bTrans{A}}, \pvec \tau^-_{\bTrans{B}}}{\pvec x'', \pvec w''}{\pvec x, \pvec w} 
		(\bInter{A}{\pvec x''}{\pvec s^A_2 \pvec x'' (\comp{(\pvec g (\pvec t_2^A \pvec x''))}
		{(\pvec s_1^B (\pvec f (\pvec t_2^A \pvec x'')) \pvec w'')})} 
				\to \bInter{B }{\pvec t_1^B (\pvec f (\pvec t_2^A \pvec x''))}{\pvec w''}) \\[3mm]
				
	& \stackrel{\textup{P}\ref{prop-bInter}}{\Leftrightarrow} & 
	
	\bInter{A \to B}{\lambda \pvec x', \pvec w' . \pvec s^A_2 \pvec x' (\comp{(\pvec g (\pvec t_2^A \pvec x'))}{(\pvec s_1^B (\pvec f (\pvec t_2^A \pvec x')) \pvec w')}), \lambda \pvec x' . \pvec t_1^B (\pvec f (\pvec t_2^A \pvec x'))}{\pvec x,\pvec w} 
\end{array}}
\]
Implication $(ii)$. Let $\pvec s_1^A, \pvec t_1^A, \pvec s_2^B, \pvec t_2^B$ be given by induction hypothesis, and let $\pvec r[\pvec x, \pvec w] = \pvec s_2^B (\pvec f (\pvec t^A_1 \pvec x)) (\singleton{(\pvec w)})$. Assume $\Wit_{\pvec \tau^+_{\bTrans{(A\to B)}}, \pvec \tau^-_{\lTrans{(A\to B)}}}(\pvec x, \pvec y)$. Then
\[
\footnotesize{
\begin{array}{lcl}
	& & \bInter{A \to B}{\pvec f,\pvec g}{\lambda \pvec (x', w'). \comp{\singleton{(\pvec t^A_1 \pvec x')}, \singleton{(\pvec r[\pvec x', \pvec w'])}}{(\pvec x', \pvec w')}} \\[1mm]
	&\stackrel{\textup{P}\ref{prop-bInter}}{\equiv} & 
		\ubq{\pvec \tau^+_{\bTrans{A}}, \pvec \tau^-_{\bTrans{B}}}{\pvec x'', \pvec w''}{\lambda (\pvec x', \pvec w'). \comp{\singleton{(\pvec t^A_1 \pvec x')}, \singleton{(\pvec r[\pvec x', \pvec w'])}}{(\pvec x', \pvec w')}}  (\bInter{A}{\pvec x''}{\pvec g \pvec x'' \pvec w''} \to \bInter{B}{\pvec f \pvec x''}{\pvec w'} )\\[1mm]
	& \stackrel{\mbox{\footnotesize{\ConApp}}}{\Rightarrow} & 
		\ubq{\pvec \tau^+_{\lTrans{A}}, \pvec \tau^-_{\lTrans{B}}}{\pvec x'',\pvec w''}{\pvec x',\pvec w'} \ubq{\pvec \tau^+_{\bTrans{A}}, \pvec \tau^-_{\bTrans{B}}}{\pvec x, \pvec w}{\singleton{(\pvec t^A_1 \pvec x'')},\singleton{(\pvec r[\pvec x'', \pvec w''])}} (\bInter{A}{\pvec x}{\pvec g \pvec x \pvec w} \to \bInter{B}{\pvec f \pvec x}{\pvec w} )\\[1mm]
	& \stackrel{\mbox{\footnotesize{\ConUnit}}}{\Rightarrow} & 
		\ubq{\pvec \tau^+_{\lTrans{A}}, \pvec \tau^-_{\lTrans{B}}}{\pvec x'',\pvec w''}{\pvec x',\pvec w'} (\bInter{A}{\pvec t^A_1 \pvec x''}{\pvec g (\pvec t^A_1 \pvec x'') (\pvec r[\pvec x'', \pvec w''])} \to \bInter{B}{\pvec f (\pvec t^A_1 \pvec x'')}{\pvec r[\pvec x'', \pvec w'']} )\\[1mm]
		& \stackrel{(\mathrm{IH}_{(i)}, \mathrm{IH}_{(ii)})}{\Rightarrow} & 
		\ubq{\pvec \tau^+_{\lTrans{A}}, \pvec \tau^-_{\lTrans{B}}}{\pvec x'', \pvec w''}{\pvec x', \pvec w'} (\ubq{\pvec \tau^-_{\lTrans{A}}}{\pvec y'}{\pvec s^A_1 \pvec x'' (\pvec g (\pvec t^A_1 \pvec x'') (\pvec r[\pvec x'', \pvec w''])} \lInter{A}{\pvec x''}{\pvec y'} \\[1mm]
		&  & \hspace{30mm} \to \ubq{\pvec \tau^-_{\lTrans{B}}}{\pvec w}{\singleton{(\pvec w'')}} \lInter{B }{\pvec t_2^B (\pvec f (\pvec t_1^A \pvec x''))}{\pvec w}) \\[1mm]
	& \stackrel{\mbox{\footnotesize{\ConUnit}}}{\Rightarrow} & 
		\ubq{\pvec \tau^+_{\lTrans{A}}, \pvec \tau^-_{\lTrans{B}}}{\pvec x'', \pvec w''}{\pvec x', \pvec w'} (\ubq{\pvec \tau^-_{\lTrans{A}}}{\pvec y'}{\pvec s^A_1 \pvec x'' (\pvec g (\pvec t^A_1 \pvec x'') (\pvec r[\pvec x'', \pvec w'']))} \lInter{A}{\pvec x''}{\pvec y'} \\[1mm]
			& &	\hspace{30mm} \to \lInter{B }{\pvec t_2^B (\pvec f (\pvec t_1^A \pvec x''))}{\pvec w''}) \\[1mm]
	& \stackrel{\textup{P}\ref{prop-lInter}}{\equiv} & 
		\ubq{\pvec \tau^-_{\lTrans{(A\to B)}}, }{\pvec x'', \pvec w''}{\pvec x', \pvec w'} \lInter{A \to B}{\lambda \pvec x, \pvec w . \pvec s^A_1 \pvec x (\pvec g (\pvec t^A_1 \pvec x) (\pvec r[\pvec x, \pvec w])), \lambda \pvec x . \pvec t_2^B (\pvec f (\pvec t_1^A \pvec x))}{\pvec x'',\pvec w''} 
\end{array}}
\]

\end{document}